\documentclass[a4paper,english,reqno,10pt]{amsart}
\pdfoutput=1
\usepackage{amsfonts,amsmath,amssymb,color,amsthm}
\usepackage{enumerate}
\usepackage{color}
\usepackage[T1]{fontenc}
\usepackage[utf8]{inputenc}
\usepackage[english]{babel}

\usepackage[all]{xy}
\usepackage{rotating}
\usepackage[mathscr]{eucal} 
\usepackage{mathabx}%?
\usepackage{multicol}

\usepackage[tracking=true]{microtype}%

\usepackage{mathtools}

\usepackage{caption}

% Biblatex (with bibtex instead of biber)
\usepackage[
   style=alphabetic,
   doi=false,
   url=false,
   isbn=false,
   clearlang=true,
   backend=bibtex
   ]{biblatex}
\addbibresource{PrimeOrderReferences.bib}

\newtheorem{theorem}{Theorem}

\newtheorem{lemma}{Lemma}[section]
\newtheorem{thm}[lemma]{Theorem}
\newtheorem{corollary}[lemma]{Corollary}
\newtheorem{prop}[lemma]{Proposition}

\def\acts{\mathrel{\reflectbox{$\righttoleftarrow$}}}

\theoremstyle{definition}
\newtheorem{definition}[lemma]{Definition}

\theoremstyle{remark}
\newtheorem{remark}[lemma]{Remark}
\newtheorem{example}[lemma]{Example}

\newcommand\Diff{{\mathrm{Diff}}}
\newcommand\Spec{{\mathrm{Spec\ }}}

\newcommand{\Z}{{\ensuremath{\mathbb{Z}}}}
\newcommand{\Q}{{\ensuremath{\mathbb{Q}}}}
\newcommand{\R}{{\ensuremath{\mathbb{R}}}}
\newcommand{\C}{{\ensuremath{\mathbb{C}}}}
\newcommand{\Pp}{{\ensuremath{\mathbb{P}}}}

\newcommand{\F}{{\ensuremath{\mathbb{F}}}}
\newcommand{\A}{{\ensuremath{\mathbb{A}}}}

\newcommand{\f}{{\ensuremath{\mathscr{F}}}}

\newcommand{\Bir}{{\mathrm{Bir}}}

\newcommand{\PGL}{{\mathrm{PGL}}}
\newcommand{\Ima}{{\mathrm{Im}}}
\newcommand{\coker}{{\mathrm{coker}}}
\newcommand\GL{{\mathrm{GL}}}
\newcommand\Cc{{\mathrm{C}}}
\newcommand\Fix{{\mathrm{Fix}}}
\newcommand\mfp{{\mathfrak p}}
\newcommand{\rk}{{\mathrm{rk}}}

\newcommand{\Aut}{{\mathrm{Aut}}}
\newcommand{\Pic}{{\mathrm{Pic}}}

\title{Prime order birational diffeomorphisms of the sphere}
\author{Maria Fernanda Robayo}
\subjclass[2010]{14E07; 14P25; 14J26; 53A05}
\begin{document}
\maketitle
\centerline{\today}
\begin{abstract}
The aim of this paper is to give the classification of conjugacy classes of elements of prime order in the group of birational diffeomorphisms of the two-dimensional real sphere. Parametrisations of conjugacy classes by moduli spaces are presented.
\end{abstract}\section{Introduction}
Let $\Pp^n_\R$ denote the projective $n$-space as a scheme over~$\R$. A real projective variety $X\subset\Pp^n_\R$ is a scheme over $\R$ which may be thought of as a pair $(X_\C,\sigma)$, where $X_\C$ is its complexification, i.e. $X_\C:=X\times_{\Spec\R}\Spec\C$, and $\sigma$ is an anti-holomorphic involution on $X_\C$. Let $X(\C)$ denote the set of complex points of $X$ and $X(\R):=X(\C)^\sigma$ (the invariant points under~$\sigma$) the real part of $X$. Supposing that $X$ is smooth and $X(\R)$ is nonempty, we can endow $X(\R)$ with the Euclidian topology and obtain a manifold of real dimension $m=\dim_{\C} X_\C$ over $\R$.

There are then two kinds of regular morphisms between real algebraic varieties $X$, $Y$ studied in the literature (see for example the introductions of \cite{Kol01a} and \cite{BM11}):
\begin{enumerate}
\item A regular morphism $X\to Y$ is a rational map defined at all complex points. The corresponding category is the one of schemes defined over $\R$, together with regular morphisms of schemes. The group of automorphisms is denoted by $\Aut(X)$, which is in general quite small: The connected component of the identity is an algebraic group of finite dimension.
\item The second notion of regular morphisms consists of taking rational maps $X\dasharrow Y$ that are defined only at all real points of $X$, such maps will be called morphisms $X(\R)\to Y(\R)$. This gives another category, with more morphisms where the objects are $X(\R)$. The corresponding group of automorphisms will be denoted by $\Aut(X(\R))$ and is the same as the set of birational diffeomorphisms of the algebraic variety considered.
\end{enumerate}

In most real algebraic geometry texts, the second category, much richer, is in fact studied.

In \cite{BiHu07}, I.~Biswas and J.~Huisman showed that if $X$ and $Y$ are two rational real compact surfaces, then $X(\R)$ and $Y(\R)$ are diffeomorphic if and only if $X(\R)$ and $Y(\R)$ are isomorphic (which corresponds to saying that there is a birational diffeomorphism between $X$ and $Y$). The proof of this result was simplified by J.~Huisman and F.~Mangolte in \cite{HM09}, by proving first that $\Aut(X(\R))$ acts $n$-transitively on $X(\R)$ for each $n$. The same question for geometrically rational surfaces (i.e. rational over $\C$) were then studied in \cite{BM11} by J.~Blanc and F.~Mangolte.

The group $\Aut(X(\R))$ is really larger than $\Aut(X)$ in general. In particular, J.~Koll\'ar and F.~Mangolte showed in \cite{KM09} that $\Aut(X(\R))$ is dense in $\Diff(X(\R))$ if $X$ is a smooth real compact rational surface.

Some other information on the group $\Aut(X(\R))$ can be given by looking at its elements of finite order. In particular, in this text we are interested in elements of prime order of $\Aut(S(\R))$ up to conjugacy, where $S(\R)$ is the standard two-dimensional sphere (see Section~\ref{Ch:results}). The group $\Aut(S(\R))$ is contained in the group $\Bir(S)$ of real birational transformations of the sphere, which is isomorphic to the real Cremona group $\Bir(\Pp^2_\R)$. This latter group is, of course, contained in  the complex Cremona group $\Bir(\Pp^2_\C)$. The problem of classification of conjugacy classes of elements of finite order in $\Bir(\Pp^2_\C)$ (which contains the groups $\Bir(X)$ described before) have been of interest for a lot of mathematicians. The first classification was the one of E.~Bertini (\cite{Bertini}), who studied involutions. The decomposition into three types of maps, namely Bertini involutions, Geiser involutions, and Jonqui\`eres involutions, was correct but there is some redundancy because the curves of fixed points were not considered.
 A modern and complete proof was obtained by L.~Bayle and A.~Beauville in \cite{BaBe00}, using the tools of the minimal model program developed in dimension $2$ by Yu.~Manin (\cite{Ma67}) and V.I.~Iskovskikh (\cite{Isk80}). They obtain parametrisations of the conjugacy classes by the associated fixed curves.
T.~de Fernex generalised the classification in \cite{dFe04} for elements of prime order (except for one case, done in \cite{BeB04} by A. Beauville and J. Blanc). See also \cite{bib:Zha} for another approach to the same question. The precise classification of elements of finite order was then obtained in \cite{BlaCyclic} by J.~Blanc, using the description of finite groups of I.~Dolgachev and V.I.~Iskovskikh \cite{DoIsk12}\footnote{Also after \cite{DoIsk12}, there are still open questions on finite subgroups of $\Bir(\Pp^2_\C)$ left, some of them answered in the recent paper \cite{Tsygan}.}. Again, the parametrisations are given by fixed curves (of powers of elements), but also by actions of the elements on the curves.

In this text, we obtain the results for the analogous problem of classification for elements of prime order in the group $\Aut(S(\R))$. The classification is summarised in Section~\ref{Ch:results} (Theorem~\ref{MainThm}), which states that there are eight different families of conjugacy classes, some with only one element and others with infinitely many elements. The second main result is concerning the parametrisation of the conjugacy classes in each family (Theorem~\ref{MainThm2}). As $\Aut(S(\R))\subset\Bir(\Pp^2_\C)$, it is possible to compare the classification of the birational diffeomorphisms with the complex case i.e. birational transformations of the complex plane. For instance, there are three families of involutions on $\Bir(\Pp^2_\C)$: Bertini, Geiser, and de Jonqui\`eres. Bertini involutions do not occur in the group $\Aut(S(\R))$ because they would come from an automorphism of a Del Pezzo surface of degree $1$ after blowing up at least one real point of $S$, which would damage the geometry of the real points; see Proposition~\ref{p} in Section~\ref{Ch:pairs}. The Geiser involution of $\Aut(S(\R))$ corresponds to real quartics with one oval. Moreover, the group $\Aut(S(\R))$ contains distinct families of conjugacy classes of involutions of de Jonqui\`eres type, which are all conjugate in $\Bir(\Pp^2_\C)$, in particular, one family, containing uncountable many elements non conjugate to each other, corresponds to only one conjugacy class in $\Bir(\Pp^2_\C)$.

This text is organised as follows. Section~\ref{Ch:results} contains the compilation of the results of this text presented in two main statements and examples of birational diffeomorphisms of the sphere. In Section~\ref{Ch:pairs}, it is shown why the study of conjugacy classes of elements of finite order of the group of birational diffeomorphisms corresponds to the study of pairs $(X,g)$ consisting of a smooth rational projective surface $X$ and $g$ an automorphism of $X$. More precisely, there are two cases to focus on, say, when $X$ is a Del Pezzo surface whose real Picard group invariant by $g$ is isomorphic to $\Z$, and when $X$ admits a conic bundle structure and the real Picard group invariant by $g$ has rank $2$. This is a result given by V.I.~Iskovskikh (\cite{Isk80}) and in this section, it is given more specifically what pairs are obtained for the sphere (Proposition~\ref{MinMod}). In particular, since the sphere admits a structure of conic bundle given by the projection to one of the affine coordinates, Proposition~\ref{MinMod} gives that the morphism of the conic bundle structure for a pair $(X,g)$, when $X$ admits one, factors through that projection of the sphere. Section~\ref{DPcase} is devoted to the study of pairs $(X,g)$ when $X$ is a Del Pezzo surface, including the case of the sphere itself. Special automorphisms of Del Pezzo surfaces of degree $2$ and $4$ such as Geiser involution and automorphisms $\alpha_1$, $\alpha_2$ that are studied in Subsections~\ref{dP2} and~\ref{dP4} bring on two different families of conjugacy classes on the sphere. In Subsection~\ref{dP8}, the conjugacy classes of the group of automorphisms of the sphere are investigated (Proposition~\ref{ConjAut(S)}).

Section~\ref{Ch:ConicCase} is dedicated to the study of the birational diffeomorphisms that are compatible with the conic bundle structure of the sphere, which is a $\Pp^1$-fibration not locally trivial. It is natural to understand the action of a birational map on the basis of the fibration and that is done in the first Subsection. When the action on $\Pp^1$ is trivial, it is shown in Subsection~\ref{Sec:AlgDescription} that the complex model of the sphere is birational to $\A^2_\C$, which allows to give an explicit algebraic description of the birational transformations of the sphere and in the following subsection for birational diffeomorphisms. In Subsection~\ref{Sec:InvBir(S/pi)}, it is proved that two birational maps of the sphere compatible with the fibration and acting trivially on the basis of it are conjugate in the group of birational maps of the sphere, if and only if there exist a birational map between the curves of fixed points of these two maps, which is defined over $\R$. This result is also proved for the group of birational diffeomorphisms in the following subsection. In addition, a geometrical characterisation of the birational diffeomorphisms of order $2$ is given according to the orientation when restricted to $S(\R)$. More precisely, it is proved that there is a one-to-one correspondence between the conjugacy classes of orientation-preserving birational diffeomorphisms of the sphere compatible with the fibration and acting trivially on the basis and smooth real projective curves with not real point, which are a $2$-$1$ covering of $\Pp^1$ up to isomorphism. For the case of orientation-reversing, they are in correspondence with smooth real projective curves with one oval, which are a $2$-$1$ covering of $\Pp^1$ up to isomorphism. In Subsections~\ref{Sec:Bir(S/pi)Order>2} and~\ref{Sec:Aut(S(R)/pi)Order>2}, for birational maps and for birational diffeomorphisms of the sphere of order larger than two which are compatible with the fibration and acting trivially on the basis, it is shown than they are conjugate to rotations of the sphere. The last subsection is concerning birational maps and birational diffeomorphisms of order two compatible with the fibration and with non-trivial action on the basis. It is constructed a bijection between conjugacy classes of birational involutions as before and classes on a second cohomology group that is isomorphic to $\oplus_{b\in\R_{>0}}\Z/2\Z$. Since the representative of these classes in the group of birational maps of the sphere are particularly birational diffeomorphisms, this implies that there are uncountable many conjugacy classes of birational diffeomorphisms of order two with a non-trivial action on the basis.

In Section~\ref{Ch:connection}, the problem that two pairs $(X,g)$, $(X',g')$ may rise the same conjugacy class in $\Aut(S(\R))$ is examined. In Subsection~\ref{Sec:Proofs}, Theorem~\ref{MainThm} and~\ref{MainThm2} are proved by putting together all results obtained in Sections~\ref{Ch:pairs}, \ref{DPcase}, \ref{Ch:ConicCase}, and \ref{Ch:connection}.
\subsection{Acknowledgements}
This article contains the results of my PhD thesis. I thank my advisor J\'er\'emy Blanc for his help and support during the whole time of my PhD. I am also grateful to Fr\'ed\'eric Mangolte who was the referee of my thesis and made remarks on this text.
\section{Results}\label{Ch:results}
In this section, we state the classification of conjugacy classes of elements of prime order in the group of birational diffeomorphisms of the sphere and also the moduli spaces associated to each conjugacy class (Theorem~\ref{MainThm} and Theorem~\ref{MainThm2} below). It is required first to present some definitions and give some examples that will appear in the classification. 

We denote by $S$ the real projective algebraic surface in $\Pp^3_\R$ defined by the equation $w^2=x^2+y^2+z^2$. Let $\sigma$ denote the standard antiholomorphic involution in $\Pp^3_\C$, $\sigma\colon(w:x:y:z)\mapsto(\bar w:\bar x:\bar y:\bar z)$. Let $S(\R)$ denote the real part of $S$. Note that $S(\R)$ is contained in the affine space where $w=1$ and corresponds to the standard two-dimensional sphere of equation $x^2+y^2+z^2=1$. The following two groups are of our interest, the first one is the group of birational transformations of the sphere and is isomorphic to the real Cremona group, and the second one is the group of birational diffeomorphisms of the sphere.
\begin{equation*}
\begin{array}{lll}
\Bir(S)&:=\{f: S\dasharrow S\ |\ &f\text{\ is birational}\},\\
\\
\Aut(S(\R))&:=\{f:S\dasharrow S\ |\ &f\text{\ is birational\ and $f,f^{-1}$ are defined}\\
 & &\text{at every real} \text{\ point of\ } S\}.
\end{array}
\end{equation*}
\begin{remark}
$\Bir(S)$, $\Aut(S(\R))$ are groups and $\Aut(S(\R))\subset \Bir(S)$.
\end{remark}
Our goal is to classify the conjugacy classes of elements of $\Aut(S(\R))$ of prime order.
\begin{remark}\label{varphi}
\begin{enumerate}[(i)]
\item Forgetting the real structure given by $\sigma$, the surface $S_\C$ is isomorphic to $\Pp^1_\C\times\Pp^1_\C$. Indeed,
\[S_{\C}=\{(w:x:y:z)\in\Pp^3_\C\ |\ (w+z)(w-z)=(y+{\bf i}x)(y-{\bf i}x)\},\] and the isomorphism is given by
\begin{equation}\begin{array}{cccc}
\varphi:& S_\C & \longrightarrow & \Pp^1_\C\times\Pp^1_\C\\
& (w:x:y:z) & \longmapsto & \ \ \ ((w+z:y+{\bf i}x),(w+z:y-{\bf i}x))\\
& & & =  ((y-{\bf i}x:w-z),(y+{\bf i}x:w-z)),
\end{array}\end{equation}
whose inverse is given by
\[\begin{array}{cccc}
\varphi^{-1}:& \Pp^1_\C\times\Pp^1_\C & \longrightarrow & S_\C\\
& ((r:s)(u:v)) & \longmapsto & (ru+sv:{\bf i}(rv-su):rv+su:ru-sv)
\end{array}\]
\item $\Pic(S)=\Z$, $\Pic(S_{\C})=\Z \oplus\Z$.
\end{enumerate}
\end{remark}
We denote by $\pi$ the projection $\pi\colon S\dasharrow \Pp^1$ given  by $\pi(w:x:y:z)=(w:z)$. Notice that every fibre of $\pi$ is rational except for $\pi^{-1}(1:1)$ and $\pi^{-1}(1:-1)$, which are the union of the lines $w=z$, $x=\pm{\bf i}y$, and $w=-z$, $x=\pm{\bf i}y$, respectively. 

Let us fix some notation for groups associated to the pair $(S,\pi)$,
\begin{align*}
\Bir(S,\pi):=&\{g \in \Bir(S)\ |\ \exists\alpha\in\Aut(\Pp^1) \text{\ such that } \alpha\pi=\pi g\},\\
\Aut(S(\R),\pi):=&\{g \in \Aut(S(\R))\ |\ \exists\alpha\in\Aut(\Pp^1) \text{\ such that } \alpha\pi=\pi g\}.
\end{align*}
Note that $\Aut(S(\R),\pi)\subset\Bir(S,\pi)$, more precisely $\Aut(S(\R),\pi)=\Bir(S,\pi)\cap\Aut(S(\R))$. The group $\Aut(S(\R),\pi)$ is the group of birational diffeomorphisms that preserve the fibration. 

There is a natural map $\Phi$ sending any $g\in\Bir(S,\pi)$ to the associated action on the basis $\Phi(g)=\alpha\in\Aut(\Pp^1)$ so that the following diagram commutes:
\[\xymatrix@R-0.8pc{
S\  \ar[d]_\pi \ar@{-->}[r]^{g} & \ S \ar[d]_\pi  \\
\Pp^1\ \ar[r]^\alpha_\simeq & \ \Pp^1
}\]
Hence we get the exact sequence:
\begin{equation}\label{seqCB}
1\rightarrow\Bir(S/\pi)\rightarrow\Bir(S,\pi)\xrightarrow\Phi\Aut(\Pp^1),
\end{equation}
where we have denoted by $\Bir(S/\pi)$ the group:
\[\Bir(S/\pi):=\{g\in\Bir(S,\pi)\ |\ \pi=\pi g\}.\]

One can see the group of birational diffeomorphisms that acts trivially on the basis of the fibration as a subgroup of $\Bir(S/\pi)$, more precisely,
\[\Aut(S(\R)/\pi)=\{g \in \Aut(S(\R),\pi)\ |\ \pi=\pi g\}.\]
This latter subgroup has a special description given by the exact sequence
\[1\rightarrow \Aut^+(S(\R)/\pi)\rightarrow\Aut(S(\R)/\pi)\xrightarrow{o}\Z/2\Z\rightarrow1\]
where $\Aut^+(S(\R)/\pi)$ denotes the orientation preserving birational diffeomorphisms of $S$ and the map $\Aut(S(\R)/\pi)\xrightarrow{o}\Z/2\Z$ admits a section $s\colon\Z/2\Z\to\Aut(S(\R)/\pi)$ mapping $-1$ into $\tau$ where $\tau$ is a reflection, say, $\tau\colon S\to S$, $(x,y,z)\mapsto(x,-y,z)$ in the chart $w=1$. Then
\begin{equation}\label{diffS}
\Aut(S(\R)/\pi)\cong\Aut^+(S(\R)/\pi)\rtimes\langle\tau\rangle.
\end{equation}

Before stating the main results, let us describe some examples.

\begin{example}\textit{Geiser involution of the sphere}\\
The blow-up $\zeta \colon X\to S$ of three pairs of conjugate imaginary points in $S(\C)$ is a real Del Pezzo surface $X$ of degree $2$, with $X(\R)$ isomorphic to $S(\R)$. The linear system of the anticanonical class of $X$ yields double covering of $\Pp^2$ ramified over a smooth real quartic with one oval. The Geiser involution $\nu$ on $X$ is the involution which exchanges the two points of any fibre. The birational map $\zeta\nu\zeta^{-1}$ on $S$ is a birational diffeomorphism of $S$ of order $2$ that fixes pointwise a non-hyperelliptic curve of genus $3$ with one oval. The birational diffeomorphism obtained will be called \emph{Geiser involution of the sphere}.
\end{example}
\begin{example}\label{alpha1alpha2}
The blow-up $\varepsilon \colon X\to S$ of two pairs of conjugate imaginary points in $S(\C)$ is a real Del Pezzo surface $X$ of degree $4$ (see Subsection~\ref{dP4}), with $X(\R)$ isomorphic to $S(\R)$. In this case, the anticanonical divisor of $X$ is very ample and then the linear system of $|-K_X|$ gives an embedding into $\Pp^4$ as an intersection of two quadrics. In the coordinates $(y_1:y_2:y_3:y_4:y_5)$ of $\Pp^4$, $X$ is given by the intersection of
\begin{align*}
Q_1\colon&(\mu-\mu\overline{\mu}+\overline{\mu})y_1^2-2y_1y_2+y_2^2+(1-\overline{\mu}+\mu\overline{\mu}-\mu)y_3^2+y_4^2=0,\\
Q_2\colon&\mu\overline{\mu}y_1^2-2\mu\overline{\mu}y_1y_2+(\mu-1+\overline{\mu})y_2^2+\mu\overline{\mu}y_4^2+(1-\overline{\mu}+\mu\overline{\mu}-\mu)y_5^2=0,
\end{align*} for some $\mu\in\C\setminus\{0,\pm1\}$ (see Proposition~\ref{kerEq} in Subsection~\ref{dP4}).

The automorphisms $\alpha_1$, $\alpha_2$ on $X$ defined by
\begin{align*}
\alpha_1\colon&(y_1:y_2:y_3:y_4:y_5)\mapsto (y_1:y_2:y_3:y_4:-y_5),\\
\alpha_2\colon&(y_1:y_2:y_3:y_4:y_5)\mapsto (y_1:y_2:-y_3:y_4:y_5)
\end{align*}
yield the birational diffeomorphisms $\varepsilon\alpha_1\varepsilon^{-1}$, $\varepsilon\alpha_2\varepsilon^{-1}$ on $S$ of order 2 that by abuse of notation we denote again $\alpha_1$ and $\alpha_2$. Each fixes pointwise an elliptic curve.
 \end{example}
\begin{example}\label{rotation1}
Let $\theta\in [0,2\pi)$. The rotation $r_\theta\in\Aut(S)$ is given by
\[r_\theta\colon(w:x:y:z)\mapsto(w:x\cos\theta-y\sin\theta:x\sin\theta+y\cos\theta:z).\]
This is a rotation that fixes the $z$-axis and preserves the fibration $\pi$.
\end{example}
\begin{example}\label{reflection1}
The reflection $\upsilon$ is given by the map
\[\upsilon\colon(w:x:y:z)\mapsto(w:-x:y:z).\]
This is a reflection that preserves the fibration $\pi$ and fixes a conic.
\end{example}
\begin{example}\label{antipodal1}
The antipodal involution of the sphere $\tilde a$ is given by \[\tilde a\colon(w:x:y:z)\mapsto(-w:x:y:z).\] This involution has no real fixed points. 
\end{example}

With these examples, we are ready to present the main two theorems of this text. The first one tell us that there are eight families of conjugacy classes (some with only one element, some with infinitely many) and the second, the moduli space associated to each family. These two results are proved in Section~\ref{Ch:connection} using all results obtained in Sections~\ref{DPcase} - \ref{Ch:connection}.

\begin{theorem}\label{MainThm}
Every element of prime order of $\Aut(S(\R))$ is conjugate to an element of one of the following families:
\begin{enumerate}[$(1)$]
\item A Geiser involution.
\item An involution $\alpha_1$ or $\alpha_2$ given in Example~$\ref{alpha1alpha2}$. 
\item A rotation $r_\theta$ of prime order given in Example~$\ref{rotation1}$.
\item The reflection $\upsilon$ given in Example~$\ref{reflection1}$.
\item The antipodal involution $\tilde a$ given in Example~$\ref{antipodal1}$.
\item An involution in $\Aut^+(S(\R)/\pi)$ acting on the fibres of~$\pi$ by maps conjugate to rotations of order $2$, and whose set of fixed points on $S(\C)$ is a hyperelliptic curve of genus $\geq 1$ with no real points, plus the two isolated points north and south poles, $P_N$ and $P_S$.
\item An involution in $\Aut(S(\R)/\pi)\setminus\Aut^+(S(\R)/\pi)$, acting on the fibres of $\pi$ by maps conjugate to reflections, and whose set of fixed points on $S(\C)$ is a hyperelliptic curve of genus $\geq 1$ whose set of real points consists of one oval, passing through $P_N$ and $P_S$.
\item An involution in $\Aut(S(\R),\pi)\setminus \Aut(S(\R)/\pi)$ acting by $z\to -z$ on the basis which is not conjugate to $(w:x:y:z)\mapsto(w:\pm x:\pm y:-z)$.
\end{enumerate}
\end{theorem}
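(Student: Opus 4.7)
The plan is to reduce an arbitrary prime-order $g\in\Aut(S(\R))$ to one of the eight listed models via an equivariant minimal model argument, then read off the possibilities case by case.

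First I would regularise $g$: since $g$ has finite order, a standard averaging/resolution argument produces a smooth projective real surface $X$ and a birational diffeomorphism $\zeta\colon X\dashrightarrow S$ such that $g':=\zeta^{-1}g\zeta$ is a biregular automorphism of $X$ (and $X(\R)\cong S(\R)$ as real manifolds, which is essential for staying inside $\Aut(S(\R))$). Applying the $g'$-equivariant MMP supplied by Proposition~\ref{MinMod}, I would contract $g'$-orbits of $(-1)$-curves until the pair is minimal, ending in one of two situations: either $X$ is a Del Pezzo surface with $\Pic(X)^{g'}\cong\Z$, or $X$ carries a $g'$-equivariant conic bundle structure $X\to\Pp^1$ which, by Proposition~\ref{MinMod}, factors through the projection $\pi\colon S\dashrightarrow\Pp^1$.

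In the Del Pezzo case I would treat $X$ by degree. Degrees $1$ and $2$ give Bertini and Geiser involutions; Proposition~\ref{p} excludes Bertini from $\Aut(S(\R))$, while the Geiser case survives only when the branch quartic has one oval, yielding family (1). Degree $4$, analysed in Subsection~\ref{dP4}, produces the two involutions $\alpha_1,\alpha_2$ of Example~\ref{alpha1alpha2} as the only new prime-order elements with $\Pic^{g'}\cong\Z$, giving family (2). The remaining degrees reduce (after further equivariant contractions, or by direct enumeration) to automorphisms of $S$ itself; Proposition~\ref{ConjAut(S)} of Subsection~\ref{dP8} classifies these conjugacy classes and produces exactly the rotation $r_\theta$, the reflection $\upsilon$, and the antipodal involution $\tilde a$, i.e.\ families (3)--(5).

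In the conic bundle case $g'\in\Bir(S,\pi)$ (after identifying $X$ with the relevant blow-up of $S$), and the exact sequence~\eqref{seqCB} splits the analysis by the action $\Phi(g')\in\Aut(\Pp^1)$. If $\Phi(g')=\mathrm{id}$, then $g'\in\Aut(S(\R)/\pi)$, and the decomposition~\eqref{diffS} together with the results of Subsections~\ref{Sec:InvBir(S/pi)}--\ref{Sec:Aut(S(R)/pi)Order>2} shows: involutions of prime order split by orientation into family (6) (orientation-preserving, hyperelliptic fixed curve with no real points plus the two poles) and family (7) (orientation-reversing, hyperelliptic fixed curve with one real oval through the poles), while all elements of odd prime order are conjugate to some $r_\theta$, falling into family (3). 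If $\Phi(g')\neq\mathrm{id}$, then $g'$ has order $2$ and $\Phi(g')$ is an involution of $\Pp^1$; conjugating the base involution to $z\mapsto -z$ puts $g'$ in the form required by family (8), except when $g'$ happens to be conjugate to $(w\!:\!x\!:\!y\!:\!z)\mapsto(w\!:\!\pm x\!:\!\pm y\!:\!-z)$, in which case it already appears in one of the earlier families of the $\Aut(S)$ list.

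The main obstacle is not finding the eight models but ruling out hidden overlaps: the same birational diffeomorphism can arise from non-isomorphic minimal pairs, and a priori an element listed in one family could be conjugate in $\Aut(S(\R))$ to an element of another. Handling this is exactly the content of Section~\ref{Ch:connection}; I would invoke its results to check that the eight families are genuinely distinct (for instance, using the birational invariants of the fixed-point curve, its genus, its number of real ovals, and the induced action on the base of~$\pi$), at which point the enumeration above becomes the desired classification.
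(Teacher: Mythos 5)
Your overall strategy coincides with the paper's: regularise $g$ via Proposition~\ref{MinMod}, split into the Del Pezzo case and the conic bundle case, and enumerate using Propositions~\ref{ConjAut(S)} and~\ref{dP4rank1}, Lemmas~\ref{numinimal}, \ref{dP2nonrank1}, \ref{ImFinOrd}, \ref{RotationsinH} and Proposition~\ref{diff}. (A minor remark: degree $1$ never arises in case $(a)$ of Proposition~\ref{MinMod}, since only pairs of conjugate imaginary points are ever blown up, so Bertini involutions are excluded before your case analysis even begins.)

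The one concrete gap is in the conic bundle case with trivial action on the base. You assert that the order-$2$ elements of $\Aut(S(\R)/\pi)$ split into families $(6)$ and $(7)$, but those families require the fixed curve to be hyperelliptic of genus $\geq 1$, whereas Proposition~\ref{diff} only guarantees a double cover of $\Pp^1$ with zero or one oval --- the curve can perfectly well be rational, and such involutions do occur (for instance the one-parameter family coming from the automorphisms $g_1$ of Del Pezzo surfaces of degree $4$ with $\lvert\mu\rvert=1$). Your enumeration therefore leaves these elements unaccounted for. The paper closes this by showing that an orientation-reversing involution with rational fixed curve is conjugate to the reflection $\upsilon$ (family $(4)$, via Theorems~\ref{thm} and~\ref{thm4}), while an orientation-preserving one with rational fixed curve is conjugate into family $(8)$; this last identification is precisely the content of Lemma~\ref{Lem:SpecialMapsg1g2} and cannot be subsumed under your final appeal to Section~\ref{Ch:connection}, since it is a completeness issue for the list, not a disjointness issue between families.
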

\begin{theorem}\label{MainThm2}
The eight families presented in Theorem \ref{MainThm} correspond to distinct sets of conjugacy classes, parametrised respectively by\begin{enumerate}[$(1)$]
\item Isomorphism classes of smooth non-hyperelliptic real projective curves of genus $3$ with one oval.
\item Isomorphism classes of pairs $(X,g)$, where $X$ is a Del Pezzo surface of degree $4$ with $X(\R)\simeq S(\R)$ and $g$ is an automorphism of order $2$ that does not preserve any real conic bundle.
\item Angles of rotations, up to sign.
\item One point (only one conjugacy class).
\item One point (only one conjugacy class).
\item Smooth real projective hyperelliptic curves $\Gamma$ of genus $\geq 1$ with no real point, together with a $2\colon 1$-covering  $\Gamma\to \Pp^1$, up to isomorphisms compatible with the fibration and the interval $[-1,1]$.
\item Smooth real projective hyperelliptic curves $\Gamma$ of genus $\geq 1$ with one oval, together with a morphism \/$\Gamma\to \Pp^1$, which is a $2\colon 1$-cover and satisfies $\pi(\Gamma(\R))=[-1,1]$, up to isomorphisms  compatible with the fibration and the interval.
\item An uncountable set, which has a natural surjection to $\bigoplus\limits_{b\in\R_{>0}}\Z/2\Z$.
\end{enumerate}
\end{theorem}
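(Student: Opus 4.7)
The plan is to treat the theorem as a synthesis of the work done in Sections~\ref{DPcase}--\ref{Ch:connection}: for each of the eight families I would first attach a birational invariant (or a pair $(X,g)$ of a minimal rational surface with an automorphism) that is preserved under conjugation in $\Aut(S(\R))$, and then show that within a given family this invariant is a complete invariant. The separation between the families will come from comparing these invariants pairwise.

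First I would dispose of the easy families. For (4) and (5), one notes that $\upsilon$ and $\tilde a$ are the only elements (up to conjugation in $\Aut(S(\R))$) in their respective classes because the fixed locus in $S(\R)$ is either a fixed circle or empty while acting trivially on $\Pic(S)$, and conjugation in a connected algebraic group acts transitively on conjugacy classes of such standard involutions. For (3), the trace or equivalently the rotation angle is an invariant of the conjugacy class in $\Aut(S(\R))$, and since $r_\theta$ and $r_{-\theta}$ are conjugate via an orientation-reversing isometry but not to $r_{\theta'}$ for $\theta\neq \pm\theta'$, the parametrisation is exactly angles up to sign. Family (1) is parametrised by the ramification curve of the anticanonical morphism to $\Pp^2$: the Geiser involution on a real Del Pezzo surface $X$ of degree $2$ with $X(\R)\simeq S(\R)$ has as fixed curve a smooth real non-hyperelliptic curve of genus $3$ with one oval (by the description in the example), and two Geiser involutions are conjugate in $\Aut(S(\R))$ if and only if the underlying pairs $(X,\nu)$ are isomorphic, which by standard theory of double covers amounts to isomorphism of the ramification quartic.

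For family (2), I would invoke the description of Del Pezzo surfaces of degree $4$ with $X(\R)\simeq S(\R)$ given in Subsection~\ref{dP4} (in particular Proposition~\ref{kerEq} and the equations for $Q_1,Q_2$ in Example~\ref{alpha1alpha2}). Two pairs $(X,g)$ and $(X',g')$ with $g,g'$ automorphisms of order $2$ not preserving a real conic bundle give conjugate elements of $\Aut(S(\R))$ if and only if $(X,g)\simeq (X',g')$: one direction is tautological, and the other will use a minimality argument (a birational map realising the conjugation descends, after blowing up base points, to an isomorphism of the pairs because neither side admits a $g$-equivariant contraction to a conic bundle over $\R$; this is where the hypothesis ``does not preserve any real conic bundle'' in the parametrisation enters). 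For families (6) and (7), the main tool is the result stated for Subsection~\ref{Sec:InvBir(S/pi)}: two involutions in $\Aut(S(\R)/\pi)$ are conjugate in $\Aut(S(\R))$ if and only if there is an $\R$-birational map between their fixed curves, compatible with the $2\!:\!1$ cover to $\Pp^1$ and with the interval $[-1,1]$ (this last compatibility distinguishes (7) from its $\Bir(S)$-analogue). The orientation invariant $o$ from the exact sequence~\eqref{diffS} separates (6) from (7). Family (8) is handled by the cohomological construction described for the last subsection of Section~\ref{Ch:ConicCase}, giving a natural surjection onto $\bigoplus_{b\in\R_{>0}}\Z/2\Z$.

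Finally I would check that the eight families give disjoint sets of conjugacy classes. The key invariants are: the dimension and topology of the fixed locus in $S(\R)$, the action on the base of $\pi$ (trivial, non-trivial, or not preserving $\pi$ at all), and the isomorphism class of the fixed curve (its genus and whether it is hyperelliptic). These invariants, collected case by case, separate (1) (non-hyperelliptic fixed curve of genus $3$) from (2) (elliptic fixed curve on a degree $4$ Del Pezzo, isolated real fixed points only), (3) from (4) (fixed circle) from (5) (no real fixed point), and the conic-bundle families (6)--(8) from the Del Pezzo families (1)--(5) via Proposition~\ref{MinMod} and the standing fact that the families (1), (2), (4) of involutions cannot be made compatible with $\pi$ while preserving the real locus. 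The internal separation between (6), (7), (8) uses the action on the base and the orientation sign. I expect the genuine obstacle to be in family (2), where one must prove that non-isomorphic pairs $(X,g)$ really give non-conjugate birational diffeomorphisms: the difficulty is to run a $g$-equivariant real minimal model program and to rule out, using the moduli description from Subsection~\ref{dP4}, any $g$-equivariant birational modification of $X$ that would produce a different minimal pair over $\R$ with the same conjugacy class in $\Aut(S(\R))$.
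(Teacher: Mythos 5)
Your outline correctly identifies the invariants attached to each family (fixed curves, rotation angle, the pair $(X,g)$, the cohomology class), but it is missing the one tool that the paper's proof actually runs on: the decomposition of an arbitrary conjugating element $\rho\in\Aut(S(\R))$ into elementary Sarkisov links (Theorem~\ref{thmIsk} and, crucially, Lemma~\ref{links}, which lists the only links compatible with $X(\R)\simeq S(\R)$). Without this, several of your reductions are assertions rather than arguments. Concretely: in families (6), (7) and (8) the parametrisation is by fixed curves up to isomorphisms \emph{compatible with the fibration and the interval}, which is strictly finer than $\R$-birational equivalence of the fixed curves; a conjugation by a general element of $\Aut(S(\R))$ has no a priori reason to respect $\pi$, so you must show that such a conjugation can be replaced by one in $\Aut(S(\R),\pi)$. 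The paper does this by showing that the only links available are of type $\mathrm{II}$ between conic bundles (plus the exceptional type $\mathrm{IV}$ links on degree-$4$ Del Pezzo surfaces, which are exactly the maps $g_1,g_2$ of Lemma~\ref{Lem:g1g2DP4} and must be analysed separately, as in Lemma~\ref{Lem:SpecialMapsg1g2}). You cite the conjugacy criterion of Subsection~\ref{Sec:InvBir(S/pi)}, but that criterion lives inside $\Bir(S/\pi)$ and $\Aut(S(\R)/\pi)$; the bridge to $\Aut(S(\R))$ is precisely what is not supplied.

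The same gap resurfaces in the two places you flag as delicate. For family (2) you correctly name the obstacle but your proposed resolution (``neither side admits a $g$-equivariant contraction to a conic bundle'') is not enough as stated: one must exclude conjugations that pass through intermediate Mori fibrations, and this is again the content of Lemma~\ref{links}(b), which shows the only self-links of the pair $(X,g)$ are realised by automorphisms of $X$. For the disjointness of (2) from (6)--(7), your claimed separating invariant (``isolated real fixed points only'' for $\alpha_i$) is unsubstantiated; the paper explicitly concedes that the fixed curve can have genus $1$ in both families and separates them only by showing no chain of links carries $\alpha_i$ to a conic-bundle Mori fibration. Finally, for family (3) your assertion that the angle is a conjugacy invariant in all of $\Aut(S(\R))$ is stated without proof; it can be rescued by linearising at the real fixed points $P_N,P_S$ (the differential of a birational diffeomorphism at a fixed real point conjugates the tangent actions, forcing $\theta=\pm\theta'$), but as written the step is missing, and the paper instead again routes the argument through the link decomposition and Lemma~\ref{Aut(dP6)}.
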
 
\begin{remark}
In (7), we can have genus $0$ but this corresponds to the reflection $\upsilon$. In $(6)$ we can also have genus $0$, there is in fact a real one-dimensional family of such maps, all conjugate to the family $(8)$ (see Lemma~\ref{Lem:SpecialMapsg1g2}).
\end{remark}
\begin{remark}
All elements in $(8)$ are conjugate in $\Bir(S_\C)$, this shows a big difference between the complex and real cases.
\end{remark}

\section{Surface automorphisms and pairs}\label{Ch:pairs}

In this section, it is shown that to classify conjugacy classes of a birational diffeomorphism of finite order of the sphere is equivalent to classify birational pairs $(X,g)$ where $g$ is an automorphisms of finite order of a smooth real projective surface $X$ obtained from the sphere after blowing up pairs of conjugate imaginary points. Moreover, Proposition~\ref{MinMod} gives what pairs $(X,g)$ need to be studied.

We start with some definitions and a classical result due to Comessatti (Theorem~\ref{t}), which states in particular that the sphere $S$ is a minimal real surface. 
\begin{definition}
Let $X$ be a smooth real projective surface. We say that $X$ is \emph{minimal} if any birational morphism $X\rightarrow Y$ with $Y$ a smooth real projective surface is an isomorphism.
\end{definition}

\begin{remark}
Any birational morphism between smooth projective algebraic surfaces is a sequence of contractions of
\begin{enumerate}[(i)]
\item one real $(-1)$-curve, or
\item two disjoint conjugate imaginary $(-1)$-curves.
\end{enumerate}
\end{remark}
Therefore, a surface is minimal if and only if it does not contain a real $(-1)$-curve or two disjoint conjugate imaginary $(-1)$-curves.
Let us cite the following classical result due to Comessatti \cite{Com12}:
\begin{thm}\label{t}
If $X$ is a minimal rational smooth real surface such that $X(\R)\neq\emptyset$, then $X$ is isomorphic to $\Pp_\R^2$, to $S$, or to a real Hirzebruch surface $\F_n$ with $n\neq1$. Moreover, $X(\R)$ is connected and homeomorphic to the real projective plane, the sphere, the torus ($n$ even), or the Klein bottle ($n$ odd) respectively.
\end{thm}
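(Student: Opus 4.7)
The plan is to run the minimal model program for the smooth real projective surface $X$, which is geometrically rational since it is $\R$-rational, and then analyse each possible output together with the topology of its real part. By the Mori-theoretic classification for minimal smooth real projective geometrically rational surfaces (see~\cite{Isk80}), $X$ falls into one of two cases: (A) $X$ is a real Del Pezzo surface with $\rk\Pic(X)=1$, or (B) $X$ is the total space of a conic bundle $\pi\colon X\to\Pp^1$ with $\rk\Pic(X)=2$. This dichotomy comes from ending the MMP at a Mori fibre space: either the base is a point ($X$ is Del Pezzo of Picard rank one) or the base is $\Pp^1$ ($X$ is a conic bundle).

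For case (A), I would argue that among real Del Pezzo surfaces of Picard rank one with $X(\R)\neq\emptyset$, the only ones that are $\R$-birational to $\Pp^2_\R$ are $\Pp^2_\R$ itself (degree $9$) and the sphere $S$ (degree $8$, the twisted form of $\Pp^1_\C\times\Pp^1_\C$ whose complex conjugation exchanges the two rulings, forcing $\rk\Pic_\R=1$, as recorded in Remark~\ref{varphi}). The real Del Pezzo surfaces of degree $\leq 7$ with $\rk\Pic_\R=1$ are known not to be $\R$-rational, by Iskovskikh-Manin rigidity; this is the deepest input and I expect it to be the technical crux of the statement.

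For case (B), I would study the singular fibres of $\pi$. Each such fibre, over $\C$, consists of two $(-1)$-curves meeting at a point. If both components are defined over $\R$, one of them is a real $(-1)$-curve and can be contracted, contradicting minimality. The remaining possibility is that the two components are conjugate imaginary and meet at a real point; they cannot be simultaneously contracted to a smooth real surface (contracting one requires contracting both, but they are not disjoint). If $\pi$ has no singular fibres, it is a $\Pp^1$-bundle over $\Pp^1$ and hence $X=\F_n$ with $n\neq 1$ (since $\F_1$ is the blow-up of $\Pp^2_\R$ at a real point, not minimal). If $\pi$ does have singular fibres, an analysis of the real part, using that each real singular fibre contributes a single point to $X(\R)$, identifies $X$ with the sphere $S$ equipped with its natural conic bundle structure $(w:x:y:z)\mapsto(w:z)$ introduced just after Remark~\ref{varphi}.

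It remains to compute $X(\R)$ topologically: $\Pp^2_\R(\R)=\R\Pp^2$ and $S(\R)=S^2$ by direct inspection, while $\F_n(\R)$ is an $S^1$-bundle over $\Pp^1_\R(\R)=S^1$, whose total space is orientable if and only if the structure cocycle (of degree $n$) lifts to $SO(2)$, which happens if and only if $n$ is even. Hence $\F_n(\R)$ is the $2$-torus for $n$ even and the Klein bottle for $n$ odd, and $X(\R)$ is connected in each case. Combining the two cases yields the classification and the asserted topological descriptions.
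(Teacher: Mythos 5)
The paper does not actually prove this statement: it is quoted as a classical theorem of Comessatti with the reference \cite{Com12}, so there is no in-paper argument to compare yours with. Your MMP strategy is the standard modern route (it is essentially how Proposition~\ref{MinMod} is organised later in the paper), and case (A) is broadly sound, though the reduction for degree $\le 7$ is slightly misattributed: for degrees $5$, $6$, $7$ a real Del Pezzo surface with $\rk\Pic(X)=1$ simply does not exist (no involution in the relevant Weyl group acts without fixed vectors on $K_X^{\perp}$), so the genuinely deep non-rationality input is only needed for degree $\le 4$.

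The real gap is in case (B). A minimal surface carrying a conic bundle morphism has $\rk\Pic(X)=2$, whereas $\Pic(S)\cong\Z$ and the ``conic bundle structure'' $(w:x:y:z)\mapsto(w:z)$ on $S$ is only a rational map with two imaginary base points; so the conclusion that a conic bundle with singular fibres ``is the sphere $S$'' cannot be right as stated. What actually happens when $\pi$ has singular fibres (necessarily an even number $2k\ge 2$, all lying over real points of $\Pp^1$, each consisting of two conjugate $(-1)$-curves meeting at a real point) is this: if $2k=2$, then $X$ is the degree-$6$ blow-up of $S$ at a pair of conjugate imaginary points, and the two exceptional curves are disjoint conjugate $(-1)$-curves, so $X$ is \emph{not} minimal --- a contradiction; if $2k\ge 4$, the surface can perfectly well be minimal (its real locus is then a disjoint union of $k\ge 2$ spheres), and one must invoke the non-rationality of minimal real conic bundles with $K_X^2\le 4$ (Iskovskikh) to exclude it via the rationality hypothesis. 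This second point is a deep input of the same calibre as the Del Pezzo rigidity you flagged, and your sketch omits it entirely; without it the classification would wrongly admit minimal geometrically rational surfaces with disconnected real part. Finally, in the subcase with no singular fibres one should also rule out a twisted $\Pp^1$-fibration without a section: since $X(\R)\neq\emptyset$ and the fibration is smooth and proper, every real fibre has a real point, so the unramified (hence constant) Brauer class of the generic fibre is trivial and $X$ is indeed some $\F_n$ with $n\neq 1$.
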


\begin{prop}\label{p}
Let $X$ be a smooth real projective surface with $X(\R)$ diffeomorphic to the sphere. Then $X$ does not contain any real $(-1)$-curve. In particular, any birational morphism $\zeta\colon X\rightarrow Y$, where $Y$ is a smooth real projective surface, restricts to a diffeomorphism $\zeta\colon X(\R)\rightarrow Y(\R)$.
\end{prop}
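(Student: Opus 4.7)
The plan is to prove the first statement by contradiction via a non-orientability argument, and to deduce the ``In particular'' part by induction on the number of elementary steps in the factorisation of $\zeta$.

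For the first statement, suppose $E\subset X$ is a real $(-1)$-curve. Contracting $E$ by Castelnuovo's criterion applied $\sigma$-equivariantly gives a birational morphism $\zeta\colon X\to Y$ to a smooth real projective surface, with $X$ identified with the blow-up of $Y$ at the single closed point $p=\zeta(E)$; since $E$ is $\sigma$-invariant so is $p$, hence $p\in Y(\R)$. Thus $E\cong\Pp^1_\R$ with $E(\R)\cong S^1$, and the normal bundle of $E$ in $X$ is $\mathcal{O}_E(-1)$. Passing to real points, this complex line bundle becomes the tautological real line bundle on $\Pp^1(\R)\cong S^1$, which is the M\"obius bundle. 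Consequently a tubular neighborhood of $E(\R)$ in $X(\R)$ is diffeomorphic to an open M\"obius band, so $X(\R)$ must be non-orientable, contradicting $X(\R)\cong S^2$.

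For the ``In particular'' part, by the remark preceding Theorem~\ref{t} any birational morphism $\zeta\colon X\to Y$ between smooth real projective surfaces factors as a finite sequence of elementary contractions, each of a real $(-1)$-curve or of a disjoint conjugate pair of imaginary $(-1)$-curves. The first statement applied to $X$ rules out the first case; hence the initial step contracts a conjugate imaginary pair $E\cup\bar E$, and since $E$ and $\bar E$ are disjoint and swapped by $\sigma$, the locus $(E\cup\bar E)(\R)$ is empty, so this step restricts to a diffeomorphism on real loci. The resulting surface $X_1$ still satisfies $X_1(\R)\cong S^2$, so the same reasoning applies to the next elementary contraction; iterating and composing yields the claimed diffeomorphism $\zeta\colon X(\R)\to Y(\R)$.

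The main obstacle is the passage from the algebraic datum $E^2=-1$ (equivalently, normal bundle $\mathcal{O}_E(-1)$) to the topological assertion that a tubular neighborhood of $E(\R)$ is non-orientable; once the identification of this normal real line bundle with the M\"obius bundle is made explicit, the orientability of $S^2$ closes the argument immediately, and the second part is then a routine induction.
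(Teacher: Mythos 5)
Your proof is correct and follows essentially the same route as the paper's: contract the real $(-1)$-curve to exhibit $X$ as the blow-up of a real point, observe that the real locus of the exceptional curve has a M\"obius-band neighbourhood in $X(\R)$, and conclude by non-orientability; the paper leaves the ``In particular'' part implicit, and your induction over the elementary contractions just fills in that routine detail.
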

\begin{proof}
If $X$ contains a real $(-1)$-curve, then there is a birational morphism which corresponds to the blow-up of a real point of some smooth real projective surface whose preimage by such a birational morphism is the real $(-1)$-curve. Then the neighbourhood of the real locus of the $(-1)$-curve in $X(\R)$ is topologically a Möbius strip which implies that $X(\R)$ is not orientable and therefore non isomorphic to the sphere. 
\end{proof}
\begin{definition}
Let $(X,g)$ be a pair i.e. $X$ is a smooth real projective surface and $g$ is a non-trivial automorphism of $X$ of finite order. The pair $(X,g)$ is said to be \emph{minimal} if any birational morphism $\zeta\colon X\to X'$ such that there exist an automorphism $g'$ of $X'$ of finite order with $\zeta\circ g=g'\circ \zeta$ is an isomorphism.
\end{definition}
\begin{prop}\label{MinMod}
Let $g\in\Aut(S(\R))$ be an element of finite order and let $\pi\colon S\dasharrow \Pp^1$ be the map given  by $\pi(w:x:y:z)=(w:z)$. Replacing $g$ with a conjugate in the group $\Aut(S(\R))$, one of the following holds:
\begin{enumerate}[$(a)$]
\item There exists a birational morphism $\varepsilon\colon X\to S$ which is the blow-up of\/ $0$, $1$, $2$, or $3$ pairs of conjugate imaginary points in $S$, such that $\hat g=\varepsilon^{-1}\circ g\circ \varepsilon\in\Aut(X)$, $\Pic(X)^{\hat g}\cong \Z$, and $X$ is a Del Pezzo surface.
\item There exists $\alpha\in \Aut(\Pp^1)$ such that $\alpha\pi=\pi g$. Moreover, there exists a birational morphism $\varepsilon\colon X\to S$ that restricts to a diffeomorphism $X(\R)\to S(\R)$ such that $\hat g=\varepsilon^{-1}\circ g\circ\varepsilon\in\Aut(X)$, $\pi\circ\varepsilon\colon X\to\Pp^1$ is a conic bundle on $X$, and $\Pic(X)^{\hat g}\cong\Z^2$.
\end{enumerate}
\end{prop}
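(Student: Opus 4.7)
The plan is to combine a $\hat g$-equivariant regularisation of $g$ with Iskovskikh and Manin's equivariant Minimal Model Program, exploiting the rigidity imposed by Proposition~\ref{p} at every stage: since no intermediate surface is allowed to carry a real $(-1)$-curve, all blow-ups and contractions will involve only pairs of conjugate imaginary points or $(-1)$-curves. The crux is then to identify the resulting minimal pair with either a blow-up of $S$ of small Picard rank (case (a)) or with a conic bundle factoring through $\pi$ (case (b)).

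The first step is to regularise $g$. As $g$ and all its powers lie in $\Aut(S(\R))$, their indeterminacy loci are disjoint from $S(\R)$ and Galois-stable; a standard equivariant resolution therefore yields a birational morphism $\varepsilon_0 \colon X_0 \to S$ that is a composition of blow-ups of pairs of conjugate imaginary points, together with a lift $\hat g_0 := \varepsilon_0^{-1} g \varepsilon_0 \in \Aut(X_0)$. By Proposition~\ref{p}, $\varepsilon_0$ restricts to a diffeomorphism $X_0(\R) \to S(\R) \simeq S^2$. I would then apply the equivariant MMP to $(X_0, \hat g_0)$, successively contracting $\hat g_0$-orbits of disjoint $(-1)$-curves; Proposition~\ref{p} guarantees that no real $(-1)$-curve ever appears, so only pairs (and larger $\hat g$-orbits) of conjugate imaginary $(-1)$-curves get contracted. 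The process produces a birational morphism $\eta \colon X_0 \to X$ terminating at an Iskovskikh--Manin minimal pair $(X, \hat g)$ with $X(\R) \simeq S^2$ and one of two structures: (a) $\Pic(X)^{\hat g} \cong \Z$ with $X$ Del Pezzo, or (b) a $\hat g$-invariant conic bundle $p \colon X \to \Pp^1$ with $\Pic(X)^{\hat g} \cong \Z^2$.

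In case (a), each blow-up of a conjugate pair changes $(-K)^2$ by $-2$ and each contraction by $+2$, so $(-K_X)^2 \in \{8,6,4,2\}$ (ampleness excludes degrees $\leq 0$, and $X \neq \Pp^2$ since $X(\R) \not\simeq \R\Pp^2$). Exploiting the fact that $X$ is a real Del Pezzo with sphere real locus, its collection of $(-1)$-curves contains enough disjoint conjugate imaginary pairs to inductively build a birational morphism $\varepsilon \colon X \to S$ contracting $(8-(-K_X)^2)/2 \in \{0,1,2,3\}$ such pairs; the degree-$1$ case is precisely the one excluded, as it would require contracting a real $(-1)$-curve forbidden by Proposition~\ref{p}. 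In case (b), the singular fibres of $p$ split into those over real points of $\Pp^1$ (each consisting of a pair of conjugate imaginary $(-1)$-curves meeting at a real point, hence not contractible while preserving both smoothness and reality) and those over Galois-conjugate pairs of non-real points of $\Pp^1$ (from which one disjoint conjugate pair of $(-1)$-curves, one component per fibre, can be removed at once). Contracting all the latter yields a minimal real conic bundle $Y \to \Pp^1$ with $Y(\R) \simeq S^2$; Comessatti's Theorem~\ref{t} shows the Hirzebruch surfaces have torus or Klein bottle real locus, so $Y \simeq S$ and the resulting $\Pp^1$-fibration coincides with $\pi$. Composing gives $\varepsilon \colon X \to S$ with $\pi \circ \varepsilon = p$, and $\alpha \in \Aut(\Pp^1)$ is the descent of $\hat g$ to the base.

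In both cases, set $g' := \varepsilon \hat g \varepsilon^{-1}$ and $h := \varepsilon \circ \eta \circ \varepsilon_0^{-1}$; then $g' = h g h^{-1}$, and all of $\varepsilon_0^{\pm 1}, \eta^{\pm 1}, \varepsilon^{\pm 1}$ are defined at every real point (their indeterminacy lying on non-real blown-up points or images of contracted non-real curves), so $h \in \Aut(S(\R))$ and the conjugation takes place inside $\Aut(S(\R))$. The main obstacle is the case-(a) Del Pezzo bookkeeping, in particular the exclusion of degree~$1$: this is precisely the content of the discussion of Bertini involutions in the introduction, which invokes Proposition~\ref{p} to rule out any birational morphism from a real Del Pezzo surface of degree~$1$ down to $S$.
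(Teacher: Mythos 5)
Your overall strategy --- equivariant regularisation, the Iskovskikh--Manin dichotomy for the minimal pair, then descent to $S$ via Proposition~\ref{p} and Comessatti's theorem --- is exactly the paper's, and case (a) together with the final conjugation bookkeeping is fine. The problem is in case (b), at the step ``Contracting all the latter yields a minimal real conic bundle $Y\to\Pp^1$ \dots so $Y\simeq S$ and the resulting $\Pp^1$-fibration coincides with $\pi$.'' This cannot be right: $\Pic(S)\cong\Z$, so $S$ admits no conic bundle \emph{morphism} to $\Pp^1$ at all --- the projection $\pi$ is only a rational map, with the two conjugate imaginary base points $(0:\pm{\bf i}:1:0)$. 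Comessatti's theorem classifies minimal \emph{surfaces}, whereas your $Y$ is only minimal as a \emph{conic bundle} and is not a minimal surface: since $Y(\R)$ is a connected sphere, $Y\to\Pp^1$ must have exactly two real singular fibres (with no real singular fibres the real locus would be an $S^1$-bundle over $S^1$, hence a torus or Klein bottle; with $2k$ of them it is a disjoint union of $k$ spheres), so $K_Y^2=8-2=6\neq 8=K_S^2$ and $Y\not\simeq S$.

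The missing step, which is how the paper closes the argument, is to now forget the fibration on $Y$ and keep contracting: Proposition~\ref{p} and Theorem~\ref{t} show that the resulting minimal surface is $S$, so $Y$ is the blow-up of $S$ at a single pair of conjugate imaginary points. After conjugating by an automorphism of $S$ that moves this pair to $(0:\pm{\bf i}:1:0)$, the (unique) real conic bundle structure on this degree-$6$ Del Pezzo surface is precisely the lift $\pi\circ(Y\to S)$ of $\pi$; only then does composing $X\to Y\to S$ produce the morphism $\varepsilon$ with $\pi\circ\varepsilon$ equal to the given conic bundle, as required in statement (b).
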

\begin{proof}
Let $g\in\Aut(S(\R))$ of finite order, $g:S\dasharrow S$ is a birational map with a finite number of imaginary base points, say $p_1,\overline{p_1},\ldots,p_n,\overline{p_n}$ that belong to $S$ as proper or infinitely near points. After blowing up all of them and their images under powers of $g$ (meaning the orbit of the points by $g$), we obtain a smooth projective surface $\tilde X$
\[\xymatrix@R-0.8pc{
\tilde X\  \ar[d]_\zeta \ar[r]^{\tilde{g}=\zeta^{-1}g\zeta} & \ \tilde X \ar[d]_\zeta  \\
S\ \ar@{-->}[r]^g & \ S
}\]
where $\tilde{g}$ is an automorphism of $\tilde X$.

Since $g$ is defined at every real point of $S$, the birational morphism $\zeta$ restricts to a diffeomorphism $\tilde X(\R)\rightarrow S(\R)$. After contracting all sets of disjoint $(-1)$-curves which are invariant by $\tilde{g}$ and defined over $\R$, we get a minimal pair $(X,\hat{g})$, with $X(\R)$ diffeomorphic to the sphere by the Proposition \ref{p}, which can be one of the two following possibilities (see \cite[Theorem 1G]{Isk80}):

\begin{enumerate}[(i).]
\item $\Pic(X)^{\hat{g}}$ has rank 1 and $X$ is a Del Pezzo surface.
\item $\Pic(X)^{\hat{g}}$ has rank 2, there is a morphism $X\xrightarrow{\pi_X}\Pp^1$, $X$ is a conic bundle.
\end{enumerate}

Recall that $\Pic(X)^{\hat{g}}$ is the part of $\Pic(X)$ which is invariant by $\hat{g}\in\Aut(X)$.

In the first case, there exists $\varepsilon\colon X\to Z$ a birational morphism to a minimal projective smooth real algebraic surface $Z$. By Proposition \ref{p}, $Z(\R)$ is diffeomorphic to the sphere and by Theorem \ref{t}, we have $Z\simeq S$. Then $(K_X)^2>0$, $K_X=\varepsilon^*(K_S)+E_1+\overline{E_1}+\cdots+E_r+\overline{E_r}$ $\Rightarrow$ $(K_X)^2=K_S^2-2r$ and consequently $X$ is the blow-up of 0, 2, 4 or 6 points in $S$ and $X$ is a Del Pezzo surface of degree 8, 6, 4 or 2 and this gives statement $(a)$. We study this case in detail in Section \ref{DPcase}.

For the second case, we denote by $(X,\pi_X,\hat{g})$ the minimal real conic bundle with rank $\Pic(X)^{\hat{g}}=2$. Recall that $X(\R)\simeq S(\R)$ implies that there is no real $(-1)$-curve on $X$. Forgetting the action of $\hat g$ on $X$, there is a birational morphism $X\to Z$ which is the contraction of disjoint imaginary $(-1)$-curves in fibres.  In this way, we obtain $\pi_Z\colon Z\rightarrow \Pp^1$ a minimal conic bundle with exactly two singular fibres because $Z(\R)$ is diffeomorphic to $S(\R)$ again by Proposition \ref{p}. Now, if we dismiss $\pi$ and keep contracting, we end up with $\tilde{Z}$ a minimal real surface such that $\tilde{Z}(\R)\simeq Z(\R)$ and by Theorem \ref{t} we have $\tilde{Z}\simeq S$ implying that $Z$ is the blow-up of two imaginary points on $S$. In this case, the surface $Z$ is unique and is the Del Pezzo surface of degree 6 that will be described in Subsection \ref{Pezzodeg6}. The explicit conic bundle structure on $Z$ corresponds to the lift of the projection $\pi\colon S\dasharrow\Pp^1$ sending $(w:x:y:z)$ to $(w:z)$. More precisely, $\pi_Z=\pi\circ\varepsilon$ where $\varepsilon\colon Z\to S$ is the blow-up of two imaginary conjugate points.
\end{proof}

\section{Del Pezzo surfaces with $\rk(\Pic(X)^{\hat{g}})=1$}\label{DPcase}
In this section, we study the pairs $(X,g)$ where $X$ is a Del Pezzo surface and $g$ is an automorphism of $X$. This corresponds to the first case in Proposition~\ref{MinMod}. 

Recall that the complex surface $S_\C$ is isomorphic to $\Pp_\C^1\times\Pp_\C^1$ via the isomorphism $\varphi\colon S_\C\to\Pp^1_\C\times\Pp^1_\C$ (see Remark~\ref{varphi}). 

We denote by $f$ and $\overline{f}$ the divisors of the fibres of the two projections i.e. $\Pic(S_\C)=\Z f\oplus\Z\bar f$ and by abuse of notation we denote again by $f$ and $\overline{f}$ the pullback $\varepsilon^{*}(f)$ and $\varepsilon^{*}(\overline{f})$ in $X$ for $\varepsilon:X\rightarrow S$ a birational morphism.

\subsection{Case: $(K_X)^2=8$.}\label{dP8}
In this subsection, our interest is to present the group of real automorphisms of $S$, $\Aut(S)$, and describe the conjugacy classes of it. We call $\sigma$ the corresponding antiholomorphic involution in $\Pp_\C^1\times\Pp_\C^1$ via the isomorphism $\varphi$, which is given by $\sigma(x,y)=(\overline{y},\overline{x})$.  

\begin{prop}\label{Aut(S)}
The group $\Aut(S)$ corresponds, via $\varphi$, to the subgroup of the group of complex automorphisms $\Aut(\Pp^1_\C\times\Pp^1_\C)$ generated by $\upsilon\colon (x,y)\mapsto(y,x)$ and by $\f=\{(A,\overline{A})\ |\ A\in\PGL(2,\C)\}$. Moreover, $\Aut(S)\cong\f\rtimes\langle\upsilon\rangle.$
\end{prop}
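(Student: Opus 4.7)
The plan is to identify $\Aut(S)$ via the isomorphism $\varphi$ with the subgroup of $\Aut(\Pp^1_\C\times\Pp^1_\C)$ consisting of those automorphisms that commute with the antiholomorphic involution $\sigma\colon(x,y)\mapsto(\bar y,\bar x)$. Indeed, a birational self-map of $S$ is a real automorphism if and only if, viewed as a complex map on $S_\C$, it is regular and commutes with the real structure. Transporting via $\varphi$, this reduces the problem to describing the $\sigma$-centralizer inside $\Aut(\Pp^1_\C\times\Pp^1_\C)$.

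Next, I would invoke the classical fact that
\[\Aut(\Pp^1_\C\times\Pp^1_\C)\;\cong\;(\PGL(2,\C)\times\PGL(2,\C))\rtimes\langle\upsilon\rangle,\]
so every automorphism has one of two forms: $(x,y)\mapsto(Ax,By)$ or $(x,y)\mapsto(By,Ax)$, with $A,B\in\PGL(2,\C)$. The proof then splits into two short case analyses. In the first case, the relation $\sigma\circ(A,B)=(A,B)\circ\sigma$ applied to a general point $(x,y)$ gives $(\bar B\bar y,\bar A\bar x)=(A\bar y,B\bar x)$, forcing $B=\bar A$; this produces exactly the subgroup $\f$. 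In the second case, $\sigma\circ\upsilon\circ(A,B)=\upsilon\circ(A,B)\circ\sigma$ yields $(\bar A\bar x,\bar B\bar y)=(B\bar x,A\bar y)$, again giving $B=\bar A$; these elements are precisely $\upsilon\cdot\f$.

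Finally, to establish the semidirect product structure, I would check the two routine facts: $\f\cap\langle\upsilon\rangle=\{\mathrm{id}\}$ (since $\upsilon$ exchanges the two factors while elements of $\f$ preserve them), and $\upsilon$ normalizes $\f$, via the direct computation $\upsilon\circ(A,\bar A)\circ\upsilon^{-1}=(\bar A,A)$, which is again of the form $(B,\bar B)$ with $B=\bar A$. Combined with the case analysis above, this shows $\Aut(S)=\f\rtimes\langle\upsilon\rangle$.

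There is no real obstacle here; the whole argument is a direct computation of centralizers once the structure of $\Aut(\Pp^1_\C\times\Pp^1_\C)$ and the explicit form of $\sigma$ under $\varphi$ (from Remark~\ref{varphi}) are in hand. The only mild care needed is to keep track of the order of $A$ and $B$ when $\upsilon$ is present, so as to apply the correct conjugation rule in the semidirect product.
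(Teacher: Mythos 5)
Your proof is correct and follows essentially the same route as the paper: identify $\Aut(S)$ with the centralizer of $\sigma\colon(x,y)\mapsto(\bar y,\bar x)$ inside $\Aut(\Pp^1_\C\times\Pp^1_\C)\cong(\PGL(2,\C)\times\PGL(2,\C))\rtimes\langle\upsilon\rangle$ and compute $B=\bar A$ in each of the two cases. If anything, you are slightly more thorough than the paper, which only writes out the ruling-preserving case and then simply observes $\upsilon\sigma=\sigma\upsilon$ to conclude the semidirect product.
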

\begin{proof}
Using the $\C$-isomorphism $S_\C\simeq\Pp^1_\C\times\Pp^1_\C$, the group $\Aut(S)$ is the subgroup of $\Aut(\Pp^1_\C\times\Pp^1_\C)$ consists of elements that commute with $\sigma$, i.e. $\Aut(S)=\Aut(\Pp^1_\C\times\Pp^1_\C,\sigma)$. Let $(A,B)\in \PGL(2,\C)\times\PGL(2,\C)$, $(A,B)$ commutes with $\sigma$ if and only if $(A,B)\sigma(x,y)=\sigma(A,B)(x,y)=\sigma(Ax,By)$ and hence $(A\overline{y},B\overline{x})=\left(\overline{B}\overline{y},\overline{A}\overline{x}\right)$ and it is equivalent to $A=\overline{B}$. If we call $\upsilon:(x,y)\mapsto(y,x)$, which corresponds to $(w:x:y:z)\mapsto(w:-x:y:z)$ on $\Pp^3$, we see that $\upsilon\sigma=\sigma\upsilon$, then $\Aut(S)=\Aut(\Pp_\C^1\times\Pp_\C^1,\sigma)=\f\rtimes\langle\upsilon\rangle$.
\end{proof}
Automorphisms in $\f$ fix the divisors of fibres $f$ and $\overline{f}$ while elements of $\Aut(S)\setminus \f$ are thus of the form $(x,y)\mapsto(\overline{A}y,Ax)$ for $A\in\PGL(2,\C)$ i.e. automorphisms exchanging the divisors of the fibres $f$ and $\overline{f}$.
\begin{example}\label{rot-ref-antip}
The following automorphisms, already described in the introduction, are now presented as automorphisms of $\Pp^1_\C\times\Pp^1_\C$ via the isomorphism $\varphi$:
\begin{enumerate}
\item The rotation $r_\theta$ given in Example \ref{rotation1} belongs to $\Aut(S)$ and corresponds to the automorphism
$(x,y)\mapsto(xe^{-{\bf i}\theta},ye^{{\bf i}\theta})$ of $\Pp^1_\C\times\Pp_\C^1$. 
\item The reflection $\upsilon$ given in Example \ref{reflection1} belongs to $\Aut(S)$ and corresponds to the automorphism $\upsilon\colon (x,y)\mapsto(y,x)$ of $\Pp_\C^1\times\Pp_\C^1$. 
\item The antipodal automorphism of the sphere given in Example \ref{antipodal1} corresponds to the automorphism 
$\tilde a\colon(x,y)\mapsto\left(-\frac{1}{y},-\frac{1}{x}\right)$ of $\Pp^1_\C\times\Pp^1_\C.$
\end{enumerate}
\end{example}
\begin{prop}\label{ConjAut(S)}
Every element of $\Aut(S)$ of prime order is conjugate to a rotation $r_\theta$ , or to the reflection $\upsilon$, or to the antipodal involution $\tilde a$, which are given in Example~$\ref{rot-ref-antip}$.
\end{prop}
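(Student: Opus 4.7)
The plan is to use Proposition~\ref{Aut(S)}, which gives $\Aut(S) = \f \rtimes \langle \upsilon \rangle$ with $\f = \{(A, \overline{A}) : A \in \PGL(2, \C)\}$, and to split on whether $g$ lies in $\f$ or in $\Aut(S) \setminus \f$. The first case will be a straightforward diagonalisation; the second will require a Galois-descent type argument.

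If $g = (A, \overline{A}) \in \f$, then $A$ has prime order in $\PGL(2, \C)$. Since every finite-order element of $\PGL(2, \C)$ is diagonalisable, some $B \in \PGL(2, \C)$ satisfies $BAB^{-1} = \mathrm{diag}(\zeta, 1)$ for a primitive root of unity $\zeta$. Conjugating $g$ by $(B, \overline{B}) \in \f \subset \Aut(S)$ turns it into $(x, y) \mapsto (\zeta x, \overline{\zeta} y)$, which is exactly the rotation $r_{-\theta}$ for $\zeta = e^{i\theta}$ by Example~\ref{rot-ref-antip}. A direct check gives $\upsilon r_\theta \upsilon^{-1} = r_{-\theta}$, so $g$ is conjugate in $\Aut(S)$ to some $r_\theta$.

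If $g \in \Aut(S) \setminus \f$, I write $g = (A, \overline{A})\upsilon$. A direct computation yields $g^2 = (A\overline{A}, \overline{A}A) \in \f$; so if $g$ had odd prime order $p$, then $g = (g^2)^{(p+1)/2}$ would lie in $\f$, contradicting $g \notin \f$. Hence $g$ must be an involution, equivalently $A\overline{A} = I$ in $\PGL(2, \C)$. Moreover, a direct computation shows that conjugation by $(B, \overline{B}) \in \f$ replaces $A$ with $\overline{B}AB^{-1}$, so the task is to classify elements $A \in \PGL(2, \C)$ with $A\overline{A} = I$ modulo this equivalence, and to match them with the two candidates $A = I$ (yielding $\upsilon$) and $A = J = \bigl(\begin{smallmatrix} 0 & -1 \\ 1 & 0 \end{smallmatrix}\bigr)$ (yielding $\tilde a$).

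This last classification is the main obstacle. The plan is to associate to $A$ the antiholomorphic map $\rho_A \colon \Pp^1_\C \to \Pp^1_\C$, $x \mapsto \overline{A(x)}$. The condition $A\overline{A} = I$ translates exactly into $\rho_A^2 = \mathrm{id}$, and the equivalence $A \sim \overline{B}AB^{-1}$ corresponds to $\rho_A \sim B\rho_A B^{-1}$, using $B \circ \tau = \tau \circ \overline{B}$ (where $\tau$ denotes complex conjugation on $\Pp^1_\C$). Hence the classification of such $A$ reduces to the classification of antiholomorphic involutions of $\Pp^1_\C$ up to $\PGL(2, \C)$-conjugation, that is, real forms of $\Pp^1$; there are exactly two of these, distinguished by whether the real locus is a circle or empty, with representatives $A = I$ and $A = J$, producing $\upsilon$ and $\tilde a$ respectively. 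Finally, these two involutions lie in distinct $\Aut(S)$-conjugacy classes because their real fixed-point sets on $S(\R)$ are respectively a circle and the empty set.
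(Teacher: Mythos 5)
Your proof is correct, and its skeleton --- splitting along $\Aut(S)=\f\rtimes\langle\upsilon\rangle$ from Proposition~\ref{Aut(S)}, diagonalising inside $\f$, and running a descent argument on the complement --- is the same as the paper's; the case $g\in\f$ is identical in substance. Where you genuinely diverge is the case $g\notin\f$. After reducing to order $2$ (which you justify more explicitly than the paper, via $g=(g^2)^{(p+1)/2}$) and hence to classifying $A\in\PGL(2,\C)$ with $A\overline{A}=1$ up to twisted conjugacy, you reinterpret the problem as the classification of antiholomorphic involutions of $\Pp^1_\C$ up to $\PGL(2,\C)$-conjugation, i.e.\ of real forms of $\Pp^1$, and invoke the classical fact that there are exactly two, represented by $I$ and $J$. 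The paper instead derives this by hand: it lifts $A$ to $A_0\in\GL(2,\C)$, shows $A_0\overline{A_0}=\lambda I$ with $\lambda\in\R$ normalisable to $\pm1$, settles $\lambda=1$ by the vanishing of $H^1(\langle\upsilon\rangle,\GL(2,\C))$ (citing Serre), and settles $\lambda=-1$ by an explicit choice of basis $(v_1,A_0\overline{v_1})$. Your route is shorter and makes the dichotomy transparent (the two real forms are distinguished by whether the real locus is a circle or empty, matching $\upsilon$ and $\tilde a$, which also shows the two classes are distinct), at the cost of importing a classical classification that is of essentially the same depth as the cohomological input the paper cites --- so nothing is lost. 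One bookkeeping remark: whether conjugation by $(B,\overline{B})$ replaces $A$ by $\overline{B}AB^{-1}$ or by $BA\overline{B}^{-1}$ depends on the composition convention, but the two twisted-conjugacy relations have the same orbits (substitute $\overline{B}$ for $B$), so this does not affect your argument.
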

\begin{proof}
We work in $\Aut(\Pp^1_\C\times\Pp^1_\C)$ according to Proposition \ref{Aut(S)}. If $g\in\f$ then $g\colon(x,y)\mapsto(Ax,\bar Ay)$ for some $A\in\PGL(2,\C)$ of finite order. Hence, $A$ is conjugate to $\left[\begin{smallmatrix}
1 & \\
 & e^{-{\bf i}\theta}
\end{smallmatrix}\right]$ for some angle $\theta$ and locally we write $x\mapsto e^{-{\bf i}\theta}x$.  This shows that $g$ is conjugate in $\f$ to $(x,y)\mapsto(xe^{-{\bf i}\theta},ye^{{\bf i}\theta})$.

If $g\notin\f$, then $g\colon(x,y)\mapsto (Ay,\bar Ax)$ for some $A\in\PGL(2,\C)$. Since $g$ has prime order, $g^2$ is the identity so $A\bar A=1$ in $\PGL(2,\C)$. Notice that the action of $\upsilon$ on $\PGL(2,\C)$ is given by the action of $\upsilon$ on $\f$ in the first component, i.e. $\upsilon(A)=\bar A$ and the condition $A\bar A=1$ is equivalent to $A\upsilon(A)=1$. 

Let $A_0\in\GL(2,\C)$ be a representative of the element $A$, then $A_0\overline{A_0}=\left[\begin{smallmatrix}
\lambda & 0\\
0 & \lambda
\end{smallmatrix}\right]$ for some $\lambda\in\C^*$. Since $A_0$ commutes with $A_0\overline{A_0}$, $A_0$ commutes with $\overline{A_0}$. This implies that $\lambda\in\R$. Then we multiply $A_0$ with $\mu\in\C$ and assume that $\lambda=1$ or $\lambda=-1$. In the first case, there exists $B$ such that $B^{-1}A_0\overline B=\left[\begin{smallmatrix}
1 & 0\\
0 & 1
\end{smallmatrix}\right]$ because $H^1(\langle\upsilon\rangle,\GL(2,\C))$ is trivial by Proposition 3 in \cite[Chapter X]{Serre}. This implies that $g$ is conjugate to $\upsilon$ by $(x,y)\mapsto(Bx,\overline By)$. In the second case, we want to find $B\in\GL(2,\C)$ such that $B^{-1}A\overline B=\left[\begin{smallmatrix}
0 & -1\\
1 & 0
\end{smallmatrix}\right]$. This will imply that $g$ is conjugate to the antipodal involution $\tilde a$ in Example~\ref{rot-ref-antip} by the automorphism $(x,y)\mapsto(Bx,\overline By)$ as before.

Let $e_1=\left[\begin{smallmatrix} 1 \\ 0\end{smallmatrix}\right], e_2=\left[\begin{smallmatrix} 0 \\ 1\end{smallmatrix}\right]$ be the two standard vectors, and choose a vector $v_1\in \C^2$ such that $(v_1,A_0\overline{v_1})$ is a basis of $\C^2$. This is always possible, by taking $v_1\in \{e_1,e_2\}$. Indeed, otherwise $A_0$ would be diagonal, so $A_0\cdot \overline{A_0}$ would have positive coefficients. We choose then $B\in \GL(2,\C)$ such that $Be_1=v_1$, $Be_2=A_0\overline{v_1}$, and observe that
\[\begin{array}{rccccc}
-Be_1&=&-v_1=A_0\overline{A_0}v_1&=&A_0 \overline{B}e_2,\\
Be_2&=&A_0\overline{v_1}&=&A_0\overline{B}e_1.\end{array}\]
Multiplying by $B^{-1}$, we obtain $B^{-1}A_0\overline{B}(e_1)=e_2$ and $B^{-1}A_0\overline{B}(e_2)=-e_1,$ which corresponds to
\[B^{-1}A_0\overline{B}=\left[\begin{smallmatrix} 0 & -1 \\1 & 0\end{smallmatrix}\right].\qedhere\]
\end{proof}
\begin{remark}
The group $\f$ corresponds to the orientation-preserving automorphisms of $S$ denoted by $\Aut^+(S)$.
\end{remark}

In the sequel, we will also need the following result.
\begin{lemma}\label{Aut(dP6)}
Let $p=(0:{\bf i}:1:0)\in S$. The group of automorphisms of $S$ preserving the set $\{p,\bar p\}$ is denoted by $\Aut(S,\{p,\bar p\})$ and, via the isomorphism $\varphi$, has the following structure
\[\Aut(S,\{p,\bar p\})\cong\mathscr D\rtimes\langle\upsilon,\tilde\upsilon\rangle\]
where $\mathscr D$ is the subgroup of \ $\f$ of diagonal elements, the isomorphism $\tilde\upsilon$ is defined by $(x,y)~\mapsto~\left(\frac{1}{x},\frac{1}{y}\right)$, and $\langle\upsilon,\tilde\upsilon\rangle\cong(\Z/2\Z)^2$. Moreover, every element of prime order is one of the following:
\begin{enumerate}[$($a$)$]
\item a rotation $r_\theta$, given in Example $\ref{rot-ref-antip}$, corresponding to one element of \ $\mathscr D$,
\item conjugate to $\tilde\upsilon$,
\item conjugate to $\upsilon$,
\item equal to $\upsilon\tilde\upsilon$,
\item equal to the map $\tilde a\colon(x,y)\mapsto\left(-\frac{1}{y},-\frac{1}{x}\right)$, which corresponds on the sphere to the antipodal automorphism. 
\end{enumerate}
\begin{table}[!hbt]
\begin{center}
\begin{tabular}{|c||c|c|}
\hline
 & $\Pp^1\times \Pp^1$ & $S_\C$\\
\hline\hline
$\begin{array}{c}
\upsilon\phantom{\normalsize (}\\
\tilde\upsilon\phantom{\big (}\\
\upsilon\tilde\upsilon\phantom{\big (}\\
\tilde a\phantom{\big (}
\end{array}$ &
$\begin{array}{rl}
(x,y)\mapsto & (y,x)\\
(x,y)\mapsto & \left(\frac{1}{x},\frac{1}{y}\right)\\
(x,y)\mapsto & \left(\frac{1}{y},\frac{1}{x}\right)\\
(x,y)\mapsto & \left(-\frac{1}{y},-\frac{1}{x}\right)
\end{array}$ &
$\begin{array}{rl}
(w:x:y:z)\phantom{\normalsize (}\mapsto & (w:-x:y:z)\\
(w:x:y:z)\phantom{\big (}\mapsto & (w:-x:y:-z)\\
(w:x:y:z)\phantom{\big (}\mapsto & (w:x:y:-z)\\
(w:x:y:z)\phantom{\big (}\mapsto & (-w:x:y:z)
\end{array}$\\
\hline
\end{tabular}
\caption{List of automorphisms.}
\end{center}
\end{table}
\end{lemma}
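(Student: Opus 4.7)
The plan is to work via $\varphi$ inside $\Pp^1_\C\times\Pp^1_\C$. First I will determine $\varphi(p)$ and $\varphi(\bar p)$: since $p=(0:{\bf i}:1:0)$ produces a $0/0$ in one factor of the first form of $\varphi$, the second form must be used, giving $\varphi(p)=((1:0),(0:1))$ and $\varphi(\bar p)=((0:1),(1:0))$. Classifying $\Aut(S,\{p,\bar p\})$ then reduces to classifying elements of $\Aut(\Pp^1_\C\times\Pp^1_\C,\sigma)$ that preserve this pair of points. By Proposition~\ref{Aut(S)}, any such element has one of the forms $(x,y)\mapsto(Ax,\bar A y)$ (in $\f$) or $(x,y)\mapsto(Ay,\bar A x)$ (in $\f\upsilon$) with $A\in\PGL(2,\C)$, and the preservation condition forces $A$ either to fix or to swap $\{(1:0),(0:1)\}$, hence $A$ is diagonal or antidiagonal. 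Writing diagonals as $h_a=\mathrm{diag}(a,1)\in\mathscr D$ and factoring antidiagonals through the swap matrix of $\tilde\upsilon$, the four cases become the cosets $\mathscr D,\ \tilde\upsilon\mathscr D,\ \upsilon\mathscr D,\ \upsilon\tilde\upsilon\mathscr D$. A direct computation yields $\upsilon^2=\tilde\upsilon^2=1$, $\upsilon\tilde\upsilon=\tilde\upsilon\upsilon$, and the conjugation relations $\upsilon h_a\upsilon=h_{\bar a}$, $\tilde\upsilon h_a\tilde\upsilon=h_{1/a}$, from which the decomposition $\Aut(S,\{p,\bar p\})\cong\mathscr D\rtimes\langle\upsilon,\tilde\upsilon\rangle$ with $\langle\upsilon,\tilde\upsilon\rangle\cong(\Z/2\Z)^2$ follows.

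For the classification of prime-order elements I will use that the quotient by $\mathscr D$ has exponent $2$, so every element of odd prime order lies in $\mathscr D$; there $h_a$ has prime order exactly when $a$ is a primitive root of unity, giving the rotations of case (a). The remaining prime-order elements are involutions in the three nontrivial cosets, which I handle by computing both the square and the $h_b$-conjugation action in each. One finds $(\tilde\upsilon h_a)^2=1$ for every $a$ and $h_b\tilde\upsilon h_a h_b^{-1}=\tilde\upsilon h_{a/b^2}$; choosing $b$ with $b^2=a$ conjugates every such involution to $\tilde\upsilon$, giving case (b). Next, $(\upsilon h_a)^2=h_{|a|^2}$ is an involution iff $|a|=1$, and $h_b\upsilon h_a h_b^{-1}=\upsilon h_{(\bar b/b)a}$ with $\bar b/b$ covering the unit circle, so all such involutions collapse to a single class represented by $\upsilon$, giving case (c). Finally, $(\upsilon\tilde\upsilon h_a)^2=h_{a/\bar a}$ is an involution iff $a\in\R^*$, with $h_b\upsilon\tilde\upsilon h_a h_b^{-1}=\upsilon\tilde\upsilon h_{a/|b|^2}$ preserving the sign of $a$; this yields the two classes represented by $\upsilon\tilde\upsilon$ (for $a>0$, normalized to $a=1$) and by $\tilde a=\upsilon\tilde\upsilon h_{-1}$ (for $a<0$, normalized to $a=-1$), which are cases (d) and (e).

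The main obstacle is the last coset, where I must verify that $\upsilon\tilde\upsilon$ and $\tilde a$ are genuinely non-conjugate inside $\Aut(S,\{p,\bar p\})$. Since $\mathscr D$ is normal and the quotient is abelian, conjugation preserves $\mathscr D$-cosets, so only the further conjugations by $\upsilon$ and $\tilde\upsilon$ inside $\upsilon\tilde\upsilon\mathscr D$ remain to check; these act on the real parameter $a$ by $a\mapsto\bar a=a$ and $a\mapsto1/a$, both of which fix the sign of $a$, confirming two distinct classes. As a final sanity check, and to justify the table, I will post-compose each of $\upsilon,\tilde\upsilon,\upsilon\tilde\upsilon,\tilde a$ with $\varphi^{-1}$ to recover the $S_\C$-formulas: this exhibits $\tilde a$ as the antipodal map (no real fixed points) and $\upsilon\tilde\upsilon$ as the reflection fixing the equator $z=0$, a geometric confirmation that the two classes are truly distinct.
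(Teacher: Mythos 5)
Your proposal is correct and follows essentially the same route as the paper: identify $\varphi(p)=(1:0)(0:1)$, $\varphi(\bar p)=(0:1)(1:0)$, reduce to diagonal/antidiagonal $A$ to get $\mathscr D\rtimes\langle\upsilon,\tilde\upsilon\rangle$, and classify prime-order elements coset by coset (your explicit computation that $\bar b/b$ sweeps the unit circle is just the concrete form of the Hilbert~90 argument the paper invokes for the coset of $\upsilon$). The one place you diverge is the coset $\upsilon\tilde\upsilon\mathscr D$, where you correctly find the full one-parameter family of involutions $\upsilon\tilde\upsilon h_a$, $a\in\R^*$, and normalise by conjugation to $\upsilon\tilde\upsilon$ or $\tilde a$ according to the sign of $a$; this is more careful than the paper's assertion that such elements are literally \emph{equal} to $\upsilon\tilde\upsilon$ or $\tilde a$, and it is the reading of (d)--(e) that is actually needed in the sequel.
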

\begin{proof}
The points $p$ and $\bar p$ correspond, via $\varphi$, to the points $(1:0)(0:1)$ and $(0:1)(1:0)$, respectively. Diagonal elements in $\PGL(2,\C)$ yield a subgroup of $\f$ preserving the points $p$ and $\bar p$ which is $\mathscr D$. The elements in $\f$ which interchange the two points are elements $(A,\bar A)$ in $\f$ with $A$ of the form $\left[\begin{smallmatrix}
0 & 1\\
a & 0
\end{smallmatrix}\right]\in\PGL(2,\C)$. Then the subgroup of $\f$ which preserve the set $\{p,\bar p\}$ has the structure $\mathscr D\rtimes\langle\tilde\upsilon\rangle$ with $\tilde\upsilon$ the automorphism of $\f$ defined by the element $\left[\begin{smallmatrix}
0 & 1\\
1 & 0
\end{smallmatrix}\right]$ and that locally is described in the statement. As $\tilde\upsilon$ commutes with $\upsilon$ that permutes the points, we get $\Aut(S,\{p,\bar p\})\cong\mathscr D\rtimes\langle\upsilon,\tilde\upsilon\rangle$.

\begin{enumerate}[$($a$)$]
\item An element of finite order in $\mathscr D$ is a rotation $r_\theta$ given in Example \ref{rot-ref-antip}. 
\item If $g\in\mathscr D\rtimes\langle\tilde\upsilon\rangle\subset \Aut(S,\{p,\bar p\})$ and is not a rotation, then $g\colon(x,y)\mapsto (Ax,\bar Ay)$ with $A=\left[\begin{smallmatrix}
0 & 1\\
b & 0
\end{smallmatrix}\right]$ for some $b\in\C$. Since $A$ is conjugate to $\left[\begin{smallmatrix}
0 & 1\\
1 & 0
\end{smallmatrix}\right]$ by the diagonal element $\left[\begin{smallmatrix}
1 & 0\\
0 & 1/\sqrt{b}
\end{smallmatrix}\right]$, then $g$ is conjugate to $\tilde\upsilon$ in $\Aut(S,\{p,\bar p\})$.
\item If $g\in\mathscr D\rtimes\langle\upsilon\rangle \subset\Aut(S,\{p,\bar p\})$ and is not a rotation, then $g\colon(x,y)\mapsto(Dy,\bar Dx)$ with $D=\left[\begin{smallmatrix}
1 & 0\\
0 & b
\end{smallmatrix}\right]$ for some $b\in\C$. Then $A\bar A=1$ because $g$ is of prime order and the action of $\upsilon$ on $\mathscr D$ is exactly the conjugation and the equality $A\bar A=1$ is the same as $A\upsilon(A)$=1. Then $g$ is conjugate to $\upsilon$ because the group $\mathcal D=\{D\in\PGL(2,\C)\ |\ D\text{\ is diagonal}\}$ is isomorphic to $\C^*$ and $H^1(\langle\upsilon\rangle,\mathcal D)=\{1\}$ by Hilbert's Theorem 90. 
\item[(d,e)] If $g\in\mathscr D\rtimes \langle\upsilon\tilde\upsilon\rangle$ and is not a rotation, then $g=(d,\upsilon\tilde\upsilon)$  for $d\in\mathscr D$ of finite order and in this case, $d$ commutes with $\upsilon\tilde\upsilon$ implying that $d$ has order 1 or 2 since the order of $g$ is prime. Then $g$ is either $\upsilon\tilde\upsilon$ and is given by the map $(x,y)\mapsto(1/y,1/x)$ on $\Pp^1\times\Pp^1$, which is the map $(w:x:y:z)\mapsto(w:x:y:-z)$ on $S$ or is given by the map $(x,y)\mapsto(-1/y,-1/x)$ on $\Pp^1\times\Pp^1$ and corresponds, on the sphere, to the antipodal automorphism $(w:x:y:z)\mapsto(-w:x:y:z)$.\qedhere\end{enumerate}
\end{proof}

\subsection{Case: $(K_X)^2=6$.}\label{Pezzodeg6}

\begin{prop}\label{dP6nominimal}
Let $\zeta:X\rightarrow S$ be the blow-up of two imaginary conjugate points $p,\overline{p}$. Then $\zeta\Aut(X)\zeta^{-1}\subset\Aut(S)$, so the pair $(X,\Aut(X))$ is not minimal.
\end{prop}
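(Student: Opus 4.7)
The approach is to show that every $g\in\Aut(X)$ preserves the unordered pair of exceptional divisors $\{E_p,E_{\bar p}\}$. From this it will follow that $\zeta g\zeta^{-1}\colon S\dasharrow S$ is a birational self-map with empty base locus, hence an automorphism of $S$, giving $\zeta\Aut(X)\zeta^{-1}\subset\Aut(S)$; and the non-isomorphism $\zeta$ is then a birational morphism to a smooth real projective surface compatible with every element of $\Aut(X)$, witnessing the non-minimality of the pair.

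The central step will be to characterise $\{E_p,E_{\bar p}\}$ intrinsically as the \emph{unique pair of disjoint conjugate $(-1)$-curves on $X$}. To carry this out, I would use $\varphi$ of Remark~\ref{varphi} to realise $X_\C$ as the blow-up of $\Pp^1_\C\times\Pp^1_\C$ at $p=(a,b)$ and $\bar p=(\bar b,\bar a)$, where $\sigma(x,y)=(\bar y,\bar x)$. Since $p$ is imaginary, $p$ and $\bar p$ share no fibre of either projection, so $X_\C$ is a Del Pezzo surface of degree $6$, whose six $(-1)$-curves are the exceptional divisors $E_p, E_{\bar p}$, the strict transforms $F_1, F_2$ of the two fibres of $\Pp^1\times\Pp^1$ through $p$, and their $\sigma$-conjugates $\bar F_1,\bar F_2$. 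These partition into three conjugate pairs $(E_p,E_{\bar p})$, $(F_1,\bar F_1)$, $(F_2,\bar F_2)$. The first is disjoint by construction. For the second, a direct computation shows $F_1\cap\bar F_1=\{(a,\bar a)\}$, which is a real point distinct from $p$ and $\bar p$ and hence untouched by $\zeta$, so $F_1\cdot\bar F_1=1$ on $X$; symmetrically $F_2\cdot\bar F_2=1$. Thus $(E_p,E_{\bar p})$ is the only disjoint conjugate pair of $(-1)$-curves.

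Given this characterisation, any $g\in\Aut(X)$ permutes $(-1)$-curves preserving intersection numbers and commutes with the real structure, hence permutes the three conjugate pairs while preserving the property of being disjoint; it therefore stabilises $\{E_p,E_{\bar p}\}$ and descends through $\zeta$ to an element of $\Aut(S)$, yielding both assertions. The main obstacle is really the bookkeeping for the six $(-1)$-curves and their mutual intersections; the key geometric input that makes the argument go through is that the intersection point of a fibre through $p$ with its $\sigma$-conjugate fibre through $\bar p$ is forced to be a real point and therefore cannot lie in the blown-up locus, which is precisely what rules out a second disjoint conjugate pair.
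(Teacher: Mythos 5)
Your proof is correct and follows essentially the same route as the paper: both enumerate the six $(-1)$-curves of the degree-$6$ Del Pezzo surface and use the key fact that the intersection of a fibre through $p$ with its $\sigma$-conjugate is a \emph{real} point (hence not blown up) to pin down $\{E_p,E_{\overline{p}}\}$ as the only pair an automorphism of $X$ can permute. The paper packages this as the hexagon of $(-1)$-curves having exactly two real vertices, while you package it as uniqueness of the disjoint conjugate pair of $(-1)$-curves; this is the same observation in slightly different clothing.
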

\noindent\begin{minipage}{0.52\textwidth}
\begin{proof}
On $X$, there are six $(-1)$-curves: the two exceptional divisors $E_p$ and $E_{\overline{p}}$ and the four curves corresponding to the strict transforms of the fibres $f$ and $\overline{f}$ passing through one point denoted by $f_p$, $f_{\overline{p}}$, $\overline{f_p}$, and $\overline{f_{\overline{p}}}$.

Since $\overline{f_{\overline{p}}\cap {\overline{f_{\overline{p}}}}}=f_{\overline{p}}\cap{\overline{f_{\overline{p}}}}$ and $\overline{f_p\cap \overline{f_p}}=f_p\cap \overline{f_p}$, these two intersection points are real (see the circles $\circ$ in Figure~\ref{picdP6}) and the other four vertices of the hexagon are imaginary, so any action of $Y$ can only exchange the two lines $E_p$ and $E_{\overline{p}}$ and this implies that $(X,\Aut(X))$ is not minimal.
\end{proof}
\end{minipage}
\begin{minipage}{0.48\textwidth}
\begin{center}
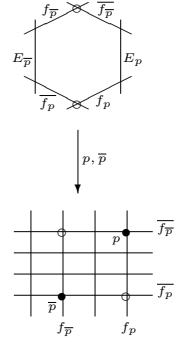

\begin{picture}(80,135)
\scalebox{0.8}{
\put(55,108){\line(-2,1){30}}
\put(32,109){\unboldmath{\tiny $\overline{f_p}$}}
\put(45,108){\line(2,1){30}}
\put(47,107.5){\unboldmath$\circ$}
\put(58,110){\unboldmath{\tiny $f_p$}}

\put(30,115){\line(0,1){35}}
\put(71,130){\unboldmath{\tiny $E_p$}}
\put(19,130){\unboldmath{\tiny $E_{\overline{p}}$}}
\put(70,115){\line(0,1){35}}

\put(55,158){\line(-2,-1){30}}

\put(33,153){\unboldmath{\tiny $f_{\overline{p}}$}}
\put(45,158){\line(2,-1){30}}
\put(47,152.5){\unboldmath$\circ$}
\put(59,153){\unboldmath{\tiny $\overline{f_{\overline{p}}}$}}

\put(50,98){\vector(0,-1){30}}
\put(52,83){\unboldmath{\tiny $p,\overline{p}$}}

\put(20,50){\line(1,0){65}}
\put(70,47){\unboldmath$\bullet$}
\put(40,47){\unboldmath$\circ$}
\put(66,45){\unboldmath{\tiny $p$}}
\put(87,50){\unboldmath{\tiny $\overline{f_{\overline{p}}}$}}
\put(70,3){\unboldmath{\tiny $f_p$}}
\put(20,40){\line(1,0){65}}
\put(20,30){\line(1,0){65}}
\put(20,20){\line(1,0){65}}
\put(40,17){\unboldmath$\bullet$}
\put(70,17){\unboldmath$\circ$}
\put(36,13){\unboldmath{\tiny $\overline{p}$}}
\put(87,20){\unboldmath{\tiny $\overline{f_p}$}}
\put(40,3){\unboldmath{\tiny $f_{\overline{p}}$}}
\put(28,60){\line(0,-1){50}}
\put(43,60){\line(0,-1){50}}
\put(58,60){\line(0,-1){50}}
\put(73,60){\line(0,-1){50}}}
\end{picture}
\captionof{figure}{Blow-up of $p,\bar p$}
\label{picdP6}
\end{center}
\end{minipage}

\subsection{Case: $(K_X)^2=4$.}\label{dP4}
There is $\zeta:X\rightarrow S$ the blow-up of four imaginary points $p,\overline{p},q,\overline{q}$.
We have $16$ $(-1)$-curves in $X$: the exceptional divisors $E_p$, $E_{\overline{p}}$, $E_q$, and $E_{\overline{q}}$; the strict transform of the fibres $f$ and $\overline{f}$ passing through one point that we denote by $f_p$, $f_{\overline{p}}$, $f_q$, $f_{\overline{q}}$, $\overline{f_p}$, $\overline{f_{\overline{p}}}$, $\overline{f_q}$, and $\overline{f_{\overline{q}}}$ as in the previous subsection; and the strict transform of the curves equivalent to $f+\overline{f}$ (e.g. of bidegree $(1,1)$) passing through three of the four points that we denote by $f_{p\overline{p}q}$, $f_{p\overline{p}\overline{q}}$, $f_{pq\overline{q}}$, and $f_{\overline{p}q\overline{q}}$. 

These $(-1)$-curves form the singular fibres of ten conic bundle structures on $X$ with four singular complex fibres each and are the following:
\begin{enumerate}
\begin{multicols}{2}
\item $f+\overline{f}-E_{\overline{p}}-E_{\overline{q}}$
\item $f+\overline{f}-E_p-E_q$
\item $f+\overline{f}-E_p-E_{\overline{p}}$
\item $f+\overline{f}-E_q-E_{\overline{q}}$
\item $f+\overline{f}-E_p-E_{\overline{q}}$
\item $f+\overline{f}-E_{\overline{p}}-E_q$
\item $f$
\item $\overline{f}$
\item $2f+\overline{f}-E_p-E_{\overline{p}}-E_q-E_{\overline{q}}$
\item $f+2\overline{f}-E_p-E_{\overline{p}}-E_q-E_{\overline{q}}$
\end{multicols}
\end{enumerate}

The anticanonical divisor of $X$ is $-K_X=2f+2\overline{f}-E_p-E_{\overline{p}}-E_q-E_{\overline{q}}$. We collect these conic bundles in pairs such that the sum of every pair is $-K_X$:
\begin{align*}
P_1:=&\{f+\overline{f}-E_p-E_{\overline{p}},\ f+\overline{f}-E_q-E_{\overline{q}}\},\\
P_2:=&\{f+\overline{f}-E_p-E_q,\ f+\overline{f}-E_{\overline{p}}-E_{\overline{q}}\},\\  
P_3:=&\{f+\overline{f}-E_p-E_{\overline{q}},\ f+\overline{f}-E_{\overline{p}}-E_q\},\\
P_4:=&\{f, f+2\overline{f}-E_p-E_{\overline{p}}-E_q-E_{\overline{q}}\},\\
P_5:=&\{\overline{f},\ 2f+\overline{f}-E_p-E_{\overline{p}}-E_q-E_{\overline{q}}\}.
\end{align*}
Since $K_X$ is invariant under any automorphism of $X$, then $\Aut(X)$ acts on the set of pairs obtaining the following exact sequence.
\begin{equation}\label{seq}
\xymatrix@-1.2pc{
0 \ar[r] & \F_\R \ar[r]\ar@{}[d]|-{\textrm{\normalsize\rotatebox{-90}{$\subseteq$}}} & \Aut(X) \ar[r]^-{\rho} \ar@{}[d]|-{\textrm{\normalsize\rotatebox{-90}{$\subseteq$}}}&Sym_5\\
& \F_\C\ar[r] & \Aut(X_\C) \ar[r]^-{\rho}  &Sym_5
}
\end{equation}
where $\F_\R$ is naturally a subgroup of $\F_2^5$. An element $(a_1,\ldots,a_5)$ exchanges the two conic bundles of the pair $P_i$ if $a_i=1$ and preserves each one if $a_i=0$.
We represent in Figure~\ref{picturepairs} the picture of the five pairs of conic bundles and with the next one, how the anti-holomorphic involution $\sigma$ acts on them.
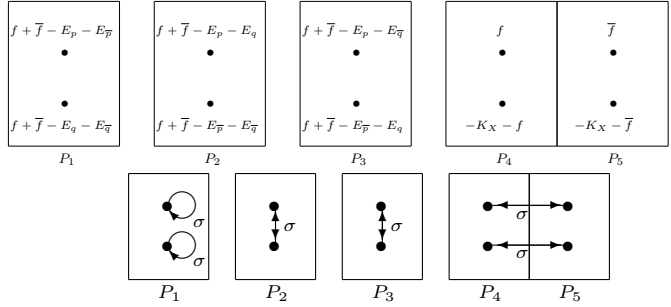
\begin{figure}[h]
\begin{center}
\begin{picture}(270,60)
\scalebox{0.65}{
\put(0,10){\line(0,1){84}}
\put(0,10){\line(1,0){64}}
\put(64,10){\line(0,1){84}}
\put(0,94){\line(1,0){64}}
\put(30,62){\unboldmath$\bullet$}
\put(1,75){\unboldmath{\scriptsize $f+\overline{f}-E_p-E_{\overline{p}}$}}
\put(30,32){\unboldmath$\bullet$}
\put(1,20){\unboldmath{\scriptsize $f+\overline{f}-E_q-E_{\overline{q}}$}}
\put(29,0){\unboldmath{\footnotesize $P_1$}}

\put(84,10){\line(0,1){84}}
\put(84,10){\line(1,0){64}}
\put(148,10){\line(0,1){84}}
\put(84,94){\line(1,0){64}}
\put(114,62){\unboldmath$\bullet$}
\put(85,75){\unboldmath{\scriptsize $f+\overline{f}-E_p-E_q$}}
\put(114,32){\unboldmath$\bullet$}
\put(85,20){\unboldmath{\scriptsize $f+\overline{f}-E_{\overline{p}}-E_{\overline{q}}$}}
\put(113,0){\unboldmath{\footnotesize $P_2$}}

\put(168,10){\line(0,1){84}}
\put(168,10){\line(1,0){64}}
\put(232,10){\line(0,1){84}}
\put(168,94){\line(1,0){64}}
\put(198,62){\unboldmath$\bullet$}
\put(169,75){\unboldmath{\scriptsize $f+\overline{f}-E_p-E_{\overline{q}}$}}
\put(198,32){\unboldmath$\bullet$}
\put(169,20){\unboldmath{\scriptsize $f+\overline{f}-E_{\overline{p}}-E_q$}}
\put(197,0){\unboldmath{\footnotesize $P_3$}}

\put(252,10){\line(0,1){84}}
\put(252,10){\line(1,0){64}}
\put(316,10){\line(0,1){84}}
\put(252,94){\line(1,0){64}}
\put(282,62){\unboldmath$\bullet$}
\put(281,75){\unboldmath{\scriptsize $f$}}
\put(282,32){\unboldmath$\bullet$}
\put(263,20){\unboldmath{\scriptsize $-K_X-f$}}
\put(281,0){\unboldmath{\footnotesize $P_4$}}

\put(316,10){\line(0,1){84}}
\put(316,10){\line(1,0){64}}
\put(380,10){\line(0,1){84}}
\put(316,94){\line(1,0){64}}
\put(346,62){\unboldmath$\bullet$}
\put(345,75){\unboldmath{\scriptsize $\overline{f}$}}
\put(346,32){\unboldmath$\bullet$}
\put(326,20){\unboldmath{\scriptsize $-K_X-\overline{f}$}}
\put(345,0){\unboldmath{\footnotesize $P_5$}}}
\end{picture}\vspace{0.1cm}
\begin{picture}(200,50)

\put(10,10){\line(0,1){40}}
\put(10,10){\line(1,0){30}}
\put(40,10){\line(0,1){40}}
\put(10,50){\line(1,0){30}}
\put(22,35){\unboldmath$\bullet$}
\put(22,20){\unboldmath$\bullet$}
\put(21,3){\unboldmath{\scriptsize $P_1$}}
\put(30,23){\circle{10}}
\put(28,33){\vector(-1,1){3}}
\put(30,38){\circle{10}}
\put(28,18){\vector(-1,1){3}}
\put(34,30){\unboldmath{\scriptsize $\sigma$}}
\put(34,15){\unboldmath{\scriptsize $\sigma$}}
\put(68,28.5){\unboldmath{\scriptsize $\sigma$}}
\put(108,28.5){\unboldmath{\scriptsize $\sigma$}}
\put(155,18){\unboldmath{\scriptsize $\sigma$}}
\put(155,33){\unboldmath{\scriptsize $\sigma$}}

\put(50,10){\line(0,1){40}}
\put(50,10){\line(1,0){30}}
\put(80,10){\line(0,1){40}}
\put(50,50){\line(1,0){30}}
\put(62,35){\unboldmath$\bullet$}
\put(62,20){\unboldmath$\bullet$}
\put(61,3){\unboldmath{\scriptsize $P_2$}}
\put(65,23){\vector(0,1){13}}
\put(65,38){\vector(0,-1){13}}

\put(90,10){\line(0,1){40}}
\put(90,10){\line(1,0){30}}
\put(120,10){\line(0,1){40}}
\put(90,50){\line(1,0){30}}
\put(102,35){\unboldmath$\bullet$}
\put(102,20){\unboldmath$\bullet$}
\put(101,3){\unboldmath{\scriptsize $P_3$}}
\put(105,23){\vector(0,1){13}}
\put(105,38){\vector(0,-1){13}}

\put(130,10){\line(0,1){40}}
\put(130,10){\line(1,0){30}}
\put(160,10){\line(0,1){40}}
\put(130,50){\line(1,0){30}}
\put(142,35){\unboldmath$\bullet$}
\put(142,20){\unboldmath$\bullet$}
\put(141,3){\unboldmath{\scriptsize $P_4$}}

\put(144,38){\vector(1,0){28}}
\put(176,38){\vector(-1,0){28}}
\put(144,23){\vector(1,0){28}}
\put(176,23){\vector(-1,0){28}}

\put(160,10){\line(0,1){40}}
\put(160,10){\line(1,0){30}}
\put(190,10){\line(0,1){40}}
\put(160,50){\line(1,0){30}}
\put(172,35){\unboldmath$\bullet$}
\put(172,20){\unboldmath$\bullet$}
\put(171,3){\unboldmath{\scriptsize $P_5$}}
\end{picture}
\end{center}
\caption{Representation of the five pairs of conic bundles and the action of $\sigma$ on them.}
\label{picturepairs}
\end{figure}
\begin{remark}\label{Imagerho}
The image of $\rho$ in the exact sequence (\ref{seq}) is contained in the group $\langle(2\ 3),(4\ 5)\rangle\subset Sym_5$ as a consequence of the action of the antiholomorphic involution $\sigma$. (See Figure~\ref{picturepairs}).
\end{remark}
\begin{lemma}\label{points}
Let $p, q\in \Pp^1_\C\times\Pp^1_\C\simeq S_\C$ be two distinct imaginary non conjugate points such that the blow-up of $p$, $\bar p$, $q$, $\bar q$ is a Del Pezzo surface. Then up to automorphisms of the sphere, the points $p$ and $q$ can be chosen to be $(1:0)(0:1)$ and $(1:1)(1:\mu)$ for some $\mu\in\C\setminus\{0,\pm1\}$, respectively.
\end{lemma}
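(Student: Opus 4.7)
My plan is to use the structure of $\Aut(S) \cong \f \rtimes \langle\upsilon\rangle$ from Proposition~\ref{Aut(S)}, together with the description of the stabilizer of $\{p_0,\bar p_0\}$ for $p_0 := (1:0)(0:1)$ given in Lemma~\ref{Aut(dP6)}, to carry out two successive normalizations. First I would find an element of $\f$ sending $p$ to $p_0$; then I would use the diagonal subgroup $\mathscr D \subset \f$ (which fixes both $p_0$ and $\bar p_0$) to move $q$ into the form $(1:1)(1:\mu)$; finally I would check that the resulting $\mu$ avoids $\{0, \pm 1\}$.

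For the first normalization, writing $p = (a,b)$, an element $(A,\bar A) \in \f$ sends $p$ to $(Aa, \bar A b)$, so the task reduces to finding $A \in \PGL(2,\C)$ with $Aa = (1:0)$ and $A\bar b = (0:1)$ (the second condition using that $\bar A b = \overline{A\bar b}$ and that $(0:1)$ is real). Such an $A$ exists precisely because $a \neq \bar b$, which is equivalent to $p$ being imaginary. For the second step, the Del Pezzo hypothesis prevents any two of $p_0,\bar p_0, q, \bar q$ from lying on a common fiber of either projection $\Pp^1_\C\times\Pp^1_\C \to \Pp^1_\C$; in the affine chart where $p_0 = (\infty, 0)$ and $\bar p_0 = (0, \infty)$ this forces $q = (a,b)$ with $a,b \in \C^*$. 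A diagonal element of $\mathscr D$ acts in these coordinates by $(x,y) \mapsto (\lambda x, \bar\lambda y)$ for $\lambda \in \C^*$, so choosing $\lambda = a^{-1}$ sends $q$ to $(1, \mu)$ where $\mu := b/\bar a$.

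The verification that $\mu \notin \{0,\pm 1\}$ is where the Del Pezzo hypothesis does real work, and I expect this last case to be the main obstacle. The case $\mu = 0$ is immediate from $b \neq 0$, and $\mu = 1$ would give $b = \bar a$, forcing $\sigma(q) = q$ and contradicting the imaginariness of $q$. The delicate case is $\mu = -1$: then $b = -\bar a$ and a direct substitution shows that the four points $p_0, \bar p_0, q, \bar q$ all lie on the smooth $(1,1)$-curve $C \colon u_0 v_0 + u_1 v_1 = 0$. Since $C^2 = 2$, its strict transform $\tilde C$ in the blow-up would satisfy $\tilde C^2 = -2$ and $(-K_X)\cdot \tilde C = 4 - 4 = 0$, contradicting the ampleness of $-K_X$ on the Del Pezzo surface $X$. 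Identifying this $(1,1)$-curve and checking the intersection numbers is the technical heart of the argument; the two preceding normalization steps are essentially linear algebra over $\C$ made $\sigma$-equivariant.
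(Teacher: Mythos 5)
Your proof is correct and follows essentially the same route as the paper: normalize $p$ to $(1:0)(0:1)$ by an element of $\f$, use the diagonal subgroup $\mathscr D$ to put $q$ in the form $(1:1)(1:\mu)$, and exclude $\mu\in\{0,\pm1\}$ via the fibre, reality, and Del Pezzo conditions. The only differences are cosmetic — you argue existence of $A$ abstractly where the paper writes it down, and you make explicit the $(1,1)$-curve and the intersection computation $\tilde C^2=-2$, $(-K_X)\cdot\tilde C=0$ that the paper summarizes as ``there is a diagonal passing through the four points.''
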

\begin{proof}
Let $p=(r_1:s_1)(u_1:v_1)\in\Pp^1_\C\times\Pp^1_\C$. Applying the automorphism $(A,\overline{A})\in\f$ where $A=\left[
   \begin{smallmatrix}
     \overline{v_1} & -\overline{u_1} \\
     -s_1 & r_1 \\
   \end{smallmatrix}
 \right]$ maps $p$ into $(1:0)(0:1)$ and $\bar{p}$ into $(0:1)(1:0)$. Now, we may assume that $p=(1:0)(0:1)$ and $\bar{p}=(0:1)(1:0)$ and $q=(\lambda:1)(\rho:1)$ with $\lambda,\rho\in\C^*$ because by hypothesis the points are not on the same fibres by any projection. The automorphism $(x,y)\mapsto(\lambda x,\bar\lambda y)$ fixes $p$ and $\bar p$ and sends $q$ into $(1:1)(1:\mu)$ and $\bar q$ into $(1:\bar\mu)(1:1)$.

Notice that when $\mu=1$ the points $q$ and $\bar q$ are equal; when $\mu=0$ the points $p$ and $\bar q$ are on the same fibre, as well as the points $\bar p$ and $q$; and finally, when $\mu=-1$ there is a diagonal passing through the four points. Hence, the blow-up of $p,\bar p, q,\bar q$ is not a Del Pezzo surface.
\end{proof}
\begin{prop}\label{kerEq}
\begin{enumerate}[$($a$)$]
\item The kernel of the sequence $(\ref{seq})$ is 
\[\F_\R=\{(a_1,\ldots,a_5)\in(\F_2)^5\ |\ a_1+a_2+a_3=0\text{\ and\ }a_4+a_5=0\}\cong(\F_2)^3,\] and is generated by the elements $\gamma_1=(0,1,1,0,0)$, $\gamma_2=(1,0,1,0,0)$, and $\gamma=(0,0,0,1,1)$ which correspond to the automorphisms of $\ X$ with coordinates in $\Pp^4$ given as
\begin{align*}
\gamma_1&\colon(y_1:y_2:y_3:y_4:y_5)\mapsto (y_1:y_2:-y_3:y_4:-y_5),\\
\gamma_2&\colon(y_1:y_2:y_3:y_4:y_5)\mapsto (y_1:y_2:y_3:-y_4:-y_5),\\
\gamma&\colon(y_1:y_2:y_3:y_4:y_5)\mapsto (y_1:y_2:-y_3:-y_4:-y_5).
\end{align*}
\item The equation of the surface $X$ is given by the intersection of the following two quadrics,
\begin{align*}
Q_1\colon&(\mu-\mu\overline{\mu}+\overline{\mu})y_1^2-2y_1y_2+y_2^2+(1-\overline{\mu}+\mu\overline{\mu}-\mu)y_3^2+y_4^2=0,\\
Q_2\colon&\mu\overline{\mu}y_1^2-2\mu\overline{\mu}y_1y_2+(\mu-1+\overline{\mu})y_2^2+\mu\overline{\mu}y_4^2+(1-\overline{\mu}+\mu\overline{\mu}-\mu)y_5^2=0.
\end{align*}
\end{enumerate}
\end{prop}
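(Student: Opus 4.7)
The plan is to treat (a) and (b) in sequence, since (b) reuses the $\F_\R$-action produced in (a). For (a), I start from the observation that each element of $\F_\R$ preserves every pair $P_i$ setwise and therefore acts on each pair as identity or swap. This yields an injection $\F_\R \hookrightarrow (\F_2)^5$, and the task reduces to identifying its image.

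Two independent constraints cut out that image. First, swapping $P_i$ for $i \in \{1,2,3\}$ corresponds to one of the three non-trivial double transpositions of the four exceptional divisors $\{E_p, E_{\overline{p}}, E_q, E_{\overline{q}}\}$; these form the Klein four-group $V_4 \subset Sym_4$, so any two multiply to the third and $(a_1,a_2,a_3)$ satisfies $a_1+a_2+a_3 = 0$. Second, since $\sigma$ exchanges $f \leftrightarrow \overline{f}$ and so swaps $P_4 \leftrightarrow P_5$ (Figure~\ref{picturepairs}), a real automorphism commuting with $\sigma$ must have $a_4 = a_5$. Conversely, I would realise each of the eight allowed $5$-tuples by an explicit automorphism: $\gamma_1$ and $\gamma_2$ as real involutions of $\Pp^1_\C \times \Pp^1_\C$ preserving both rulings (written in the normal form $p = (1:0)(0:1)$, $q = (1:1)(1:\mu)$ of Lemma~\ref{points}), and $\gamma$ as a real birational involution of $\Pp^1_\C \times \Pp^1_\C$ with base locus $\{p,\overline{p},q,\overline{q}\}$ that lifts to a genuine automorphism of $X$.

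For (b), Riemann--Roch together with Kodaira vanishing on a Del Pezzo surface of degree $4$ give $h^0(-K_X) = 5$ and $h^0(-2K_X) = 13$, so $X \hookrightarrow \Pp^4$ is cut out by the two-dimensional kernel of $S^2 H^0(-K_X) \to H^0(-2K_X)$. I would realise $H^0(X,-K_X)$ concretely as the space of bihomogeneous $(2,2)$-forms on $\Pp^1_\C \times \Pp^1_\C$ vanishing at $p, \overline{p}, q, \overline{q}$, and use the simultaneous diagonalisation of $\F_\R \cong (\Z/2\Z)^3$ to extract a basis $y_1,\ldots,y_5$ realising the eigenvalue pattern prescribed in (a). By $\F_\R$-equivariance, the two quadric relations may be taken $\F_\R$-invariant; a character count shows that the invariant subspace of $S^2 H^0(-K_X)$ is six-dimensional, spanned by $y_1^2, y_2^2, y_1 y_2, y_3^2, y_4^2, y_5^2$. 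The two relations among these, obtained by expanding the monomials into $(4,4)$-forms on $\Pp^1_\C \times \Pp^1_\C$, yield the two quadrics, and the splitting into $Q_1$ (no $y_5^2$) and $Q_2$ (no $y_3^2$) is arranged by a final change of basis in the pencil.

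The main obstacle is the explicit coefficient calculation in (b): producing concrete eigenfunctions in each character space, expanding pairwise products in the four-point coordinates, and extracting the polynomial expressions in $\mu, \overline{\mu}$ appearing in $Q_1$ and $Q_2$. Two internal consistency checks will guide the work: the pencil is $\sigma$-invariant, so both quadrics can be taken with real coefficients; and only monomials whose $\F_\R$-characters multiply to the trivial one may occur, consistent with the fact that no off-diagonal terms $y_i y_j$ ($i \neq j$) other than $y_1 y_2$ appear in either quadric.
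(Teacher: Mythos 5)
Your overall strategy matches the paper's: part (a) is reduced to two linear constraints on the swap pattern $(a_1,\dots,a_5)$ plus explicit realisation of the eight allowed tuples, and part (b) is the anticanonical embedding via $(2,2)$-forms through $p,\bar p,q,\bar q$ together with a simultaneous diagonalisation of the $\F_\R$-action. Your character-theoretic bookkeeping in (b) (trivial-character subspace of $S^2H^0(-K_X)$ spanned by $y_1^2,y_2^2,y_1y_2,y_3^2,y_4^2,y_5^2$, with the kernel forced into it because a relation in a non-trivial character space would make $X$ degenerate) is a cleaner organisation of what the paper does by writing the matrices $M_1,M_2,M$ of $\gamma_1,\gamma_2,\gamma$ in the basis $\Gamma_1,\dots,\Gamma_5$ and conjugating by $N$; in both cases the actual content is the explicit coefficient computation in $\mu,\bar\mu$, which you defer and the paper also only records the output of. The genuinely different point is your derivation of $a_1+a_2+a_3=0$: the paper simply quotes the complex statement that $\ker(\Aut(X_\C)\to Sym_5)$ is $\{\sum a_i=0\}$ from \cite{Bla09} and combines it with $a_4+a_5=0$, whereas you rederive it from the Klein four-group acting on the three partitions of $\{E_p,E_{\bar p},E_q,E_{\bar q}\}$. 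This is more self-contained, which is a real gain.

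That sub-argument does have a gap as stated: it presupposes that a kernel element permutes the four exceptional divisors, which holds when $a_4=a_5=0$ (then $g$ fixes $f$ and $\bar f$, hence sends components of singular fibres of $|f|$ to components of singular fibres, and the constraint $g(E_p)+g(E_{\bar p})\in\{E_p+E_{\bar p},\,E_q+E_{\bar q}\}$ pins down a permutation in $V_4$), but fails for elements of the form $(*,*,*,1,1)$ such as $\gamma$ itself, which exchange $f$ with $-K_X-f$ and do not send $E_p$ to another $E_\bullet$. The fix is easy but must be said: first establish the existence of $\gamma=(0,0,0,1,1)$ (which you construct anyway in the realisation step), then replace any $g=(*,*,*,1,1)$ by $g\gamma=(*,*,*,0,0)$, to which the $V_4$ argument applies and which has the same first three coordinates. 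With that patch, and with the explicit eigenbasis computation carried out, your plan yields the proposition.
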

\begin{proof}
We first prove that $\F_\R$ is contained in the group $\{(a_1,\ldots,a_5)\in(\F_2)^5\ |\ a_1+a_2+a_3=0\text{\ and\ }a_4+a_5=0\}$. To do so, we focus on the pairs $P_4$ and $P_5$ and observe that the action of the antiholomorphic involution on those pairs (see Figure~\ref{picturepairs}) implies that for an automorphism $g$ of $X$, which is in the kernel, is of the form either $(*,* ,*,0,0)$ or $(*,*,*,1,1)$, which is the same as the condition $a_4+a_5=0$. Hence, $a_1+a_2+a_3=0$ because over $\C$, the kernel of the map $\rho\colon\Aut(X_\C)\to Sym_5$ is the set $\{(a_1,\ldots,a_5)\in(\F_2)^5\ |\ \sum a_i=0\}$ \cite[Lemma~9.11]{Bla09}.

We show the existence of $\gamma$, $\gamma_1$, and $\gamma_2$ and compute the equation of the surface $X$ using the fact that the anticanonical divisor $-K_X$ is very ample and then the linear system of $|-K_X|$ gives an embedding into $\Pp^4$ as an intersection of two quadrics. We study then the following diagram
\[\xymatrix@R-0.5pc{
X\  \ar[d]_{p,\overline{p},q,\overline{q}} \ar@{^{(}->}[r]^{|-K_X|} & \ \Pp^4  \\
S\ \ar@{-->}[ur]_\xi &
}\]
where the vertical map is the blow-up of four imaginary points $p$, $\overline{p}$, $q$, $\overline{q}$ of $S$ viewed $S_\C$ as $\Pp_\C^1\times\Pp^1_\C$ via the isomorphism $\varphi$ given in Remark \ref{varphi}. As $-K_X=2f+2\overline{f}-E_p-E_{\overline{p}}-E_q-E_{\overline{q}}$, the linear system $|-K_X|$ corresponds to the curves of $S$ of bidegree $(2,2)$ viewed on $\Pp^1_\C\times\Pp^1_\C\simeq S_\C$ passing through the four blow-up points.

By Lemma~\ref{points}, we may assume that $p=(1:0)(0:1)$ and $q=(1:1)(1:\mu)$ for some $\mu\in\C^*\setminus\{0,\pm1\}$, and then $\bar p=(0:1)(1:0)$ and $\bar q=(1:\bar\mu)(1:1)$. 

In coordinates $(r:s)(u:v)$ on $\Pp_\C^1\times\Pp_\C^1$, a basis of the linear system $|-K_X|$ is given by:

$$\begin{array}{l|l}
\Gamma_1=sv(r-s)(v-u) & (f-E_p)+(\overline{f}-E_{\overline{p}})+(f-E_q) \\
& +(\overline{f}-E_{\overline{q}}) \vphantom{\Big(}	\\ \hline
 \vphantom{\Big(}\Gamma_2=(vs-\overline{\mu}ru)(r-s)(v-u) & (f+\overline{f}-E_p-E_{\overline{p}}-E_{\overline{q}})+E_{\overline{q}} \\
& +(f-E_q)+(\overline{f}-E_{\overline{q}}) \vphantom{\Big(}\\ \hline
\Gamma_3=ur(v-\mu u)(s-\overline{\mu}r)\vphantom{\Big(} & (\overline{f}-E_p)+(f-E_{\overline{p}})+(\overline{f}-E_q)\\
& +(f-E_{\overline{q}}) \vphantom{\Big(}	\\ \hline
\vphantom{\Big(}\Gamma_4=(vs-\mu ru)(\overline{\mu}(1-\mu)ru & (f+\overline{f}-E_p-E_{\overline{p}}-E_q)+E_q\\
+(\mu-\overline{\mu})su+(\overline{\mu}-1)sv) & +(f+\overline{f}-E_p-E_q-E_{\overline{q}})+E_p	\vphantom{\Big(}\\ \hline
\vphantom{\Big(}\Gamma_5=(\mu(\overline{\mu}-1)ru+(\mu-\overline{\mu})rv & (f+\overline{f}-E_{\overline{p}}-E_q-E_{\overline{q}})+E_{\overline{q}}\\
+(1-\mu)sv)u(s-\overline{\mu}r) & +(\overline{f}-E_p)+(f-E_{\overline{q}}) \\ 
\end{array}$$

The computation of the actions of $\gamma_1$, $\gamma_2$, and $\gamma$ on $\Pic(X)$ with respect to the basis $\{\Gamma_1, \Gamma_2, \Gamma_3, \Gamma_4, \Gamma_5\}$ described above, gives the following elements:
{\small \[M_1=\left(
\begin{smallmatrix}
 0 & -\frac{\mu-\overline{\mu}}{\mu}  & 1 & \mu-\overline{\mu} & 1-\overline{\mu}  \\
0  & 1  & 0 & 0 & 0  \\
 1 &  0 & 0 & \mu-\overline{\mu} & 1-\overline{\mu}\\
 0 & \frac{1}{\mu} & 0 & -1 & 0 \\
 0  & 0 & 0 & 0 & -1
\end{smallmatrix}
\right),\
M_2=\left(
\begin{smallmatrix}
 1 & \frac{2\mu-\overline{\mu}}{\mu}  & 0 & 0 & 1-\overline{\mu}  \\
0  & -1  & 0 & 0 & 0  \\
 0 &  1 & 1 & 0 & \mu-2\overline{\mu}+1\\
 0 & -\frac{1}{\mu} & 0 & 1 & -1 \\
 0  & 0 & 0 & 0 & -1
\end{smallmatrix}
\right),
\text{ and }
M=\left(
\begin{smallmatrix}
 0 & -\frac{\mu-\overline{\mu}}{\mu}  & 1 & \mu-\overline{\mu} & 0  \\
0  & 1  & 0 & 0 & 0  \\
 1 &  0 & 0 & \mu-\overline{\mu} & \overline{\mu}-\mu\\
 0 & \frac{1}{\mu} & 0 & -1 & 1 \\
 0  & 0 & 0 & 0 & 1
\end{smallmatrix}
\right).\]}
By a change of the basis, the matrices $M_1$, $M_2$, and $M$ can be diagonalised and the map $\xi\colon S\rightarrow\Pp^4$ is given by
$((r:s),(u:v))\mapsto N\cdot y^t$ where
\[N=\left(
\begin{smallmatrix}
1 & 1 & -1&  -\overline\mu-\mu & \overline\mu  \\
0  & -\frac{1}{\mu}  & 0 & 2 & -1  \\
 1 &  1 & 1 & \mu-\overline{\mu} & 1-\overline{\mu}\\
 0 & 0 & 0 & 0 & -{\bf i} \\
 0  & -\frac{1}{\mu} & 0 & 0 & 0
\end{smallmatrix}
\right) \text{ and }y=(\Gamma_1,\ldots,\Gamma_5).\]

With this new basis, the surface $X$, which is the image of the anticanonical embedding, is given by the intersection of the two quadrics $Q_1$ and $Q_2$ in the statement as well as the automorphisms $\gamma_1$, $\gamma_2$, and $\gamma$.
\end{proof}

\begin{prop}\label{im}
The image of the sequence $(\ref{seq})$, $\rho(\Aut(X))\subset Sym_5$, is $\langle(2\ 3)(4\ 5)\rangle$ if $|\mu|=1$ and trivial otherwise.
\end{prop}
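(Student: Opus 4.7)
By Remark~\ref{Imagerho}, $\rho(\Aut(X))$ is contained in the Klein four-group $\langle(2\ 3),(4\ 5)\rangle$, so it is one of five subgroups, determined by which of $(2\ 3)$, $(4\ 5)$, $(2\ 3)(4\ 5)$ it contains. The plan is to test each of these three non-trivial elements.

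The first step is a reduction modulo the kernel $\F_\R=\langle\gamma_1,\gamma_2,\gamma\rangle$. Since $\gamma$ sends $f\mapsto -K_X-f$ and $\bar f\mapsto -K_X-\bar f$, every $g\in\Aut(X)$ admits, after multiplying by a suitable element of $\F_\R$, a representative with $g^*f\in\{f,\bar f\}$; such a representative descends to an automorphism of $\Pp^1_\C\times\Pp^1_\C$ permuting the four blown-up points $\{p,\bar p,q,\bar q\}$ and commuting with $\sigma\colon(x,y)\mapsto(\bar y,\bar x)$. By Proposition~\ref{Aut(S)}, it has the form $(x,y)\mapsto(Ax,\bar A y)$ (if the rulings are preserved) or $(x,y)\mapsto(\phi(y),\bar\phi(x))$ (if they are swapped), with $A,\phi\in\PGL(2,\C)$.

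The second step is to translate the image into combinatorial data on $\{p,\bar p,q,\bar q\}$. The transposition $(4\ 5)$ appears exactly when the rulings are swapped, while $(2\ 3)$ appears exactly when the induced permutation $\tau$ of the four points (which commutes with $\sigma=(p\ \bar p)(q\ \bar q)$ and hence lies in a dihedral centralizer of order $8$) sends $\{p,q\}$ to $\{p,\bar q\}$ or $\{\bar p,q\}$; this occurs for $\tau$ equal to one of the transpositions $(p\ \bar p)$, $(q\ \bar q)$ or to one of the two $4$-cycles. Enumerating the $\sigma$-equivariant pairs $(\tau,\epsilon)$ gives, for each of the three candidate non-trivial images, a short list of shapes to test.

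The main step, and the main obstacle, is a direct computation for each shape: write the entries of $A$ or $\phi$ in terms of $\mu,\bar\mu$, impose $\sigma$-equivariance together with the four equations $g(p)=\tau(p)$, etc., and solve. One finds that a map realising $(2\ 3)$ forces $\mu^2=1$, hence $\mu\in\{\pm1\}$, which is excluded; a map realising $(4\ 5)$ forces $\mu=\bar\mu$, i.e.\ $\mu\in\R$, also excluded because the four points would then lie on a $(1,1)$-curve whose strict transform would be a $(-2)$-curve, contradicting that $X$ is Del Pezzo; and a map realising $(2\ 3)(4\ 5)$ forces $\mu\bar\mu=1$, with the explicit automorphism $(x,y)\mapsto(\bar\mu y,\mu x)$ doing the job when $|\mu|=1$. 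Collecting the three cases, the image of $\rho$ is $\langle(2\ 3)(4\ 5)\rangle$ if $|\mu|=1$ and is trivial otherwise.
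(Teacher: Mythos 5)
Your proposal is correct and follows essentially the same route as the paper: bound the image by $\langle(2\ 3),(4\ 5)\rangle$ via $\sigma$-equivariance, reduce modulo $\F_\R$, and then perform explicit M\"obius-transformation computations with the normalized points $p=(1:0)(0:1)$, $q=(1:1)(1:\mu)$ that force $\mu^2=1$ for $(2\ 3)$, $\mu=\bar\mu$ for $(4\ 5)$, and $\mu\bar\mu=1$ for $(2\ 3)(4\ 5)$ — exactly the computations in the paper's cases (a). The only (cosmetic) divergences are that by first normalizing $g^*f\in\{f,\bar f\}$ so that $g$ descends to $\Pp^1_\C\times\Pp^1_\C$ you never meet the paper's case-(b) actions, which the paper discards by exhibiting half-integer matrices on $\Pic(X)$, and that you exclude $\mu\in\R$ via the $(1,1)$-curve through the four points violating the Del Pezzo condition, where the paper invokes the non-existence of an automorphism of $S$ fixing the four points; both substitutions are valid.
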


\begin{proof}
As already mentioned in Remark~\ref{Imagerho}, $\rho(\Aut(X))\subset \langle(2\ 3),(4\ 5)\rangle$.
We show that the elements $(2\ 3)$ and $(4\ 5)$ do not belong to the image while $(2\ 3)(4\ 5)$ does it if and only if $|\mu|=1$.

We start explaining why there is no automorphism of type $(2\ 3)$.  If there were an automorphism $\alpha$ exchanging the pair $P_2$ with $P_3$ then $\alpha$ would act on $P_2$ and $P_3$ either like \ \ \
\begin{picture}(45,20)
\scalebox{0.6}{
\put(0,-10){\line(0,1){40}}
\put(0,-10){\line(1,0){30}}
\put(30,-10){\line(0,1){40}}
\put(0,30){\line(1,0){30}}
\put(12,15){\unboldmath$\bullet$}
\put(12,0){\unboldmath$\bullet$}
\put(11,-17){\unboldmath{\scriptsize $P_2$}}
\put(14,18){\vector(1,0){38}}
\put(14,3){\vector(1,0){38}}
\put(56,18){\vector(-1,0){38}}
\put(56,3){\vector(-1,0){38}}

\put(40,-10){\line(0,1){40}}
\put(40,-10){\line(1,0){30}}
\put(70,-10){\line(0,1){40}}
\put(40,30){\line(1,0){30}}
\put(52,15){\unboldmath$\bullet$}
\put(52,0){\unboldmath$\bullet$}
\put(51,-17){\unboldmath{\scriptsize $P_3$}}}
\end{picture}\vspace{0.3cm}
or like \ \ \begin{picture}(45,20)
\scalebox{0.6}{

\put(0,-10){\line(0,1){40}}
\put(0,-10){\line(1,0){30}}
\put(30,-10){\line(0,1){40}}
\put(0,30){\line(1,0){30}}
\put(12,15){\unboldmath$\bullet$}
\put(12,0){\unboldmath$\bullet$}
\put(11,-17){\unboldmath{\scriptsize $P_2$}}
\put(14,18){\vector(3,-1){40}}
\put(14,4){\vector(3,1){40}}
\put(56,18){\vector(-3,-1){40}}
\put(56,4){\vector(-3,1){40}}

\put(40,-10){\line(0,1){40}}
\put(40,-10){\line(1,0){30}}
\put(70,-10){\line(0,1){40}}
\put(40,30){\line(1,0){30}}
\put(52,15){\unboldmath$\bullet$}
\put(52,0){\unboldmath$\bullet$}
\put(51,-17){\unboldmath{\scriptsize $P_3$}}}
\end{picture}.\vspace{0.3cm}

We may assume that the action on the pairs $P_2$ and $P_3$ is the first since we can multiply the second one by the element of $\F_\R$ that corresponds to $\gamma_1=(0,1,1,0,0)$. On the pairs $P_4$ and $P_5$, the action of $\alpha$ is either \ \begin{picture}(40,20)
\scalebox{0.6}{
\put(0,-10){\line(0,1){40}}
\put(0,-10){\line(1,0){30}}
\put(30,-10){\line(0,1){40}}
\put(0,30){\line(1,0){30}}
\put(12,15){\boldmath$\cdot$}
\put(12,0){\boldmath$\cdot$}
\put(11,-17){\unboldmath{\scriptsize $P_4$}}

\put(30,-10){\line(0,1){40}}
\put(30,-10){\line(1,0){30}}
\put(60,-10){\line(0,1){40}}
\put(30,30){\line(1,0){30}}
\put(42,15){\boldmath$\cdot$}
\put(42,0){\boldmath$\cdot$}
\put(41,-17){\unboldmath{\scriptsize $P_5$}}}
\end{picture}\vspace{0.3cm}
or \ \begin{picture}(40,20)
\scalebox{0.6}{
\put(0,-10){\line(0,1){40}}
\put(0,-10){\line(1,0){30}}
\put(30,-10){\line(0,1){40}}
\put(0,30){\line(1,0){30}}
\put(12,15){\unboldmath$\bullet$}
\put(12,0){\unboldmath$\bullet$}
\put(11,-17){\unboldmath{\scriptsize $P_4$}}
\put(15,3){\vector(0,1){13}}
\put(15,18){\vector(0,-1){13}}

\put(30,-10){\line(0,1){40}}
\put(30,-10){\line(1,0){30}}
\put(60,-10){\line(0,1){40}}
\put(30,30){\line(1,0){30}}
\put(42,15){\unboldmath$\bullet$}
\put(42,0){\unboldmath$\bullet$}
\put(41,-17){\unboldmath{\scriptsize $P_5$}}
\put(45,3){\vector(0,1){13}}
\put(45,18){\vector(0,-1){13}}}
\end{picture}. And as before, we may assume that it is the first one by multiplying the second one by $\gamma=(0,0,0,1,1)$. Summarising, we have to study only two cases:
\begin{enumerate}[(a)]
\begin{multicols}{2}
\item \begin{center}
\begin{picture}(150,15)
\scalebox{0.6}{
\put(10,-10){\line(0,1){40}}
\put(10,-10){\line(1,0){30}}
\put(40,-10){\line(0,1){40}}
\put(10,30){\line(1,0){30}}
\put(22,15){\boldmath$\cdot$}
\put(22,0){\boldmath$\cdot$}
\put(21,-17){\unboldmath{\scriptsize $P_1$}}

\put(50,-10){\line(0,1){40}}
\put(50,-10){\line(1,0){30}}
\put(80,-10){\line(0,1){40}}
\put(50,30){\line(1,0){30}}
\put(62,15){\unboldmath$\bullet$}
\put(62,0){\unboldmath$\bullet$}
\put(61,-17){\unboldmath{\scriptsize $P_2$}}
\put(64,18){\vector(1,0){38}}
\put(64,3){\vector(1,0){38}}
\put(106,18){\vector(-1,0){38}}
\put(106,3){\vector(-1,0){38}}

\put(90,-10){\line(0,1){40}}
\put(90,-10){\line(1,0){30}}
\put(120,-10){\line(0,1){40}}
\put(90,30){\line(1,0){30}}
\put(102,15){\unboldmath$\bullet$}
\put(102,0){\unboldmath$\bullet$}
\put(101,-17){\unboldmath{\scriptsize $P_3$}}

\put(130,-10){\line(0,1){40}}
\put(130,-10){\line(1,0){30}}
\put(160,-10){\line(0,1){40}}
\put(130,30){\line(1,0){30}}
\put(142,15){\boldmath$\cdot$}
\put(142,0){\boldmath$\cdot$}
\put(141,-17){\unboldmath{\scriptsize $P_4$}}

\put(160,-10){\line(0,1){40}}
\put(160,-10){\line(1,0){30}}
\put(190,-10){\line(0,1){40}}
\put(160,30){\line(1,0){30}}
\put(172,15){\boldmath$\cdot$}
\put(172,0){\boldmath$\cdot$}
\put(171,-17){\unboldmath{\scriptsize $P_5$}}}
\end{picture}
\end{center}
\item \begin{center}
\begin{picture}(150,15)
\scalebox{0.6}{
\put(10,-10){\line(0,1){40}}
\put(10,-10){\line(1,0){30}}
\put(40,-10){\line(0,1){40}}
\put(10,30){\line(1,0){30}}
\put(22,15){\unboldmath$\bullet$}
\put(22,0){\unboldmath$\bullet$}
\put(21,-17){\unboldmath{\scriptsize $P_1$}}
\put(25,3){\vector(0,1){13}}
\put(25,18){\vector(0,-1){13}}

\put(50,-10){\line(0,1){40}}
\put(50,-10){\line(1,0){30}}
\put(80,-10){\line(0,1){40}}
\put(50,30){\line(1,0){30}}
\put(62,15){\unboldmath$\bullet$}
\put(62,0){\unboldmath$\bullet$}
\put(61,-17){\unboldmath{\scriptsize $P_2$}}
\put(64,18){\vector(1,0){38}}
\put(64,3){\vector(1,0){38}}
\put(106,18){\vector(-1,0){38}}
\put(106,3){\vector(-1,0){38}}

\put(90,-10){\line(0,1){40}}
\put(90,-10){\line(1,0){30}}
\put(120,-10){\line(0,1){40}}
\put(90,30){\line(1,0){30}}
\put(102,15){\unboldmath$\bullet$}
\put(102,0){\unboldmath$\bullet$}
\put(101,-17){\unboldmath{\scriptsize $P_3$}}

\put(130,-10){\line(0,1){40}}
\put(130,-10){\line(1,0){30}}
\put(160,-10){\line(0,1){40}}
\put(130,30){\line(1,0){30}}
\put(142,15){\boldmath$\cdot$}
\put(142,0){\boldmath$\cdot$}
\put(141,-17){\unboldmath{\scriptsize $P_4$}}

\put(160,-10){\line(0,1){40}}
\put(160,-10){\line(1,0){30}}
\put(190,-10){\line(0,1){40}}
\put(160,30){\line(1,0){30}}
\put(172,15){\boldmath$\cdot$}
\put(172,0){\boldmath$\cdot$}
\put(171,-17){\unboldmath{\scriptsize $P_5$}}}
\end{picture}
\end{center}
\end{multicols}
\end{enumerate}

In both cases (a) and (b), $f$, $\overline{f}$ are fixed and hence $f+\overline{f}$ is fixed. In the case (a), looking at the pair $P_1$ we see that $f+\overline{f}-E_p-E_{\overline{p}}$, $f+\overline{f}-E_q-E_{\overline{q}}$ are fixed, then $E_p+E_{\overline{p}}$ and $E_q+E_{\overline{q}}$ are fixed while the action on pairs $P_2$ and $P_3$ gives that $\alpha$ interchanges $E_p+E_q$ with $E_p+E_{\overline{q}}$ and $E_{\overline{p}}+E_{\overline{q}}$ with $E_{\overline{p}}+E_q$. This implies that $E_p$, $E_{\overline{p}}$ are fixed and $E_q$, $E_{\overline{q}}$ are exchanged.
So $\alpha$ would come from an automorphism $\alpha'$ of $\Pp^1\times\Pp^1$ which fixes $p,\overline{p}$ and interchanges $q$ and $\overline{q}$. Let us see that such an $\alpha'$ does not exist.

The automorphism $\alpha'$ would be given by $(x,y)\mapsto(Ax,\overline{A}y)$ where $A\in\PGL(2,\C)$ with $\alpha'(p)=p$, $\alpha'(\overline{p})=\overline{p}$ then  $\alpha':(x,y)\mapsto(\lambda x,\overline{\lambda}y)$ with $\lambda\in\C$ under the choice of the points $p=(1:0)(0:1)$ and $q=(1:1)(1:\mu)$ for $\mu\notin\{0,\pm1\}$ (Lemma~\ref{points}). Since $\alpha'(q)=\overline{q}$, we have $\lambda=\overline\mu$ and $\overline{\lambda}\mu=1$ and hence $\mu^2=1$, which gives a contradiction.

In the case (b), $\alpha$ is not even an automorphism of the Picard group because the matrix corresponding to an action described in (b) with basis $\{f,\overline{f},E_p,E_{\overline{p}},E_q,E_{\overline{q}}\}$ is
\[\left(
  \begin{smallmatrix}
    1 & 0 & 0 & 0 & 0 & 0 \\
    0 & 1 & 0 & 0 & 0 & 0 \\
    0 & 0 & 1/2 & -1/2 & 1/2 & 1/2 \\
    0 & 0 & -1/2 & 1/2 & 1/2 & 1/2 \\
    0 & 0 & 1/2 & 1/2 & -1/2 & 1/2 \\
    0 & 0 & 1/2 & 1/2 & 1/2 & -1/2 \\
  \end{smallmatrix}
\right).\]
Therefore, an automorphism that acts as $(2\ 3)$ does not belong to the image.

Now, we prove that automorphisms of type $(4,5)$ are not in the image and we proceed in the same way as we did for $(2\ 3)$. The action of an automorphism of type $(4\ 5)$ on the pairs $P_4$ and $P_5$ is either like \begin{picture}(50,20)
\scalebox{0.6}{
\put(10,-10){\line(0,1){40}}
\put(10,-10){\line(1,0){30}}
\put(40,-10){\line(0,1){40}}
\put(10,30){\line(1,0){30}}
\put(22,15){\unboldmath$\bullet$}
\put(22,0){\unboldmath$\bullet$}
\put(21,-17){\unboldmath{\scriptsize $P_4$}}

\put(24,18){\vector(1,0){28}}
\put(56,18){\vector(-1,0){28}}
\put(24,3){\vector(1,0){28}}
\put(56,3){\vector(-1,0){28}}

\put(40,-10){\line(0,1){40}}
\put(40,-10){\line(1,0){30}}
\put(70,-10){\line(0,1){40}}
\put(40,30){\line(1,0){30}}
\put(52,15){\unboldmath$\bullet$}
\put(52,0){\unboldmath$\bullet$}
\put(51,-17){\unboldmath{\scriptsize $P_5$}}}
\end{picture}\vspace{0.3cm} or like
\begin{picture}(50,20)
\scalebox{0.6}{
\put(10,-10){\line(0,1){40}}
\put(10,-10){\line(1,0){30}}
\put(40,-10){\line(0,1){40}}
\put(10,30){\line(1,0){30}}
\put(22,15){\unboldmath$\bullet$}
\put(22,0){\unboldmath$\bullet$}
\put(21,-17){\unboldmath{\scriptsize $P_4$}}

\put(24,18){\vector(2,-1){28}}
\put(56,18){\vector(-2,-1){28}}
\put(24,2){\vector(2,1){28}}
\put(56,2){\vector(-2,1){28}}

\put(40,-10){\line(0,1){40}}
\put(40,-10){\line(1,0){30}}
\put(70,-10){\line(0,1){40}}
\put(40,30){\line(1,0){30}}
\put(52,15){\unboldmath$\bullet$}
\put(52,0){\unboldmath$\bullet$}
\put(51,-17){\unboldmath{\scriptsize $P_5$}}}
\end{picture}.
Multiplying by $(0,0,0,1,1)$ we may assume that is the first one. With respect to the action on the first three pairs $P_1$, $P_2$, and, $P_3$ we assume that the action on $P_1$ and $P_3$ is the identity since we can multiply by $(1,1,0,0,0)$ or by $(0,1,1,0,0)$. Then, we have two cases to focus on:
\begin{enumerate}[(a)]
\begin{multicols}{2}
\item \begin{center}
\begin{picture}(150,15)
\scalebox{0.6}{
\put(10,-10){\line(0,1){40}}
\put(10,-10){\line(1,0){30}}
\put(40,-10){\line(0,1){40}}
\put(10,30){\line(1,0){30}}
\put(22,15){\boldmath$\cdot$}
\put(22,0){\boldmath$\cdot$}
\put(21,-17){\unboldmath{\scriptsize $P_1$}}

\put(50,-10){\line(0,1){40}}
\put(50,-10){\line(1,0){30}}
\put(80,-10){\line(0,1){40}}
\put(50,30){\line(1,0){30}}
\put(62,15){\boldmath$\cdot$}
\put(62,0){\boldmath$\cdot$}
\put(61,-17){\unboldmath{\scriptsize $P_2$}}

\put(90,-10){\line(0,1){40}}
\put(90,-10){\line(1,0){30}}
\put(120,-10){\line(0,1){40}}
\put(90,30){\line(1,0){30}}
\put(102,15){\boldmath$\cdot$}
\put(102,0){\boldmath$\cdot$}
\put(101,-17){\unboldmath{\scriptsize $P_3$}}

\put(130,-10){\line(0,1){40}}
\put(130,-10){\line(1,0){30}}
\put(160,-10){\line(0,1){40}}
\put(130,30){\line(1,0){30}}
\put(142,15){\unboldmath$\bullet$}
\put(142,0){\unboldmath$\bullet$}
\put(141,-17){\unboldmath{\scriptsize $P_4$}}

\put(144,18){\vector(1,0){28}}
\put(176,18){\vector(-1,0){28}}
\put(144,3){\vector(1,0){28}}
\put(176,3){\vector(-1,0){28}}

\put(160,-10){\line(0,1){40}}
\put(160,-10){\line(1,0){30}}
\put(190,-10){\line(0,1){40}}
\put(160,30){\line(1,0){30}}
\put(172,15){\unboldmath$\bullet$}
\put(172,0){\unboldmath$\bullet$}
\put(171,-17){\unboldmath{\scriptsize $P_5$}}}
\end{picture}
\end{center}
\item \begin{center}
\begin{picture}(150,15)
\scalebox{0.6}{
\put(10,-10){\line(0,1){40}}
\put(10,-10){\line(1,0){30}}
\put(40,-10){\line(0,1){40}}
\put(10,30){\line(1,0){30}}
\put(22,15){\boldmath$\cdot$}
\put(22,0){\boldmath$\cdot$}
\put(21,-17){\unboldmath{\scriptsize $P_1$}}

\put(50,-10){\line(0,1){40}}
\put(50,-10){\line(1,0){30}}
\put(80,-10){\line(0,1){40}}
\put(50,30){\line(1,0){30}}
\put(62,15){\unboldmath$\bullet$}
\put(62,0){\unboldmath$\bullet$}
\put(61,-17){\unboldmath{\scriptsize $P_2$}}
\put(65,3){\vector(0,1){13}}
\put(65,18){\vector(0,-1){13}}

\put(90,-10){\line(0,1){40}}
\put(90,-10){\line(1,0){30}}
\put(120,-10){\line(0,1){40}}
\put(90,30){\line(1,0){30}}
\put(102,15){\boldmath$\cdot$}
\put(102,0){\boldmath$\cdot$}
\put(101,-17){\unboldmath{\scriptsize $P_3$}}

\put(130,-10){\line(0,1){40}}
\put(130,-10){\line(1,0){30}}
\put(160,-10){\line(0,1){40}}
\put(130,30){\line(1,0){30}}
\put(142,15){\unboldmath$\bullet$}
\put(142,0){\unboldmath$\bullet$}
\put(141,-17){\unboldmath{\scriptsize $P_4$}}

\put(144,18){\vector(1,0){28}}
\put(176,18){\vector(-1,0){28}}
\put(144,3){\vector(1,0){28}}
\put(176,3){\vector(-1,0){28}}

\put(160,-10){\line(0,1){40}}
\put(160,-10){\line(1,0){30}}
\put(190,-10){\line(0,1){40}}
\put(160,30){\line(1,0){30}}
\put(172,15){\unboldmath$\bullet$}
\put(172,0){\unboldmath$\bullet$}
\put(171,-17){\unboldmath{\scriptsize $P_5$}}}
\end{picture}
\end{center}
\end{multicols}
\end{enumerate}
The case (a) corresponds to an automorphism which interchanges $f$ with $\overline{{f}}$ and fixes $E_p$, $E_{\overline{p}}$, $E_q$, and $E_{\overline{q}}$. It would be the lift of an automorphism of $S$ fixing 4 points which does not exist. On the other hand, the case (b) is not an automorphism of the Picard group because the matrix corresponding to it is
\[\left(
  \begin{smallmatrix}
    0 & 1 & 0 & 0 & 0 & 0 \\
    1 & 0 & 0 & 0 & 0 & 0 \\
    0 & 0 & 1/2 & 1/2 & -1/2 & 1/2 \\
    0 & 0 & 1/2 & 1/2 & 1/2 & -1/2 \\
    0 & 0 & -1/2 & 1/2 & 1/2 & 1/2 \\
    0 & 0 & 1/2 & -1/2 & 1/2 & 1/2 \\
  \end{smallmatrix}
\right).\]

Finally, we check that there is an automorphism which acts as $(2\ 3)(4\ 5)$ if and only if $|\mu|=1$. As before, we can see that automorphisms corresponding to $(2\ 3)(4\ 5)$ are, up to composition with an element of $\F_\R$, of the form
\begin{enumerate}[(a)]
\begin{multicols}{2}
\item
\begin{center}
\begin{picture}(150,15)
\scalebox{0.6}{
\put(10,-10){\line(0,1){40}}
\put(10,-10){\line(1,0){30}}
\put(40,-10){\line(0,1){40}}
\put(10,30){\line(1,0){30}}
\put(22,15){\boldmath$\cdot$}
\put(22,0){\boldmath$\cdot$}
\put(21,-17){\unboldmath{\scriptsize $P_1$}}

\put(50,-10){\line(0,1){40}}
\put(50,-10){\line(1,0){30}}
\put(80,-10){\line(0,1){40}}
\put(50,30){\line(1,0){30}}
\put(62,15){\unboldmath$\bullet$}
\put(62,0){\unboldmath$\bullet$}
\put(61,-17){\unboldmath{\scriptsize $P_2$}}
\put(64,18){\vector(1,0){38}}
\put(64,3){\vector(1,0){38}}
\put(106,18){\vector(-1,0){38}}
\put(106,3){\vector(-1,0){38}}

\put(90,-10){\line(0,1){40}}
\put(90,-10){\line(1,0){30}}
\put(120,-10){\line(0,1){40}}
\put(90,30){\line(1,0){30}}
\put(102,15){\unboldmath$\bullet$}
\put(102,0){\unboldmath$\bullet$}
\put(101,-17){\unboldmath{\scriptsize $P_3$}}

\put(130,-10){\line(0,1){40}}
\put(130,-10){\line(1,0){30}}
\put(160,-10){\line(0,1){40}}
\put(130,30){\line(1,0){30}}
\put(142,15){\unboldmath$\bullet$}
\put(142,0){\unboldmath$\bullet$}
\put(141,-17){\unboldmath{\scriptsize $P_4$}}

\put(144,18){\vector(1,0){28}}
\put(176,18){\vector(-1,0){28}}
\put(144,3){\vector(1,0){28}}
\put(176,3){\vector(-1,0){28}}

\put(160,-10){\line(0,1){40}}
\put(160,-10){\line(1,0){30}}
\put(190,-10){\line(0,1){40}}
\put(160,30){\line(1,0){30}}
\put(172,15){\unboldmath$\bullet$}
\put(172,0){\unboldmath$\bullet$}
\put(171,-17){\unboldmath{\scriptsize $P_5$}}}
\end{picture}
\end{center}
\item
\begin{center}
\begin{picture}(150,15)
\scalebox{0.6}{
\put(10,-10){\line(0,1){40}}
\put(10,-10){\line(1,0){30}}
\put(40,-10){\line(0,1){40}}
\put(10,30){\line(1,0){30}}
\put(22,15){\unboldmath$\bullet$}
\put(22,0){\unboldmath$\bullet$}
\put(21,-17){\unboldmath{\scriptsize $P_1$}}
\put(25,3){\vector(0,1){13}}
\put(25,18){\vector(0,-1){13}}

\put(50,-10){\line(0,1){40}}
\put(50,-10){\line(1,0){30}}
\put(80,-10){\line(0,1){40}}
\put(50,30){\line(1,0){30}}
\put(62,15){\unboldmath$\bullet$}
\put(62,0){\unboldmath$\bullet$}
\put(61,-17){\unboldmath{\scriptsize $P_2$}}
\put(64,18){\vector(1,0){38}}
\put(64,3){\vector(1,0){38}}
\put(106,18){\vector(-1,0){38}}
\put(106,3){\vector(-1,0){38}}

\put(90,-10){\line(0,1){40}}
\put(90,-10){\line(1,0){30}}
\put(120,-10){\line(0,1){40}}
\put(90,30){\line(1,0){30}}
\put(102,15){\unboldmath$\bullet$}
\put(102,0){\unboldmath$\bullet$}
\put(101,-17){\unboldmath{\scriptsize $P_3$}}

\put(130,-10){\line(0,1){40}}
\put(130,-10){\line(1,0){30}}
\put(160,-10){\line(0,1){40}}
\put(130,30){\line(1,0){30}}
\put(142,15){\unboldmath$\bullet$}
\put(142,0){\unboldmath$\bullet$}
\put(141,-17){\unboldmath{\scriptsize $P_4$}}

\put(144,18){\vector(1,0){28}}
\put(176,18){\vector(-1,0){28}}
\put(144,3){\vector(1,0){28}}
\put(176,3){\vector(-1,0){28}}

\put(160,-10){\line(0,1){40}}
\put(160,-10){\line(1,0){30}}
\put(190,-10){\line(0,1){40}}
\put(160,30){\line(1,0){30}}
\put(172,15){\unboldmath$\bullet$}
\put(172,0){\unboldmath$\bullet$}
\put(171,-17){\unboldmath{\scriptsize $P_5$}}}
\end{picture}
\end{center}
\end{multicols}
\end{enumerate}
For the case (a), looking at the pairs $P_4$ and $P_5$ we see that $f$ and $\overline{f}$ are exchanged and then $f+\overline{f}$ is fixed. The exchange of pairs $P_2$ and $P_3$ gives that $f+\overline{f}-E_p-E_q$ and $f+\overline{f}-E_p-E_{\overline{q}}$ are interchanged and so are $f+\overline{f}-E_{\overline{p}}-E_{\overline{q}}$ and $f+\overline{f}-E_{\overline{p}}-E_q$. This implies that $E_p+E_q$ with $E_p+E_{\overline{q}}$ are interchanged and $E_{\overline{p}}+E_{\overline{q}}$ with $E_{\overline{p}}+E_q$ are interchanged, respectively. So an automorphism of type $(2\ 3)(4\ 5)$ for case (a) comes from an automorphism $\delta$ of $\Pp^1\times\Pp^1$ which interchanges $f$ with $\overline{f}$, $q$ with $\overline{q}$ and fixes $p$ and $\overline{p}$. We want to show that $\delta$ exists if and only if $|\mu|=1$. So $\delta$ is given by
$\delta:(x,y)\mapsto(\overline{A}y,Ax)$ satisfying
$\overline{A}\left[\begin{smallmatrix}
		0\\
		1\end{smallmatrix}\right]=\left[\begin{smallmatrix}
		1\\
		0\end{smallmatrix}\right]$,
$A\left[\begin{smallmatrix}
		1\\
		0\end{smallmatrix}\right]
		=\left[\begin{smallmatrix}
		0\\
		1\end{smallmatrix}\right].$
This implies that $A=\left[\begin{smallmatrix}
                     0 & \lambda \\
                     1 & 0 \\
                   \end{smallmatrix}\right].$
Since $\delta$ interchanges $q$ with $\overline{q}$, then $\left[\begin{smallmatrix}
                     0 & \overline{\lambda} \\
                     1 & 0 \\
                   \end{smallmatrix}\right]
                   \left[\begin{smallmatrix}
		   1\\
		   \mu\end{smallmatrix}\right]
		=\left[\begin{smallmatrix}
		1\\
		\overline{\mu}\end{smallmatrix}\right]
		=\left[\begin{smallmatrix}
		\overline{\lambda}\mu\\
		1\end{smallmatrix}\right]$ and
$\left[\begin{smallmatrix}
                     0 & \lambda \\
                     1 & 0 \\
                   \end{smallmatrix}\right]
                   \left[\begin{smallmatrix}
		1\\
		1\end{smallmatrix}\right]
		=\left[\begin{smallmatrix}
		1\\
		1\end{smallmatrix}\right]
		=\left[\begin{smallmatrix}
		\lambda\\
		1\end{smallmatrix}\right].$
Hence, $\lambda=1$ and $\mu\overline{\mu}=1$. Therefore this automorphism exists if $|\mu|=1$.

The case (b) is not possible because the matrix of the action of it on the Picard group with basis $\{f,\overline{f},E_p,E_{\overline{p}},E_q,E_{\overline{q}}\}$ is \[\left(
  \begin{smallmatrix}
    0 & 1 & 0 & 0 & 0 & 0 \\
    1 & 0 & 0 & 0 & 0 & 0 \\
    0 & 0 & 1/2 & -1/2 & 1/2 & 1/2 \\
    0 & 0 & -1/2 & 1/2 & 1/2 & 1/2 \\
    0 & 0 & 1/2 & 1/2 & -1/2 & 1/2 \\
    0 & 0 & 1/2 & 1/2 & 1/2 & -1/2 \\
  \end{smallmatrix}
\right)\]
and this shows that it is not an automorphism of the Picard group.
\end{proof}

\begin{prop} \label{dP4rank1}
If $g\in\Aut(X)$ and $\Pic(X)^g$ has rank one, then $g$ is either $\alpha_1=(1,1,0,1,1)$ or $\alpha_2=(1,0,1,1,1)$ in $\F_\R$ which are given by
\begin{align*}
\alpha_1\colon&(y_1:y_2:y_3:y_4:y_5)\mapsto (y_1:y_2:y_3:y_4:-y_5),\\
\alpha_2\colon&(y_1:y_2:y_3:y_4:y_5)\mapsto (y_1:y_2:-y_3:y_4:y_5).
\end{align*}
\end{prop}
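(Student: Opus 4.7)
My plan is to exhaust the at most $16$ elements of $\Aut(X)$ and, for each one, compute the rank of the fixed subspace of $g^*$ on the real Picard group $\Pic(X) = \Pic(X_\C)^\sigma$, working in the basis $\{f + \overline{f},\; E_p + E_{\overline{p}},\; E_q + E_{\overline{q}}\}$. Since $-K_X$ is always $g$-invariant, rank one is equivalent to the fixed subspace being exactly $\Z \cdot K_X$.

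First I would reduce to $g \in \F_\R$. When $|\mu| \neq 1$, already $\Aut(X) = \F_\R$ by Proposition~\ref{im}. When $|\mu| = 1$, the lift $\delta$ of $(2\ 3)(4\ 5)$ built in the proof of Proposition~\ref{im} fixes $E_p, E_{\overline{p}}$ individually and interchanges $E_q \leftrightarrow E_{\overline{q}}$ and $f \leftrightarrow \overline{f}$; hence $\delta^*$ is the identity on the chosen basis of $\Pic(X)$. So every $g \in \Aut(X)$ acts on $\Pic(X)$ like some element of $\F_\R$ (namely $\delta^{-1} g$ if $g \notin \F_\R$), and it suffices to inspect the eight elements of $\F_\R$.

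For $g = (a_1, a_2, a_3, a_4, a_5) \in \F_\R$, the values of $a_i$ encode $g^*$ on $\Pic(X_\C)$ directly: for $i \in \{4, 5\}$, $a_i$ specifies whether $f$ (resp.\ $\overline{f}$) is fixed or sent to $-K_X - f$; for $i \in \{1, 2, 3\}$, $a_i$ specifies whether the two classes of the pair $P_i$ are fixed or swapped. Together with the relation $g\sigma = \sigma g$, this uniquely determines the $3 \times 3$ matrix of $g^*$ on $\Pic(X)$. In particular, $a_4 = a_5 = 1$ yields $g^*(f + \overline{f}) = 3(f+\overline{f}) - 2(E_p + E_{\overline{p}}) - 2(E_q + E_{\overline{q}})$, and the images of $E_p + E_{\overline{p}}$ and $E_q + E_{\overline{q}}$ are then pinned down by $a_1, a_2, a_3$.

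A short case analysis on the eight elements of $\F_\R$ closes the proof. The identity and $\gamma_1 = (0,1,1,0,0)$ act trivially on the chosen basis and give fixed rank $3$; the four elements $\gamma_2, \gamma_1\gamma_2, \gamma, \gamma_1\gamma$ give fixed rank $2$ (routine linear algebra shows $M - I$ has rank one in each case). Only $\alpha_1 = (1,1,0,1,1)$ and $\alpha_2 = (1,0,1,1,1)$ produce the matrix
\[
M = \begin{pmatrix} 3 & 2 & 2 \\ -2 & -2 & -1 \\ -2 & -1 & -2 \end{pmatrix},
\]
for which $M - I$ has rank $2$, so $\ker(M - I)$ is one-dimensional. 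The main obstacle is not the linear algebra but the bookkeeping: one must carefully translate each combinatorial datum $a_i$ into an explicit image of divisors before assembling the matrix; once this dictionary is set up, the conclusion follows immediately.
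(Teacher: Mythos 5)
Your case analysis on the eight elements of $\F_\R$ is correct (the matrices and ranks all check out), but the reduction step has a genuine gap that breaks the conclusion. You observe that $\delta^*$ is the identity on $\Pic(X)=\Pic(X_\C)^\sigma$, so that $g$ and $\delta^{-1}g\in\F_\R$ have the \emph{same action} on $\Pic(X)$. This only determines the rank of $\Pic(X)^g$, not the element $g$ itself: if $\rk(\Pic(X)^g)=1$ and $g\notin\F_\R$, your argument yields $\delta^{-1}g\in\{\alpha_1,\alpha_2\}$, i.e.\ $g\in\{\delta\alpha_1,\delta\alpha_2\}$ — and these are honest automorphisms of $X$ (when $\lvert\mu\rvert=1$) with $\rk(\Pic(X)^g)=1$ that are \emph{not} $\alpha_1$ or $\alpha_2$. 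So as written your proof establishes only that $g$ acts on $\Pic(X)$ like $\alpha_1$ or $\alpha_2$, which is strictly weaker than the statement.

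The missing ingredient is the prime-order hypothesis, which the proposition suppresses but the paper's proof opens with (``Let $g\in\Aut(X)$ of prime order'') and which is how the proposition is used in the proof of Theorem~\ref{MainThm}. Indeed $\delta\alpha_1$ and $\delta\alpha_2$ act on the four conic bundles of $P_2\cup P_3$ by a $4$-cycle, hence have order $4$, so they are excluded only once you invoke that hypothesis. For prime-order $g\notin\F_\R$ one then still has to rule out the remaining candidate actions: the paper does this by showing that exchanging $P_2$ with $P_3$ ``squarely'' (both straight or both crossed), together with the constraints on $P_1$, $P_4$, $P_5$ forced by $\rk(\Pic(X)^g)=1$, produces a matrix with half-integer entries on $\Pic(X_\C)$, so no such automorphism exists. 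Your approach cannot see this because you pass to the $\sigma$-invariant lattice too early; the obstruction lives in $\Pic(X_\C)$, not in $\Pic(X)$. To repair the proof you need to (i) assume $g$ has prime order, and (ii) treat $g\notin\F_\R$ by a separate argument on $\Pic(X_\C)$ (or on the orders of the candidate permutations of the ten conic bundles), rather than by dividing by $\delta$.
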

\begin{proof}
Let $g\in\Aut(X)$ of prime order. If $g\in\F_\R$, $g=(a_1,\ldots,a_5)$ and the condition on the rank forces that the first component $a_1=1$,  $g$ is thus either $(1,1,0,*,*)$ or $(1,0,1,*,*)$. Moreover, we observe that $g$ must interchange the two conic bundles in the pairs $P_4$ and $P_5$ because otherwise, $g(f+\bar f)=f+\bar f\in\Pic(X)^g$ implying that the rank of $\Pic(X)^g>1$ since $f+\bar f$ is not multiple of $-K_X$. Then the two possibilities for $g$ when $g\in\F_\R$ are $\alpha_1=(1,1,0,1,1)$ and $\alpha_2=(1,0,1,1,1)$.

Now if $g\notin\F_\R$, Proposition \ref{im} tells us that the action of $\Aut(X)$ on the five pairs is $\langle(2\ 3)(4\ 5)\rangle$. To ask that $\Pic(X)^g\cong\Z$ forces that the two conic bundle structures in the first pair are interchanged for the same reason as before. On the other hand, the action of $(2\ 3)(4\ 5)$ on the pairs $P_2$ and $P_3$ cannot be of the form \begin{picture}(40,25)
\scalebox{0.6}{
\put(0,-5){\line(0,1){40}}
\put(0,-5){\line(1,0){30}}
\put(30,-5){\line(0,1){40}}
\put(0,35){\line(1,0){30}}
\put(12,20){\unboldmath$\bullet$}
\put(12,5){\unboldmath$\bullet$}
\put(11,-12){\unboldmath{\scriptsize $P_2$}}

\put(14,23){\vector(3,-1){40}}
\put(14,9){\vector(3,1){40}}

\put(56,23){\vector(-1,0){38}}
\put(56,8){\vector(-1,0){38}}

\put(40,-5){\line(0,1){40}}
\put(40,-5){\line(1,0){30}}
\put(70,-5){\line(0,1){40}}
\put(40,35){\line(1,0){30}}
\put(52,20){\unboldmath$\bullet$}
\put(52,5){\unboldmath$\bullet$}
\put(51,-12){\unboldmath{\scriptsize $P_3$}}}
\end{picture}\vspace{0.2cm} \ (or the one reversing the arrows) because in this case the order of $g$ is 4. In addition, we observe that if the action of $(2\ 3)(4\ 5)$ on the pairs $P_4$ and $P_5$ is as in this picture:  \begin{picture}(50,20)
\scalebox{0.6}{
\put(10,-10){\line(0,1){40}}
\put(10,-10){\line(1,0){30}}
\put(40,-10){\line(0,1){40}}
\put(10,30){\line(1,0){30}}
\put(22,15){\unboldmath$\bullet$}
\put(22,0){\unboldmath$\bullet$}
\put(21,-17){\unboldmath{\scriptsize $P_4$}}

\put(24,17.5){\vector(1,0){28}}
\put(56,17.5){\vector(-1,0){28}}
\put(24,2.5){\vector(1,0){28}}
\put(56,2.5){\vector(-1,0){28}}

\put(40,-10){\line(0,1){40}}
\put(40,-10){\line(1,0){30}}
\put(70,-10){\line(0,1){40}}
\put(40,30){\line(1,0){30}}
\put(52,15){\unboldmath$\bullet$}
\put(52,0){\unboldmath$\bullet$}
\put(51,-17){\unboldmath{\scriptsize $P_5$}}}
\end{picture}\vspace{0.3cm}, the divisor $f+\bar f$ is preserved under $g$ and $\sigma$, then $f+\bar f\in\Pic(X)^g$. This implies that $\rk(\Pic(X)^g)>1$. 

We have then to check the remaining cases,
\begin{enumerate}
\begin{multicols}{2}
\item \begin{center}
\begin{picture}(150,0)
\scalebox{0.6}{
\put(10,-20){\line(0,1){40}}
\put(10,-20){\line(1,0){30}}
\put(40,-20){\line(0,1){40}}
\put(10,20){\line(1,0){30}}
\put(22,5){\unboldmath$\bullet$}
\put(22,-10){\unboldmath$\bullet$}
\put(21,-27){\unboldmath{\scriptsize $P_1$}}
\put(24.5,-7){\vector(0,1){13}}
\put(24.5,8){\vector(0,-1){13}}

\put(50,-20){\line(0,1){40}}
\put(50,-20){\line(1,0){30}}
\put(80,-20){\line(0,1){40}}
\put(50,20){\line(1,0){30}}
\put(62,5){\unboldmath$\bullet$}
\put(62,-10){\unboldmath$\bullet$}
\put(61,-27){\unboldmath{\scriptsize $P_2$}}
\put(64.5,-7){\vector(0,1){13}}
\put(64.5,8){\vector(0,-1){13}}

\put(90,-20){\line(0,1){40}}
\put(90,-20){\line(1,0){30}}
\put(120,-20){\line(0,1){40}}
\put(90,20){\line(1,0){30}}
\put(102,5){\unboldmath$\bullet$}
\put(102,-10){\unboldmath$\bullet$}
\put(101,-27){\unboldmath{\scriptsize $P_3$}}

\put(130,-20){\line(0,1){40}}
\put(130,-20){\line(1,0){30}}
\put(160,-20){\line(0,1){40}}
\put(130,20){\line(1,0){30}}
\put(142,5){\unboldmath$\bullet$}
\put(142,-10){\unboldmath$\bullet$}
\put(141,-27){\unboldmath{\scriptsize $P_4$}}
\put(144.5,-7){\vector(0,1){13}}
\put(144.5,8){\vector(0,-1){13}}

\put(160,-20){\line(0,1){40}}
\put(160,-20){\line(1,0){30}}
\put(190,-20){\line(0,1){40}}
\put(160,20){\line(1,0){30}}
\put(172,5){\unboldmath$\bullet$}
\put(172,-10){\unboldmath$\bullet$}
\put(171,-27){\unboldmath{\scriptsize $P_5$}}
\put(174.5,-7){\vector(0,1){13}}
\put(174.5,8){\vector(0,-1){13}}
}
\end{picture}
\end{center}
\item \begin{center}
\begin{picture}(150,30)
\scalebox{0.6}{
\put(10,-20){\line(0,1){40}}
\put(10,-20){\line(1,0){30}}
\put(40,-20){\line(0,1){40}}
\put(10,20){\line(1,0){30}}
\put(22,5){\unboldmath$\bullet$}
\put(22,-10){\unboldmath$\bullet$}
\put(21,-27){\unboldmath{\scriptsize $P_1$}}
\put(24.5,-7){\vector(0,1){13}}
\put(24.5,8){\vector(0,-1){13}}

\put(50,-20){\line(0,1){40}}
\put(50,-20){\line(1,0){30}}
\put(80,-20){\line(0,1){40}}
\put(50,20){\line(1,0){30}}
\put(62,5){\unboldmath$\bullet$}
\put(62,-10){\unboldmath$\bullet$}
\put(61,-27){\unboldmath{\scriptsize $P_2$}}

\put(90,-20){\line(0,1){40}}
\put(90,-20){\line(1,0){30}}
\put(120,-20){\line(0,1){40}}
\put(90,20){\line(1,0){30}}
\put(102,5){\unboldmath$\bullet$}
\put(102,-10){\unboldmath$\bullet$}
\put(101,-27){\unboldmath{\scriptsize $P_3$}}
\put(104.5,-7){\vector(0,1){13}}
\put(104.5,8){\vector(0,-1){13}}

\put(130,-20){\line(0,1){40}}
\put(130,-20){\line(1,0){30}}
\put(160,-20){\line(0,1){40}}
\put(130,20){\line(1,0){30}}
\put(142,5){\unboldmath$\bullet$}
\put(142,-10){\unboldmath$\bullet$}
\put(141,-27){\unboldmath{\scriptsize $P_4$}}
\put(144.5,-7){\vector(0,1){13}}
\put(144.5,8){\vector(0,-1){13}}

\put(160,-20){\line(0,1){40}}
\put(160,-20){\line(1,0){30}}
\put(190,-20){\line(0,1){40}}
\put(160,20){\line(1,0){30}}
\put(172,5){\unboldmath$\bullet$}
\put(172,-10){\unboldmath$\bullet$}
\put(171,-27){\unboldmath{\scriptsize $P_5$}}
\put(174.5,-7){\vector(0,1){13}}
\put(174.5,8){\vector(0,-1){13}}
}
\end{picture}
\end{center} 
\item \begin{center} 
\begin{picture}(150,0) 
\scalebox{0.6}{
\put(10,-20){\line(0,1){40}}
\put(10,-20){\line(1,0){30}}
\put(40,-20){\line(0,1){40}}
\put(10,20){\line(1,0){30}}
\put(22,5){\unboldmath$\bullet$}
\put(22,-10){\unboldmath$\bullet$}
\put(21,-27){\unboldmath{\scriptsize $P_1$}}
\put(24.5,-7){\vector(0,1){13}}
\put(24.5,8){\vector(0,-1){13}}

\put(50,-20){\line(0,1){40}}
\put(50,-20){\line(1,0){30}}
\put(80,-20){\line(0,1){40}}
\put(50,20){\line(1,0){30}}
\put(62,5){\unboldmath$\bullet$}
\put(62,-10){\unboldmath$\bullet$}
\put(61,-27){\unboldmath{\scriptsize $P_2$}}
\put(65,8){\vector(1,0){38}}
\put(65,-7){\vector(1,0){38}}
\put(105,8){\vector(-1,0){38}}
\put(105,-7){\vector(-1,0){38}}

\put(90,-20){\line(0,1){40}}
\put(90,-20){\line(1,0){30}}
\put(120,-20){\line(0,1){40}}
\put(90,20){\line(1,0){30}}
\put(102,5){\unboldmath$\bullet$}
\put(102,-10){\unboldmath$\bullet$}
\put(101,-27){\unboldmath{\scriptsize $P_3$}}

\put(130,-20){\line(0,1){40}}
\put(130,-20){\line(1,0){30}}
\put(160,-20){\line(0,1){40}}
\put(130,20){\line(1,0){30}}
\put(142,5){\unboldmath$\bullet$}
\put(142,-10){\unboldmath$\bullet$}
\put(141,-27){\unboldmath{\scriptsize $P_4$}}

\put(144,8){\vector(2,-1){28}}
\put(176,8){\vector(-2,-1){28}}
\put(144,-8){\vector(2,1){28}}
\put(176,-8){\vector(-2,1){28}}

\put(160,-20){\line(0,1){40}}
\put(160,-20){\line(1,0){30}}
\put(190,-20){\line(0,1){40}}
\put(160,20){\line(1,0){30}}
\put(172,5){\unboldmath$\bullet$}
\put(172,-10){\unboldmath$\bullet$}
\put(171,-27){\unboldmath{\scriptsize $P_5$}}
}
\end{picture}
\end{center} 
\item \begin{center}
\begin{picture}(150,30) 
\scalebox{0.6}{
\put(10,-20){\line(0,1){40}}
\put(10,-20){\line(1,0){30}}
\put(40,-20){\line(0,1){40}}
\put(10,20){\line(1,0){30}}
\put(22,5){\unboldmath$\bullet$}
\put(22,-10){\unboldmath$\bullet$}
\put(21,-27){\unboldmath{\scriptsize $P_1$}}
\put(24.5,-7){\vector(0,1){13}}
\put(24.5,8){\vector(0,-1){13}}

\put(50,-20){\line(0,1){40}}
\put(50,-20){\line(1,0){30}}
\put(80,-20){\line(0,1){40}}
\put(50,20){\line(1,0){30}}
\put(62,5){\unboldmath$\bullet$}
\put(62,-10){\unboldmath$\bullet$}
\put(61,-27){\unboldmath{\scriptsize $P_2$}}

\put(64,7){\vector(3,-1){40}}
\put(64,-7){\vector(3,1){40}}
\put(106,7){\vector(-3,-1){40}}
\put(106,-7){\vector(-3,1){40}}

\put(90,-20){\line(0,1){40}}
\put(90,-20){\line(1,0){30}}
\put(120,-20){\line(0,1){40}}
\put(90,20){\line(1,0){30}}
\put(102,5){\unboldmath$\bullet$}
\put(102,-10){\unboldmath$\bullet$}
\put(101,-27){\unboldmath{\scriptsize $P_3$}}

\put(130,-20){\line(0,1){40}}
\put(130,-20){\line(1,0){30}}
\put(160,-20){\line(0,1){40}}
\put(130,20){\line(1,0){30}}
\put(142,5){\unboldmath$\bullet$}
\put(142,-10){\unboldmath$\bullet$}
\put(141,-27){\unboldmath{\scriptsize $P_4$}}

\put(144,8){\vector(2,-1){28}}
\put(176,8){\vector(-2,-1){28}}
\put(144,-8){\vector(2,1){28}}
\put(176,-8){\vector(-2,1){28}}

\put(160,-20){\line(0,1){40}}
\put(160,-20){\line(1,0){30}}
\put(190,-20){\line(0,1){40}}
\put(160,20){\line(1,0){30}}
\put(172,5){\unboldmath$\bullet$}
\put(172,-10){\unboldmath$\bullet$}
\put(171,-27){\unboldmath{\scriptsize $P_5$}}
}
\end{picture}
\end{center} 
\end{multicols}
\end{enumerate}
\vspace{0.3cm}

The case (2) can be seen from case (1) conjugating it by the automorphism of the Picard group interchanging the divisors $E_q$ with $E_{\bar q}$ and fixing $f$, $\bar f$, $E_p$, and $E_{\bar p}$. Now, the action of the automorphisms of the case (1) on the Picard group $\Pic(X)$ with respect to the basis $\{f,\overline{f},E_p,E_{\overline{p}},E_q,E_{\overline{q}}\}$ is
\[\left(\begin{smallmatrix}
1 & 2 & 1 & 1& 1& 1\\
2& 1 & 1& 1& 1& 1\\
-1& -1& -1& -1& -1& 0\\
-1& -1& -1& -1& 0& -1\\
-1& -1& -1& 0& -1& -1\\
-1& -1& 0& -1& -1& -1
\end{smallmatrix}
\right).\]
In this case that corresponds to $\alpha_1$, the eigenspace for the eigenvalue $1$ is generated by the two conic bundles of the pair $P_3$ which are not in $\Pic(X)^g$ because of the action of $\sigma$ interchanges them but whose sum is $-K_X$. Hence, $\Pic(X)^g\cong\Z$ and therefore in case (2) as well when $g=\alpha_2$. By Proposition \ref{kerEq}, $\alpha_1=\gamma_1\gamma_2\gamma$ and $\alpha_2=\gamma_2\gamma$ which are exactly the maps in the statement.

Finally, for cases (3) and (4), the element $g$ is not even an automorphism of the Picard group because matrices corresponding to an action described in these cases with basis $\{f,\overline{f},E_p,E_{\overline{p}},E_q,E_{\overline{q}}\}$ are
\[\left(\begin{smallmatrix}
2 & 1 & 1 & 1& 1& 1\\
1 & 2 & 1& 1& 1& 1\\
-1& -1& -\frac{1}{2} & -\frac{3}{2} & -\frac{1}{2} & -\frac{1}{2}\\
-1& -1& -\frac{3}{2} & -\frac{1}{2} & -\frac{1}{2} & -\frac{1}{2}\\
-1& -1& -\frac{1}{2} & -\frac{1}{2} & -\frac{3}{2} & -\frac{1}{2}\\
-1& -1& -\frac{1}{2} & -\frac{1}{2} & -\frac{1}{2} & -\frac{3}{2}
\end{smallmatrix}
\right)\text{\ and\ } \left(\begin{smallmatrix}
2 & 1 & 1 & 1& 1& 1\\
1& 2 & 1& 1& 1& 1\\
-1& -1& -\frac{3}{2} & -\frac{1}{2} & -\frac{1}{2} & -\frac{1}{2}\\
-1& -1& -\frac{1}{2} & -\frac{3}{2} & -\frac{1}{2} & -\frac{1}{2}\\
-1& -1& -\frac{1}{2} & -\frac{1}{2} & -\frac{1}{2} & -\frac{3}{2}\\
-1& -1& -\frac{1}{2} & -\frac{1}{2} & -\frac{3}{2} & -\frac{1}{2}
\end{smallmatrix}
\right),\text{ respectively.}\qedhere\]
\end{proof}

There are automorphisms of Del Pezzo surfaces of degree $4$ which are minimal but preserve a conic bundle structure. These will be needed in the sequel. We give here a special family of examples.
\begin{lemma}\label{Lem:g1g2DP4}
If $\lvert\mu\rvert =1$, then $X$ admits two automorphisms $g_1$, $g_2\in \Aut(X)$ of order $2$, acting on the conic bundles like
\begin{enumerate}
\begin{multicols}{2}
\item[$g_1:$]
\begin{center}
\begin{picture}(150,15)
\scalebox{0.6}{

\put(10,-10){\line(0,1){40}}
\put(10,-10){\line(1,0){30}}
\put(40,-10){\line(0,1){40}}
\put(10,30){\line(1,0){30}}
\put(22,15){\boldmath$\cdot$}
\put(22,0){\boldmath$\cdot$}
\put(21,-17){\unboldmath{\scriptsize $P_1$}}

\put(50,-10){\line(0,1){40}}
\put(50,-10){\line(1,0){30}}
\put(80,-10){\line(0,1){40}}
\put(50,30){\line(1,0){30}}
\put(62,15){\unboldmath$\bullet$}
\put(62,0){\unboldmath$\bullet$}
\put(61,-17){\unboldmath{\scriptsize $P_2$}}
\put(64,18){\vector(1,0){38}}
\put(64,3){\vector(1,0){38}}
\put(106,18){\vector(-1,0){38}}
\put(106,3){\vector(-1,0){38}}

\put(90,-10){\line(0,1){40}}
\put(90,-10){\line(1,0){30}}
\put(120,-10){\line(0,1){40}}
\put(90,30){\line(1,0){30}}
\put(102,15){\unboldmath$\bullet$}
\put(102,0){\unboldmath$\bullet$}
\put(101,-17){\unboldmath{\scriptsize $P_3$}}

\put(130,-10){\line(0,1){40}}
\put(130,-10){\line(1,0){30}}
\put(160,-10){\line(0,1){40}}
\put(130,30){\line(1,0){30}}
\put(142,15){\unboldmath$\bullet$}
\put(142,0){\unboldmath$\bullet$}
\put(141,-17){\unboldmath{\scriptsize $P_4$}}

\put(144,18){\vector(2,-1){28}}
\put(176,18){\vector(-2,-1){28}}
\put(144,2){\vector(2,1){28}}
\put(176,2){\vector(-2,1){28}}

\put(160,-10){\line(0,1){40}}
\put(160,-10){\line(1,0){30}}
\put(190,-10){\line(0,1){40}}
\put(160,30){\line(1,0){30}}
\put(172,15){\unboldmath$\bullet$}
\put(172,0){\unboldmath$\bullet$}
\put(171,-17){\unboldmath{\scriptsize $P_5$}}}
\end{picture}
\end{center}
\item[$g_2:$]
\begin{center}
\begin{picture}(150,15)
\scalebox{0.6}{
\put(10,-10){\line(0,1){40}}
\put(10,-10){\line(1,0){30}}
\put(40,-10){\line(0,1){40}}
\put(10,30){\line(1,0){30}}
\put(22,15){\boldmath$\cdot$}
\put(22,0){\boldmath$\cdot$}
\put(21,-17){\unboldmath{\scriptsize $P_1$}}

\put(50,-10){\line(0,1){40}}
\put(50,-10){\line(1,0){30}}
\put(80,-10){\line(0,1){40}}
\put(50,30){\line(1,0){30}}
\put(62,15){\unboldmath$\bullet$}
\put(62,0){\unboldmath$\bullet$}
\put(61,-17){\unboldmath{\scriptsize $P_2$}}

\put(64,17){\vector(3,-1){40}}
\put(64,3){\vector(3,1){40}}
\put(106,17){\vector(-3,-1){40}}
\put(106,3){\vector(-3,1){40}}

\put(90,-10){\line(0,1){40}}
\put(90,-10){\line(1,0){30}}
\put(120,-10){\line(0,1){40}}
\put(90,30){\line(1,0){30}}
\put(102,15){\unboldmath$\bullet$}
\put(102,0){\unboldmath$\bullet$}
\put(101,-17){\unboldmath{\scriptsize $P_3$}}

\put(130,-10){\line(0,1){40}}
\put(130,-10){\line(1,0){30}}
\put(160,-10){\line(0,1){40}}
\put(130,30){\line(1,0){30}}
\put(142,15){\unboldmath$\bullet$}
\put(142,0){\unboldmath$\bullet$}
\put(141,-17){\unboldmath{\scriptsize $P_4$}}

\put(144,18){\vector(2,-1){28}}
\put(176,18){\vector(-2,-1){28}}
\put(144,2){\vector(2,1){28}}
\put(176,2){\vector(-2,1){28}}

\put(160,-10){\line(0,1){40}}
\put(160,-10){\line(1,0){30}}
\put(190,-10){\line(0,1){40}}
\put(160,30){\line(1,0){30}}
\put(172,15){\unboldmath$\bullet$}
\put(172,0){\unboldmath$\bullet$}
\put(171,-17){\unboldmath{\scriptsize $P_5$}}}
\end{picture}
\end{center}
\end{multicols}
\end{enumerate}
and having the following properties:
\begin{enumerate}[$(a)$]
\item The two automorphisms $g_1$, $g_2$ are conjugate by $\gamma_2\in \Aut(X)$ and satisfy $\rk(\Pic(X)^{g_i})=2$ for $i=1,2$. 

\item Both $g_1$ and $g_2$ preserve the two real conic bundles of the pair $P_1$. The action on one is trivial on the basis, but non-trivial on the other one.

\item The fixed points of $g_i$ on $X(\C)$ consists of two isolated real points, and one smooth rational curve having no real point.
\item
The action of $g_1,g_2$ on $\Pp^1_\C\times\Pp^1_\C$, via the blow-up $X\to S$ and the isomorphism $\varphi: S_\C \to  \Pp^1_\C\times\Pp^1_\C$, are respectively given by
\[\begin{array}{llll}
\vphantom{\Big)}(s,v)&\dasharrow &\left(
\frac{s(\mu s v-(1+\mu) v+\mu)}{\mu (-s v+(1+\mu)s-1)},  \frac{\mu v(-s v+(1+\mu)s-1)}{\mu s v-(1+\mu) v+\mu}\right)\\
\vphantom{\Big)}
(s,v)&\dasharrow &
\left(\frac{-s v+(1+\mu)s-1}{s (\mu s v-(1+\mu) v+\mu)}, \frac{\mu s v-(1+\mu) v+\mu}{v (-s v+(1+\mu)s-1)})\right)\end{array}\]
on the chart $\{(1:s),(1:v)\mid (s,v)\in \A^2_\C\}$.
\end{enumerate}
\end{lemma}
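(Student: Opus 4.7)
The approach is to construct $g_1$ and $g_2$ as products of the involution $\tilde\delta\in\Aut(X)$ already exhibited in the proof of Proposition~\ref{im} (available precisely when $|\mu|=1$) with the kernel elements $\gamma,\gamma_1\in\F_\R$ of Proposition~\ref{kerEq}, and then to verify each of (a)--(d) by direct computation. Recall that $\tilde\delta$ lifts the automorphism $\delta\colon(x,y)\mapsto(\bar Ay,Ax)$ of $\Pp^1_\C\times\Pp^1_\C$ with $A=\left[\begin{smallmatrix}0&1\\1&0\end{smallmatrix}\right]$, is real, is an involution, projects to $(2\ 3)(4\ 5)\in Sym_5$ with the \emph{non-crossed} pattern on both $(P_2,P_3)$ and $(P_4,P_5)$, and reads $(s,v)\mapsto(1/v,1/s)$ in affine coordinates. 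I set $g_1:=\gamma\tilde\delta$ and $g_2:=\gamma_1\gamma\tilde\delta$: multiplying $\tilde\delta$ by $\gamma$ swaps the two conic bundles of $P_4$ and of $P_5$, turning the non-crossed action on $(P_4,P_5)$ into the crossed one; the further multiplication by $\gamma_1$ does the same on $(P_2,P_3)$. This reproduces the two pictures in the statement.

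For (a), conjugation by $\tilde\delta$ on $\F_\R\subset(\F_2)^5$ is the permutation $(2\ 3)(4\ 5)$ of coordinates, so it fixes both $\gamma=(0,0,0,1,1)$ and $\gamma_1=(0,1,1,0,0)$; the elements $\gamma,\gamma_1,\tilde\delta$ therefore pairwise commute, which immediately gives $g_i^2=1$. The same action sends $\gamma_2=(1,0,1,0,0)$ to $(1,1,0,0,0)=\gamma_1\gamma_2$, so $\tilde\delta\gamma_2\tilde\delta^{-1}=\gamma_1\gamma_2$, and a one-line calculation yields $\gamma_2 g_1\gamma_2^{-1}=\gamma(\gamma_2\tilde\delta\gamma_2)=\gamma\gamma_1\tilde\delta=g_2$. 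For the rank claim, both conic bundles of $P_1$ are preserved by $g_i$ (since $g_i$ acts trivially on $P_1$) and by $\sigma$, so $\Pic(X)^{g_i}$ contains the rank-two sublattice they span; equality follows because $g_i$ acts non-trivially on the remaining pairs while $\rk\Pic(X)=3$. This proves (a) and the first half of (b).

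For (d), one lifts $\gamma$ and $\gamma_1$ from their description in the anticanonical embedding (Proposition~\ref{kerEq}) to birational self-maps of $\Pp^1_\C\times\Pp^1_\C$ via the basis $\Gamma_1,\ldots,\Gamma_5$ of $|-K_X|$ computed there, and composes with $\tilde\delta(s,v)=(1/v,1/s)$; this is bookkeeping and produces the stated formulas. With the formulas in hand, the second part of (b) follows from the identity $g_1(s)\,g_1(v)=sv$: the real conic bundle $c_1:=f+\bar f-E_p-E_{\bar p}$ has fibres $\{sv=\text{const}\}$, so $g_1$ acts trivially on its base, whereas substitution into the parametrisation $[\mu s-v:1+sv-(1+\mu)s]$ of the second real bundle $c_2:=f+\bar f-E_q-E_{\bar q}$ (obtained by imposing passage through $q,\bar q$ and using $\bar\mu=1/\mu$) shows a non-trivial action there.

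Finally, for (c) the fixed locus of the involution $g_1$ is smooth, and by the preservation of $c_1$ with trivial action on its base the restriction of $g_1$ to each smooth fibre of $c_1$ is a non-trivial involution of $\Pp^1$ with exactly two fixed points; following these through the base yields a smooth bisection curve $C$. Solving $g_1(s,v)=(s,v)$ directly in the formula of (d) exhibits $C$ as rational, and substituting a real value $sv\in\R$ shows that the two fixed points on each real fibre of $c_1$ are complex conjugate, so $C(\R)=\emptyset$. The two isolated real fixed points then come from the two real singular fibres of $c_1$, at whose nodes $g_1$ fixes the intersection of the two conjugate components. The heaviest step is the verification of the fixed locus in (c), but it reduces to elementary manipulations once the explicit formula of (d) is available.
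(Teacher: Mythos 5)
Your construction $g_1=\gamma\tilde\delta$, $g_2=\gamma_1\gamma\tilde\delta$ is the paper's own existence argument (Proposition~\ref{im} together with the description of $\F_\R$) made explicit, and parts (a), (b), (d) go through as you describe: conjugation by $\tilde\delta$ on $\F_\R$ permutes the coordinates by $(2\ 3)(4\ 5)$, hence fixes $\gamma$ and $\gamma_1$ and sends $\gamma_2$ to $\gamma_1\gamma_2$, which gives $\gamma_2 g_1\gamma_2^{-1}=g_2$; the fibres of $f+\overline{f}-E_p-E_{\overline{p}}$ are indeed $\{sv=\mathrm{const}\}$ and $g_1(s)\,g_1(v)=sv$; and $f+\overline{f}$ is not $g_i$-invariant, which pins $\rk\Pic(X)^{g_i}$ at $2$. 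The paper reads (b) and (c) off the permutation of the singular fibres of the two conic bundles of $P_1$ rather than off the explicit formulas, but that is essentially a difference of presentation.

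The genuine gap is in (c): you exhibit the fixed conic $C$ (rational, without real points) and two isolated real fixed points at the nodes of the two \emph{real} singular fibres of $f+\overline{f}-E_p-E_{\overline{p}}$, but you never show that these are the \emph{only} isolated fixed points. That conic bundle has two further singular fibres, $E_q+f_{p\overline{p}q}$ and $E_{\overline{q}}+f_{p\overline{p}\overline{q}}$, whose components are exchanged by $g_1$, so their (imaginary, conjugate) nodes are also fixed; and since your computation of $\Fix(g_1)$ is done in an affine chart of $\Pp^1_\C\times\Pp^1_\C$, it cannot decide whether these nodes lie on the strict transform of $C$ or constitute two extra isolated fixed points, which would contradict the statement. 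The cheapest repair from where you stand: $C$ is a fibre of $f+\overline{f}-E_q-E_{\overline{q}}$, so $C\cdot\bigl(\text{fibre of } f+\overline{f}-E_p-E_{\overline{p}}\bigr)=2$, and since $C\subset\Fix(g_1)$ while the imaginary singular fibres carry only the node as fixed point, $C$ must meet each of them doubly at that node; hence those nodes are on $C$ and not isolated. Alternatively, use the paper's containment of $\Fix(g_1)$ in the two fibres of $f+\overline{f}-E_q-E_{\overline{q}}$ lying over the fixed points of the induced involution of its base: one of these is $C$ and the other, disjoint from $C$, contributes exactly two isolated points. (A Lefschetz count, $\chi(\Fix(g_1))=2+\mathrm{tr}\bigl(g_1^*|_{\Pic(X_\C)}\bigr)=4$ from the matrix in the proof, also forces exactly two isolated points.)
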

\begin{proof}
The existence can be checked by using Proposition~\ref{im} and the description of $\mathbb{F}_\R$. Using the action on the conic bundles to compute the matrices of $g_1$, $g_2$ with respect to the basis $\{f,\bar f,E_p,E_{\bar p},E_q,E_{\bar q}\}$, we respectively get
\[\left(\begin{smallmatrix}
2 & 1 & 1 & 1& 1& 1\\
1 & 2 & 1& 1& 1& 1\\
-1& -1& 0 & -1 & -1 & -1\\
-1& -1& -1 & 0 & -1 & -1\\
-1& -1& -1 & -1 & -1 & 0\\
-1& -1& -1 & -1 & 0 & -1
\end{smallmatrix}
\right)\text{\ and\ } \left(\begin{smallmatrix}
2 & 1 & 1 & 1& 1& 1\\
1 & 2 & 1& 1& 1& 1\\
-1& -1& -1 & 0 & -1 & -1\\
-1& -1& 0 & -1 & -1 & -1\\
-1& -1& -1 & -1 & 0 & -1\\
-1& -1& -1 & -1 & -1 & 0
\end{smallmatrix}
\right).\]
Using the fact that the points $p,\bar p ,q,\bar q$ on $\Pp^1_\C\times\Pp^1_\C$ are respectively $(1:0)(0:1)$, $(0:1)(1:0)$, $(1:1)(1:\mu)$, $(1:\bar{\mu})(1:1)$ and the above matrices, we obtain the explicit description of the birational maps of $\Pp^1_\C\times\Pp^1_\C$, given in $(d)$. Assertion $(a)$  follows from the description of $g_1$, $g_2$; it remains to show $(b)$, $(c)$. 
The singular fibres of the two conic bundles of the pair $P_1$ are given in Figure \ref{fig:singularfibres}, together with the action of $g_1$, which follows from the description of the matrix above.
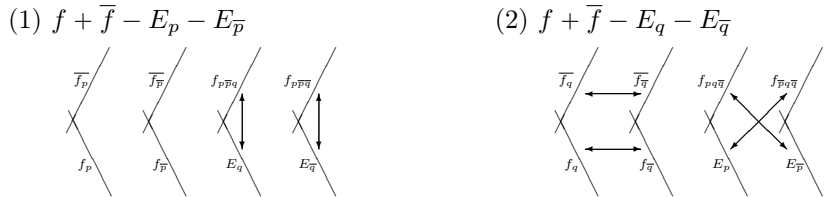
\begin{figure}[h]
\centering
\noindent\begin{minipage}{\textwidth}
\noindent\begin{minipage}{0.5\textwidth}
(1) $f+\overline{f}-E_p-E_{\overline{p}}$
\begin{center}
\begin{picture}(140,50)
\scalebox{0.7}{
\put(25,-5){\line(-1,2){23}}
\put(7,10){\unboldmath{\scriptsize $f_p$}}
\put(24,75){\line(-1,-2){23}}
\put(5,55){\unboldmath{\scriptsize $\overline{f_p}$}}
\put(65,-5){\line(-1,2){23}}
\put(47,10){\unboldmath{\scriptsize $f_{\overline{p}}$}}
\put(65,75){\line(-1,-2){23}}
\put(45,55){\unboldmath{\scriptsize $\overline{f_{\overline{p}}}$}}
\put(105,-5){\line(-1,2){23}}
\put(86,10){\unboldmath{\scriptsize $E_q$}}
\put(105,75){\line(-1,-2){23}}
\put(78,55){\unboldmath{\scriptsize $f_{p\overline{p}q}$}}
\put(145,-5){\line(-1,2){23}}
\put(125,10){\unboldmath{\scriptsize $E_{\overline{q}}$}}
\put(145,75){\line(-1,-2){23}}
\put(117,55){\unboldmath{\scriptsize $f_{p\overline{p} \overline{q}}$}}
\put(95,20){\vector(0,1){30}}
\put(95,50){\vector(0,-1){30}}
\put(136,20){\vector(0,1){30}}
\put(136,50){\vector(0,-1){30}}}
\end{picture}
\end{center}
\end{minipage}
\noindent\begin{minipage}{0.5\textwidth}
(2) $f+\overline{f}-E_q-E_{\overline{q}}$
\begin{center}
\begin{picture}(140,50)
\scalebox{0.7}{
\put(25,-5){\line(-1,2){23}}
\put(7,10){\unboldmath{\scriptsize $f_q$}}
\put(25,75){\line(-1,-2){23}}
\put(4,55){\unboldmath{\scriptsize $\overline{f_q}$}}
\put(65,-5){\line(-1,2){23}}
\put(47,10){\unboldmath{\scriptsize $f_{\overline{q}}$}}
\put(65,75){\line(-1,-2){23}}
\put(44,55){\unboldmath{\scriptsize $\overline{f_{\overline{q}}}$}}
\put(105,-5){\line(-1,2){23}}
\put(86,10){\unboldmath{\scriptsize $E_p$}}
\put(105,75){\line(-1,-2){23}}
\put(78,55){\unboldmath{\scriptsize $f_{pq\overline{q}}$}}
\put(145,-5){\line(-1,2){23}}
\put(125,10){\unboldmath{\scriptsize $E_{\overline{p}}$}}
\put(145,75){\line(-1,-2){23}}
\put(117,55){\unboldmath{\scriptsize $f_{\overline{p}q\overline{q}}$}}
\put(96,20){\vector(1,1){30}}
\put(96,50){\vector(1,-1){30}}
\put(126,20){\vector(-1,1){30}}
\put(126,50){\vector(-1,-1){30}}
\put(18,20){\vector(1,0){30}}
\put(48,20){\vector(-1,0){30}}
\put(48,50){\vector(-1,0){30}}
\put(18,50){\vector(1,0){30}}}
\end{picture}
\end{center}
\end{minipage}
\caption{Singular fibres of the two conic bundles, together with the action of $g_1$.}
\label{fig:singularfibres}
\end{minipage}
\end{figure}
This shows that the action on the basis is trivial in the first case and not trivial in the second. The fixed points are then contained in the two fibres of the second fibration that are fixed, and which are then two smooth rational curves. Looking at the first fibration, we obtain two fixed points in each smooth fibre, three points in the first two singular fibres and one in the last two. The only real points in these fibres are $f_p\cap \overline{f_p}$ and $f_{\bar{p}}\cap \overline{f_{\bar{p}}}$, so we obtain on $X(\C)$ exactly two isolated real points and one smooth rational curve with no real point.\end{proof}

\begin{lemma}\label{Lem:DP4Links}
Let $g\in\Aut(X)$ of prime order that preserves a real conic bundle structure and such that $\rk(\Pic(X)^g)=2$, in particular, $g$ preserves the pair $P_1$. Then, one of the following occurs:
\begin{enumerate}[$(1)$]
\item there is $h\in\Cc(g)\subset\Aut(X)$, the centraliser of $g$, whose action on $P_1$ is the exchange of the two conic bundle structures. In other words, the following diagram commutes
\[\xymatrix@R-0.8pc{
& \scalebox{0.7}{$g\acts $}X\ \ar[dl]_{\zeta_1}\ \ar[r]^h \ar[d]_{\pi_1} &\scalebox{0.7}{$g\acts $}X\ \  \ar[d]_{\pi_2}\ar[dr]^{\zeta_2} & & \\
S\ar[r]^\pi &\Pp^1  \ar[r]^{\simeq}& \Pp^1 & S\ar[l]_\pi}\]
where $\zeta_1$, $\zeta_2$ are the blow-up of four points on $S_\C$ and $\pi_1$, $\pi_2$ are the morphisms corresponding to the conic bundle structures for $f+\bar f-E_p-E_{\bar p}$ and $f+\bar f-E_q-E_{\bar q}$, respectively.
\item The map $g$ is equal to $g_1$ or $g_2$ given in Lemma~$\ref{Lem:g1g2DP4}$.
\end{enumerate}
\end{lemma}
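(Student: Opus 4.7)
The plan is to split according to whether $g$ lies in the kernel $\F_\R$ of the action on the five pairs; by Proposition~\ref{im}, the only non-trivial possibility for $\rho(g)$ is $(2\ 3)(4\ 5)$. The case $g \in \F_\R$ is essentially immediate: $\F_\R \cong (\F_2)^3$ is abelian, so every element of $\F_\R$ commutes with $g$, and $\gamma_2 = (1,0,1,0,0) \in \F_\R$ has first coordinate~$1$, hence exchanges the two conic bundle structures of the pair~$P_1$. Setting $h = \gamma_2$ gives conclusion~(1).

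When $g \notin \F_\R$, one has $\rho(g) = (2\ 3)(4\ 5)$, so this case only arises when $|\mu|=1$, and since $\rho(g)$ has order two while $g$ has prime order, $g$ must itself be an involution. I would then enumerate the involutions in the coset $\F_\R \cdot g_1$: writing $g = \delta g_1$ with $\delta \in \F_\R$, the equation $g^2 = 1$ becomes $\delta + g_1 \delta g_1^{-1} = 0$ (using abelianness of $\F_\R$), and since conjugation by $g_1$ permutes the indices of $\F_\R$ via $(2\ 3)(4\ 5)$, this forces $a_2(\delta) = a_3(\delta)$, $a_4(\delta) = a_5(\delta)$, and via the relation $a_1 + a_2 + a_3 = 0$ also $a_1(\delta) = 0$. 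The resulting four solutions $\delta \in \{0, \gamma_1, \gamma, \gamma_1 \gamma\}$ yield the candidates $g \in \{g_1,\, g_2,\, \gamma g_1,\, \gamma g_2\}$.

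The remaining task, and the main technical step, is to verify that only $g_1$ and $g_2$ satisfy $\rk(\Pic(X)^g) = 2$. Working on the basis $\{f + \overline{f},\, E_p + E_{\overline{p}},\, E_q + E_{\overline{q}}\}$ of the $\sigma$-invariant lattice $\Pic(X) \otimes \Q$, I would establish two facts: first, that $\gamma_1$ acts on $\Pic(X_\C)$ exactly like $\sigma$ (it swaps $E_p \leftrightarrow E_{\overline{p}}$ and $E_q \leftrightarrow E_{\overline{q}}$ and fixes $f$ and $\overline{f}$), hence trivially on $\Pic(X) \otimes \Q$; and second, that $g_1$ and $\gamma$ induce the same endomorphism of $\Pic(X) \otimes \Q$, since both exchange the two conic bundles of $P_4$ and act with matching symmetry on the exceptional divisors. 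Consequently $\gamma g_1$ acts as $\gamma^2 = \mathrm{id}$ on $\Pic(X) \otimes \Q$, so $\rk(\Pic(X)^{\gamma g_1}) = 3$, and the same conclusion applies to $\gamma g_2 = \gamma_1 \cdot \gamma g_1$. Hence among the four candidates only $g_1$ and $g_2$ survive, giving conclusion~(2).

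The main obstacle is establishing the coincidence of the actions of $g_1$ and $\gamma$ on $\Pic(X) \otimes \Q$. This reduces to computing the matrix of $\gamma$ on the chosen basis; since $\gamma$ fixes every conic bundle in $P_1, P_2, P_3$ while swapping within $P_4$ and within $P_5$ (so it sends $f \mapsto -K_X - f$ and $\overline{f} \mapsto -K_X - \overline{f}$), these constraints determine the matrix uniquely, after which a direct comparison with the matrix of $g_1$ already available from Lemma~\ref{Lem:g1g2DP4} is routine.
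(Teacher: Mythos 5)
Your proof is correct and follows the same overall decomposition as the paper: split on whether $g$ lies in $\F_\R$, take $h=\gamma_2$ in the abelian kernel case, and in the non-kernel case use $\rho(g)=(2\ 3)(4\ 5)$ (hence $|\mu|=1$ and $g$ an involution) to reduce to finitely many candidates. Where you diverge is in how the bad candidates are eliminated. The paper does not enumerate the coset explicitly; it simply observes that if the exchange of $P_4$ with $P_5$ is of the ``horizontal'' type ($f\leftrightarrow\bar f$), then $f+\bar f\in\Pic(X)^g$, which together with the preserved real conic bundle class and $-K_X$ forces $\rk(\Pic(X)^g)>2$ — a one-line contradiction — and then identifies the two surviving actions with those of $g_1,g_2$ (implicitly using faithfulness of the Picard representation). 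You instead list the four involutions $g_1,g_2,\gamma g_1,\gamma g_2$ in the coset $\F_\R g_1$ and prove the stronger fact that $\gamma g_1$ and $\gamma g_2$ act as the identity on $\Pic(X)\otimes\Q$, via the matrix identity $M_{\gamma}=M_{g_1}$ on the invariant lattice (which I checked: both equal $\left(\begin{smallmatrix}3&2&2\\-2&-1&-2\\-2&-2&-1\end{smallmatrix}\right)$ in the basis $f+\bar f$, $E_p+E_{\bar p}$, $E_q+E_{\bar q}$). Your route costs a computation but makes the final identification of $g$ with $g_1$ or $g_2$ as honest automorphisms, not just as Picard actions, completely explicit. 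One cosmetic slip: $\gamma_1$ does not act on $\Pic(X_\C)$ ``exactly like $\sigma$'' — it fixes $f$ and $\bar f$ whereas $\sigma$ swaps them — but since both fix $f+\bar f$ this does not affect the only fact you use, namely triviality of $\gamma_1$ on $\Pic(X)\otimes\Q$.
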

\begin{proof}
Non trivial automorphisms in $\F_\R$ preserving the first pair $P_1$ are $\gamma_1$, $\gamma$, and $\gamma_1\gamma$. In this case, we are in $(1)$ and can choose $h=\gamma_2$. 

When $g\notin\F_\R$, then $g$ exchanges $P_2$ and $P_3$. This plus the fact that $g$ has prime order implies that $g$ has order $2$. On the other hand, the action of $g$ on the pairs $P_4$ and $P_5$ cannot be like \begin{picture}(50,20)
\scalebox{0.6}{
\put(10,-10){\line(0,1){40}}
\put(10,-10){\line(1,0){30}}
\put(40,-10){\line(0,1){40}}
\put(10,30){\line(1,0){30}}
\put(22,15){\unboldmath$\bullet$}
\put(22,0){\unboldmath$\bullet$}
\put(21,-17){\unboldmath{\scriptsize $P_4$}}

\put(24,18){\vector(1,0){28}}
\put(56,18){\vector(-1,0){28}}
\put(24,3){\vector(1,0){28}}
\put(56,3){\vector(-1,0){28}}

\put(40,-10){\line(0,1){40}}
\put(40,-10){\line(1,0){30}}
\put(70,-10){\line(0,1){40}}
\put(40,30){\line(1,0){30}}
\put(52,15){\unboldmath$\bullet$}
\put(52,0){\unboldmath$\bullet$}
\put(51,-17){\unboldmath{\scriptsize $P_5$}}}
\end{picture}\vspace{0.3cm}, since this would imply that $\rk(\Pic(X)^g)>2$ since in this case, $g$ also fixes $f+\bar f$.
Then, the action of $g$ on the conic bundles is one of the two given in Lemma~\ref{Lem:g1g2DP4}.
\end{proof}

\subsection{Case: $(K_X)^2=2$.}\label{dP2}

The birational morphism $\zeta\colon X\to S$ is the blow-up of 3 pairs of conjugate points, say $p,\bar p, q, \bar q, r, \bar r\in S$. 
Since X is a Del Pezzo surface of degree two, the linear system of the anticanonical divisor defines a double covering $|-K_X|\colon X\rightarrow \Pp^2$ ramified over a quartic $\Gamma$. From the fact that $X(\R)\simeq S(\R)$, $\Gamma $ is a real smooth quartic with one oval. We see $X$ as $w^2=F(x,y,z)$ in $\Pp(2,1,1,1)$ and $\Gamma$ the zero set of $F$.

\begin{prop}\label{Prop:Geiser}
There exists an exact sequence
\begin{equation*}
\xymatrix{
1 \ar[r] & \langle\nu\rangle \ar[r] & \Aut(X) \ar[r] & \Aut(\Gamma)\ar[r] &1
}
\end{equation*}
where $\nu$ represents the Geiser involution which exchanges the two points of any fibre i.e. the involution given by $(w,x,y,z)\mapsto(-w,x,y,z)$.
\end{prop}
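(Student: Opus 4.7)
The backbone of the argument is the anticanonical double cover $\phi = |-K_X|\colon X\to\Pp^2$, branched over the smooth real quartic $\Gamma$. Since $K_X$ is preserved by every element of $\Aut(X)$, each $g\in\Aut(X)$ descends to an automorphism $\bar g\in\Aut(\Pp^2)$. Because $\Gamma$ is the branch locus of $\phi$, it is intrinsically attached to $X$, so $\bar g$ preserves $\Gamma$ and restricts to an automorphism $g\vert_\Gamma\in\Aut(\Gamma)$. This gives a homomorphism $\Aut(X)\to\Aut(\Gamma)$, $g\mapsto g\vert_\Gamma$, which I plan to prove has kernel $\langle\nu\rangle$ and is surjective.

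For the kernel, suppose $g$ acts trivially on $\Pp^2$. Then $g$ preserves every fibre of $\phi$. A general fibre consists of two distinct points interchanged by $\nu$, so on this fibre $g$ either fixes both points or permutes them; by continuity (connectedness of the locus of smooth fibres in $X$) the alternative is the same for every general fibre. Thus $g\in\{\mathrm{id},\nu\}$, giving $\ker=\langle\nu\rangle$. Concretely, in the model $w^2=F(x,y,z)$ of $X$ in $\Pp(2,1,1,1)$, an automorphism acting trivially on $(x\colon y\colon z)$ is of the form $w\mapsto \lambda w$ with $\lambda^2=1$, so $\lambda\in\{\pm1\}$.

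For surjectivity, the key input is that a smooth plane quartic is canonically embedded: by adjunction $\omega_\Gamma\cong\Op_\Gamma(1)$, so any automorphism $\alpha$ of $\Gamma$ preserves the canonical class and hence extends to an automorphism $\tilde\alpha\in\Aut(\Pp^2)$ with $\tilde\alpha(\Gamma)=\Gamma$. Given such $\tilde\alpha$, the fibred product construction of $X$ as the double cover of $\Pp^2$ determined by $(\Gamma,\Op_{\Pp^2}(2))$ shows that $\tilde\alpha$ admits exactly two lifts to automorphisms of $X$, differing by $\nu$, which provides a preimage of $\alpha$ in $\Aut(X)$.

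The main subtlety, and the step I expect to require the most care, is handling the real structure throughout: one must check that the extension $\Gamma\hookrightarrow\Pp^2$ used in the surjectivity argument is compatible with the antiholomorphic involution $\sigma$, so that a \emph{real} automorphism of $\Gamma$ lifts to a \emph{real} automorphism of $X$. This follows because the canonical system of $\Gamma$ is intrinsic and therefore $\sigma$-equivariant, and because $\nu$ itself commutes with $\sigma$; hence among the two complex lifts of a real $\tilde\alpha$, the choice is $\sigma$-stable up to composition with $\nu$, yielding a real lift in $\Aut(X)$. Putting the three paragraphs together gives the exact sequence of the statement.
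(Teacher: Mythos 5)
Your proof follows essentially the same route as the paper: factor the restriction map through $\Aut(\Pp^2,\Gamma)$ via the anticanonical double cover, identify the kernel of $\Aut(X)\to\Aut(\Pp^2)$ with $\langle\nu\rangle$ using the model $w^2=F(x,y,z)$, and use adjunction ($K_\Gamma=L|_\Gamma$) to extend automorphisms of $\Gamma$ to $\Pp^2$. One small step is glossed over in your kernel paragraph: you start from ``suppose $g$ acts trivially on $\Pp^2$'', but the kernel of $\Aut(X)\to\Aut(\Gamma)$ consists of those $g$ with $g|_\Gamma=\mathrm{id}$, so you still need the injectivity of the restriction $\Aut(\Pp^2,\Gamma)\to\Aut(\Gamma)$, i.e.\ that a projective linear map fixing the smooth quartic pointwise is the identity (immediate, since $\Gamma$ contains four points in general position; the paper makes this remark explicitly). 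With that observation added, the argument is complete and matches the paper's.
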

\begin{proof}
We have the following exact sequence 
\begin{equation}\label{seq2}
\xymatrix{
1 \ar[r] & \langle\nu\rangle \ar[r] & \Aut(X) \ar[r] & \Aut(\Pp^2,\Gamma)\ar[r] &1
}
\end{equation}
\noindent where $\Aut(\Pp^2,\Gamma)$ denotes the automorphisms of $\Pp^2$ which preserves the quartic and is isomorphic to $\Aut(\Gamma)$ because the restrictions gives a map from $\Aut(\Pp^2,\Gamma)$ to $\Aut(\Gamma)$ which is injective since the only automorphism that preserves the quartic pointwise is the identity (an automorphism of $\Pp^2$ can only fixed 3 points or a point and a line but not a quartic). To see that the restriction map is surjective, we compute the canonical divisor of the quartic by adjunction formula getting that $K_\Gamma=(K_{\Pp^2}+\Gamma)|_\Gamma=(-3L+4L)|_\Gamma=L|_\Gamma$. Hence, every automorphism of $\Gamma$ extends to $\Pp^2$.
\end{proof}

\begin{lemma}\label{numinimal}
\begin{enumerate}[$($a$)$]
\item Let $C$ be a $(-1)$-curve in $X$, then the $(-1)$-curve $\nu(C)$ is equal to $\nu(C)=-K_X-C$.
\item $\rk(\Pic(X)^\nu)=1$. In particular, the pair $(X,\langle\nu\rangle)$ is minimal.
\end{enumerate}
\end{lemma}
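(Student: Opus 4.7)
The plan is to use the description of $\nu$ as the deck transformation of the anticanonical double cover $\pi := |{-K_X}|\colon X \to \Pp^2$ branched over the smooth quartic $\Gamma$, together with the projection formula, to compute the action of $\nu^*$ on $\Pic(X)$ explicitly.

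For part $(a)$, I would start by observing that a $(-1)$-curve $C$ satisfies $C \cdot (-K_X) = 1$ by the adjunction formula, so its image $\pi(C)$ is a line $L \subset \Pp^2$. The ramification divisor of $\pi$ is isomorphic to $\Gamma$, which is smooth of genus $3$ and therefore contains no rational curve; in particular $C$ cannot be a component of it. Thus $\pi|_C \colon C \to L$ is birational and $\nu(C) \neq C$, which together with $\deg \pi = 2$ forces the divisorial equality $\pi^{-1}(L) = C + \nu(C)$. Since $\pi^{-1}(L) \sim \pi^* L = -K_X$, we obtain $\nu(C) = -K_X - C$ in $\Pic(X)$.

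For part $(b)$, I would extend the above to an explicit formula for $\nu^*$ on all of $\Pic(X)$. For a degree-$2$ Galois cover one has $\pi^* \pi_*(D) = D + \nu^*(D)$, and the projection formula combined with $\Pic(\Pp^2) = \Z L$ gives $\pi_*(D) = (D \cdot (-K_X)) L$, whence $\pi^* \pi_*(D) = (K_X \cdot D) K_X$. This yields
\[\nu^*(D) = (K_X \cdot D) K_X - D.\]
A class is $\nu$-invariant if and only if $2D = (K_X \cdot D) K_X$, which in $\Pic(X) \otimes \Q$ forces $D \in \Q K_X$; hence $\rk \Pic(X)^\nu = 1$. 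The minimality claim then follows formally: any non-trivial $\nu$-equivariant birational morphism $\zeta \colon X \to X'$ would split $\Pic(X)^\nu$ as $\zeta^* \Pic(X')^{\nu'}$ plus a nonzero contribution from the $\nu$-invariant exceptional classes, forcing rank at least $2$, a contradiction.

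The only delicate point is to justify that $\pi^{-1}(L) = C + \nu(C)$ (and not $2C$) in part $(a)$, which is handled by the genus argument above showing $C$ is not a component of the ramification locus. Everything else is a direct projection-formula computation.
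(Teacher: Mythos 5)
Your proof is correct. For part $(a)$ you take essentially the same route as the paper: both arguments identify the total preimage of the image line of $C$ under the anticanonical double cover $X\to\Pp^2$ with $C+\nu(C)$ on one hand and with the pullback $-K_X$ of a line on the other. The paper excludes the degenerate possibility $\nu(C)=C$ purely numerically (the pullback $d(-K_X)$ cannot equal $C$ in $\Pic(X)$, and intersecting with $-K_X$ gives $d=1$), whereas you exclude it by noting that the ramification curve is a copy of the genus-$3$ quartic $\Gamma$ and so has no rational component; both are valid. Part $(b)$ is where the proofs genuinely differ. The paper expands an arbitrary class $D$ in the generators $\{-K_X,E_p,\dots,E_{\bar r}\}$ and applies part $(a)$ to each exceptional class to conclude $D+\nu(D)\in\Z(-K_X)$; you instead compute $\nu^*$ in closed form, $\nu^*(D)=(K_X\cdot D)K_X-D$, by combining the push-pull identity for the degree-$2$ Galois cover (the pullback of the pushforward of $D$ is $D+\nu^*(D)$) with the projection formula. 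Your computation is independent of any choice of generating set --- in particular it sidesteps the paper's assertion that the seven listed classes generate $\Pic(X_\C)\cong\Z^8$, which as written has to be read as a statement about $\Pic(X)\otimes\Q$ with the conjugate-invariant sums $E_p+E_{\bar p}$, etc. --- at the small cost of invoking push-pull for the cover rather than only part $(a)$. Your closing deduction of minimality from $\rk\Pic(X)^\nu=1$ (a non-trivial equivariant contraction would add an invariant exceptional class to the pullback of $\Pic(X')^{\nu'}$ and force the invariant rank to be at least $2$) is the standard argument, which the paper leaves implicit.
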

\begin{proof}
\begin{enumerate}[(a)]
\item We call $\varepsilon$ the map defined by $|-K_X|$. Then, $\varepsilon(C)$ is a curve of degree $d$ for some $d$. If we call $D=\varepsilon^*(\varepsilon(C))$, we have that $D= d(-K_X)\neq C$. This implies that $D=C+C'=d(-K_X)$ for $C'$ a $(-1)$-curve, $C'=\nu(C)$. Intersecting $D$ with $-K_X$ we have $2=2d$ and hence $d=1$. Then $\nu(C)=C'=-K_X-C$.
\item Let $C$ be a $(-1)$-curve in $X$, then by item (a) we have $C\cdot\nu(C)=C(-K_X-C)=2$. Moreover, the fact that $\Pic(X_\C)$ is generated by the divisors in the set $A:=\{-K_X,E_p,E_{\bar p},E_q,E_{\bar q},E_r,E_{\bar r}\}$ then, for any divisor $D\in\Pic(X_\C)$, $D=\sum a_iC_i$ with $a_i\in\Z$ and $C_i\in A$. We have $D+\nu(D)=D+a_i\sum\nu(C_i)=a_i(\sum -K_X-C_i)=m(-K_X)$ for some $m\in\Z$.\qedhere
\end{enumerate}
\end{proof}
\begin{lemma}\label{dP2nonrank1}
Let $g\in\Aut(X)$ of prime order and $g\neq\nu$. Then $\rk(\Pic(X)^g)\neq1$.
\end{lemma}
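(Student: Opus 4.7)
(Proof proposal.) The problem reduces to showing $L^g \neq 0$, where $L := (K_X^\perp \cap \Pic(X)) \otimes \Q$ is the rank-three orthogonal complement of $\Q K_X$ inside $\Pic(X)\otimes\Q$: indeed $\Pic(X)\otimes\Q = \Q K_X \oplus L$ with $K_X$ automatically $g$-fixed, so $\rk(\Pic(X)^g) = 1 + \dim L^g$.

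For odd primes $p$, a lattice-theoretic argument suffices. The element $g$ acts on $L$ with order dividing $p$, so the characteristic polynomial of $g|_L$ over $\Q$ is a product of the cyclotomic polynomials $\Phi_1(x) = x-1$ (degree $1$) and $\Phi_p(x)$ (degree $p-1$). Writing $3 = \dim L = a + b(p-1)$ with $a, b$ the multiplicities, and using that $p-1$ is even while $3$ is odd, we must have $a \geq 1$, giving $L^g \neq 0$.

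For $p=2$, I would argue by contradiction: suppose $g$ acts as $-\mathrm{id}$ on $L$. By Proposition~\ref{Prop:Geiser} and the centrality of $\nu$, write $g = \nu^a h$ where $h$ is a lift of $\bar g \in \Aut(\Pp^2,\Gamma)$; the case $\bar g = 1$ forces $g = \nu$, contradicting the hypothesis, so $\bar g$ is a non-trivial real involution of $\Pp^2$, which in suitable real coordinates is $(x:y:z) \mapsto (x:y:-z)$ with real fixed line $\ell = \{z = 0\}$ and real fixed point $P = (0:0:1)$. Let $\pi_X\colon X \to \Pp^2$ denote the anticanonical double cover. The two lifts of $\bar g$ to $X$ are $h_E : (w:x:y:z) \mapsto (w:x:y:-z)$, fixing pointwise the genus-$1$ curve $E = \pi_X^{-1}(\ell)$ together with the pair $\pi_X^{-1}(P)$ (Euler characteristic $2$), and $h_S := \nu h_E$, with only the four ramification points of $E \to \ell$ as fixed points on $X_\C$ (Euler characteristic $4$). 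Since $\nu$ acts as $-\mathrm{id}$ on all of $K_X^\perp$ by Lemma~\ref{numinimal}, the element $g' := g\nu$ is a non-trivial involution that acts trivially on $L$, hence on the entire real Picard group $\Pic(X)\otimes\Q$.

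Applying the topological Lefschetz formula $\chi(\mathrm{Fix}(\cdot)) = 2 + \mathrm{tr}(\cdot \mid H^2(X_\C,\Q))$ to $h_E$ and $h_S$, one finds that the $(+1)$-eigenspaces on $\Pic(X_\C)\otimes\Q$ have ranks $4$ and $5$ respectively. The matching of rank $4$ for $h_E$ with $\rk\Pic(X) = 4$, combined with the fact that $g'$ is one of the two lifts and must fix $\Pic(X)\otimes\Q$ pointwise, pins down the action of $g'$ on the $\sigma$-antiinvariant complement $\Pic(X)^-\otimes\Q$: it must coincide with $-\mathrm{id}$, so that $g'$ and the antiholomorphic involution $\sigma$ induce the same action on $\Pic(X_\C)$. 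Then $g'\sigma$ is an antiholomorphic involution of $X_\C$ acting trivially on $\Pic(X_\C)$, defining a second real structure of Picard rank $8$ for which all $56$ complex $(-1)$-curves become individually real. The contradiction follows from this incompatibility: by Comessatti's classification (Theorem~\ref{t}) together with the injectivity of $\Aut(X_\C) \to \Aut(\Pic(X_\C))$ -- guaranteed for degree-$2$ Del Pezzo surfaces by the faithful action of $\Aut(\Pp^2,\Gamma)$ on $\Pp^2$ and by $\nu$ acting as $-\mathrm{id}$ on $K_X^\perp$ -- this new real structure is incompatible with the original hypothesis $X(\R) \simeq S(\R)$, which by Proposition~\ref{p} forbids real $(-1)$-curves; the only resolution is $g = \nu$, contradicting the hypothesis. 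The main technical obstacle I anticipate is making this last comparison of the two real structures watertight, in particular reconciling the sheet-swap description of $h_E, h_S$ via $X \to \Pp^2$ with the blow-up description $X \to S$; once completed, one obtains $L^g \neq 0$ and hence $\rk(\Pic(X)^g) \geq 2$.
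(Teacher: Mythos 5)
Your reduction to $L^{g}\neq 0$ and your treatment of odd primes are correct and essentially the paper's argument (the paper runs the cyclotomic-polynomial count on the rank-$4$ lattice $\Pic(X)$ rather than on the rank-$3$ complement $L$, concluding $p=3$ and $Q=(x-1)^2(x^2+x+1)$, but the parity argument is the same). The problem is the case $p=2$, where you take a genuinely different route and the final step does not close.

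Two concrete gaps. First, nothing in your Lefschetz computation ``pins down'' that $g'=g\nu$ is the lift $h_E$ rather than $h_S$: you show the two lifts have $(+1)$-eigenspaces on $\Pic(X_\C)\otimes\Q$ of ranks $4$ and $5$, and a rank-$5$ eigenspace can perfectly well contain the rank-$4$ subspace $\Pic(X)\otimes\Q$ that $g'$ is known to fix pointwise; so the case $g'=h_S$ is simply not excluded, and in that case $g'$ does \emph{not} act as $\sigma$ on $\Pic(X_\C)$ and your construction never starts. Second, and more seriously, even when $g'$ and $\sigma$ do induce the same action on $\Pic(X_\C)$, the existence of the second real structure $g'\sigma$ with all $56$ complex $(-1)$-curves real is not a contradiction: distinct real structures on the same complex surface are logically independent, and Proposition~\ref{p} constrains only the original structure $\sigma$. (Compare $\Pp^1_\C\times\Pp^1_\C$, which carries both the spherical real structure and the split one $\Pp^1_\R\times\Pp^1_\R$; the latter has plenty of real curves without contradicting anything about the former.) So no contradiction is actually derived, as your own closing caveat concedes. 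The paper's route for $p=2$ avoids all of this: by Proposition~\ref{Prop:Geiser} the Geiser involution $\nu$ is central, so $g$ and $\nu$ are simultaneously diagonalisable on the rank-$4$ lattice $\Pic(X)$; if $\rk(\Pic(X)^g)=1$ both act as $\mathrm{diag}(1,-1,-1,-1)$ with the $+1$ on $K_X$, hence $g\nu$ acts trivially on $\Pic(X)$ and one concludes $g=\nu$. If you want to salvage your approach, the contradiction has to be extracted from the single structure $\sigma$ (e.g.\ by showing directly that no non-Geiser involution of $X$ can act as $-\mathrm{id}$ on $L$ while descending to a real involution of $(\Pp^2,\Gamma)$ compatible with $\Gamma$ having one oval), not from comparing $\sigma$ with an auxiliary real structure.
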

\begin{proof}
Let $g\in\Aut(X)$. Since a basis of $\Pic(X_\C)\cong\Z^8$ is $\{f,\bar f, E_p,E_{\bar p},E_q,E_{\bar q},E_r,E_{\bar r}\}$, we get that the action of $g$ on $\Pic(X)=\Pic(X_\C)^{\sigma}$ is an element in $\GL(4,\Z)\subset\GL(4,\C)$ and is diagonalisable in $\GL(4,\C)$ for $g\in\Aut(X)$.
If $g$ is an involution in $\Aut(X)$ with $\rk(\Pic(X)^g)=1$, the only possibility for the action of $g$ on $\Pic(X)^g$ in $\GL(4,\C)$ is given by $\left(\begin{smallmatrix}
1 & & & \\
& -1 & & \\
& & -1& \\
& & & -1
\end{smallmatrix}\right)$ assuming that the first entry $1$ corresponds to the anticanonical divisor for some basis containing it. On the other hand, since every element $g$ in $\Aut(X)$ commutes with $\nu$, then in the same basis, $g$ and $\nu$ are conjugate to a diagonal action as the element presented above. This implies that $g$ and $\nu$ are the same.

Let $g\in\Aut(X)$ be of prime order $p\geq3$. We obtain then an element of $\GL(4,\Z)$ of order $p$ which fixes $K_X$. Then, the characteristic polynomial $Q\in\Z[x]$ vanishes at $1$ and all other roots in $\C$ are roots of the polynomial $x^{p-1}+\cdots+1$, irreducible over $\Q$. Hence, $Q$ is a multiple of $(x-1)(x^{p-1}+\cdots+1)=x^p-1$. This implies that $p\leq4$, so $p=3$ and then $Q=(x-1)^2(x^2+x+1)$. Therefore $\Pic(X)^g\cong\Z^2$.
\end{proof}

\section{Conic bundle case}\label{Ch:ConicCase}
In this section, we describe the elements in $\Aut(S(\R))$ of prime order corresponding to the second case of Proposition \ref{MinMod}, i.e.\ that belong to the group $\Aut(S(\R),\pi)$. Let us recall the following notation:
\begin{align*}
\Bir(S,\pi)=&\{g \in \Bir(S)\ |\ \exists\alpha\in\Aut(\Pp^1) \text{\ such that } \alpha\pi=\pi g\},\\
\Aut(S(\R),\pi)=&\{g \in \Aut(S(\R))\ |\ \exists\alpha\in\Aut(\Pp^1) \text{\ such that } \alpha\pi=\pi g\},
\end{align*}
and that $\Phi\colon\Bir(S,\pi)\to \Aut(\Pp^1)$ is the corresponding group homomorphism (see the exact sequence $(\ref{seqCB})$) whose kernel is denoted by $\Bir(S/\pi)$ and by $\Aut(S(\R)/\pi)$ for the corresponding group homomorphism $\Aut(S(\R),\pi)\to\Aut(\Pp^1)$.
\subsection{Image of the action on the basis}
Recall that $\pi\colon S\dasharrow \Pp^1$ is the map given  by $\pi(w:x:y:z)=(w:z)$. Hence, the natural coordinates on $\Pp^1$ are $(w:z)$ or simply $(1:z)$ for affine coordinates. With the choice of these coordinates, the group $\Aut(\Pp^1)$ is naturally isomorphic to $\PGL(2,\R)$:  an element 
$\left[\begin{smallmatrix}
a & b\\
c & d
\end{smallmatrix}
\right]\in \PGL(2,\R)$ acts as \[z\mapsto \frac{az+b}{cz+d}\ \ \mbox{ or }\ \ (w:z)\mapsto (cz+dw:az+bw).\]

In the following two lemmas, the image of the map $\Phi\colon\Bir(S,\pi)\to \Aut(\Pp^1)$ in the sequence (\ref{seqCB}) is presented and the image of elements of finite order is characterised.
\begin{lemma}\label{imphigrande}
The image of\/ $\Phi\colon\Bir(S,\pi)\to\Aut(\Pp^1)$ is the same as the image of its restriction to $\Aut(S(\R),\pi)$.

The corresponding subgroup of $\Aut(\Pp^1)$ is given by the following semidirect product, where the generator of \ $\Z/2\Z$ is the automorphism $\eta\colon z\mapsto -z$.
\begin{equation}\label{imphi}
\Phi(\Bir(S,\pi))=\Phi(\Aut(S(\R),\pi))=\left\{\left[\begin{smallmatrix}
1 & b\\
b & 1
\end{smallmatrix}
\right];b\in(-1,1)\subset\R\right\}\rtimes\Z/2\Z
\end{equation}
\end{lemma}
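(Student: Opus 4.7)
Let $G$ denote the subgroup of $\Aut(\Pp^1)$ stated in \eqref{imphi}. The plan is to establish
\[\Phi(\Bir(S,\pi))\ \subseteq\ G\ \subseteq\ \Phi(\Aut(S(\R),\pi)),\]
the reverse inclusion $\Phi(\Aut(S(\R),\pi))\subseteq\Phi(\Bir(S,\pi))$ being automatic from $\Aut(S(\R),\pi)\subseteq\Bir(S,\pi)$; this then forces equality throughout.

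For $\Phi(\Bir(S,\pi))\subseteq G$, take $g\in\Bir(S,\pi)$ and $\alpha=\Phi(g)$. Because $g$ commutes with $\sigma$, $\alpha$ lies in $\PGL(2,\R)$; and because the two singular fibres of $\pi$ lie over $(1{:}1),(1{:}{-1})$, $\alpha$ must permute that pair. A short calculation identifies the real Möbius maps with this property as those with matrix $\left[\begin{smallmatrix}1 & b\\ b & 1\end{smallmatrix}\right]$ with $b\in\R\setminus\{\pm 1\}$ (fixing $\pm 1$ individually) together with their composites with $\eta$ (swapping them). The restriction $b\in(-1,1)$ will come from the heart of the argument: $\alpha$ preserves the open arc $I=\{(1:z)\mid z\in(-1,1)\}\subset\Pp^1(\R)$, namely the locus of $t\in\Pp^1(\R)$ for which $\pi^{-1}(t)$ has real points. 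Indeed, for $t_0\in I$ the real fibre $\pi^{-1}(t_0)(\R)$ is a circle, hence contains real points outside the finite base locus of $g$; at any such $p$, the image $g(p)$ is real (since $g\sigma=\sigma g$) and lies in $\pi^{-1}(\alpha(t_0))$, forcing $\alpha(t_0)\in[-1,1]$. Running the same argument on $g^{-1}$, and using that $\alpha$ is a self-homeomorphism of $\Pp^1(\R)$ permuting $\{\pm 1\}$, yields $\alpha(I)=I$, which rules out $|b|\geq 1$.

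For $G\subseteq\Phi(\Aut(S(\R),\pi))$, I exhibit explicit preimages. A direct computation from the formula for $\varphi^{-1}$ in Remark \ref{varphi} gives $\pi\circ\varphi^{-1}(x,y)=(1+xy:1-xy)$ in the affine chart on $\Pp^1_\C\times\Pp^1_\C$. For $\lambda\in\C^*$, the automorphism $(x,y)\mapsto(\lambda x,\overline{\lambda}y)$ lies in $\f\subseteq\Aut(S)\subseteq\Aut(S(\R),\pi)$ and multiplies $t=xy$ by $|\lambda|^2$; converting to the base coordinate $z=(1-t)/(1+t)$, this becomes the Möbius map with matrix $\left[\begin{smallmatrix}1 & b\\ b & 1\end{smallmatrix}\right]$ where $b=(1-|\lambda|^2)/(1+|\lambda|^2)$, and as $|\lambda|^2$ sweeps $\R_{>0}$ the value $b$ covers all of $(-1,1)$. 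Finally, the involution $(w:x:y:z)\mapsto(w:x:y:-z)$ of $\Pp^3$ restricts to an element of $\Aut(S)\subseteq\Aut(S(\R),\pi)$ whose $\Phi$-image is $\eta$. These two families together generate $G$, completing the plan.

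The main obstacle I anticipate is the interval-preservation step in the first inclusion; once that is nailed down, everything else reduces to direct computation.
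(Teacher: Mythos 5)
Your proof is correct and follows essentially the same route as the paper: containment in the stated group is forced by the fact that $\pi$ maps $S(\R)$ onto $[-1,1]$, so the induced automorphism must preserve that interval, and surjectivity is obtained by exhibiting explicit preimages in $\Aut(S)\subseteq\Aut(S(\R),\pi)$ (your maps $(x,y)\mapsto(\lambda x,\overline{\lambda}y)$ are, via $\varphi$, the same boosts $g_b$ the paper writes directly on the sphere). Your more careful justification of why $\alpha(I)=I$ — real points in general fibres away from the base locus, applied to both $g$ and $g^{-1}$ — fills in a step the paper only asserts, which is a welcome addition but not a different argument.
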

\begin{proof}
Since the sphere $S(\R)$ is preserved by elements in $\Bir(S,\pi)$ (respectively in $\Aut(S(\R),\pi)$) and is mapped subjectively to the interval $[-1,1]\subset\R$ on the basis of the fibration. This interval is then invariant on the basis and the group $\Phi(\Bir(S,\pi))$ is contained in the group generated by $z\mapsto \frac{z+b}{bz+1}$, $b\in(-1,1)\subset\R$ and $z\mapsto -z$ because those are exactly the automorphisms of $\Pp^1$ which fix or interchanged the points $-1$ and $1$. On the other hand, for each $b\in (-1,1)\subset \R$ the map $g_b\!:\!(x,y,z)\mapsto\left(x\frac{\sqrt{1-b^2}}{bz+1},y\frac{\sqrt{1-b^2}}{bz+1},\frac{z+b}{bz+1}\right)$ belongs to $\Aut(S(\R),\pi)$ and is sent to $\left[\begin{smallmatrix}
1 & b\\
b & 1
\end{smallmatrix}
\right]$ and the map $\tilde\eta\colon(x,y,z)\mapsto(x,y,-z)$ is sent to $\left[\begin{smallmatrix}
-1 & 0\\
0 & 1
\end{smallmatrix}
\right]$, corresponding to $z\mapsto -z$, which proves Equality (\ref{imphi}).
\end{proof}

\begin{lemma}\label{ImFinOrd}
Let {\small $g\in\Aut(S(\R),\pi)$} be of finite order. After conjugation in {\small $\Aut(S(\R),\pi)$}, the map $\Phi(g)$ is the identity or equal to $\left[\begin{smallmatrix}
1 & \ 0\\
0 & -1
\end{smallmatrix}
\right]$.
\end{lemma}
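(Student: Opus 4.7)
The plan is to use the structure of $\Phi(\Aut(S(\R),\pi))$ described in Lemma~\ref{imphigrande}: every element is either of the form $H_b := \left[\begin{smallmatrix}1 & b \\ b & 1\end{smallmatrix}\right]$ with $b \in (-1,1)$ or of the form $H_b \cdot \eta = \left[\begin{smallmatrix}-1 & b \\ -b & 1\end{smallmatrix}\right]$, where $\eta$ is the generator of $\Z/2\Z$.

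First, I would observe that $H_b$ with $b \neq 0$ has trace $2$ and determinant $1 - b^2 \in (0,1)$, so the ratio $\mathrm{tr}^2/\det = 4/(1-b^2) > 4$, making $H_b$ a hyperbolic (in particular infinite order) element of $\PGL(2,\R)$. Therefore if $\Phi(g)$ lies in the connected component and has finite order, $\Phi(g)$ must be the identity, giving the first case of the conclusion.

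The remaining case is $\Phi(g) = \left[\begin{smallmatrix}-1 & b \\ -b & 1\end{smallmatrix}\right]$ for some $b \in (-1,1)$. The goal becomes to find $c\in(-1,1)$ such that conjugation by $H_c$ sends this matrix to $\left[\begin{smallmatrix}-1 & 0 \\ 0 & 1\end{smallmatrix}\right]$; then, lifting $H_c$ to the birational diffeomorphism $g_c$ exhibited in the proof of Lemma~\ref{imphigrande} produces the desired conjugation $g_c g g_c^{-1} \in \Aut(S(\R),\pi)$ whose image under $\Phi$ is the required normal form. A direct computation of $H_c \left[\begin{smallmatrix}-1 & b \\ -b & 1\end{smallmatrix}\right] H_c^{-1}$ reduces the problem to solving $bc^2 + 2c + b = 0$, whose roots are $c = \bigl(-1\pm \sqrt{1-b^2}\bigr)/b$; the root $c = \bigl(-1+\sqrt{1-b^2}\bigr)/b$ satisfies $|c| < 1$, as one checks by squaring the equivalent inequality $1 - |b| < \sqrt{1-b^2}$.

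Thus the only genuine content of the lemma is the elementary verification that the off-diagonal involutions in $\Phi(\Aut(S(\R),\pi))$ form a single conjugacy class under the connected part, and that a conjugator can be chosen inside the allowed range $(-1,1)$. The mild obstacle, already addressed above, is simply to check that this latter condition can indeed be met; everything else follows formally from the explicit description of $\Phi(\Aut(S(\R),\pi))$ in Lemma~\ref{imphigrande} together with the fact that the hyperbolic elements $H_b$ ($b\neq 0$) cannot appear as $\Phi(g)$ when $g$ has finite order.
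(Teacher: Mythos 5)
Your argument is correct and follows essentially the same route as the paper: rule out the finite-order possibility for $\left[\begin{smallmatrix}1 & b\\ b & 1\end{smallmatrix}\right]$ with $b\neq 0$ via its eigenvalues (the paper computes $1\pm b$ directly; your trace--determinant criterion is equivalent), then conjugate the off-diagonal elements to the standard reflection by a matrix $\left[\begin{smallmatrix}1 & c\\ c & 1\end{smallmatrix}\right]$ obtained from the same quadratic equation. Your explicit verification that the chosen root satisfies $\lvert c\rvert<1$, so that the conjugator genuinely lies in $\Phi(\Aut(S(\R),\pi))$, is a small point the paper leaves implicit.
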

\begin{proof}
Elements of the form $\left[\begin{smallmatrix}
1 & b\\
b & 1
\end{smallmatrix}
\right]$ with $b\in(-1,1)\setminus\{0\}$ are not of finite order; indeed the eigenvalues of $\left[\begin{smallmatrix}
1 & b\\
b & 1
\end{smallmatrix}
\right]$ are $1\pm b$, so the element $\left[\begin{smallmatrix}
1 & b\\
b & 1
\end{smallmatrix}
\right]$ is conjugate to $\left[\begin{smallmatrix}
\frac{1+b}{1-b} & 0\\
0 & 1
\end{smallmatrix}
\right]$ in $\PGL(2,\R)$ and $\frac{1+b}{1-b}\in\R^*$ has infinite order because $\frac{1+b}{1-b}\neq-1$. Moreover, $\left[\begin{smallmatrix}
1 & -b\\
b & -1
\end{smallmatrix}
\right]$ is conjugate to $\left[\begin{smallmatrix}
1 & 0\\
0 & -1
\end{smallmatrix}
\right]$ by the matrix $\left[\begin{smallmatrix}
1 & c\\
c & 1
\end{smallmatrix} 
\right]$ with $c=\frac{1\pm\sqrt{1-b^2}}{b}.$
\end{proof}
\subsection{Algebraic description of $\Bir(S/\pi)$}\label{Sec:AlgDescription}

Extending the scalars from $\R$ to $\C$, the general fibre of $\pi\colon S_\C\rightarrow\C$, $(x,y,z)\mapsto z$ is rational. The group of birational maps of $S_\C$ preserving any general fibre of $\pi$ is then equal to $\PGL(2,\C(z))$. The group $\Bir(S/\pi)$ can thus be viewed as a subgroup of $\PGL(2,\C(z))$.

\begin{definition}\label{bar}\
\begin{enumerate}[$(i)$]
\item For each $A\in\GL(2,\C(z))$, we define $\bar A\in \GL(2,\C(z))$, as the matrix obtained by replacing every coefficient of every entry of $A$ by its conjugate. 

\item In the same way, we define $\bar A$ for any element in $\PGL(2,\C(z))$ and we observe that $\bar A$ does not depend on the representative because if $A_1,A_2\in\PGL(2,\C(z))$ are in the class of the element $A$ then $A_1=\lambda A_2$ for some $\lambda\in\C(z)^*$ and then $\bar A_1=\bar\lambda\bar A_2$ implying that $\bar{A_1}$ and $\bar{A_2}$ are both in the class of $\bar A$.
\end{enumerate}
\end{definition}

\begin{lemma}\label{LemAction}\ 
\begin{enumerate}[$(a)$]
\item The complex surface $S_\C$ is birational to $\A^2_\C$ via $\psi\colon (x,y,z)\dasharrow (x-{\bf i} y,z)$.
\item The group $\PGL(2,\C(z))$ acts on $\A^2_\C$ via
\begin{equation}\label{actionPGL}
\begin{array}{ccccc}
\ \PGL(2,\C(z))&\times& \A_\C^2& \dasharrow& \A_\C^2\\
\Big(\left[\begin{smallmatrix}\alpha(z) & \beta(z) \\ \gamma(z) & \delta(z)\end{smallmatrix}\right]&,&(t,z)\Big)&\dasharrow& \left(\frac{\alpha(z)t+\beta(z)}{\gamma(z)t+\delta(z)},z\right)\end{array}
\end{equation}
and thus also acts on $S_\C$ via the conjugation by $\psi^{-1}$.
\item 
For any $A\in\PGL(2,\C(z))$, the corresponding action of $A$ and $\tau \bar A\tau$ on $S_\C$, via $\psi$ and denoted by $\mathcal A$ and $\mathcal{\tau\bar A\tau}$ respectively, are conjugate by the anti-holomorphic involution $\sigma$ $($i.e. $\sigma\colon (x,y,z)\mapsto (\bar x, \bar y, \bar z)$ $)$, where $\tau:=\left[\begin{smallmatrix}
0 & 1-z^2\\
1 & 0
\end{smallmatrix}
\right]\in  \PGL(2,\C(z)),$ which means that the following diagram commutes
\[\xymatrix@R-0.8pc{
S_\C\  \ar[d]_\sigma \ar@{-->}[r]^{\mathcal A} & \ S_\C \ar[d]^\sigma  \\
S_\C\ \ar@{-->}[r]^{\mathcal{\tau\bar A\tau}} & \ S_\C .
}\]
In particular, the group $\Bir(S/\pi)$ corresponds, via the action of $\ \PGL(2,\C(z))$ on $S_\C$, to the group
\[\mathscr G:=\{A\in  \PGL(2,\C(z))\ |\ \tau A\tau= \bar{A}\}\]
\end{enumerate}
\end{lemma}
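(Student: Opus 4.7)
The plan is to establish the three parts in sequence; the substantive content lies in (c), with (a) and (b) being essentially unpacking of definitions.

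\textbf{Parts (a) and (b).} In the affine chart $w=1$, $S_\C$ has equation $x^2+y^2+z^2=1$, which factors as $(x-{\bf i}y)(x+{\bf i}y)=(1-z)(1+z)$. Setting $t=x-{\bf i}y$, the identity $x+{\bf i}y=(1-z^2)/t$ expresses $x$ and $y$ rationally in $(t,z)$; solving yields the explicit inverse
\[\psi^{-1}\colon (t,z)\dasharrow\left(\frac{t^2+1-z^2}{2t},\;\frac{{\bf i}(t^2-1+z^2)}{2t},\;z\right),\]
which proves (a). For (b), the group $\PGL(2,\C(z))$ acts tautologically by $\C(z)$-linear fractional transformations of the first coordinate of $\A_\C^2$, fixing the projection to the $z$-line; conjugating by $\psi^{-1}$ transports this action to $S_\C$ and lands in the group of birational self-maps preserving (the complexification of) $\pi$.

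\textbf{Part (c).} I proceed in two calculations. First, I push $\sigma$ through $\psi$: since $\overline{x-{\bf i}y}=\bar x+{\bf i}\bar y$ and $(x+{\bf i}y)(x-{\bf i}y)=1-z^2$, a direct check shows
\[\sigma':=\psi\circ\sigma\circ\psi^{-1}\colon(t,z)\dasharrow\left(\frac{1-\bar z^2}{\bar t},\;\bar z\right).\]
Second, a one-step matrix multiplication yields
\[\tau\bar A\tau=\left[\begin{smallmatrix}(1-z^2)\bar\delta & (1-z^2)^2\bar\gamma\\ \bar\beta & (1-z^2)\bar\alpha\end{smallmatrix}\right].\]
Composing $\sigma'\circ\mathcal A\circ\sigma'$ coordinate-wise, using the scalar identity $\overline{f(\bar z)}=\bar f(z)$ valid for any $f\in\C(z)$, and clearing denominators by multiplying top and bottom by $t$, one sees that the resulting fractional-linear transformation in the variable $t$ is precisely the Möbius action of the matrix displayed above. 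This is the commutative diagram asserted in (c).

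The identification $\Bir(S/\pi)=\mathscr G$ then drops out: by (b), every element of $\Bir(S_\C)$ preserving the $\pi$-fibration and acting trivially on the base corresponds to some $A\in\PGL(2,\C(z))$, and such an element descends to an $\R$-birational map of $S$ — equivalently, lies in $\Bir(S/\pi)$ — precisely when it commutes with $\sigma$. By the commutative diagram in (c), this commutation is equivalent to $\tau\bar A\tau=A$. The only real challenge is bookkeeping: the two distinct operations on rational functions in $z$ — coefficient-wise conjugation (defining $\bar A$) and substitution $z\mapsto\bar z$ induced by $\sigma$ — must be carefully distinguished. The identity $\overline{f(\bar z)}=\bar f(z)$ reconciles them, and the factor $1-z^2$ coming from $\psi^{-1}$ is exactly what produces the matrix $\tau$ in the conjugation formula.
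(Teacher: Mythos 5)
Your proposal is correct and follows essentially the same route as the paper: both parts (a) and (b) are the same direct computations, and for (c) you identify the transported involution $\psi\sigma\psi^{-1}\colon(t,z)\dasharrow\left(\frac{1-\bar z^2}{\bar t},\bar z\right)$ exactly as the paper does, then verify that conjugation by it realises $A\mapsto\tau\bar A\tau$ (the paper phrases this via the factorisation $\psi\sigma\psi^{-1}=\sigma_1\tau$ and the identity $\bar A=\sigma_1 A\sigma_1$, whereas you multiply the matrices out explicitly, but this is only a cosmetic difference). The concluding identification $\Bir(S/\pi)=\mathscr G$ via commutation with $\sigma$ is also the paper's argument.
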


\begin{proof}
\begin{enumerate}[$(a)$]
\item The map $\psi$ is a rational map and its inverse is given by
\[\psi^{-1}\colon (t,z)\dasharrow \left(\frac{t^2-z^2+1}{2t},{\bf i} \cdot \frac{t^2+z^2-1}{2t},z\right).\]
\item 
Clearly, the identity in $\PGL(2,\C(z))$ gives the identity map of $\A^2_\C$. Let $A=\left[\begin{smallmatrix}
\alpha(z) & \beta(z)\\
\gamma(z) & \delta(z)
\end{smallmatrix}
\right]$ and $A'=\left[\begin{smallmatrix}
\alpha'(z) & \beta'(z)\\
\gamma'(z) & \delta'(z)
\end{smallmatrix}
\right]$ be elements in $\PGL(2,\C(z))$. We compute \[(A,A'(t,z))\mapsto\left(\frac{\alpha\left(\frac{\alpha't+\beta'}{\gamma't+\delta'}\right)+\beta}{\gamma\left(\frac{\alpha't+\beta'}{\gamma't+\delta'}\right)+\delta},z\right)=\left(\frac{(\alpha\alpha'+\beta\gamma')t+\alpha\beta'+\beta\delta'}{(\gamma\alpha'+\delta\gamma')t+\gamma\beta'+\delta\delta'},z\right),\] which is the same as \[(AA',(t,z))\mapsto\left(\frac{(\alpha\alpha'+\beta\gamma')t+\alpha\beta'+\beta\delta'}{(\gamma\alpha'+\delta\gamma')t+\gamma\beta'+\delta\delta'},z\right).\]

The action of $\PGL(2,\C(z))$ on $\A^2_\C$ gives an action on $S_\C$ in the following way: for any element $A=\left[\begin{smallmatrix}
\alpha(z) & \beta(z)\\
\gamma(z) & \delta(z)
\end{smallmatrix}
\right]\in\PGL(2,\C(z))$ we denote by $A\acts\A^2_\C$ the action of $A$ on $\A^2_\C$ given by the map $(t,z)\dasharrow\left(\frac{\alpha(z)t+\beta(z)}{\gamma(z)t+\delta(z)},z\right)$, thus the following diagram gives the action on $S_\C$ that we denote by $\psi^{-1} A\psi$ or simply $\mathcal A$ if no confusion:
\[\xymatrix@R-0.5pc{
S_\C\  \ar@{-->}[d]_{\mathcal A} \ar@{-->}[r]^{\psi} & \ \A_\C^2 \ar@{-->}[d]^{A\scalebox{0.7}{$\acts\A^2_\C$} }  \\
S_\C\  & \ \A^2_\C \ar@{-->}[l]_{\psi^{-1}}
}\]
\item We name $\sigma_1\colon (t,z)\mapsto (\bar t,  \bar z)$ the anti-holomorphic involution on $\A^2_\C$, then via the birational map $\psi$ we have 
\[\psi \sigma \psi^{-1}=\sigma_1\tau=\tau\sigma_1\colon (t,z)\dasharrow \left(\frac{1-\bar{z}^2}{\bar{t}},\bar{z}\right).\]
Let $A\in\PGL(2,\C(z))$. We want to show that $\mathcal{\tau\bar A\tau(\sigma}(x,y,z))=\mathcal{\sigma (A}(x,y,z))$ for any $(x,y,z)\in S_\C$ which is the same as showing $\psi^{-1}(\tau\bar A\tau)(\psi\sigma(x,y,z))=\sigma (\psi^{-1}A(\psi(x,y,z)))$ for any $(x,y,z)\in S_\C$, where the action of $A$ and $\tau\bar A\tau$ are now on $\A^2_\C$. Notice that according to Definition \ref{bar}$(ii)$, the action of $\bar A$ on $\A^2_\C$ is the same as the action of $\sigma_1 A\sigma_1$ and in this way, for any $(x,y,z)\in S_\C$ we have
\begin{align*}
\psi^{-1}(\tau\bar A\tau)(\psi\sigma(x,y,z))&=\psi^{-1}(\tau\sigma_1A\sigma_1\tau)(\psi\sigma(x,y,z))\\
&=\psi^{-1}((\psi\sigma\psi^{-1})A(\psi\sigma\psi^{-1}))(\psi\sigma(x,y,z))\\
&=\sigma\psi^{-1}A(\psi\sigma(\sigma(x,y,z)))=\sigma(\psi^{-1}A(\psi(x,y,z))).
\end{align*}

The elements in $\Bir(S/\pi)$ correspond to the elements in $\PGL(2,\C(z))$ which commute with $\psi\sigma\psi^{-1}$, in other words, for $A\in\PGL(2,\C(z))$ we have that $A$ belongs to $\psi^{-1}\Bir(S/\pi)\psi$ if $\tau\sigma_1A\sigma_1\tau=A$ which is equivalent to $\tau \bar A\tau=A$ and hence we get the description of the group $\mathscr G=\psi^{-1}\Bir(S/\pi)\psi$.\qedhere
\end{enumerate}
\end{proof}
\begin{remark}\label{RemTau}The element $\tau=\left[\begin{smallmatrix}
0 & 1-z^2\\
1 & 0
\end{smallmatrix}
\right]\in \PGL(2,\C(z))$ belongs to $\mathscr G$ and corresponds to the element of $\Bir(S/\pi)$ given by 
$(x,y,z)\mapsto (x,-y,z),$ which is a reflection that belongs then to $\Aut(S)\subset \Aut(S(\R))$.
\end{remark}

The group $\mathscr G\subset \PGL(2,\C(z))$ defined in Lemma~\ref{LemAction} is the algebraic version of $\Bir(S/\pi)$, that we will study in the sequel. In the following lemma, we give a more precise description of elements of this group.

\begin{lemma}\label{G}
Each $A\in\mathscr G\subset \PGL(2,\C(z))$ is equal to $\left[\begin{smallmatrix}
a(z) & b(z)h\\
\bar b(z) & \bar a(z) 
\end{smallmatrix}
\right]$ for some polynomials $a,b\in\C[z]$ with no common real roots, $h=1-z^2$. Moreover, the corresponding matrix $\left[\begin{smallmatrix}
a(z) & b(z)h\\
\bar b(z) & \bar a(z) 
\end{smallmatrix}
\right]\in\mathrm{GL}(2,\C(z))$ has a determinant $a(z) \bar a(z)-b(z)\bar b(z) h \in \R[z]$ which is positive when $z^2>1$.
\end{lemma}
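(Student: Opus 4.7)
The plan is to exploit the defining equation $\tau A\tau=\bar A$ in $\mathrm{PGL}(2,\C(z))$, rewrite it in $\GL(2,\C(z))$ up to a scalar, normalise that scalar by Hilbert 90, and then read off the shape of $A$ from the matrix equation. Throughout I will use $\bar\tau=\tau$ (since $h\in\R[z]$) and the identity $\tau^2=hI$.

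First I pick any representative $A\in\GL(2,\C(z))$ of an element of $\mathscr G$. The relation in $\mathrm{PGL}$ lifts to $\tau A\tau=\lambda\bar A$ in $\GL$ for some $\lambda\in\C(z)^*$ depending on the choice of representative; conjugating this equation and using $\bar\tau=\tau$ gives $\tau\bar A\tau=\bar\lambda\,A$, and multiplying $\tau A\tau=\lambda\bar A$ on both sides by $\tau$ and substituting yields $\tau^2\bar A\tau^2=\lambda\bar\lambda\,\bar A$, i.e.\ $\lambda\bar\lambda=h^2$. Thus $h/\lambda$ has norm $1$ for the quadratic extension $\C(z)/\R(z)$, and by Hilbert's Theorem 90 applied to the Galois group of order $2$ generated by complex conjugation, there exists $\mu\in\C(z)^*$ with $\mu/\bar\mu=h/\lambda$. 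Replacing $A$ by $\mu A$ (still the same class in $\mathrm{PGL}$) the scalar becomes exactly $h$, so I may assume
\[\tau A\tau=h\,\bar A.\]

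Writing $A=\left[\begin{smallmatrix}\alpha&\beta\\ \gamma&\delta\end{smallmatrix}\right]$ and computing both sides, the equality $\tau A\tau=h\bar A$ reduces to the four scalar equations $h\delta=h\bar\alpha$, $h^2\gamma=h\bar\beta$, $\beta=h\bar\gamma$, $h\alpha=h\bar\delta$, all of which reduce to the two independent conditions $\delta=\bar\alpha$ and $\beta=h\bar\gamma$. Setting $a:=\alpha$ and $b:=\bar\gamma$ therefore yields
\[A=\begin{bmatrix}a&bh\\ \bar b&\bar a\end{bmatrix}\qquad\text{with }a,b\in\C(z).\]
To pass from $\C(z)$ to $\C[z]$ I choose a common denominator $d\in\C[z]$ of $a$ and $b$ and multiply $A$ by $d\bar d\in\R[z]$; since $\R[z]$ scaling preserves the shape (it scales $a$ and $b$ by $\bar d\cdot(\text{something in }\R[z])$ and their conjugates accordingly), both entries become polynomials. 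To enforce the absence of common real roots, I note that if $r\in\R$ is a common zero of $a$ and $b$, then $(z-r)$ also divides $\bar a$ and $\bar b$ (because $\bar a(r)=\overline{a(r)}=0$ and similarly for $\bar b$), so $(z-r)\in\R[z]$ divides every entry of $A$ and we may cancel it without changing the class in $\mathrm{PGL}$ or the normal form. Iterating gives $a,b\in\C[z]$ with no common real root.

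For the determinant statement, $\det A=a\bar a-b\bar b\,h$; the polynomials $a\bar a$ and $b\bar b$ lie in $\R[z]$ because they are invariant under conjugation, and $h\in\R[z]$, so $\det A\in\R[z]$. For real $z$ with $z^2>1$ we have $h(z)=1-z^2<0$, hence
\[\det A(z)=\lvert a(z)\rvert^2+\lvert b(z)\rvert^2(z^2-1)\geq 0,\]
with equality only when $a(z)=b(z)=0$; this is excluded by the "no common real roots" property, so $\det A(z)>0$ for all real $z$ with $z^2>1$. The only real subtlety in the whole argument is the Hilbert 90 normalisation of the scalar $\lambda$ to $h$; everything else is elementary matrix bookkeeping.
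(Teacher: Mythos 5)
Your proof is correct and follows essentially the same route as the paper's: both reduce the $\mathrm{PGL}$ relation $\tau A\tau=\bar A$ to a $\GL$ equation with a scalar $\lambda$ satisfying $\lambda\bar\lambda=h^2$, normalise $\lambda$ via Hilbert's Theorem 90 for $\C(z)/\R(z)$, read off the entry relations $\delta=\bar\alpha$, $\beta=h\bar\gamma$, and then clear denominators, cancel common real roots, and check the determinant sign. The only difference is cosmetic: you normalise the representative to satisfy $\tau A\tau=h\bar A$ before writing the entry equations, whereas the paper writes the four entry equations first and substitutes $\lambda=\frac{\mu}{\bar\mu}(1-z^2)$ afterwards.
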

\begin{remark}
Conversely, if $A=\left[\begin{smallmatrix}
a(z) & b(z)h\\
\bar b(z) & \bar a(z) 
\end{smallmatrix}
\right]\in \PGL(2,\C(z))$ for some $a,b\in \C(z)$ (and in particular when $a,b\in \C[z]$), then $A$ belongs to $\mathscr G$, since $\tau A\tau=\overline{A}$.
\end{remark}
\begin{proof}
Let $A=\left[\begin{smallmatrix}
a(z) & b(z)\\
c(z) & d(z)
\end{smallmatrix}
\right]\in\mathscr G$. The equality $\tau A\tau =\bar A$ gives 
\begin{align*}
\left[\begin{smallmatrix}
\bar a(z) & \bar b(z)\\
\bar c(z) & \bar d(z)
\end{smallmatrix}
\right]=\left[\begin{smallmatrix}
0 & 1-z^2\\
1 & 0
\end{smallmatrix}
\right]\left[\begin{smallmatrix}
a(z) & b(z)\\
c(z) & d(z)
\end{smallmatrix}
\right]\left[\begin{smallmatrix}
0 & 1-z^2\\
1 & 0
\end{smallmatrix}
\right].
\end{align*}
Hence $b(z)=\lambda\bar c(z)$, $d(z)(1-z^2)=\lambda\bar a(z)$, $c(z)(1-z^2)^2=\lambda\bar b(z)$, and $a(z)(1-z^2)=\lambda\bar d(z)$ for some $\lambda\in\C(z)^*$. From first and third equation we get that $c((1-z^2)^2-\lambda\bar\lambda)=0$ and from second and fourth equation we get that $\bar a((1-z^2)^2-\lambda\bar\lambda)=0$. In both cases, $\lambda\bar\lambda=(1-z^2)^2$ which is equivalent to $\frac{\lambda}{(1-z^2)}\cdot\left(\overline{\frac{\lambda}{(1-z^2)}}\right)=1$, then by Hilbert's Theorem 90 there is $\mu\in\C(z)^*$ such that $\lambda=\frac{\mu}{\bar\mu}(1-z^2)$ and $A=\left[\begin{smallmatrix}
a(z)\bar\mu & \mu \bar c(z)(1-z^2)\\
c(z)\bar\mu & \bar a(z)\mu
\end{smallmatrix}
\right]$. Calling again $a(z)\colon=a(z)\bar\mu(z)$ and $b(z)\colon=\mu(z)\bar c(z)$ we get
$A=\left[\begin{smallmatrix}
a(z) & b(z)h\\
\bar b(z) & \bar a(z)
\end{smallmatrix}
\right]$.

When $a=\frac{p}{q}$, $b=\frac{r}{s}$ with $p,q,r,s\in\C[z]$, we can multiply $A$ by $q\bar q s\bar s$ and we obtain an element in the same class with entries in $\C[z]$. Now, if $z_0$ is a common real root of $a$ and $b$ thus $z_0$ is also a real root of $\bar a$ and $\bar b$ which means that we may divide by $z-z_0$ all entries of $A$ and remain in the same class. Then $A$ is of the desired form. The determinant of the corresponding element of $\mathrm{GL}(2,\C(z))$ is then $a\bar a-b\bar b(1-z^2)=a\bar a+b\bar b(z^2-1)\in\R[z]$. Notice that for $z^2>1$, $a\bar a+b\bar b(z^2-1)>0$ because $a\bar a\geq 0$, $b\bar b\geq 0$ implies $a\bar a+b\bar b(z^2-1)\geq0$ and the fact $a$ and $b$ have non common real roots implies that the inequality is strict.
\end{proof}
\begin{remark}In the sequel, we will always denote by $h$ the polynomial $1-z^2\in \mathbb{R}[z].$\end{remark}
Now, we would like to characterise elements in $\Aut(S(\R)/\pi)$ and $\Aut^+(S(\R)/\pi)$ inside the group $\mathscr G=\psi^{-1}\Bir(S/\pi)\psi$. In order to do this, we need to understand the birational map $\psi\colon S_\C\dasharrow \A^2_\C$  given by $(x,y,z)\dasharrow(x-{\bf i} y,z)$. The following result describes the extension of the map, that we again denote by $\psi$.
\begin{lemma}\label{DescriptionPsi}
$\psi$ satisfies:
\begin{enumerate}[$($a$)$]
\item The birational map
\[\begin{array}{rccc}
\psi\colon &S_\C & \dasharrow & \Pp^1_\C\times \Pp^1_\C\\
& (1:x:y:z)& \dasharrow & ((1:x-{\bf i} y),(1:z))\\
&(w:x:y:z) & \dasharrow & ((w:x-{\bf i} y),(w:z))\end{array}\]
has three base-points, namely $q=(0:{\bf i}:1:0)$, $\bar{q}=(0:-{\bf i}:1:0)$, and one point $\omega$, infinitely near $q$.
\item Its inverse is 
\[\begin{array}{rccc}
\psi^{-1}\colon & \Pp^1_\C\times \Pp^1_\C & \dasharrow &S_\C\\[0.5em]
& ((1:t),(1:z))& \dasharrow & \left(1:\frac{t^2-z^2+1}{2t}:{\bf i} \cdot \frac{t^2+z^2-1}{2t}:z\right)\\[0.5em]
&((u:t),(v:z)) & \dasharrow & \splitfrac{(2tuv^2:t^2v^2-z^2u^2+u^2v^2:}{{\bf i}(t^2v^2+z^2u^2-u^2v^2):2tzuv)}\end{array}\]
and has exactly three base-points, namely \[(0:1)(0:1), (1:0)(1:1),\mbox{ and }(1:0)(1:-1).\]
\item The map $\psi$ can be decomposed as the blow-up of $q$, $\bar{q}$, $\omega$, followed by the contraction of the strict transforms of the curves $L$, $M$, $D\subset S_{\C}$ given respectively by
\[\begin{array}{lll}
L\colon& x={\bf i}y,w=-z\\
M\colon& x={\bf i}y,w=z\\
D\colon& w=0\end{array}\]
\end{enumerate}
This can be described by the diagram in Figure $\ref{mappsi}$, where $P_N=(1:0:0:1)$, $P_S=(1:0:0:-1)\in S(\R)$ are the north and south poles, where $\overline{L}$, $\overline{M}$ are the image of $L$, $M$ by the anti-holomorphic involution and where the strict transforms of the curves are again denoted by the same names.

\begin{figure}[h]
\begin{center}
\begin{picture}(224,137.6)
\scalebox{0.8}{

\put(42,-9){\unboldmath{\scriptsize $S_\C$}}

\put(20,50){\line(1,0){52}}
\put(61.5,47.5){\unboldmath$\bullet$}

\put(42.5,53){\unboldmath{\tiny $\overline L$}}
\put(66,45){\unboldmath{\tiny $\overline q$}}
\put(29,45){\unboldmath{\tiny $P_S $}}
\put(20,20){\line(1,0){52}}

\put(22,15){\line(6,5){48}}
\put(42.5,38){\unboldmath{\tiny $D$}}

\put(25.5,47.5){\unboldmath$\circ$}
\put(61.5,17.5){\unboldmath$\circ$}

\put(42.5,22){\unboldmath{\tiny $M$}}
\put(65,32){\unboldmath{\tiny $\overline M$}}
\put(22,33){\unboldmath{\tiny $L$}}
\put(25.5,17.5){\unboldmath$\bullet$}
\put(29,15){\unboldmath{\tiny $q$}}
\put(65,15){\unboldmath{\tiny $P_N$}}
\put(28,60){\line(0,-1){50}}
\put(64,60){\line(0,-1){50}}

\put(70,83){\vector(-1,-1){20}}
\put(58,68){\unboldmath{\tiny $q,\overline{q}$}}

\put(95,93){\line(-2,1){30}}
\put(72,93){\unboldmath{\tiny $E_q$}}
\put(77,98){\unboldmath$\bullet$}
\put(82,103){\unboldmath{\tiny $\omega$}}
\put(85,93){\line(2,1){30}}
\put(98,95){\unboldmath{\tiny $M$}}

\put(78,97){\line(2,5){18}}
\put(88.5,118){\unboldmath{\tiny $D$}}

\put(67.5,127.5){\unboldmath$\circ$}
\put(71,125){\unboldmath{\tiny $P_S$}}
\put(107.5,102.5){\unboldmath$\circ$}
\put(111,100){\unboldmath{\tiny $P_N$}}

\put(70,100){\line(0,1){35}}
\put(111,115){\unboldmath{\tiny $\overline M$}}
\put(63,115){\unboldmath{\tiny $L$}}
\put(110,101){\line(0,1){35}}

\put(95,143){\line(-2,-1){30}}

\put(73,138){\unboldmath{\tiny $\overline L$}}
\put(85,143){\line(2,-1){30}}
\put(99,138){\unboldmath{\tiny $E_{\overline q}$}}

\put(150,155){\vector(-2,-1){20}}
\put(145,148){\unboldmath{\tiny $\omega$}}

\put(195,133){\line(-2,1){30}}
\put(172,134){\unboldmath{\tiny $E_q$}}
\put(177,138){\line(1,1){20}}
\put(178.5,150){\unboldmath{\tiny $E_\omega$}}
\put(187,163){\unboldmath{\tiny $D$}}
\put(185,133){\line(2,1){30}}
\put(198,135){\unboldmath{\tiny $M$}}

\put(170,140){\line(0,1){35}}
\put(211,155){\unboldmath{\tiny $\overline M$}}
\put(163,155){\unboldmath{\tiny $L$}}
\put(210,140){\line(0,1){35}}

\put(167.5,167.5){\unboldmath$\circ$}
\put(172,167){\unboldmath{\tiny $P_S$}}
\put(207.5,142.5){\unboldmath$\circ$}
\put(211,140){\unboldmath{\tiny $P_N$}}

\put(195,183){\line(-2,-1){30}}

\put(173,178){\unboldmath{\tiny $\overline L$}}

\put(185,183){\line(2,-1){30}}
\put(199,178){\unboldmath{\tiny $E_{\overline q}$}}
\put(198,180){\line(-1,-4){9}}

\put(210,120){\vector(1,-2){25}}
\put(220,100){\unboldmath{\tiny $L,M,D$}}

\put(236,-9){\unboldmath{\scriptsize $\Pp^1_\C\times\Pp^1_\C$}}

\put(220,50){\line(1,0){52}}
\put(228,50){\circle{2}}
\put(228,50){\circle{4}}
\put(245,53){\unboldmath{\tiny $\overline L$}}

\put(229,45){\unboldmath{\tiny $P_S$}}
\put(220,20){\line(1,0){52}}
\put(245,22){\unboldmath{\tiny $\overline M$}}

\put(220,40){\line(1,0){52}}
\put(263,40){\circle{4.5}}
\put(260.5,37.5){\unboldmath$\bullet$}
\put(245,35){\unboldmath{\tiny $E_\omega$}}

\put(228,20){\circle{2}}
\put(228,20){\circle{4}}
\put(229,15){\unboldmath{\tiny $P_N$}}
\put(228,60){\line(0,-1){50}}
\put(219,30){\unboldmath{\tiny $E_q$}}
\put(263,60){\line(0,-1){50}}
\put(263,30){\unboldmath{\tiny $E_{\overline q}$}}

\multiput(90,35)(8,0){14}{\line(1,0){4}}
\put(204,35){\vector(1,0){1}}
\put(150,38){\unboldmath{\footnotesize $\psi$}}}
\end{picture}
\end{center}
\caption{The decomposition of $\psi$ into blow-ups and blow-downs.}
\label{mappsi}
\end{figure}
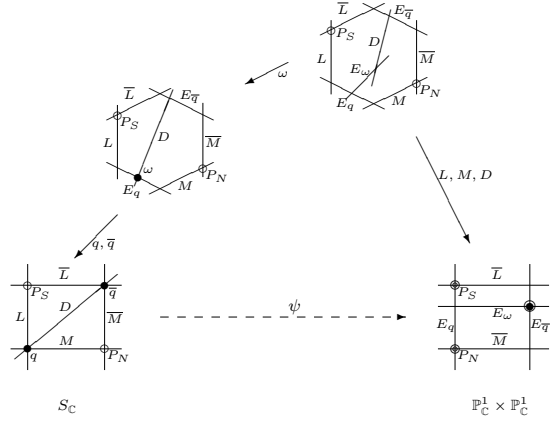
\end{lemma}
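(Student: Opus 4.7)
The plan is threefold: (i) verify the explicit formulas in (a) and (b) and locate the proper base points of $\psi$; (ii) identify the infinitely near base point $\omega$ via a local computation at $q$; and (iii) extract the blow-up/blow-down decomposition of (c) from the explicit images of the curves $L$, $M$, $D$ together with a self-intersection count.

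For (a) and (b), I would first check that the formula for $\psi^{-1}$ in (b) actually lands in $S_\C$: writing $X=(t^2-z^2+1)/(2t)$ and $Y={\bf i}(t^2+z^2-1)/(2t)$, a direct expansion gives $X^2+Y^2 = \bigl[(t^2-z^2+1)^2-(t^2+z^2-1)^2\bigr]/(4t^2) = 1-z^2$, so $X^2+Y^2+z^2=1$, as required on $S_\C$ in the chart $w=1$. The composition $\psi\circ\psi^{-1}$ is then routine, which establishes birationality together with the inverse formula. The indeterminacy of $\psi$ on $S_\C$ is the locus where one of the pairs $(w,x-{\bf i}y)$ or $(w,z)$ simultaneously vanishes; using the identity $w^2-z^2=(x-{\bf i}y)(x+{\bf i}y)$ on $S_\C$, one sees that $(w,x-{\bf i}y)=(0,0)$ forces $z=0$ and yields only $q$, while $(w,z)=(0,0)$ yields both $q$ and $\bar q$. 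Hence the proper base points of $\psi$ are exactly $q$ and $\bar q$; an analogous argument for $\psi^{-1}$ yields the three proper base points listed in (b).

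The heart of the proof is the identification of the infinitely near base point $\omega$ over $q$. In the affine chart $y=1$ with local parameters $(w,z)$ at $q$, the local equation of $S$ is $w^2-z^2=u(u+2{\bf i})$, where $u:=x-{\bf i}$; solving for $u$ gives $u=-{\bf i}(w^2-z^2)/2+O((w,z)^3)$. Thus $u$ vanishes to order $2$ at $q$ as a function on $S$, so after one blow-up of $q$ the first component $(w:u)$ of $\psi$ remains indeterminate at the point of $E_q$ corresponding to the tangent direction $w=0$. This is precisely the tangent direction of $D=\{w=0\}$ at $q$, so $\omega$ lies on the strict transform of $D$. A further blow-up at $\omega$ resolves $\psi$ entirely, which accounts for the three base points claimed in (a).

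For (c), direct substitution shows that $\psi$ is constant along each of $L$, $M$, $D$: on $L$ one has $w=-z$ and $x-{\bf i}y=0$, hence $\psi(L)=((1:0),(1:-1))$, and similarly $\psi(M)=((1:0),(1:1))$ and $\psi(D)=((0:1),(0:1))$. On the threefold blow-up $\tilde S$, the strict transforms of $L$, $M$, $D$ are pairwise disjoint $(-1)$-curves: $L$ and $M$ are components of reducible fibres of $\pi$ passing through $q$ only, so their strict transforms have self-intersection $0-1=-1$; the conic $D$ has $D^2=2$ on $S_\C$ and passes through $q$, $\bar q$ and $\omega$ (by the tangent-direction computation above), so its strict transform has self-intersection $2-3=-1$; their pairwise intersections all disappear on $\tilde S$ because the tangent directions of $L$, $M$, $D$ at $q$ are mutually distinct. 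Consequently, the resolved map $\tilde\psi\colon\tilde S\to\Pp^1_\C\times\Pp^1_\C$ is a birational morphism between smooth surfaces of Picard ranks $5$ and $2$, and must therefore factor as the contraction of three disjoint $(-1)$-curves; the image computation forces these to be precisely the strict transforms of $L$, $M$, $D$. This yields the diagram of Figure~\ref{mappsi}; the main obstacle is the local tangent-direction computation that pins $\omega$ onto the strict transform of $D$, after which the self-intersection bookkeeping and the identification of the contracted curves are straightforward.
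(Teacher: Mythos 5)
Your proposal is correct and follows essentially the same route as the paper: parts (a) and (b) by direct calculation, and part (c) by resolving $\psi$ on the triple blow-up and using the Picard-rank count $5\to 2$ to identify the three contracted curves as the strict transforms of $L$, $M$, $D$. Your local computation at $q$ pinning $\omega$ to the tangent direction $w=0$ (hence onto the strict transform of $D$) and the self-intersection bookkeeping are exactly the details the paper leaves implicit, and they check out.
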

\begin{proof}
Parts $(a)$ and $(b)$ follow from a direct calculation. Hence, denoting by $\zeta\colon X\to S_\C$ the blow-up of $q,\bar{q},\omega$, the map $\psi\zeta$ is a birational morphism $X\to  \Pp^1_\C\times \Pp^1_\C $, which is the blow-up of three points since both $S_\C$ and $\Pp^1_\C\times \Pp^1_\C$ have a complex Picard group of rank $2$. Looking at coordinates, one checks that the three curves are $L,M,N$, and the remaining part of the picture can be checked by computing the intersection between the curves.
\end{proof}

Since $M\cup\overline M$ is the fibre of $(1:1)\in\Pp^1$ by $\pi$ and is singular with only real point, every element of $\Bir(S/\pi)$ preserves the north pole  $P_N=M\cap \overline{M}$ and either preserves each of the two curves or interchanges them. This result is proved in the following lemma, that describes moreover algebraically the distinct possible cases.
\begin{lemma}\label{ExchangeOrNotBir}
Let $A=\left[\begin{smallmatrix}
a(z) & b(z)h\\
\bar b(z) & \bar a(z) 
\end{smallmatrix}
\right]\in \mathscr G\subset \PGL(2,\C(z)),$ for some polynomials $a,b\in\C[z]$ with no common real roots $($see Lemma~$\ref{G})$, and let $\mathcal A\in\Bir(S/\pi)$ be the corresponding element $($see Lemma~$\ref{LemAction})$.

The map $\mathcal A$ is defined at the north and south poles $P_N=M\cap \overline M$ and $P_S=L\cap \overline L$. Moreover, the following hold:

\begin{enumerate}[$(1)$]
\item
If $a(1)=0$, then $\mathcal A$ exchanges $M$ with $\overline{M}$.
\item
If $a(1)\neq0$, then $\mathcal A$ preserves both $M$ and $\overline{M}$.
\item
If $a(-1)=0$, then $\mathcal A$ exchanges $L$ with $\overline{L}$.
\item
If $a(-1)\neq0$, then $\mathcal A$ preserves both $L$ and $\overline{L}$.
\end{enumerate}
\begin{remark}
Note that $a(1)\neq0$ (respectively $a(-1)\neq0$) is equivalent to the fact that the determinant $a(z)\overline{a}(z)+b(z)\overline{b}(z)h$ is positive when $z=1$ (respectively $z=-1$).
\end{remark}
\end{lemma}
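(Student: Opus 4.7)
The plan is to work in local coordinates on $S$ adapted to the singular fibres of $\pi$ over $(1:\pm 1)$. Setting $u = x - {\bf i}y$ and $v = x + {\bf i}y$ in the affine chart $w = 1$, the equation of $S$ becomes $uv = h(z)$. Thus $P_N=(0,0,1)$ and $P_S=(0,0,-1)$; the components of $\pi^{-1}((1:1))$ are $M=\{u=0,\,z=1\}$ and $\overline{M}=\{v=0,\,z=1\}$, and similarly $L=\{u=0,\,z=-1\}$, $\overline{L}=\{v=0,\,z=-1\}$. In this chart $\psi$ is the projection $(u,v,z)\mapsto(u,z)$, which is regular at both poles since the base-points $q,\bar q,\omega$ of $\psi$ listed in Lemma~\ref{DescriptionPsi} lie in the hyperplane $w=0$. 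Combining the formula of Lemma~\ref{LemAction}(b) with the relation $h(z)=uv$ on $S$ to eliminate $h$, one obtains the symmetric expression
\[
\mathcal{A}\colon (u,v,z)\ \dashrightarrow\ \left(\frac{u\bigl(a(z)+b(z)v\bigr)}{\bar b(z)u+\bar a(z)},\ \frac{v\bigl(\bar b(z)u+\bar a(z)\bigr)}{a(z)+b(z)v},\ z\right).
\]

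The analysis at $P_N$ and $P_S$ is entirely symmetric, so I will treat $P_N$. When $a(1)\neq 0$ (hence $\bar a(1)\neq 0$), the two denominators evaluate at $P_N$ to $\bar a(1)$ and $a(1)$, both non-zero, so the displayed formula is already regular there. A direct computation gives $\mathcal{A}(P_N)=P_N$ with Jacobian $\mathrm{diag}\bigl(a(1)/\bar a(1),\,\bar a(1)/a(1)\bigr)$; hence $\mathcal{A}$ is a local isomorphism at $P_N$ which preserves each axis $\{u=0\}=M$ and $\{v=0\}=\overline{M}$. This proves~(2), and the identical argument at $P_S$ with $a(-1)\neq 0$ yields~(4).

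The main difficulty is the case $a(1)=0$, where the fractions above are $0/0$ at $P_N$. The idea is to resolve this indeterminacy using the defining equation of $S$ itself. Writing $a(z)=(z-1)a_1(z)$ and $\bar a(z)=(z-1)\bar a_1(z)$, and substituting $(z-1)=-uv/(z+1)$ (valid on $S$ because $uv=-(z-1)(z+1)$), the common factor $(z-1)$ cancels and the map takes the form
\[
\mathcal{A}\colon (u,v,z)\ \dashrightarrow\ \left(\frac{v\bigl(b(z)(z+1)-a_1(z)u\bigr)}{\bar b(z)(z+1)-\bar a_1(z)v},\ \frac{u\bigl(\bar b(z)(z+1)-\bar a_1(z)v\bigr)}{b(z)(z+1)-a_1(z)u},\ z\right).
\]
The hypothesis that $a$ and $b$ share no common real root forces $b(1)\neq 0$, so the new denominators evaluate at $P_N$ to $2\bar b(1)$ and $2b(1)$, both non-zero. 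Hence $\mathcal{A}$ is regular at $P_N$ with $\mathcal{A}(P_N)=P_N$, and one reads off directly that $u'=0\iff v=0$ and $v'=0\iff u=0$ in a neighbourhood of $P_N$; this shows that $\mathcal{A}$ exchanges $M$ and $\overline{M}$, proving~(1). The substitution $(z+1)=-uv/(z-1)$, valid near $P_S$, treats the case $a(-1)=0$ in exactly the same way and gives~(3).
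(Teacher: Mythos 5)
Your proof is correct: it establishes the regularity of $\mathcal A$ at both poles and all four statements. It does, however, take a genuinely different route from the paper's. The paper transports everything to $\A^2_\C$ via $\psi$: when $a(1)\neq0$ the determinant is nonzero at $z=1$, so $A$ is a local isomorphism at $(t,z)=(0,1)$ preserving the line $z=1$, and lifting through the blow-up of that point --- which by the decomposition of $\psi$ in Lemma~\ref{DescriptionPsi} yields precisely $M$ and $\overline{M}$ --- gives $(2)$; the case $a(1)=0$ is then reduced to this one by composing with $\tau=\left[\begin{smallmatrix}0&1-z^2\\1&0\end{smallmatrix}\right]$, which is already known to exchange $M$ with $\overline{M}$ and $L$ with $\overline{L}$. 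You instead never leave the surface: in the coordinates $u=x-{\bf i}y$, $v=x+{\bf i}y$, where $S$ reads $uv=h(z)$, the relation $h=uv$ lets you write $\mathcal A$ in a symmetric form with $u'$ divisible by $u$ and $v'$ divisible by $v$, and the further substitution $z-1=-uv/(z+1)$ resolves the indeterminacy when $a(1)=0$, producing a formula with $u'$ divisible by $v$ and $v'$ divisible by $u$, which visibly swaps the two branches through $P_N$. This trades the blow-up bookkeeping and the reduction via $\tau$ for a slightly longer direct computation, and has the small bonus of exhibiting the local normal form of $\mathcal A$ at the poles (diagonal versus anti-diagonal differential). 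One cosmetic remark: after substituting $z-1=-uv/(z+1)$, what actually cancels is a factor $u/(z+1)$ rather than ``the common factor $(z-1)$''; the displayed formula is nonetheless correct, and the hypothesis $b(1)\neq0$ (respectively $b(-1)\neq0$) from the absence of common real roots is used exactly where it should be.
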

\begin{proof}
Recall that $A$ acts on $\A^2_\C$ via \[(t,z)\dasharrow \left(\frac{a(z)t+b(z)(1-z^2)}{\overline{b}(z) t+\overline{a}(z)},z\right)\]
(see Lemma~\ref{LemAction}).

Suppose first that $a(1)\neq0$. This implies that the determinant $a(z)\overline{a}(z)+(z^2-1)b(z)\overline{b}(z)$ is not zero (and in fact positive) when $z=1$. Hence, the above birational map is a local isomorphism near the fixed point $(t,z)=(0,1)$, and restricts to an isomorphism of the curve $z=1$. After blowing up $(0,1)$, we obtain thus a local isomorphism in the neighbourhood of the exceptional divisor and of the strict transform of the curve $z=1$. By Lemma~\ref{DescriptionPsi}, these maps correspond to respectively $M$ and $\overline{M}$ via $\psi$. This shows that $\mathcal A$ is defined at $P_N=M\cap \overline{M}$ and preserves each of the two curves $M$ and $\overline{M}$.

If $a(-1)\neq0$, we find similarly that $\mathcal A$ is defined at $P_S=L\cap \overline{L}$ and preserves each of the two curves $L$ and $\overline{L}$.

If $a(1)=0$, we write $a(z)=a_0(z)(1-z)$ for some polynomial $a_0\in \C[z]$ and have $b(1)\neq0$, since $a,b$ have no common real root. We consider $\tau=\left[\begin{smallmatrix}
0 & 1-z^2\\
1 & 0
\end{smallmatrix}
\right]\in \mathscr G$, that corresponds to the reflection 
$(x,y,z)\mapsto (x,-y,z)$
of the sphere $S$ (see Remark~\ref{RemTau}). Note that this map is defined at the north and south poles, interchanges $L$ with $\overline L$ and interchanges $M$ with $\overline M$. It remains to study the map 
\[A\tau=\left[\begin{smallmatrix}
b(1-z^2) & a(1-z^2)\\
\overline{a} & \overline{b} (1-z^2)
\end{smallmatrix}
\right]=\left[\begin{smallmatrix}
b(1+z) & a_0(1-z^2)\\
\overline{a_0} & \overline{b} (1+z)
\end{smallmatrix}
\right]\in \mathscr G\subset \PGL(2,\C(z))\]
and to see that it is equal to $\left[\begin{smallmatrix}
a' & b'(1-z^2)\\
\overline{b'} & \overline{a'}
\end{smallmatrix}
\right]$, where $a'=b\cdot (1+z)$, $b'=a_0\in \C[z]$ have no common real root, and such that $a'(1)=2b(1)\neq0$. This reduces to the previous case.

The case where $a(-1)=0$ is similar.
\end{proof}

\begin{lemma}\label{BasePts}
Let $A=\left[\begin{smallmatrix}
a(z) & b(z)h\\
\bar b(z) & \bar a(z) 
\end{smallmatrix}
\right]\in \mathscr G\subset \PGL(2,\C(z)),$ for some polynomials $a,b\in\C[z]$ with no common real roots $($see Lemma~$\ref{G})$, and let $\mathcal A\in\Bir(S/\pi)$ be the corresponding element $($see Lemma~$\ref{LemAction})$. We denote by $D(z)=a(z)\bar a(z)-b(z)\bar b(z)(1-z^2)\in \R[z]$ the corresponding determinant.

Let $z_0\in (-1,1)\subset \R$, and let $\Gamma_{z_0}\subset S$ be the conic given by $z=z_0$. Then, the following hold:

\begin{enumerate}[$($a$)$]
\item The map $\mathcal A$ is a local isomorphism at each point of $\ \Gamma_{z_0}$ if and only if $D(z_0)\neq0$.
\item The map $\mathcal A$ contracts the curve $\Gamma_{z_0}$ onto a real point of $\ \Gamma_{z_0}$ if and only if $D(z_0)=0$. In this case, it has exactly one proper base-point on $\Gamma_{z_0}$, which is real.
\end{enumerate}
\end{lemma}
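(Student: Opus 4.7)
The plan is to transport the analysis from $S_\C$ to $\Pp^1_\C\times\Pp^1_\C$ via the birational map $\psi$, where $\mathcal A$ restricts on each fibre of the second projection to the projective transformation given by the specialised matrix
\[
A(z_0)=\left[\begin{smallmatrix}a(z_0) & b(z_0)h(z_0) \\ \bar b(z_0) & \bar a(z_0)\end{smallmatrix}\right],
\]
whose determinant is exactly $D(z_0)$. The whole statement then follows from the contrast between $A(z_0)$ being invertible when $D(z_0)\neq 0$ and having rank one when $D(z_0)=0$.

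The first step is to verify that for every $z_0\in(-1,1)$, the map $\psi$ induces a biregular isomorphism from $\Gamma_{z_0}$ to the fibre $\{(v:z)=(1:z_0)\}$ of the second projection of $\Pp^1_\C\times\Pp^1_\C$. By Lemma~\ref{DescriptionPsi}, the base-points of $\psi^{-1}$ all lie in fibres with $z\in\{1,-1,\infty\}$, so they lie outside $\{z=z_0\}$; and although $q$ and $\bar q$ belong to the projective closure of $\Gamma_{z_0}$ and are base-points of $\psi$, the resolution in Figure~\ref{mappsi} shows that the strict transform of $\Gamma_{z_0}$ in the blow-up of $q,\omega,\bar q$ is isomorphic to $\Gamma_{z_0}$ and maps biregularly onto the corresponding fibre in $\Pp^1_\C\times\Pp^1_\C$. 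Note also that $A(z_0)$ is never the zero matrix: vanishing of all entries would force $a(z_0)=b(z_0)=0$ since $h(z_0)>0$, contradicting the hypothesis that $a$ and $b$ have no common real root.

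The second step is then immediate. If $D(z_0)\neq 0$, then $A(z_0)\in\mathrm{GL}(2,\C)$ descends to an automorphism of the fibre, so $\mathcal A$ has no base-point on $\Gamma_{z_0}$ and restricts to a local isomorphism at every point, giving part $(a)$. If $D(z_0)=0$, then $A(z_0)$ has rank one: viewing the associated action on $\Pp^1$ in projective coordinates $(u:t)$ as $(u:t)\mapsto(\bar a u+\bar b t\,:\,bh u+a t)$, its unique indeterminacy is at the kernel $(\bar b(z_0):-\bar a(z_0))$, which transfers via $\psi^{-1}$ to a single proper base-point of $\mathcal A$ on $\Gamma_{z_0}$, while the whole fibre is contracted to the image point $(\bar a(z_0):b(z_0)h(z_0))$.

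The main obstacle, and the only place where the real structure enters non-trivially, is showing that the base-point and its image are real points of $\Gamma_{z_0}$. Using Lemma~\ref{LemAction}, the real structure on the fibre $\{z=z_0\}\subset\Pp^1_\C\times\Pp^1_\C$ reads $(u:t)\mapsto(\bar t:h(z_0)\bar u)$, whose fixed locus is the circle $t\bar t=h(z_0)u\bar u$. I would check by a direct substitution that both projective points $(\bar b(z_0):-\bar a(z_0))$ and $(\bar a(z_0):b(z_0)h(z_0))$ lie on this circle exactly when $a(z_0)\bar a(z_0)=h(z_0)b(z_0)\bar b(z_0)$, i.e.\ exactly when $D(z_0)=0$; this reality check is really just an unravelling of the relation $\tau A\tau=\bar A$ that defines $\G$, and completes part $(b)$.
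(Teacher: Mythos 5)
Your proof is correct and follows essentially the same route as the paper: transport the map via $\psi$ to the fibre $z=z_0$, specialise the matrix to $A(z_0)$, and read off the dichotomy from whether the determinant $D(z_0)$ vanishes (invertible versus rank one). The only difference is cosmetic: for the reality of the base-point and of the image point you verify explicitly that the kernel and image of $A(z_0)$ lie on the real circle $t\bar t=h(z_0)u\bar u$, whereas the paper deduces reality from uniqueness together with $\sigma$-equivariance of $\mathcal A$ and of $\Gamma_{z_0}$.
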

\begin{proof}
Observe that $\psi$ is a local isomorphism at a general point of $\Gamma_{z_0}$ by Lemma \ref{DescriptionPsi}. Hence, $\mathcal A$ contracts $\Gamma_{z_0}$ or is a local isomorphism at each point of it if and only if so does $A$ on the curve of $\A^2_\C$ given by $z=z_0$. Recall that $A$ acts as
\[(t,z)\dasharrow \left(\frac{a(z)t+b(z)(1-z^2)}{\overline{b}(z) t+\overline{a}(z)},z\right).\]
If $D(z_0)\neq0$, we obtain thus a local isomorphism along $\Gamma_{z_0}$. If $D(z_0)=0$, then $\frac{a(z_0)t+b(z_0)(1-(z_0)^2)}{\overline{b}(z_0) t+\overline{a}(z_0)}$ does not depend on $t$. The fact that $a$ and $b$ cannot both vanish at $z_0$ implies that the curve $\Gamma_{z_0}$ is then contracted onto one point, which is thus real. It has moreover exactly one proper base-point on this curve, which corresponds to the vanishing of the denominator and numerator of the above fraction. 
\end{proof}

\subsection{Algebraic description of $\Aut(S(\R)/\pi)$.}

The fact that an element in the group $\Aut(S(\R)/\pi)$ exchanges or not the lines $L$ and $\overline{L}$ can be checked geometrically, as the following result shows. This will help to describe algebraically the groups $\Aut(S(\R)/\pi)$ and $\Aut^+(S(\R)/\pi)$ as subgroups of $\mathscr G$ (Proposition~\ref{diffG} below).
\begin{lemma}\label{preserving}
Let $\mathcal A\in\Aut(S(\R)/\pi)$, and let $L,\overline L,M,\overline M\subset S_\C$ be the four curves given in Lemma~$\ref{DescriptionPsi}$. Then, one of the following holds:
\begin{enumerate}[$($a$)$]
\item
 $\mathcal A\in\Aut^+(S(\R)/\pi)$ and $\mathcal A$ preserves each of the four curves $L,\overline L,M,\overline M$.
 \item
 $\mathcal A\in\Aut(S(\R)/\pi)\setminus \Aut^+(S(\R)/\pi)$ and $\mathcal A$ exchanges $L$ with $\overline L$ and $M$ with $\overline M$.
 \end{enumerate}
\end{lemma}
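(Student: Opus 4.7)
By Lemma~\ref{ExchangeOrNotBir}, the map $\mathcal A$ is defined at the north and south poles $P_N,P_S$, and each of the two pairs $\{M,\overline M\}$ and $\{L,\overline L\}$ is either preserved pointwise (each curve preserved) or globally (the two curves exchanged). So a priori there are four cases; I will rule out the two ``mixed'' ones by an orientation argument, and identify the remaining two with $\Aut^+(S(\R)/\pi)$ and its complement. Since $P_N$ (resp.\ $P_S$) is the unique real point of the singular fibre $M\cup\overline M$ (resp.\ $L\cup\overline L$), and since $\mathcal A$ preserves each fibre of $\pi$ and sends real points to real points (being a birational diffeomorphism), $\mathcal A$ fixes both $P_N$ and $P_S$.

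The key step is to analyse the differential of $\mathcal A$ at $P_N$ (and $P_S$) and read off whether $\mathcal A|_{S(\R)}$ is locally orientation-preserving there. Pick a nonzero $v\in T_{P_N}M$ and set $\bar v:=\sigma_*(v)\in T_{P_N}\overline M$; then $\{v,\bar v\}$ is a $\C$-basis of the complexified tangent space $T_{P_N}^\C S_\C$, and the real tangent space $T_{P_N}S(\R)$ is the $\sigma_*$-invariant part, spanned over $\R$ by $v+\bar v$ and $\mathbf{i}(v-\bar v)$. Two subcases:
\begin{itemize}
\item If $\mathcal A$ preserves both $M$ and $\overline M$, then $d\mathcal A(v)=\lambda v$ and, by compatibility with $\sigma_*$, $d\mathcal A(\bar v)=\bar\lambda\bar v$ for some $\lambda\in\C^*$. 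In the real basis above, $d\mathcal A$ is represented by $\bigl(\begin{smallmatrix}\mathrm{Re}\,\lambda & -\mathrm{Im}\,\lambda\\ \mathrm{Im}\,\lambda & \mathrm{Re}\,\lambda\end{smallmatrix}\bigr)$, of determinant $|\lambda|^2>0$, so $\mathcal A$ is orientation-preserving at $P_N$.
\item If $\mathcal A$ exchanges $M$ and $\overline M$, then $d\mathcal A(v)=\mu\bar v$ and $d\mathcal A(\bar v)=\bar\mu v$ for some $\mu\in\C^*$. The same computation yields a real matrix of determinant $-|\mu|^2<0$, so $\mathcal A$ is orientation-reversing at $P_N$.
\end{itemize}
The identical argument at $P_S$ with $L,\overline L$ in place of $M,\overline M$ shows that $\mathcal A$ is orientation-preserving at $P_S$ iff it preserves each of $L,\overline L$.

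Since $S(\R)$ is connected, $\mathcal A$ has a well-defined global orientation behaviour, so its local behaviours at $P_N$ and $P_S$ must agree. Consequently exactly one of the following holds: $\mathcal A$ preserves each of $L,\overline L,M,\overline M$ and lies in $\Aut^+(S(\R)/\pi)$, or $\mathcal A$ exchanges $L$ with $\overline L$ and $M$ with $\overline M$ and lies in $\Aut(S(\R)/\pi)\setminus\Aut^+(S(\R)/\pi)$. The mixed cases permitted a priori by Lemma~\ref{ExchangeOrNotBir} are excluded.

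The only delicate step is the linear-algebra computation of the real sign of the determinant from the complex eigenvalue data; everything else is formal once one observes that $P_N$ and $P_S$ are fixed and uses connectedness of $S(\R)$ to transfer local orientation information from one pole to the other.
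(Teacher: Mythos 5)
Your proof is correct and takes essentially the same route as the paper's: both observe that $\mathcal A$ fixes the poles, compute the differential of $\mathcal A$ on the real tangent plane $T_{P_N}S(\R)\subset T_{P_N}S_\C$, and read off the sign of its determinant according to whether the two imaginary lines through the pole are preserved or exchanged. The only cosmetic differences are that you work in the eigenbasis $\{v,\sigma_*(v)\}$ where the paper classifies the relevant $2\times 2$ matrices in explicit coordinates, and that you spell out the connectedness argument excluding the two mixed cases, which the paper leaves implicit.
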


\begin{proof}
Since $M\cup\overline M$ is the fibre of $(1:1)\in\Pp^1$ by $\pi$, every element of $\Aut(S(\R)/\pi)$ either preserves each of the two curves or interchanges them. 

We study the action of $\mathcal A$ on the lines $M$ and $\overline M$ near the point $P_N=M\cap \overline M=(1:0:0:1)$, the situation near $P_S=L\cap \overline L$ is similar. The equation of the sphere being $(w-z)(w+z)=x^2+y^2$, the complex tangent plane $T_{P_N}S_\C$ is given by $w=z=0$, and contains the two lines $M$ and $\overline M$, which correspond to $x=\pm {\bf i}y$.

The real tangent plane is contained in the complex tangent plane i.e. $T_{P_N}S(\R)\subset T_{P_N}S_\C$ and the action of $\mathcal A$ on the lines $M$ and $\overline M$ is the same as the action of its differential at $P_N$ denoted by $D_{P_N}\mathcal A\in\GL(2,\C)$ which also preserves $T_{P_N}S(\R)$ and is linear. Then $D_{P_N}\mathcal A$ can be presented as a matrix in $\GL(2,\R)$. 

Matrices in $\GL(2,\C)$ which preserve the two lines $x=\pm {\bf i}y$  are of the form $\left[\begin{smallmatrix}
a & b\\
-b & a
\end{smallmatrix}
\right]$ for some $a$, $b\in\C$. Imposing the condition of preserving the real plane is equivalent to ask for $a,b\in\R$. This tell us that if $D_{P_N}\mathcal A$ is the differential at $P_N$ of a diffeomorphism $\mathcal A$ which fixes $P_N$ and preserves the lines $M$ and $\overline M$, then $D_{P_N}\mathcal A$ restricted to $T_{P_N}(S(\R))$ is of the form $\left[\begin{smallmatrix}
a & b\\
-b & a
\end{smallmatrix}
\right]$ for some $a,b\in\R$ and is positive defined because its determinant is $a^2+b^2>0$ and therefore such a diffeomorphism $\mathcal A$ is an orientation-preserving one.

On the other hand, matrices in $\GL(2,\C)$ which interchange the lines $M$ and $\overline M$ and preserve the real tangent plane are of the form $\left[\begin{smallmatrix}
a & b\\
b & -a
\end{smallmatrix}
\right]$ for some $a$, $b\in\R$. Then if $D_{P_N}\mathcal A$ is the differential at $P_N$ of a diffeomorphism $\mathcal A$ which fixes $P_N$ and interchanges the lines $M$ and $\overline M$, we obtain that $D_{P_N}\mathcal A$ restricted to $T_{P_N}(S(\R))$ is of the form $\left[\begin{smallmatrix}
a & b\\
b & -a
\end{smallmatrix}
\right]$ for some $a,b\in\R$ and its determinant is $-(a^2+b^2)<0$  which implies that $\mathcal A$ is an orientation-reversing diffeomorphism.
\end{proof}
\begin{definition}
We denote by $\R[z]_+$ the multiplicative submonoid of $\R[z]$ defined as
$\R[z]_+:=\{f\in\R[z]\ |\ f(z_0)>0 \text{\ for each\ } z_0\in\R\}$.
\end{definition}
\begin{prop}\label{diffG}
Let $\mathscr H$ and $\mathscr H_0$ be the subgroups of \ $\mathscr G$ given respectively by $\psi\Aut(S(\R)/\pi)\psi^{-1}$ and $\psi\Aut^+(S(\R)/\pi)\psi^{-1}$. 

Then $\mathscr H=\mathscr H_0 \rtimes \langle \tau \rangle$, where $\tau=\left[\begin{smallmatrix}
0 & 1-z^2\\
1 & 0
\end{smallmatrix}
\right]=\left[\begin{smallmatrix}
0 & h\\
1 & 0
\end{smallmatrix}
\right]$ as before, and
\[\mathscr H_0=\left\{
\left[\begin{smallmatrix}
a(z) & b(z)h\\
\bar b(z) & \bar a(z) 
\end{smallmatrix}
\right] ;\ a,b\in \C[z], a \bar a-b\bar b h\in \R[z]_+\right\}.\] 
\end{prop}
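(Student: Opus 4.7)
My plan is to first characterise $\mathscr H_0$ via the sign of the determinant $D(z)=a\bar a-b\bar b h\in\R[z]$, and then deduce the semidirect product decomposition by exhibiting $\tau$ as a section of the orientation map.

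For the characterisation of $\mathscr H_0$ I will argue as follows. Given $\mathcal A\in\mathscr H_0$ with associated matrix $A=\left[\begin{smallmatrix}a & bh\\\bar b & \bar a\end{smallmatrix}\right]$ as in Lemma~\ref{G}, Lemma~\ref{preserving}(a) tells me $\mathcal A$ preserves each of $L,\bar L,M,\bar M$, so the contrapositive of Lemma~\ref{ExchangeOrNotBir}(1,3) gives $a(\pm 1)\neq 0$, i.e.\ $D(\pm 1)=|a(\pm 1)|^2>0$. Since $\mathcal A$ is a diffeomorphism of $S(\R)$ it is in particular injective there, so it cannot contract any real circle $\Gamma_{z_0}(\R)$ with $z_0\in(-1,1)$ to a point; Lemma~\ref{BasePts}(b) then forces $D(z_0)\neq 0$ on the whole open interval $(-1,1)$. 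Combining with Lemma~\ref{G}, which gives $D>0$ on $\{z\in\R:z^2>1\}$, continuity of $D$ and the positivity at $\pm 1$ upgrade ``$D$ nowhere zero on $[-1,1]$'' to ``$D>0$ on all of $\R$'', so $D\in\R[z]_+$.

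Conversely, suppose $A\in\mathscr G$ with $D\in\R[z]_+$. The values $D(\pm 1)>0$ force $a(\pm 1)\neq 0$, so by Lemma~\ref{ExchangeOrNotBir} the map $\mathcal A$ is defined at the poles $P_N,P_S$ and preserves each of the four curves $L,\bar L,M,\bar M$. For every $z_0\in(-1,1)$, $D(z_0)>0$ together with Lemma~\ref{BasePts}(a) shows $\mathcal A$ is a local isomorphism along $\Gamma_{z_0}$. Hence $\mathcal A$ is defined at every real point of $S$; the inverse has matrix $A^{-1}=\left[\begin{smallmatrix}\bar a & -bh\\-\bar b & a\end{smallmatrix}\right]$, again of the form in Lemma~\ref{G} and with the same determinant $D$, so $\mathcal A^{-1}$ shares the same properties. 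Thus $\mathcal A\in\Aut(S(\R)/\pi)$, and preservation of the four curves places $\mathcal A$ in $\Aut^+(S(\R)/\pi)=\mathscr H_0$ by Lemma~\ref{preserving}.

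For the semidirect product decomposition: Remark~\ref{RemTau} identifies $\tau$ with the reflection $(x,y,z)\mapsto(x,-y,z)$, which is orientation-reversing, so $\tau\in\mathscr H\setminus\mathscr H_0$; a direct calculation gives $\tau^2=1$ in $\PGL(2,\C(z))$. For any $A\in\mathscr H\setminus\mathscr H_0$, Lemma~\ref{preserving}(b) says $\mathcal A$ swaps $L\leftrightarrow\bar L$ and $M\leftrightarrow\bar M$; the same is true of $\tau$, so $\mathcal A\tau$ preserves each of the four curves and lies in $\mathscr H_0$, giving $A\in\mathscr H_0\tau$. Since $\mathscr H_0$ is the kernel of the orientation homomorphism $\mathscr H\to\Z/2\Z$ (hence normal) and $\mathscr H_0\cap\langle\tau\rangle=\{1\}$, I conclude $\mathscr H=\mathscr H_0\rtimes\langle\tau\rangle$. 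The main subtlety I anticipate is organising the sign analysis of $D$: the three conditions $D(\pm 1)>0$, $D$ nowhere zero on $(-1,1)$, and $D>0$ for $|z|>1$ come from three different lemmas (preservation at the poles, injectivity along real fibres, and the structure of $\mathscr G$), and they must be glued together by continuity to produce the single clean statement $D\in\R[z]_+$.
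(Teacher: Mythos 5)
Your proof is correct and follows essentially the same route as the paper: both directions of the characterisation of $\mathscr H_0$ rest on Lemmas~\ref{G}, \ref{ExchangeOrNotBir}, \ref{preserving}, and \ref{BasePts}, with the semidirect product coming from $\tau$ being an orientation-reversing reflection. You merely spell out a few steps the paper leaves implicit (the continuity gluing of the sign conditions on $D$, the check that $A^{-1}$ has the same form and determinant, and the coset argument for $\mathscr H=\mathscr H_0\rtimes\langle\tau\rangle$), all of which are accurate.
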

\begin{proof}The fact that  $\mathscr H=\mathscr H_0 \rtimes \langle \tau \rangle$ follows from the fact that $\tau$ corresponds to a reflection in $\Aut(S(\R)/\pi)\setminus \Aut^+(S(\R)/\pi)$; it remains to describe $\mathscr H_0$.

Let $A\in\mathscr G$ be some element, that we write as $\left[\begin{smallmatrix}
a(z) & b(z)h\\
\bar b(z) & \bar a(z) 
\end{smallmatrix}
\right]$ for some polynomials $a,b\in\C[z]$ with no common real roots (Lemma~\ref{G}), and let $D=a \bar a -b\bar b h\in \R[z]$ be the corresponding determinant. We have $D(z)>0$ if $z^2>1$ (see Lemma~\ref{G}).
We denote by $\mathcal{A}\in \Bir(S/\pi)$ the corresponding element, given by $\psi^{-1} A \psi$.

Suppose that $A\in \mathscr H_0$. By Lemmas~\ref{ExchangeOrNotBir} and \ref{preserving}, this implies that $a(1)a(-1)\neq0$, hence $D(1)$ and $D(-1)$ are both positive. Moreover, $D(z)\neq0$ for each $z_0\in (-1,1)$ by Lemma~\ref{BasePts}. This implies that $D\in \R[z]_+$.

Conversely, suppose that $D\in \R[z]_+$. By Lemmas~\ref{ExchangeOrNotBir} and \ref{BasePts}, this implies that $\mathcal{A}$ is defined at each real point of the sphere, hence $A\in \mathscr H$. The fact that $A\in \mathscr H_0$ is given by Lemma~\ref{preserving}.
\end{proof}
\subsection{Involutions in $\Bir(S/\pi)$}\label{Sec:InvBir(S/pi)}
Recall that the group of elements of $\Bir(S,\pi)$ acting tri-vially on the basis of the fibration is denoted by $\Bir(S/\pi)$. This group is conjugate to 
\begin{align*}\mathscr G=\left\{
A=\left[\begin{smallmatrix}
a(z) & b(z)h\\
\bar b(z) & \bar a(z) 
\end{smallmatrix}
\right]\right. &;\ a,b \in \C[z] \text{ with no common real roots,}\\
&\left. \text{ and }
a(z) \bar a(z)-b(z)\bar b(z) h>0 \text{ for } z^2>1\phantom{\huge{l}}\right\}\subset\PGL(2,\C(z))
\end{align*}
by the birational map $\psi$ (see Lemma~\ref{LemAction}). In this subsection, we study involutions in $\Bir(S/\pi)$ or equivalently in $\mathscr G$ up to conjugacy.

We also recall that the action of $\PGL(2,\C(z))$ on $\A^2_\C$ was given in Equation~(\ref{actionPGL}) by $(t,z)\dasharrow\left(\frac{a(z)t+b(z)}{c(z)t+d(z)},z\right)$ for $\left[\begin{smallmatrix}a(z)&b(z)\\ c(z)&d(z)\end{smallmatrix}\right]=A\in\PGL(2,\C(z))$. Notice that when $A$ has order $2$, the restriction of $A$ to the $\Pp^1_\C$ corresponding to $z=z_0$, for a general $z_0 \in \C$, is an automorphism of order $2$ with two fixed points. We denote by $\Gamma_A$ the closure of the set of those fixed points as $z$ varies in $\C$ and call it the curve of fixed points of $A$ or just the curve fixed by $A$. The corresponding definition for the sphere is presented below, see Definition~\ref{Def:fixedCurve}.

The following results will be useful for the proof of the main result of this subsection in Theorem~\ref{thm}, which states that two involutions are conjugate in $\mathscr G$ if and only if their respective fixed curves are birational over $\R$.
\begin{lemma}\label{ConjPGL1}\ 
\begin{enumerate}[$(a)$]
	\item If $A\in\PGL(2,\C(z))$ is an element of order $2$, then $A$ is conjugate to $\left[\begin{smallmatrix}
0 & p\\
1 & 0
\end{smallmatrix}
\right]$ for some $p\in\C(z)^*$,
\item the elements $\left[\begin{smallmatrix}
0 & p\phantom{'}\\
1 & 0
\end{smallmatrix}
\right]$, $\left[\begin{smallmatrix}
0 & p'\\
1 & 0
\end{smallmatrix}
\right]\in\PGL(2,\C(z))$ with $p$, $p'\in\C(z)^*$ are conjugate in $\PGL(2,\C(z))$ if and only if $p/p'$ is a square in $\C(z)$. 
\item Let $A$, $B\in\PGL(2,\C(z))$ of order $2$. Then $A$ and $B$ are conjugate in $\PGL(2,\C(z))$  $(A\sim B)$ if and only if there exists a birational map $\rho$ defined over $\C$ 
\[\xymatrix@R-0.8pc{
\Gamma_A\  \ar[d]_\pi \ar@{-->}[r]^{\rho} & \ \Gamma_B \ar[d]_\pi  \\
\C \ar@{}[r]|-{=}   & \ \C}\] 
where $\Gamma_A$, $\Gamma_B\subset\C^2$ are the curves fixed by $A$ and $B$, respectively.
\end{enumerate}
\end{lemma}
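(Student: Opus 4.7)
For part $(a)$, I would start by noting that any representative $A=\left[\begin{smallmatrix}a&b\\c&d\end{smallmatrix}\right]\in\mathrm{GL}(2,\C(z))$ of an order-$2$ element of $\PGL(2,\C(z))$ satisfies $A^2=(a+d)A-(ad-bc)I$, and since $A\not\in\C(z)^*\cdot I$ while $A^2\in\C(z)^*\cdot I$, one must have $a+d=0$. So I may assume $A=\left[\begin{smallmatrix}a&b\\c&-a\end{smallmatrix}\right]$. If $c=0$, conjugating by $\left[\begin{smallmatrix}0&1\\1&0\end{smallmatrix}\right]$ swaps $b$ and $c$; since $A$ is not a scalar, this ensures $c\neq 0$. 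Rescaling I may take $c=1$, and then a direct computation shows that conjugation by $\left[\begin{smallmatrix}1&a\\0&1\end{smallmatrix}\right]$ sends $A$ to $\left[\begin{smallmatrix}0&a^2+b\\1&0\end{smallmatrix}\right]$, giving the required form with $p=a^2+b$.

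For part $(b)$, I would compute directly. If $M=\left[\begin{smallmatrix}\alpha&\beta\\\gamma&\delta\end{smallmatrix}\right]$ and $\lambda\in\C(z)^*$ satisfy $M\left[\begin{smallmatrix}0&p\\1&0\end{smallmatrix}\right]=\lambda\left[\begin{smallmatrix}0&p'\\1&0\end{smallmatrix}\right]M$, expanding both sides yields $\delta=\lambda\alpha$ and $\alpha p=\lambda\delta p'$, so either $\alpha=0$ or $p/p'=\lambda^2$; the case $\alpha=0$ forces $\delta=0$ and the remaining two equations likewise yield $p/p'=\lambda^2$. Conversely, if $p=\lambda^2 p'$ for some $\lambda\in\C(z)^*$, then conjugation by $\left[\begin{smallmatrix}\lambda&0\\0&1\end{smallmatrix}\right]$ sends $\left[\begin{smallmatrix}0&p'\\1&0\end{smallmatrix}\right]$ to a scalar multiple of $\left[\begin{smallmatrix}0&p\\1&0\end{smallmatrix}\right]$.

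For part $(c)$, combining $(a)$ and $(b)$ reduces the problem to the case $A=\left[\begin{smallmatrix}0&p\\1&0\end{smallmatrix}\right]$, $B=\left[\begin{smallmatrix}0&p'\\1&0\end{smallmatrix}\right]$ via the action (\ref{actionPGL}). The fixed locus of $A$ on $\A^2_\C$ is the curve $\Gamma_A=\{(t,z):t^2=p(z)\}$ projecting to $\C$ via $(t,z)\mapsto z$, and similarly for $\Gamma_B$. A birational map $\rho\colon\Gamma_A\dasharrow\Gamma_B$ over $\C$ compatible with the two projections corresponds precisely to a $\C(z)$-isomorphism of function fields $\C(z)(\sqrt{p})\simeq\C(z)(\sqrt{p'})$, which exists if and only if $p/p'$ is a square in $\C(z)$. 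By part $(b)$ this is in turn equivalent to $A$ and $B$ being conjugate in $\PGL(2,\C(z))$, and the conjugacy of arbitrary order-$2$ elements reduces to this normal-form case by $(a)$ (noting that conjugation in $\PGL(2,\C(z))$ clearly sends fixed curve to fixed curve compatibly with the projection to $\C$, since elements of $\PGL(2,\C(z))$ act trivially on the base).

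The only mild subtlety I anticipate is the translation in $(c)$ between a $\C$-birational map of the affine plane curves $\Gamma_A,\Gamma_B$ commuting with projection to $\C$ and a $\C(z)$-isomorphism of their function fields; this is standard but deserves a careful sentence, since one must observe that such $\rho$ is automatically the one induced by the matrix conjugation (and conversely any $\C(z)$-automorphism between the quadratic extensions lifts to such a $\rho$).
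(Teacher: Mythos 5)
Your overall route coincides with the paper's: zero trace forces the anti\-/diagonal normal form, conjugacy of $\left[\begin{smallmatrix}0&p\\1&0\end{smallmatrix}\right]$ and $\left[\begin{smallmatrix}0&p'\\1&0\end{smallmatrix}\right]$ is detected by the determinant modulo squares, and part $(c)$ reduces to the fact that a $\C(z)$-isomorphism $\C(z)[\sqrt{p}]\simeq\C(z)[\sqrt{p'}]$ exists if and only if $p/p'$ is a square. (The paper makes this last point explicit by writing $\sqrt{p}\mapsto a\sqrt{p'}+b$ and checking $b=0$; you cite it as standard, which is acceptable.) Parts $(b)$ and $(c)$ are correct as proposed.

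There is, however, one step in $(a)$ that fails as written. After reducing to $A=\left[\begin{smallmatrix}a&b\\c&-a\end{smallmatrix}\right]$ you claim that, when $c=0$, conjugating by the swap matrix produces a nonzero lower-left entry ``since $A$ is not a scalar''. This is false for the diagonal involutions $A=\left[\begin{smallmatrix}a&0\\0&-a\end{smallmatrix}\right]$ (for instance the class of $\mathrm{diag}(1,-1)$, which arises from the rotation by $\pi$, so the case is not vacuous): such an $A$ is non-scalar and invertible, yet conjugation by the swap merely replaces it by $-A$, the same element of $\PGL(2,\C(z))$, still with vanishing off-diagonal entries. Non-scalarity does not force $b$ or $c$ to be nonzero. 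The repair is the one the paper uses: treat $b=c=0$ separately and conjugate $\left[\begin{smallmatrix}1&0\\0&-1\end{smallmatrix}\right]$ to $\left[\begin{smallmatrix}0&1\\1&0\end{smallmatrix}\right]$ by $\left[\begin{smallmatrix}1&1\\1&-1\end{smallmatrix}\right]$. With that one-line addition your argument for $(a)$, and hence the whole lemma, is complete. (A minor point of convention: the identity you invoke is $P^{-1}AP=\left[\begin{smallmatrix}0&a^2+b\\1&0\end{smallmatrix}\right]$ for $P=\left[\begin{smallmatrix}1&a\\0&1\end{smallmatrix}\right]$, not $PAP^{-1}$; only one of the two conventions gives the stated normal form.)
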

\begin{proof}
\begin{enumerate}[$(a)$]
\item Let $A=\left[\begin{smallmatrix}
a & b\\
c & d
\end{smallmatrix}
\right]$ be an element of order 2 in $\PGL(2,\C(z))$. From $A^2
=\left[\begin{smallmatrix}
1 & 0\\
0 & 1
\end{smallmatrix}
\right]$, we get that $a=-d$ or $b=0=c$, but in the second case, $a^2=d^2$ thus $a=\pm d$. If $a=d$ and $b=c=0$ then $A=I$ and therefore $A$ does not have order 2. This implies that $a=-d$ in any case so we can write $A=\left[\begin{smallmatrix}
a & b\\
c & -a
\end{smallmatrix}
\right]$.
Now $A$ is conjugate to $\left[\begin{smallmatrix}
0 & a^2+bc\\
1 & 0
\end{smallmatrix}
\right]$ by $\left[\begin{smallmatrix}
-a & -b\\
1 & 0
\end{smallmatrix}
\right]$ when $b\neq0$ or by $\left[\begin{smallmatrix}
-c & a\\
0 & 1
\end{smallmatrix}
\right]$ when $c\neq0$. The case when $b=c=0$, we have $A=\left[\begin{smallmatrix}
1& 0\\
0 & -1
\end{smallmatrix}
\right]$ and is conjugate to $\left[\begin{smallmatrix}
0 & 1\\
1 & 0
\end{smallmatrix}
\right]$ by $\left[\begin{smallmatrix}
1 & 1\\
1 & -1
\end{smallmatrix}
\right]$. We have proved that $A$ is always conjugate to $\left[\begin{smallmatrix}
0 & p\\
1 & 0
\end{smallmatrix}
\right]$.
\item If $\left[\begin{smallmatrix}
0 & p\phantom{'}\\
1 & 0
\end{smallmatrix}
\right]$, $\left[\begin{smallmatrix}
0 & p'\\
1 & 0
\end{smallmatrix}
\right]$ are conjugate in $\PGL(2,\C(z))$ then the determinants are equal up to square and then $p/p'$ is a square. Reciprocally, if $p/p'=a^2$ for some $a\in\C(z)^*$ then $\left[\begin{smallmatrix}
0 & p\\
1 & 0
\end{smallmatrix}
\right]$ is conjugate to $\left[\begin{smallmatrix}
0 & p'\\
1 & 0
\end{smallmatrix}
\right]$ by $\left[\begin{smallmatrix}
1\phantom{'} & 0\\
0 & a\phantom{'}
\end{smallmatrix}
\right]$.
\item If $A$ and $B$ are conjugate elements of order 2 in $\PGL(2,\C(z))$, there is an element $\zeta\in\PGL(2,\C(z))$ such that the following diagram commutes:
\[\xymatrix@-1.3pc{
& \C^2 \ar@{-->}[rr]^{B} \ar[dd] & & \C^2 \ar[dd]^\pi \\
\C^2 \ar@{-->}[ur]_{\zeta} \ar[dd]_\pi \ar@{-->}[rr]^{\ \ \ A} & & \C^2 \ar@{-->}[ur]_{\zeta} \ar[dd]^{\pi} &  \\
& \C \ \ar[rr]_{=\ \ \ \ \ \ \ \ \ } & & \ \C\\
\C \ar[ur]^{=} \ar[rr]^{=} & & \C \ar[ur]_{=} &}\]
Then the existence of the birational map $\rho$ is given by the restriction of $\zeta$ to $\Gamma_A$.
Conversely, we assume the existence of $\rho:\Gamma_A\dasharrow\Gamma_B$.
By part (a), the fact that $A$ and $B$ are of order 2 implies that they are conjugate to an element of the form $\left[\begin{smallmatrix}
0 & f\\
1 & 0
\end{smallmatrix}
\right]$ and $\left[\begin{smallmatrix}
0 & g\\
1 & 0
\end{smallmatrix}
\right]$ respectively, for some $f,g\in\C(z)^*$. In this way, the equations for the curves $\Gamma_A$ and $\Gamma_B$ are $t^2=f(z)$ and $t^2=g(z)$. Since $\Gamma_A$ and $\Gamma_B$ are birational, this implies that the corresponding fields of rational functions are isomorphic i.e. $\C(z)[\sqrt{f}]\cong\C(z)[\sqrt{g}]$. The isomorphism will send $z\mapsto z$ and $\sqrt{f}\mapsto a\sqrt{g}+b$ for some $a,b\in\C(z)$ with $a\neq 0$. Since $f=g(=t^2)$, we have $f=(\sqrt{f})^2\mapsto (a\sqrt{g}+b)^2=a^2g+2ab\sqrt{g}+b^2=f$ then $a^2g+b^2-f=-2ab\sqrt g$ in $\C(z)[\sqrt g]$ which implies that $2ab\sqrt{g}=0$ and therefore $b=0$. Hence $f=a^2g$ and then $\left[\begin{smallmatrix}
0 & f\\
1 & 0
\end{smallmatrix}
\right]$ and $\left[\begin{smallmatrix}
0 & g\\
1 & 0
\end{smallmatrix}
\right]$ are conjugate by part (b).
\end{enumerate}
\end{proof}
\begin{lemma}\label{ConjPGL}
Let $A$, $B\in \mathscr G\subset\PGL(2,\C(z))$ be of order two. If $A$ and $B$ are conjugate in $\PGL(2,\C(z))$ then there are elements $\alpha$, $\beta\in\PGL(2,\C(z))$ such that $A=\alpha P\alpha^{-1}$, $B=\beta P\beta^{-1}$ for some $P=\left[\begin{smallmatrix}
0 & p\\
1 & 0
\end{smallmatrix}
\right]$, $p\in\R(z)^*$
\end{lemma}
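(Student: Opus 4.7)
The plan is to show that any order-two element of $\mathscr G$ can be conjugated in $\PGL(2,\C(z))$ to some matrix of the form $P = \left[\begin{smallmatrix} 0 & p \\ 1 & 0 \end{smallmatrix}\right]$ with $p\in\R(z)^*$. Once this is established for $A$, the hypothesis that $A$ and $B$ are conjugate in $\PGL(2,\C(z))$ immediately supplies the common $P$ and the corresponding $\alpha, \beta$.

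Writing $A = \left[\begin{smallmatrix} a & bh \\ \bar b & \bar a \end{smallmatrix}\right]$ as in Lemma~\ref{G}, I compute that $A^2$ equals $\left[\begin{smallmatrix} a^2 + b\bar b h & (a + \bar a)bh \\ (a + \bar a)\bar b & \bar a^2 + b\bar b h \end{smallmatrix}\right]$. The requirement that this be scalar in $\PGL$, combined with $A$ being nontrivial, forces $a + \bar a = 0$ (a short case analysis, handling separately the subcase $b=0$, where $a^2 = \bar a^2$ together with nontriviality still gives $a = -\bar a$). Writing $a = {\bf i}a_0$ with $a_0 \in \R[z]$, the explicit conjugation from the proof of Lemma~\ref{ConjPGL1}(a), applied by $\left[\begin{smallmatrix} -a & -bh \\ 1 & 0 \end{smallmatrix}\right]$ when $bh \neq 0$, then sends $A$ to $\left[\begin{smallmatrix} 0 & a^2 + b\bar b h \\ 1 & 0 \end{smallmatrix}\right] = \left[\begin{smallmatrix} 0 & -a_0^2 + b\bar b h \\ 1 & 0 \end{smallmatrix}\right]$, whose $(1,2)$-entry lies in $\R(z)^*$ since $a_0$ and $b\bar b$ both lie in $\R[z]$; invertibility of $A$ ensures this entry is nonzero. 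The degenerate subcase $b = 0$ reduces to the diagonal form $\left[\begin{smallmatrix} {\bf i}a_0 & 0 \\ 0 & -{\bf i}a_0 \end{smallmatrix}\right] \sim \left[\begin{smallmatrix} 1 & 0 \\ 0 & -1 \end{smallmatrix}\right] \sim \left[\begin{smallmatrix} 0 & 1 \\ 1 & 0 \end{smallmatrix}\right]$ with $p = 1$.

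This produces $\alpha \in \PGL(2,\C(z))$ and $p \in \R(z)^*$ such that $A = \alpha P \alpha^{-1}$ with $P = \left[\begin{smallmatrix} 0 & p \\ 1 & 0 \end{smallmatrix}\right]$. Since $B$ is assumed conjugate to $A$ in $\PGL(2,\C(z))$, it is then also conjugate to the same $P$, yielding the required $\beta$. The main obstacle is the first step, namely verifying that the normal form from Lemma~\ref{ConjPGL1}(a) has its nontrivial entry automatically in $\R(z)$ when applied to an element of $\mathscr G$; this boils down to the identity $a + \bar a = 0$ forced by order two in $\mathscr G$, which makes $a^2$ real and therefore makes the determinant $a\bar a - b\bar b h$ real as well. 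Everything else in the lemma follows from transitivity of conjugacy.
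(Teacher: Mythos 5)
Your proof is correct, but it takes a genuinely different route from the paper's. The paper first invokes Lemma~\ref{ConjPGL1} to present $A$ and $B$ with a common $P$ for some $p\in\C(z)^*$, and then works abstractly with the defining relation of $\mathscr G$: lifting to $\GL(2,\C(z))$, the identity $\tau A\tau=\bar A$ yields $p=\lambda^2\bar p$ for some $\lambda\in\C(z)^*$ with $\lambda\bar\lambda=\pm1$; the case $\lambda\bar\lambda=-1$ is excluded by a positivity argument on leading coefficients of norms $f\bar f$, and Hilbert's Theorem~90 then produces $\mu$ with $\lambda=\bar\mu/\mu$, so that $p\mu^2$ is real. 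You instead exploit the explicit normal form of Lemma~\ref{G}: the order-two condition forces $a+\bar a=0$ (your case analysis for $b=0$ is the right one), so $a={\bf i}a_0$ with $a_0\in\R[z]$, and the companion-form entry $a^2+b\bar b h=-a_0^2+b\bar b h$ is visibly real and nonzero, after which transitivity of conjugacy finishes the argument. Your computation is more elementary and is in fact the one the paper itself uses later (the explicit form $\left[\begin{smallmatrix}{\bf i}a&bh\\ \bar b&-{\bf i}a\end{smallmatrix}\right]$ and its conjugation to $\left[\begin{smallmatrix}0&-(a^2-b\bar bh)\\ 1&0\end{smallmatrix}\right]$ appear verbatim in the proof of Lemma~\ref{murho}, and the vanishing-trace observation in Lemma~\ref{invform}), so it integrates well with the rest of the text; what the paper's cohomological argument buys is independence from the polynomial normal form --- it needs only the relation $\tau A\tau=\bar A$ and the behaviour of determinants, and the genuine content (ruling out $\lambda\bar\lambda=-1$) is precisely the obstruction that your explicit computation makes invisible.
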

\begin{proof}
By Lemma \ref{ConjPGL1} we can present $A$ and $B$ as in the statement for the same $P$ for some $p\in\C(z)^*$, what remains to show is that we can pick $p\in\R(z)^*$ (equivalently  $p=\bar{p}$). Let $A_0,\tau_0\in \GL(2,\C(z))$ be elements corresponding to $A,\tau\in \PGL(2,\C(z))$. We can choose $A_0$ so that $\det(A_0)=p$ and want to find an element $\mu\in\C(z)^{*}$ such that $\overline{p\mu^2}=p\mu^2$ because $\left[\begin{smallmatrix}
0 & p\\
1 & 0
\end{smallmatrix}
\right]$ is conjugate to $\left[\begin{smallmatrix}
0 & p\mu^2\\
1 & 0
\end{smallmatrix}
\right]$ by $\left[\begin{smallmatrix}
\mu & 0\\
0 & -1
\end{smallmatrix}
\right]$.  

The equality 
$\tau A \tau=\bar{A}$ in $\PGL(2,\C(z))$  implies that $(\tau_0)^{-1}A_0 \tau_0=\lambda \bar{A_0}$ for some element $\lambda\in \C(z)^{*}$. Taking the determinant, we obtain $\det(A_0)=\lambda^2\overline{\det(A_0)}$, which means that $p=\lambda^2\overline{p}$. It suffices to find $\mu$ with $\lambda=\frac{\overline{\mu}}{\mu}$. Since $\lambda^2=p/\overline{p}$, we obtain $\lambda^2\cdot \overline{\lambda}^2=1$, and thus $\lambda\overline{\lambda}=\pm 1$. If $\lambda\overline{\lambda}=1$ then by Hilbert's Theorem 90 there is $\mu\in\C(z)^*$ such that $\lambda=\frac{\overline{\mu}}{\mu}$. The case $\lambda\overline{\lambda}=-1$ is not possible in $\C(z)$ otherwise $\lambda$ would be the quotient of two polynomials in $\C(z)$, say $\lambda=\frac{f}{g}$ with $f,g\in\C[z]^*$ and then $\frac{f\bar{f}}{g\bar{g}}=-1$ which is equivalent to $f\bar{f}=-g\bar{g}$. But the leading coefficient of any element of the set $\{f\bar{f}\ :\ f\in\C[z]\}\subset\R[z]^*$ is always positive implying that $f\bar{f}$ cannot be equal to $-g\bar{g}$ for any $g\in\C(z)^*$.
\end{proof}
\begin{prop}\label{C(H)}
Let $F=\left[\begin{smallmatrix}
0 & f\\
1 & 0
\end{smallmatrix}
\right]$ with $f\in\C(z)^*$, 
\begin{enumerate}[$(a)$]
\item the centralizer of $F$ in $\PGL(2,\C(z))$, that we denote by $\Cc(F)$, is the semi-direct product $J_f\rtimes\Z/2\Z$ where $J_f$ is the image in $\PGL(2,\C(z))$ of $T_f$ where
\[T_f:=
\left\{\left[\begin{smallmatrix}
a & fb\\
b & a
\end{smallmatrix}
\right]\in\GL(2,\C(z))\ ;\ a,b\in\C(z), a^2-fb^2\neq0\right\}\] and $\Z/2\Z$ is generated by the element $\nu=\left[\begin{smallmatrix}
1 & 0\\
0 & -1
\end{smallmatrix}
\right]$ in $\PGL(2,\C(z))$.
\item The group $T_f$ is isomorphic to the multiplicative group $\C(\Gamma)^*$ where $\C(\Gamma)$ is the field of rational functions on $\Gamma$, the hyperelliptic curve $\Gamma$ of equation $t^2=f(z)$ in $\A^2_\C$ (the fixed curve of the birational map corresponding to the element $F$).
\item $H^1(\langle\nu\rangle,J_f)=\{1\}$.
\end{enumerate}
\end{prop}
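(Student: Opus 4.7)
The plan is to carry out three direct algebraic computations in sequence, each building on the previous one.

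For part (a), I would start from a generic $M=\left[\begin{smallmatrix}a & b\\c & d\end{smallmatrix}\right]\in\GL(2,\C(z))$ and impose the $\PGL$-commutation condition $MF=\lambda FM$ for some $\lambda\in\C(z)^{*}$. Comparing entries of the two products yields $b=\lambda fc$, $fc=\lambda b$, $fa=\lambda fd$, $d=\lambda a$, whose combination forces $\lambda^{2}=1$. The case $\lambda=1$ produces exactly the matrices of $T_{f}$, while the case $\lambda=-1$ produces $T_{f}\cdot\nu$, since right-multiplication by $\nu$ sends $\left[\begin{smallmatrix}a & fc\\c & a\end{smallmatrix}\right]$ to $\left[\begin{smallmatrix}a & -fc\\c & -a\end{smallmatrix}\right]$. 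Projecting to $\PGL$ thus gives $\Cc(F)=J_{f}\sqcup J_{f}\nu$, and a direct computation of $\nu M\nu^{-1}$ for $M\in T_{f}$ (which turns out to send $(a,b)$ to $(a,-b)$) shows that $\nu$ normalises $T_{f}$, so the decomposition is the semidirect product $J_{f}\rtimes\langle\nu\rangle$.

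For part (b), I would define $\Psi\colon T_{f}\to\C(\Gamma)^{*}$ by $\left[\begin{smallmatrix}a & fb\\b & a\end{smallmatrix}\right]\mapsto a+bt$, where $t$ satisfies $t^{2}=f$ in $\C(\Gamma)$. The matrix product of two elements of $T_{f}$ reproduces the formula $(a_{1}+b_{1}t)(a_{2}+b_{2}t)=(a_{1}a_{2}+fb_{1}b_{2})+(a_{1}b_{2}+a_{2}b_{1})t$ term-by-term, confirming that $\Psi$ is a group homomorphism. Because $\{1,t\}$ is a $\C(z)$-basis of $\C(\Gamma)$, $\Psi$ is a set-theoretic bijection between the underlying $\C(z)$-modules, and the determinantal condition $a^{2}-fb^{2}\neq 0$ matches $(a+bt)(a-bt)\neq 0$, i.e.\ the condition that $a+bt$ is a unit in $\C(\Gamma)$. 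Hence $\Psi$ is a group isomorphism.

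For part (c), the isomorphism from (b) descends to $J_{f}\cong\C(\Gamma)^{*}/\C(z)^{*}$, and the conjugation computation from (a) shows that the action of $\nu$ on $J_{f}$ corresponds under $\Psi$ to the hyperelliptic involution $a+bt\mapsto a-bt$ of the Galois quadratic extension $\C(\Gamma)/\C(z)$. I would then feed the short exact sequence of $\langle\nu\rangle$-modules
\[1\to\C(z)^{*}\to\C(\Gamma)^{*}\to J_{f}\to 1\]
into the long exact cohomology sequence and apply Hilbert's Theorem~90 to get $H^{1}(\langle\nu\rangle,\C(\Gamma)^{*})=0$. Given a cocycle class $[\tilde x]\in J_{f}$ satisfying $[\tilde x\cdot\nu(\tilde x)]=1$, one lifts $\tilde x$ to $\C(\Gamma)^{*}$, exploits the $\C(z)^{*}$-ambiguity of the lift to renormalise the representative so that the norm $\tilde x\cdot\nu(\tilde x)$ becomes trivial, and then invokes Hilbert~90 to produce $\tilde y\in\C(\Gamma)^{*}$ with $\tilde x=\nu(\tilde y)/\tilde y$, which descends to the required coboundary in $J_{f}$. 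The main obstacle is precisely this renormalisation step: one must show that the scaling ambiguity in lifting from $J_{f}$ to $\C(\Gamma)^{*}$ is powerful enough to absorb the norm obstruction coming from $H^{2}$, which is where the cohomological bookkeeping becomes delicate.
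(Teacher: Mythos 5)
Your parts (a) and (b) are correct and follow the paper's proof essentially verbatim: the same entry-by-entry comparison in $\PGL(2,\C(z))$ forcing $\lambda=\pm1$ and producing the two cosets $J_f$ and $J_f\nu$, and the same isomorphism $a+b\sqrt{f}\mapsto\left[\begin{smallmatrix}a&fb\\b&a\end{smallmatrix}\right]$ identifying $T_f$ with $\C(z)[\sqrt{f}]^*\cong\C(\Gamma)^*$.

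The gap is in (c), and it sits exactly where you place it; but it is not delicate bookkeeping that a more careful chase would repair --- it is the entire content of the statement. Since $\nu$ acts trivially on the scalar subgroup $\C(z)^*\subset T_f$, rescaling a lift $\tilde x$ by $\mu\in\C(z)^*$ multiplies the norm $\tilde x\cdot\nu(\tilde x)$ by $\mu\cdot\nu(\mu)=\mu^2$. Your renormalisation therefore demands that the scalar $\tilde x\cdot\nu(\tilde x)\in\C(z)^*$ be a square in $\C(z)^*$, i.e.\ that the image of the class under the connecting map $H^1(\langle\nu\rangle,J_f)\to H^2(\langle\nu\rangle,\C(z)^*)\cong\C(z)^*/(\C(z)^*)^2$ vanish; the target group is far from trivial, and Hilbert~90 says nothing about it. To see that this cannot be absorbed by a clever choice of lift, note that the class of $F$ itself in $J_f$ (corresponding to $\sqrt{f}$) is a cocycle whose norm equals $-f$, and $-f$ is a square in $\C(z)$ only when $f$ is --- which is excluded when $\Gamma$ is an irreducible curve. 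The paper closes the argument at precisely this point not representative by representative but globally: it writes the exact sequence $H^1(\langle\nu\rangle,T_f)\to H^1(\langle\nu\rangle,J_f)\to H^2(\langle\nu\rangle,\C(z)^*)$ and appeals to Tsen's theorem \cite[Chapter~X, Section~7]{Serre} to dispose of the $H^2$ term. That input (or some substitute controlling which scalars can arise as norms of cocycles) is exactly what your write-up is missing, and without it the proof of (c) does not close.
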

\begin{proof}
\begin{enumerate}[$(a)$]
\item Let $A=\left[\begin{smallmatrix}
a & b\\
c & d
\end{smallmatrix}
\right]\in\PGL(2,\C(z))$, from $AF=FA$ we get $\left[\begin{smallmatrix}
b & fa\\
d & fc
\end{smallmatrix}
\right]=\left[\begin{smallmatrix}
fc & fd\\
a & b
\end{smallmatrix}
\right]$ implying that $d=\lambda a$, $b=\lambda f c$, $a=\lambda d$, and $fc=\lambda b$ for some $\lambda\in\C(z)^*$. If $a\neq0$ we have $a=\lambda^2a$ hence $\lambda=\pm1$ and $A=\left[\begin{smallmatrix}
a & fb\\
b & a
\end{smallmatrix}
\right]$ or $\left[\begin{smallmatrix}
a & -fb\\
b & -a
\end{smallmatrix}
\right]$. When $a=0$, we get $d=0$, and $fc=\lambda^2fc$ implying $\lambda=\pm1$ and $A=\left[\begin{smallmatrix}
0 & fb\\
b & 0
\end{smallmatrix}
\right]$ or $\left[\begin{smallmatrix}
0 & -fb\\
b & 0
\end{smallmatrix}
\right]$. Then $\Cc(F)=J_f\rtimes \left\langle\left[\begin{smallmatrix}
1 & 0\\
0 & -1
\end{smallmatrix}
\right]\right\rangle.$
\item An element of the field $\C(\Gamma)$ can be written as $a+b t$ with $a$, $b\in\C(z)$ and then we see that $\C(z)[\sqrt{f}]$ is isomorphic to $\C(\Gamma)$ by sending $a+b t$ to $a+b\sqrt{f}$. Hence we define the map from $\C(z)[\sqrt{f}]^*$ to $T_f$ given by $a+b\sqrt{f}\mapsto\left[\begin{smallmatrix}
a & fb\\
b & a
\end{smallmatrix}
\right]$ which is clearly bijective and is a group homomorphism since \[(a+b\sqrt{f})(c+d\sqrt{f})=(ac+f bd)+(ad+bc)\sqrt{f}\] corresponds to the product \[\left[\begin{smallmatrix}
a & fb\\
b & a 
\end{smallmatrix}
\right]\left[\begin{smallmatrix}
c & fd\\
d & c
\end{smallmatrix}
\right]=\left[\begin{smallmatrix}
ac+f bd & f(ad+bc)\\
ad+bc & ac+f bd
\end{smallmatrix}
\right].\]
\item From the exact sequence
\begin{equation}\label{origExSeq}
1\rightarrow\C(z)^*\xrightarrow i T_f\xrightarrow\mfp J_f\rightarrow 1
\end{equation} we obtain the cohomology exact sequence
\[H^1(\langle\nu\rangle,T_f)\rightarrow H^1(\langle\nu\rangle,J_f)\rightarrow H^2(\langle\nu\rangle,\C(z)^*).\]
The first cohomology group $H^1(\langle\nu\rangle,T_f)$ is trivial by Hilbert's Theorem 90 and the second cohomology group $H^2(\langle\nu\rangle,\C(z)^*)$ is trivial by Tsen's Theorem (\cite[Chapter X, Section 7]{Serre}). Then we get that $H^1(\langle\nu\rangle,J_f)=\{1\}$.\qedhere
\end{enumerate}
\end{proof} 

\begin{lemma}\label{murho}
Let $A\in\mathscr G$ of order $2$ and let $\alpha\in\PGL(2,\C(z))$ such that $A=\alpha P\alpha^{-1}$ for some $P=\left[\begin{smallmatrix}
0 & p\\
1 & 0
\end{smallmatrix}
\right]$, $p\in\R(z)^*$. Then the element $\mu_A\colon=\alpha^{-1}\tau\bar\alpha$ belongs to $J_p$ where $\tau=\left[\begin{smallmatrix}
0 & h\\
1 & 0
\end{smallmatrix}
\right]$ for $h=1-z^2$ and $J_p$ is defined in Proposition $\ref{C(H)}$.
\end{lemma}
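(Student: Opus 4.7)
The argument splits into two stages. First, I would verify that $\mu_A$ commutes with $P$ so that it lies in the centraliser $\Cc(P)=J_p\rtimes\langle\nu\rangle$ of Proposition~\ref{C(H)}; second, I would identify which of the two cosets $\mu_A$ sits in by an explicit computation.

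For the first stage, the hypothesis $A\in\mathscr G$ reads $\tau A\tau=\bar A$, while $A=\alpha P\alpha^{-1}$ together with $\bar P=P$ (true because $p\in\R(z)$) gives $\bar A=\bar\alpha P\bar\alpha^{-1}$. Substituting yields $\tau\alpha P\alpha^{-1}\tau=\bar\alpha P\bar\alpha^{-1}$, which rearranges to $(\bar\alpha^{-1}\tau\alpha)P=P(\bar\alpha^{-1}\tau\alpha)$. Applying complex conjugation (and using $\bar\tau=\tau$, $\bar P=P$) gives the same commutation for $\mu_A=\alpha^{-1}\tau\bar\alpha$, so $\mu_A\in\Cc(P)$.

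For the second stage, I would use Lemma~\ref{G} to write $A=\left[\begin{smallmatrix}a & bh\\ \bar b & \bar a\end{smallmatrix}\right]$ and observe that $A^2\sim I$ in $\PGL$ forces $\bar a=-a$ (the off-diagonal entries of $A^2$ are proportional to $a+\bar a$). The explicit conjugation from the proof of Lemma~\ref{ConjPGL1}(a) then provides $A=\alpha P\alpha^{-1}$ with $\alpha=\left[\begin{smallmatrix}0 & bh\\ -1 & -a\end{smallmatrix}\right]$ and $p=a^2+b\bar b h$, which lies in $\R(z)^*$ since $a^2=-a\bar a$ and $b\bar b h$ are both real. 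A direct matrix computation, in which the factor $\tau^2=hI$ clears an overall $h$, yields
\[
\mu_A=\alpha^{-1}\tau\bar\alpha=\left[\begin{smallmatrix}a & -p\\ -1 & a\end{smallmatrix}\right],
\]
which is manifestly of the form $\left[\begin{smallmatrix}A' & pB'\\ B' & A'\end{smallmatrix}\right]$ and hence belongs to $J_p$. For any other valid $\alpha'=\alpha\beta$ with $\beta\in\Cc(P)$, the element $\mu_A$ is replaced by $\beta^{-1}\mu_A\bar\beta$; since $J_p$ is normal in $\Cc(P)$ and stable under complex conjugation (as $p\in\R(z)$), the $J_p$-coset of $\mu_A$ is unchanged, and the conclusion is independent of the choice.

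The main obstacle is precisely this second stage. The purely formal identity $\bar\mu_A=PM^{-1}_AP$ extracted from the first stage is invariant under swapping the cosets $J_p\leftrightarrow J_p\nu$, so no abstract consequence of the hypotheses forces $\mu_A$ into $J_p$ rather than $J_p\nu$. The explicit normal form coming from Lemma~\ref{ConjPGL1}(a) breaks this symmetry and exhibits $\mu_A$ directly in the desired coset.
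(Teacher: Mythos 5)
Your proof is correct and follows essentially the same route as the paper's: first the commutation relation placing $\mu_A$ in $\Cc(P)=J_p\rtimes\langle\nu\rangle$, then the explicit normal form $A=\left[\begin{smallmatrix}{\bf i}a & bh\\ \bar b & -{\bf i}a\end{smallmatrix}\right]$ with the same conjugating matrix $\alpha$ and the same computation giving $\mu_A=\left[\begin{smallmatrix}{\bf i}a & -p\\ -1 & {\bf i}a\end{smallmatrix}\right]\in J_p$, and finally the same reduction of an arbitrary admissible $\alpha'$ to this one via an element of $\Cc(P)$. There is no substantive difference from the paper's argument.
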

\begin{proof}
The fact that $A\in \mathscr G$ implies that $\mu_A\in \Cc(P)$ because
\begin{align*}
\mu_A P\mu_A^{-1}&=(\alpha^{-1}\tau\overline{\alpha})P(\overline{\alpha}^{-1}\tau\alpha)=\alpha^{-1}\tau(\overline{\alpha}\overline{P}\overline{\alpha}^{-1})\tau\alpha\\
&=\alpha^{-1}(\tau \overline{A}\tau)\alpha =\alpha^{-1}A\alpha=P.
\end{align*}
In order to check that indeed $\mu_A$ belongs to $J_p$, we compute $P$ and $\alpha$ explicitly. First, we observe that if $A$ is an involution in $\mathscr G$ then $A$ is of the form $\left[\begin{smallmatrix}
\textbf{i}\cdot a(z) & b(z)h\\
\bar b(z) & -\textbf{i}\cdot a(z)
\end{smallmatrix}
\right]$ with $a(z)\in\R(z)$, $b(z)\in\C(z)$.
In $\PGL(2,\C(z))$, this involution is conjugate to the element $P=\left[\begin{smallmatrix}
0 & -(a^2-b\bar bh)\\
1 & 0 
\end{smallmatrix}
\right]$ by $\alpha=\left[\begin{smallmatrix}
0 & b(z)h\\
-1 & -\textbf{i}\cdot a(z) 
\end{smallmatrix}
\right]$. In this case, $p=-(a^2-b\bar bh)$ and then $\mu_A$ is explicitly $\left[\begin{smallmatrix}
\textbf{i}\cdot a(z) & -p\\
-1 & \textbf{i}\cdot a(z) 
\end{smallmatrix}
\right]$ which belongs to $J_p$. If $\alpha'$ is another element in $\PGL(2,\C(z))$ such that $\alpha'^{-1}A\alpha'=\left[\begin{smallmatrix}
0 & p\\
1 & 0 
\end{smallmatrix}
\right]$ then $\alpha'^{-1}\alpha\in\Cc(P)$, say $\theta=\alpha'^{-1}\alpha$. Then $\mu_A'=\alpha'^{-1}\tau\bar\alpha'=(\theta\alpha^{-1})\tau(\overline{\alpha\theta^{-1}})=\theta(\alpha^{-1}\tau\overline{\alpha})\overline{\theta^{-1}}$ that lies in $J_p$ as well. 
\end{proof}

\begin{definition}\label{Def:fixedCurve}
Let $\mathcal A\in \Bir(S/\pi)\setminus\{1\}$ be of finite order. For a general $z_0\in \R$ the birational map given by $\mathcal A$ fixes the conic $\Gamma_{z_0}$ corresponding to the preimage of $z_0$ by $\pi$. Note that $\mathcal A$ restricted to $\Gamma_{z_0}$ ($\mathcal A_{\Gamma_{z_0}}\colon\Gamma_{z_0}\rightarrow\Gamma_{z_0}$) is an isomorphism with exactly two fixed points, which can be two real points or two imaginary conjugate points. The (closure of) the set of these fixed points, for every $z\in\Pp^1$, gives the \emph{curve of fixed points} that we denote by $\Fix(\mathcal A)$ and that is a double covering of $\Pp^1$. Note that some isolated points can also be fixed and not belong to $\Fix(\mathcal A)$. 
\end{definition}

\begin{thm}\label{thm}
Let $\mathcal A$, $\mathcal B\in \Bir(S/\pi)$ of order $2$. The elements $\mathcal A$ and $\mathcal B$ are conjugate in $\Bir(S/\pi)$  $(\mathcal A\sim_{\Bir(S/\pi)} \mathcal B)$ if and only if there exists a birational map $\rho$ defined over $\R$ 
\[\xymatrix@R-0.8pc{
\Fix(\mathcal A)\  \ar[d]_\pi \ar@{-->}[r]^{\rho} & \ \Fix(\mathcal B) \ar[d]_\pi  \\
\R \ar@{}[r]|-{=}   & \ \R}\] 
with $\Fix(\cdot)$ as in the precedent paragraph.
\end{thm}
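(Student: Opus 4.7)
The plan is to prove both directions separately. Direction $(\Rightarrow)$ is immediate: if $\gamma\in \Bir(S/\pi)$ conjugates $\mathcal A$ to $\mathcal B$, then $\gamma$ sends $\Fix(\mathcal A)$ to $\Fix(\mathcal B)$, commutes with $\pi$, and is defined over $\R$; its restriction to $\Fix(\mathcal A)$ is therefore the desired $\rho$.

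For $(\Leftarrow)$ I pass to the algebraic picture of Lemma~\ref{LemAction} and let $A,B\in \mathscr G\subset \PGL(2,\C(z))$ correspond to $\mathcal A$ and $\mathcal B$. Viewing $\rho$ merely as a $\C$-rational map, Lemma~\ref{ConjPGL1}(c) gives $A$ and $B$ conjugate in $\PGL(2,\C(z))$; Lemma~\ref{ConjPGL} then lets me write $A=\alpha P\alpha^{-1}$ and $B=\beta P\beta^{-1}$ for a common $P=\left[\begin{smallmatrix}0 & p\\1 & 0\end{smallmatrix}\right]$ with $p\in \R(z)^*$. Setting $\mu_A:=\alpha^{-1}\tau\overline{\alpha}$ and $\mu_B:=\beta^{-1}\tau\overline{\beta}$, Lemma~\ref{murho} places both in the abelian group $J_p$ (Proposition~\ref{C(H)}(b)); using $\tau^{2}=I$ in $\PGL(2,\C(z))$ one checks $\mu_A\overline{\mu_A}=\mu_B\overline{\mu_B}=1$, so both are $1$-cocycles for the $\mathrm{Gal}(\C/\R)$-action on $J_p$ by coefficientwise conjugation.

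The strategy is to find a conjugator of the form $\gamma:=\beta\lambda\alpha^{-1}$ with $\lambda\in \Cc(P)$, belonging to $\mathscr G$. A brief calculation using $\tau\overline{\alpha}=\alpha\mu_A$ and $\tau\overline{\beta}=\beta\mu_B$ shows that $\gamma A\gamma^{-1}=B$ automatically once $\lambda$ centralises $P$, while the condition $\tau\overline{\gamma}\tau=\gamma$ reduces to $\mu_B=\lambda\mu_A\overline{\lambda}^{-1}$. Restricting to $\lambda\in J_p$ and using the abelianness of $J_p$, this becomes $\mu_B\mu_A^{-1}=\lambda\overline{\lambda}^{-1}$; i.e., the $1$-cocycle $\mu_B\mu_A^{-1}$ must be trivial in $H^1(\mathrm{Gal}(\C/\R),J_p)$.

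This is where the hypothesis that $\rho$ is defined over $\R$ enters. The composite $\beta^{-1}\rho\alpha$ is a self-map of $\Fix(P)$ commuting with $\pi$, hence either the identity or the hyperelliptic involution $\nu|_{\Fix(P)}$; by replacing $\alpha$ with $\alpha\nu$ if necessary (this does not change $A$, since $\nu\in \Cc(P)$, while modifying $\mu_A$ accordingly) I may assume it is the identity. Unpacking the fact that $\rho=\beta\alpha^{-1}|_{\Fix(A)}$ intertwines the real structures induced by $\sigma_1\tau$ on $\Fix(A)$ and $\Fix(B)$, and combining this with the Hilbert~90/Tsen-type vanishing appearing in the proof of Proposition~\ref{C(H)}(c), should produce the required $\lambda\in J_p$ with $\lambda\overline{\lambda}^{-1}=\mu_B\mu_A^{-1}$; then $\gamma=\beta\lambda\alpha^{-1}$ lies in $\mathscr G$ and, transported back via $\psi$, gives an element of $\Bir(S/\pi)$ conjugating $\mathcal A$ to $\mathcal B$. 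The hardest step is this last one: converting the geometric $\R$-rationality of $\rho$ into the explicit coboundary identity in $J_p$, since the real structures on $\Fix(A)$ and $\Fix(P)$ differ by the action of $\mu_A\in J_p$ on $\A^2_\C$ and require careful bookkeeping.
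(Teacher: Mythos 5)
Your reduction is exactly the one the paper uses: pass to $\mathscr G$ via $\psi$, apply Lemma~\ref{ConjPGL1}(c) and Lemma~\ref{ConjPGL} to obtain a common real model $P=\left[\begin{smallmatrix}0 & p\\ 1 & 0\end{smallmatrix}\right]$ with $p\in\R(z)^*$, and reduce the existence of a conjugator in $\mathscr G$ to solving $\mu_B\mu_A^{-1}=\lambda\overline{\lambda}^{-1}$ with $\lambda\in J_p$. The gap is in how you propose to close this last equation. You assert that ``this is where the hypothesis that $\rho$ is defined over $\R$ enters'' and then sketch, without completing it, an argument extracting the coboundary identity from the $\R$-rationality of $\rho$ via the composite $\beta^{-1}\rho\alpha$. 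That step is both unfinished (``should produce the required $\lambda$'') and unnecessary: you have already checked that $\mu_A\overline{\mu_A}=\mu_B\overline{\mu_B}=1$ and that $J_p$ is abelian, so $\mu_B\mu_A^{-1}$ satisfies $(\mu_B\mu_A^{-1})\cdot\overline{\mu_B\mu_A^{-1}}=1$ and is itself a $1$-cocycle. Proposition~\ref{C(H)}(c) states that $H^1(\langle\nu\rangle,J_p)=\{1\}$ (this is where Hilbert~90 and Tsen's theorem are consumed, once and for all), so \emph{every} such cocycle is a coboundary and the desired $\lambda$ exists with no further input. This is precisely the paper's argument; the paper even remarks that its diagram need not commute, i.e.\ $\rho$ need not be the restriction of $\beta\alpha^{-1}$, so no bookkeeping relating $\rho$ to $\beta\alpha^{-1}$ (or to the hyperelliptic involution of $\Fix(P)$) is required.

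Put differently, in the reverse implication the hypothesis on $\rho$ is used only at the very first step, to deduce that $A$ and $B$ are conjugate in $\PGL(2,\C(z))$ via Lemma~\ref{ConjPGL1}(c) — for which $\C$-rationality of $\rho$ already suffices. The real structure is carried entirely by the fact that $A,B\in\mathscr G$, which is what forces $\mu_A,\mu_B\in J_p$ (Lemma~\ref{murho}). Your proof becomes complete, and coincides with the paper's, once you delete the paragraph about $\beta^{-1}\rho\alpha$ and replace it by a direct appeal to Proposition~\ref{C(H)}(c).
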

\begin{proof}
If $\mathcal A$ and $\mathcal B$ are conjugate in $\Bir(S/\pi)$, then there is an element $\zeta\in\Bir(S/\pi)$ such that $\zeta \mathcal A\zeta^{-1}=\mathcal B$ and then the map $\rho$ is given by the restriction of $\zeta$ to $\Fix(\mathcal A)$ which is defined over $\R$.

In order to prove the sufficiency, we assume that there is $\rho\colon\Fix(\mathcal A)\dasharrow\Fix(\mathcal B)$ with $\sigma\rho=\rho\sigma$. Then by Lemma \ref{ConjPGL1}(c), we obtain that $A:=\psi \mathcal A\psi^{-1}\in\mathscr G$ and $B:=\psi \mathcal B\psi^{-1}\in\mathscr G$ are conjugate in $\PGL(2,\C(z))$ and by Lemma \ref{ConjPGL} there are $\alpha$, $\beta\in\PGL(2,\C(z))$ such that $A=\alpha F\alpha^{-1}$, $B=\beta F\beta^{-1}$ and $F=\left(\begin{smallmatrix}0 & f \\ 1 & 0\end{smallmatrix}\right)$, for some $f\in \R(z)^{*}$. Observe that the action  of $\alpha$ and $\beta$ on $S_\C$ restrict to birational maps  $\Fix(F)\dasharrow \Fix(A)$ and $\Fix(F)\dasharrow \Fix(B)$, respectively. To sum up, we have the following diagram (which is not necessarily commutative, since $\rho\colon\Fix(A)\dasharrow\Fix(B)$ may be not the restriction of $\beta\alpha^{-1}$):
\[\xymatrix@R-0.8pc{
\Fix(A)\  \ar@/^2pc/ @{-->}[rr]^{\rho\text{\ defined over $\R$}} \ar@/_/ @{-->}[r]_{\alpha^{-1}} \ar[d]_\pi &\ \Fix(F) \ar@/^/ @{-->}[r]^{\beta} \ar@/_/ @{-->}[l]_{\alpha} \ar[d]_\pi &\ \Fix(B) \ar[d]_\pi \ar@/^/ @{-->}[l]^{\beta^{-1}} \\
\C \ar@{}[r]|-{=}  & \C  \ar@{}[r]|-{=} & \ \C}.\]

Since we want to show that $A\sim_{\mathscr G} B$ (or equivalently $\mathcal A\sim_{\Bir(S/\pi)} \mathcal B$), we need to find $\gamma\in \mathscr G$ such that $\gamma A\gamma^{-1}=B$ i.e. $\gamma\alpha F\alpha^{-1}\gamma^{-1}=\beta F\beta^{-1}$ $\Longleftrightarrow$ $\beta^{-1}\gamma\alpha F(\beta^{-1}\gamma\alpha)^{-1}=F$, hence $\beta^{-1}\gamma\alpha\in \Cc(F)$. In other words, finding $\gamma\in \mathscr G$ so that $\gamma A\gamma^{-1}=B$ is equivalent to find $\xi\in \Cc(F)$ such that $\beta\xi\alpha^{-1}\in \mathscr G$.  

The condition $\beta\xi\alpha^{-1}\in \mathscr G$ is the same as $\tau(\beta\xi\alpha^{-1})\tau=\overline{\beta\xi\alpha^{-1}}$ which is equivalent to $\xi=(\beta^{-1}\tau\overline{\beta})\overline{\xi}(\overline{\alpha}^{-1}\tau\alpha)$. We define $\mu_B:=\beta^{-1}\tau\overline{\beta}$ and $\mu_A^{-1}:=\overline{\alpha}^{-1}\tau\alpha$ and like this, we need to find $\xi\in\Cc(F)$ such that $\xi=\mu_B\bar{\xi}\mu_A^{-1}$. By Lemma \ref{murho} $\mu_A$, $\mu_B\in J_f$ and then also $\mu_A^{-1}\in J_f$. On the other hand, $\mu_A^{-1}\overline{\mu_A^{-1}}=1$ and $\mu_B\overline{\mu_B}=1$ and as $J_f$ is abelian, we get $\mu_B\mu_A^{-1}\cdot \overline{\mu_B\mu_A^{-1}}=1$ and then by Proposition \ref{C(H)}(c) there is $\xi\in J_f$ such that $\xi/\bar{\xi}=\mu_B\mu_A^{-1}$ $\Longrightarrow$ $\xi=\mu_A\bar{\xi}\mu_A^{-1}$.
\end{proof}

\subsection{Involutions in $\Aut(S(\R)/\pi)$.}
In Proposition \ref{diffG}, we have described algebraically the orientation preserving birational diffeomorphisms as the group \[\mathscr H_0=\left\{
\left[\begin{smallmatrix}
a(z) & b(z)h\\
\bar b(z) & \bar a(z) 
\end{smallmatrix}
\right] ;\ a,b\in \C[z], a \bar a-b\bar b h\in \R[z]_+\right\}.\] 
We want to describe involutions in $\mathscr H\simeq\mathscr H_0\rtimes \langle\tau\rangle$ where $\tau=\left[\begin{smallmatrix}
0 & h\\
1 & 0 
\end{smallmatrix}
\right]$.
\begin{lemma}\label{invform}
Every involution $\iota\in\mathscr H_0$ is equal to \[\iota=\left[\begin{smallmatrix}
{\bf i}\cdot p(z) & q(z)h\\
\bar q(z) & -{\bf i}\cdot p(z) 
\end{smallmatrix}
\right]\] for some $p\in\R[z]$ and $q\in\C[z]$ with no common real roots and $p^2-q\bar qh\in\R[z]_+$. 
\end{lemma}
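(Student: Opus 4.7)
The plan is to exploit the normal form for elements of $\mathscr G$ given in Lemma~\ref{G} together with the positivity condition of $\mathscr H_0$ from Proposition~\ref{diffG}. Writing $\iota=\left[\begin{smallmatrix} a(z) & b(z)h\\ \bar b(z) & \bar a(z)\end{smallmatrix}\right]$ with $a,b\in\C[z]$ having no common real root, the condition that $\iota$ have order two in $\PGL(2,\C(z))$ means that $\iota^{2}$ is a scalar matrix in $\GL(2,\C(z))$.

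First I would compute
\[\iota^{2}=\begin{pmatrix} a^{2}+b\bar b\, h & (a+\bar a)\,bh\\ (a+\bar a)\,\bar b & \bar a^{2}+b\bar b\, h\end{pmatrix},\]
and extract two algebraic constraints: equality of the diagonal entries forces $a^{2}=\bar a^{2}$, and vanishing of the off-diagonals forces $(a+\bar a)\,b=0$ in $\C[z]$. The first relation splits into two sub-cases, $a\in\R[z]$ or $a\in {\bf i}\R[z]$.

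Next I would rule out $a\in\R[z]$. If $a\in\R[z]\setminus\{0\}$ then $a+\bar a=2a\neq 0$, which forces $b=0$ and makes $\iota$ the identity in $\PGL(2,\C(z))$, contradicting that $\iota$ has order two. If $a=0$ then the determinant $a\bar a-b\bar b\, h$ degenerates to $-b\bar b\, h=b\bar b\,(z^{2}-1)$, which is $\leq 0$ throughout $[-1,1]\subset\R$ (strictly negative at some interior point as soon as $b\neq 0$), hence is not in $\R[z]_+$; this contradicts $\iota\in\mathscr H_0$. So we must have $a\in {\bf i}\R[z]$. Writing $a={\bf i}\,p$ with $p\in\R[z]$ and setting $q=b$, the matrix takes the form stated in the lemma; the condition that $a$ and $b$ share no common real root transfers immediately to $p$ and $q$ (since $a$ and $p$ have the same roots), and $p^{2}-q\bar q\, h=a\bar a-b\bar b\, h\in\R[z]_+$ is just the defining property of $\mathscr H_0$.

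The main (and essentially only) subtlety is excluding the case $a=0$: considered purely inside $\mathscr G$ this case is perfectly admissible — it contains for instance the reflection $\tau$, which exchanges orientations and is the canonical representative of $\mathscr H\setminus\mathscr H_0$. Only the orientation-preserving requirement, encoded by $\R[z]_+$, kills it. The remainder of the argument is a direct matrix computation, and nothing deeper is needed.
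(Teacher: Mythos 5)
Your proof is correct and follows essentially the same route as the paper: the key point in both is that an involution in $\PGL(2,\C(z))$ has vanishing trace (you re-derive this by squaring the matrix, the paper cites the computation in Lemma~\ref{ConjPGL1}), which combined with the conjugate diagonal entries of the $\mathscr G$-normal form forces $a=-\bar a$, i.e.\ $a={\bf i}p$ with $p\in\R[z]$. Your explicit exclusion of the case $a=0$ via the positivity of the determinant is a welcome extra detail that the paper leaves implicit.
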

\begin{proof}
All such elements are indeed involutions, as one easily calculates.
From the proof of the first statement of Lemma \ref{ConjPGL1}, we see that the
trace of any involution in $\PGL(2,\C(z))$ vanishes.
Since in $\mathscr{H}_0$ the diagonal entries are conjugate, they are strictly imaginary,
from which the claim follows.
\end{proof}

Fibrewise, the maps in $\mathscr{H}_0$ look like rotations, the maps in $\mathscr{H} \setminus \mathscr{H}_0$ like reflections:
\begin{lemma}\label{Lem:LocRot}
The restriction of an involution $\iota\in\mathscr H_0$ to a fibre is conjugate, inside the
group of automorphisms of the circle, to a rotation by $\pi$. For an element in $\mathscr{H} \setminus \mathscr{H}_0$, the restriction is conjugate to a reflection.
\end{lemma}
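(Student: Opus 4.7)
The plan is to reduce the statement to a computation on a generic real fibre $\Gamma_{z_0} = \pi^{-1}(z_0) \cap S(\R)$ with $z_0 \in (-1,1)$: every involution of a circle is, inside $O(2)$, conjugate either to the rotation by $\pi$ (the unique orientation-preserving involution; acts freely) or to a reflection (the conjugacy class of all orientation-reversing involutions; each fixes exactly two real points), so it suffices to count real fixed points of $\iota|_{\Gamma_{z_0}}$. Via $\psi$, the complex fibre identifies with the affine line $\{z=z_0\}\subset \A^2_\C$, the real structure becomes $t\mapsto h(z_0)/\bar t$ (by the computation in the proof of Lemma~\ref{LemAction}), and the real circle is the locus $\{t\bar t = h(z_0)\}$. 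A complex fixed point $t_0$ is therefore real iff $t_0\overline{t_0}=h(z_0)$.

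For an involution $\iota\in\mathscr H_0$, I would apply Lemma~\ref{invform} to write
\[
\iota = \left[\begin{smallmatrix}{\bf i}\,p & qh\\ \bar q & -{\bf i}\,p\end{smallmatrix}\right],\qquad p^2-q\bar q h\in\R[z]_+,
\]
and solve the fixed-point quadratic $\bar q\,t^2-2{\bf i}\,p\,t-qh=0$ at $z_0$ to obtain
\[
t_\pm=\frac{{\bf i}\bigl(p\pm \sqrt{p^2-q\bar q h}\bigr)}{\bar q}\Big|_{z=z_0},
\]
with the square root real and positive. The key computation $t_-\overline{t_+}=h(z_0)$ then identifies $\sigma(t_+)=t_-$, so the two fixed points form a conjugate pair off the real circle. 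Hence $\iota|_{\Gamma_{z_0}}$ acts freely on $S^1$, which forces it to be the rotation by $\pi$.

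For an involution $\iota\in \mathscr H\setminus\mathscr H_0$, I will write $\iota=\iota_0\tau$ with $\iota_0=\left[\begin{smallmatrix}a&bh\\ \bar b & \bar a\end{smallmatrix}\right]\in\mathscr H_0$, and translate $(\iota_0\tau)^2=1$ into $\iota_0\overline{\iota_0}\in\C(z)^{*}\cdot I$ using $\tau\iota_0\tau=\overline{\iota_0}$. Expanding the product forces $b+\bar b=0$, i.e.\ $b={\bf i}\,c$ with $c\in\R[z]$; the alternative branch $a\equiv 0$ with $b\in\R^{*}$ is ruled out because it would make $\det\iota_0=-b^2h$ negative on $(-1,1)$, contradicting $\iota_0\in\mathscr H_0$. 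One then computes
\[
\iota=\left[\begin{smallmatrix}{\bf i}\,ch & ah\\ \bar a & -{\bf i}\,ch\end{smallmatrix}\right],\qquad t_\pm=\frac{{\bf i}\,ch\pm\sqrt{h(a\bar a-c^2 h)}}{\bar a}\Big|_{z=z_0}.
\]
Since $a\bar a-c^2h=\det\iota_0\in\R[z]_+$ and $h(z_0)>0$, the square root is real, and a direct expansion yields $t_\pm\overline{t_\pm}=h(z_0)$. Both fixed points therefore lie on the real circle, so $\iota|_{\Gamma_{z_0}}$ is a reflection.

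The main obstacle I anticipate is the bookkeeping in the second case, namely deriving cleanly from $\iota^2=1$ that $b$ must be purely imaginary while dismissing the spurious case $a\equiv 0$, $b\in\R^{*}$. A secondary technicality is the finitely many fibres at which $\bar q(z_0)$ (resp.\ $\bar a(z_0)$) vanishes: since $p$ and $q$ (resp.\ the analogous pair in the second case) have no common real root, the restriction remains well defined there, and switching to the other affine chart of $\Pp^1_\C$ on the fibre --- or simply invoking continuity of the two fixed points as $z$ varies along $(-1,1)$ --- handles these isolated values of~$z_0$.
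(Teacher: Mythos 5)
Your proof is correct, but it takes a genuinely different route from the paper's. The paper's argument is purely a normal-form computation in $\PGL(2,\R)$: a fibre over $z_0\in(-1,1)$ is identified with $\Pp^1(\R)$, the restriction of $\iota$ is an involution in $\PGL(2,\R)$, hence conjugate to $\left[\begin{smallmatrix}0 & \pm p\\ 1 & 0\end{smallmatrix}\right]$ with $p>0$ (the real-coefficient version of Lemma~\ref{ConjPGL1}(a)), and the sign — equivalently the sign of the determinant — is pinned down by whether the element is orientation-preserving ($\mathscr H_0$) or orientation-reversing ($\mathscr H\setminus\mathscr H_0$); a further conjugation gives $t\mapsto \mp 1/t$, i.e.\ the rotation by $\pi$ or a reflection. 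You instead distinguish the two conjugacy classes by counting fixed points on the real circle $\{t\bar t=h(z_0)\}$: your computations $t_-\overline{t_+}=h(z_0)$ (conjugate imaginary pair, free action, hence rotation by $\pi$) and $t_\pm\overline{t_\pm}=h(z_0)$ (two real fixed points, hence reflection) are both correct, as is your derivation that involutions in $\mathscr H\setminus\mathscr H_0$ have the form $\left[\begin{smallmatrix}{\bf i}ch & ah\\ \bar a & -{\bf i}ch\end{smallmatrix}\right]$ — a fact the paper never needs to state, since Lemma~\ref{invform} only treats $\mathscr H_0$. The trade-off: the paper's proof is three lines and needs no fibrewise algebra, whereas yours is longer but essentially re-derives, fibre by fibre, the content of Proposition~\ref{diff} on the reality of $\Fix(\iota)$, and so makes the link between the fibrewise dynamics and the global fixed curve explicit. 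Your handling of the finitely many fibres where the leading coefficient vanishes (switch charts, or note that $t=\infty$ is not on the circle since $\sigma$ swaps $0$ and $\infty$) is adequate.
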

\begin{proof}
A fibre is a subvariety of the real points of $S$ and isomorphic to a circle $S^1$,
which in turn is isomorphic to $\Pp^1(\R)$.
Therefore $\iota$ restricts on each fibre to an automorphism of $\Pp^1(\R)$, that is, an
element of $\PGL(2,\R)$.
The first statement of Lemma \ref{ConjPGL1} applies equally when the field $\R$ instead of $\C(z)$
is used, which tells us that the automorphism is conjugate to an element of the form
$\left[
	\begin{smallmatrix}
	0 & \pm p\\
	1 & \ 0
	\end{smallmatrix}
\right]$,
with $p>0$ in $\R$.
The sign is negative for $\mathscr{H}_0$ and positive for $\mathscr{H} \setminus \mathscr{H}_0$, and depends on whether the element is orientation-preserving or -reversing.
With $q = \sqrt{p}$, the element is equal to
$\left[
	\begin{smallmatrix}
	0 & \pm q\\
	q^{-1} & \ 0
	\end{smallmatrix}
\right]$,
which is conjugate to
$\left[
	\begin{smallmatrix}
	0 & \pm 1\\
	1 & \ 0
	\end{smallmatrix}
\right]$
via
$\left[
	\begin{smallmatrix}
	1 & 0\\
	0 & p
	\end{smallmatrix}
\right]$.
These elements describe a rotation and a reflection, as claimed.
\end{proof}

Recall that $\R[z]_+:=\{f\in\R[z]\ |\ f(z_0)>0 \text{\ for each\ } z_0\in\R\}$. We will need the following description.

\begin{lemma}\label{positiv}
$\R[z]_+=\{p\bar p\ |\ p\in\C[z],\ p \text{\ has no real root}\}$
\end{lemma}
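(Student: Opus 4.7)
The plan is to prove both inclusions separately, with the forward inclusion being essentially immediate and the reverse requiring a factorisation argument over $\C$.

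For the inclusion $\{p\bar p \mid p\in\C[z],\ p\text{ has no real root}\}\subseteq \R[z]_+$, I would simply observe that $\overline{p\bar p}=\bar p\,p=p\bar p$, so $p\bar p\in\R[z]$, and that for every $z_0\in\R$ one has $p\bar p(z_0)=p(z_0)\overline{p(z_0)}=\lvert p(z_0)\rvert^2\geq 0$, with equality only if $p(z_0)=0$. Since $p$ has no real root by assumption, $p\bar p(z_0)>0$ for all $z_0\in\R$, so $p\bar p\in\R[z]_+$.

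For the reverse inclusion, let $f\in\R[z]_+$. Since $f(z_0)>0$ for every $z_0\in\R$, the polynomial $f$ has no real root. Factoring $f$ over $\C$ as $f=c\prod_i(z-\alpha_i)$ with $c\in\C^*$ and $\alpha_i\in\C\setminus\R$, the fact that $f$ has real coefficients implies that the multiset of roots is invariant under complex conjugation; hence the roots can be grouped into conjugate pairs $\{\alpha_i,\bar\alpha_i\}$. Writing $p_0:=\prod_i(z-\alpha_i)$ for one root from each pair, we get $f=c\cdot p_0\bar{p_0}$. Moreover, the leading coefficient of $p_0\bar{p_0}$ is a positive real number, and $f$ itself has positive leading coefficient (otherwise $f(z_0)\to-\infty$ as $z_0\to+\infty$, contradicting $f\in\R[z]_+$), so $c>0$. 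Setting $p:=\sqrt{c}\,p_0\in\C[z]$, we obtain $f=p\bar p$, and $p$ has no real root since $p_0$ has no real root.

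No step is a serious obstacle; the only delicate point is ensuring $c>0$ so that $\sqrt{c}$ is real, which follows from comparing leading coefficients and using the positivity condition at infinity.
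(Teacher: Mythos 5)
Your proof is correct and follows essentially the same route as the paper's: factor $f$ over $\C$, group the (necessarily non-real) roots into conjugate pairs, take one root from each pair to form $p_0$, and absorb the positive leading coefficient via $\sqrt{c}$. You additionally spell out the easy inclusion and the positivity of $c$, which the paper leaves implicit, but the argument is the same.
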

\begin{proof}
Since $f(z)>0$ for every $z\in\R$, $f$ has complex roots which can be sorted as pairs of complex conjugate roots. Then $f$ can be factorised in $\C(z)$ as factors of the form $(z-\alpha)(z-\bar\alpha)$ which already have the form $p_\alpha\bar p_\alpha$ with $p_\alpha=z-\alpha$ for every complex root $\alpha$ of $f$. We then construct $p'$ as the product $p'=p_{\alpha_1}\cdot p_{\alpha_2}\cdots p_{\alpha_k}$ where $k$ is the number of pairs of complex conjugate roots and in this way, $f=\lambda\cdot p'\cdot\overline{p'}$ for some real positive constant $\lambda$. Thus we define $p=\sqrt{\lambda}p'$ and the result follows.
\end{proof}

\begin{prop}\label{diff}
Let $A\in\mathscr H$ be an element of order $2$. Then the curve $\Fix(A)$, which is a double covering of $\Pp^1$, has the following properties:
\begin{enumerate}[$(a)$]
\item If $A\in\mathscr H_0$, then $\Fix(A)$ has no real point $(0$ oval$)$;
\item if $A\in\mathscr H\setminus\mathscr H_0$, then $\Fix(A)$ has one oval and $\pi(\Fix(A)(\R))=[-1,1]$.
\end{enumerate}
\end{prop}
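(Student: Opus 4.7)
The plan is to combine the fibrewise description of involutions in $\mathscr H$ given by Lemma~\ref{Lem:LocRot} with a direct observation about fixed points of rotations and reflections on a real circle, and then handle the degenerate fibres at $z=\pm1$ using Lemma~\ref{preserving}.

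First I would fix $z_0\in(-1,1)\subset\R$ and note that $\Gamma_{z_0}=\pi^{-1}(z_0)$ is a smooth conic on $S$ whose real locus is the circle $\{x^2+y^2=1-z_0^2\}$. Since $\mathcal A\mid_{\Gamma_{z_0}}$ is a non-trivial involution of $\Gamma_{z_0}(\C)\cong\Pp^1_\C$, it has exactly two fixed points on the complexification. By Lemma~\ref{Lem:LocRot} its action on $\Gamma_{z_0}(\R)$ is conjugate to a rotation by $\pi$ when $A\in\mathscr H_0$ and to a reflection when $A\in\mathscr H\setminus\mathscr H_0$. The key elementary observation is then the following: a rotation by $\pi$ on the real circle has no real fixed point (the equation $(x,y)=(-x,-y)$ has no solution on $x^2+y^2=1-z_0^2$), so its two fixed points on the complex conic are imaginary conjugate; whereas a reflection has exactly two real fixed points on the circle (the intersection of its axis with the circle).

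Assembling this, for $A\in\mathscr H_0$ the curve $\Fix(A)$ has no real point above any $z_0\in(-1,1)$, and none above $z_0\in\R$ with $z_0^2>1$ since $S(\R)$ has no real point there. At $z_0=\pm 1$ Lemma~\ref{preserving} tells us $\mathcal A$ preserves each of $L,\overline L,M,\overline M$, and using the form in Lemma~\ref{invform} together with the condition $a(\pm1)\neq 0$ from Lemma~\ref{ExchangeOrNotBir} one checks directly that the two fixed points on each degenerate fibre are one imaginary point on $M$ (resp.\ $L$) and its $\sigma$-conjugate on $\overline M$ (resp.\ $\overline L$), neither being the real pole. Hence $\Fix(A)(\R)=\emptyset$, proving (a). For $A\in\mathscr H\setminus\mathscr H_0$, I obtain two real fixed points of $\Fix(A)$ over each $z_0\in(-1,1)$; at $z_0=\pm 1$ Lemma~\ref{preserving} says $\mathcal A$ exchanges $M$ with $\overline M$ and $L$ with $\overline L$, so the unique fixed point on each degenerate fibre is the pole itself, which therefore becomes a ramification point of the cover $\Fix(A)\to\Pp^1$ lying on $\Fix(A)(\R)$.

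Consequently $\pi(\Fix(A)(\R))=[-1,1]$, and $\Fix(A)(\R)$ is a topological $2{:}1$-cover of $[-1,1]$ ramified exactly over the two endpoints, which is homeomorphic to a circle, giving one oval. I do not expect a serious obstacle; the only subtle step is the analysis at the poles, controlled cleanly by Lemma~\ref{preserving}. As an alternative sanity check one can read everything off the explicit form in Lemma~\ref{invform}: the fixed curve is $\bar q(z)t^2-2{\bf i}p(z)t-q(z)h(z)=0$ in the $(t,z)$-chart, with discriminant $-4(p^2-q\bar q h)$, strictly negative for all real $z$ precisely when $p^2-q\bar q h\in\R[z]_+$, i.e.\ precisely when $A\in\mathscr H_0$, matching the reality statement in both cases.
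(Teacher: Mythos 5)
Your argument is correct, and in fact your closing ``sanity check'' paragraph \emph{is} the paper's entire proof: the paper simply writes $A$ in the normal form of Lemma~\ref{invform}, notes that the fixed curve in the $(t,z)$-chart is the conic $\bar q(z)t^2-2{\bf i}p(z)t-q(z)h=0$ whose discriminant is $-4$ times the determinant $p^2-q\bar q h$, and reads off both cases from the sign of that determinant on $\R$ (everywhere positive for $\mathscr H_0$, equal to $h$ times a positive polynomial for $\mathscr H\setminus\mathscr H_0$). Your primary route --- counting real fixed points fibre by fibre via Lemma~\ref{Lem:LocRot} (a rotation by $\pi$ of the circle is fixed-point free, a reflection has exactly two fixed points) and then controlling the degenerate fibres over $z=\pm 1$ with Lemma~\ref{preserving} --- is genuinely different. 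What it buys is geometric transparency and, more importantly, an honest treatment of the endpoints: the claim $\pi(\Fix(A)(\R))=[-1,1]$ and the fact that $P_N$, $P_S$ are real ramification points in case $(b)$ but do not lie on $\Fix(A)$ in case $(a)$ are not addressed at all by the paper's two-line computation, which works only in the affine chart. What the algebraic route buys is uniformity: no case distinction at $z=\pm1$ and no appeal to Lemmas~\ref{Lem:LocRot} and~\ref{preserving}. Two minor points. First, the step you leave as ``one checks directly'' in case $(a)$ can be closed without computation: a real point of $\Fix(A)$ over $z=1$ would have to be $P_N$, and since $\Fix(A)$ has no real points over $(-1,1)$ (your fibrewise count) nor over $(1,\infty)$ (where $S(\R)$ is empty), such a point would be isolated in $\Fix(A)(\R)$, which is impossible for the smooth real hyperelliptic curve $u^2=-(p^2-q\bar qh)$ with $p^2-q\bar q h\in\R[z]_+$. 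Second, in the discriminant check one should say a word about why ``negative discriminant'' rules out real points, since the quadratic has non-real coefficients and a real point of $S$ in the chart $t=x-{\bf i}y$ means $t\bar t=h$ rather than $t\in\R$; this is glossed over in the paper as well, and is precisely the gap that your fibrewise argument avoids.
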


\begin{proof}
Let $A\in\mathscr H$ be an element of order two. By Lemma~\ref{invform}, $A$ is of the form $\left[\begin{smallmatrix}
{\bf i}\cdot p(z) & q(z)h\\
\bar q(z) & -{\bf i}\cdot p(z) 
\end{smallmatrix}
\right]$ where $p\in\R[z]$, $q\in\C[z]$ and $p,q$ have no common real roots. The curve of fixed points is given by $\bar q(z)t^2-2{\bf i}p(z)t+q(z)h=0$ whose discriminant (with respect to $t$) is $-4(p^2+q\bar qh)$ and corresponds to minus the determinant of the matrix.

If $A\in\mathscr H_0$, then the determinant is positive, so $\Fix(A)$ does not have any real point.

If $A\in\mathscr H\setminus\mathscr H_0$, then the determinant is negative (because it is $(1-z^2)$ times the positive determinant). Hence, we get $2$ real points for each $z_0\in (-1,1)$.
\end{proof}

According to Proposition \ref{diff}, for an involution which is also a diffeomorphism its curve of fixed points is birational to a smooth real hyperelliptic curve with no oval or just one. In the first case, there is no real point on the fixed curve and $1$ and $-1$ are not ramification points. This involution is an orientation preserving diffeomorphism with two isolated fixed points. In the second case, the only two ramification points are $1$ and $-1$, the oval is sent by $\pi\colon S\to \A^1$ onto the real interval $[-1,1]$ and this involution is an orientation reversing diffeomorphism. Both possible cases for the curve of fixed points are illustrated in Figure \ref{fixcur}.
Now, we would like to prove the converse, i.e. for any hyperelliptic curve with one or no oval (equation of the form $t^2=(1-z^2)p$ or $t^2=-p$ for some $p\in\R[z]_+$ with no real roots) we want to associate an element $\gamma$ of $\mathscr H$ which realises the curve as $\Fix(\gamma)$. We need first to prove the following lemmas.
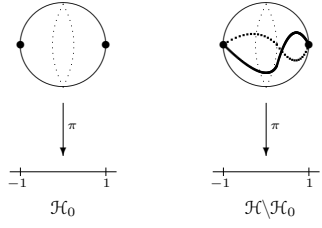
\begin{figure}[h]
\begin{center}
\begin{picture}(120,69)
\scalebox{0.8}{
\multiput(25,70)(95,0){2}{\circle{40}}

\put(2.5,67.5){\unboldmath$\bullet$}
\put(42.5,67.5){\unboldmath$\bullet$}
\put(97.5,67.5){\unboldmath$\bullet$}
\put(137.5,67.5){\unboldmath$\bullet$}

\multiput(25,42.5)(95,0){2}{\vector(0,-1){25}}
\multiput(27,30)(95,0){2}{\unboldmath{\tiny $\pi$}}

\multiput(0,10)(95,0){2}{\line(1,0){50}}
\multiput(5,12)(95,0){2}{\line(0,-1){4}}
\multiput(45,12)(95,0){2}{\line(0,-1){4}}
\multiput(-1,3)(95,0){2}{\unboldmath{\tiny $-1$}}
\multiput(43.5,3)(95,0){2}{\unboldmath{\tiny $1$}}

\qbezier[15](25,50)(35,70)(25,89)
\qbezier[15](25,50)(15,70)(25,89)
\qbezier[15](120,50)(130,70)(120,89)
\qbezier[15](120,50)(110,70)(120,89)

\put(19,-10){\unboldmath{\small $\mathscr H_0$}}
\put(110,-10){\unboldmath{\small $\mathscr H\setminus\mathscr H_0$}}

\linethickness{1pt}\qbezier(100,70)(120,50)(125,60)
\linethickness{1pt}\qbezier(125,60)(132,85)(140,70)
\linethickness{1pt}\qbezier[15](100,70)(115,80)(125,70)
\linethickness{1pt}\qbezier[15](125,70)(135,55)(140,70)}
\end{picture}
\end{center}
\caption{Possible appearances of the fixed curve of elements in $\Aut(S(\R)/\pi)$.}
\label{fixcur}
\end{figure}
\begin{lemma}\label{quadratic}
Let $f\in\R[z]$ be a polynomial of degree two such that $f\in \R[z]_+$ then there exist $a\in\R[z]$ and a positive real number $c$  such that $f(z)=a(z)^2+c(z^2-1)$.
\end{lemma}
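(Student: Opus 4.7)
The plan is to write $f(z)=\alpha z^2+\beta z+\gamma$ with $\alpha,\beta,\gamma\in\R$ and translate the positivity of $f$ into two inequalities: $\alpha>0$ (because $f$ has degree two and is eventually positive) and $\beta^2-4\alpha\gamma<0$ (no real roots). I will look for $a(z)$ of the form $pz+q$, since any higher degree in $a$ would cause $a(z)^2-f(z)$ to have degree $\geq 3$, contradicting $a(z)^2-f(z)=-c(z^2-1)$.

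Given the ansatz $a(z)=pz+q$, I would expand $a(z)^2+c(z^2-1)$ and match coefficients against $f$ to obtain the system
\begin{equation*}
p^2+c=\alpha,\qquad 2pq=\beta,\qquad q^2-c=\gamma.
\end{equation*}
Eliminating $c$, these reduce to $p^2+q^2=\alpha+\gamma$ together with $2pq=\beta$. The crucial observation is that these two conditions are equivalent to
\begin{equation*}
(p+q)^2=f(1),\qquad (p-q)^2=f(-1),
\end{equation*}
both of which are automatically positive because $f\in\R[z]_+$. So real solutions $p,q$ exist, parametrised by the four sign choices of the square roots $\sqrt{f(1)},\sqrt{f(-1)}$.

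The main obstacle, and the only substantive point, is arranging that $c$ is strictly \emph{positive}. Using $c=\alpha-p^2=\alpha-\tfrac{1}{4}((p+q)+(p-q))^2$, the value of $c$ is maximised by taking $p+q$ and $p-q$ of opposite signs, say $p+q=\sqrt{f(1)}$ and $p-q=-\sqrt{f(-1)}$. A short computation using $f(1)+f(-1)=2(\alpha+\gamma)$ then yields
\begin{equation*}
c=\frac{\alpha-\gamma+\sqrt{f(1)f(-1)}}{2}.
\end{equation*}
If $\alpha\geq\gamma$ this is visibly positive. Otherwise I would square the inequality $\sqrt{f(1)f(-1)}>\gamma-\alpha$; using $f(1)f(-1)=(\alpha+\gamma)^2-\beta^2$, this reduces to $4\alpha\gamma>\beta^2$, which is exactly the ``no real roots'' condition. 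Hence $c>0$ in every case, and setting $a(z)=pz+q$ gives the required decomposition.
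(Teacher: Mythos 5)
Your proof is correct, and it takes a genuinely different route from the paper's. The paper fixes $c$ first: it writes $a(z)^2=f(z)-c(z^2-1)$ and demands that the discriminant of this quadratic in $z$ vanish, which gives a quadratic equation $p(c)=c^2+(b^2+d^2-1)c-d^2=0$ in $c$; the existence of a root in $(0,1)$ is then read off from the sign change $p(0)=-d^2<0$, $p(1)=b^2>0$ (the upper bound $c<1$ being needed so that the leading coefficient $1-c$ of $a(z)^2$ is positive), with the degenerate case $b=0$ treated separately. You instead fix the shape of $a$ first, via the ansatz $a(z)=pz+q$, and observe that the resulting system collapses to $(p+q)^2=f(1)$ and $(p-q)^2=f(-1)$ — both automatically positive since $f\in\R[z]_+$ — after which the sign of $c=\tfrac{1}{2}\bigl(\alpha-\gamma+\sqrt{f(1)f(-1)}\bigr)$ is checked directly, the only nontrivial case reducing exactly to the no-real-roots inequality $4\alpha\gamma>\beta^2$. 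Your version buys an explicit closed formula for $c$ and for $a$, works uniformly without a special case, and handles an arbitrary positive leading coefficient (the paper's factorisation $f=(z-\alpha)(z-\bar\alpha)$ tacitly normalises $f$ to be monic); the paper's version buys a shorter computation at the cost of an intermediate-value argument and a case distinction. Both are complete proofs.
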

\begin{proof}
Since $f\in\R[z]_+$, then $f$ is factorised as $f(z)=(z-\alpha)(z-\bar\alpha)=z^2-(\alpha+\bar\alpha)z+\alpha\bar\alpha$ for $\alpha$ a complex number and making $\alpha=b+{\bf i}d$, we rewrite $f$ as $f(z)=z^2-2bz+(b^2+d^2)$. Then if we write $a(z)^2=f(z)-c(z^2-1)=(1-c)z^2-2bz+b^2+d^2+c$, we want to show that there exist some value of $c>0$ such that the right side is indeed a square with respect to $z$. So we want the discriminant of such an expression to be zero. This is $4b^2-4(1-c)(b^2+d^2+c)=4(c^2+(b^2+d^2-1)c-d^2)=0$ which implies that $c$ is a positive solution of $p(c):=c^2+(b^2+d^2-1)c-d^2$ so we compute the discriminant of this quadratic expression with respect to $c$ and want it to be larger than zero i.e. $\Delta_c:=(b^2+d^2-1)^2+4d^2>0$ but this is always the case. Now, since the leading coefficient of $a(z)^2$ has to be larger than zero, implies that $c<1$ so we just check that the discriminant which depends on $c$ has a root between $0$ and $1$ which is true because $p(0)=-d^2<0$ and $p(1)=b^2>0$. What remains is to check the case $b=0$ i.e. $\alpha={\bf i}d$. In this case, $f(z)=z^2+d^2$ so we just take $c=1$ and $a=\sqrt{d^2+1}$.
\end{proof}

\begin{lemma}\label{setV}
Let $V$ be the set \[V=\{a^2+P\cdot(z^2-1)\ |\ a\in\R[z], P\in\R[z]_+\}.\]
\begin{enumerate}[$(a)$]
\item If $f,g\in V\cap \R[z]_+$, then $f\cdot g\in V\cap \R[z]_+$,
\item $\R[z]_+\subset V$.
\end{enumerate}
\end{lemma}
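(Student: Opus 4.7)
The plan is to handle the two parts separately, using a direct algebraic manipulation for (a) and the real factorisation of strictly positive polynomials for (b).

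For (a), I would expand directly: writing $f = a^2 + P(z^2-1)$ and $g = b^2 + Q(z^2-1)$ gives
\[
fg \;=\; (ab)^2 \;+\; \bigl(a^2 Q + b^2 P + PQ(z^2-1)\bigr)(z^2-1),
\]
so it suffices to show that $R := a^2 Q + b^2 P + PQ(z^2-1)$ lies in $\R[z]_+$. I would check positivity of $R$ on the three regions $(-1,1)$, $\{\pm 1\}$ and $\R\setminus[-1,1]$. Outside $[-1,1]$ every summand is non-negative and $PQ(z^2-1)$ is already strictly positive. At $z=\pm 1$, the hypothesis $f(\pm 1) = a(\pm 1)^2 > 0$ forces $a(\pm 1)\neq 0$, so $R(\pm 1) = a^2 Q + b^2 P > 0$. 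The delicate region is $(-1,1)$, where $PQ(z^2-1)<0$; here the hypothesis $f \in \R[z]_+$ translates to $a(z)^2 > P(z)(1-z^2)$, so multiplying by $Q(z)>0$ yields $a^2 Q > -PQ(z^2-1)$, and adding the non-negative term $b^2 P$ gives $R > 0$ throughout $(-1,1)$.

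For (b), since $f\in\R[z]_+$ has no real root, its factorisation over $\R$ consists of a positive constant $c$ together with irreducible real quadratics that are themselves strictly positive on $\R$. By Lemma~\ref{quadratic} each such quadratic is already of the form $a_j^2 + c_j(z^2-1)$ with $c_j>0$ constant, and hence lies in $V$. Applying part (a) inductively, the product of these quadratics belongs to $V\cap\R[z]_+$; writing it as $b^2 + R(z^2-1)$, the remaining scalar factor $c$ is absorbed via $f = (\sqrt{c}\,b)^2 + (cR)(z^2-1)$, with $cR\in\R[z]_+$ since $c>0$ and $R\in\R[z]_+$.

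The main obstacle is proving positivity of $R$ in part (a), specifically on the open interval $(-1,1)$; once one realises that the assumption $f\in\R[z]_+$ provides exactly the strict lower bound $a^2 > P(1-z^2)$ needed to dominate the negative contribution $PQ(z^2-1)$, everything else reduces to routine sign-checking and polynomial factorisation.
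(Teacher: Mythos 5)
Your proof is correct and follows essentially the same route as the paper: the identical expansion of $fg$ for part (a), and the reduction to the quadratic case via Lemma~\ref{quadratic} for part (b). The only difference is cosmetic: the paper groups the cofactor of $(z^2-1)$ as $a^2Q + P\bigl(b^2+Q\cdot(z^2-1)\bigr) = a^2Q + P\cdot g$, which makes its strict positivity immediate (since $P\cdot g>0$ everywhere and $a^2Q\geq 0$) and so avoids your three-region case analysis.
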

\begin{proof}

\begin{enumerate}[$(a)$]
\item Let $f,g\in V\cap \R[z]_+$ then $f=a^2+P\cdot (z^2-1)$ and $g=b^2+Q\cdot(z^2-1)$ for $a,b\in\R[z]$ and $P,Q\in\R[z]_+$. We have then
\begin{align*}
f\cdot g=& (a^2+P\cdot(z^2-1))(b^2+Q\cdot(z^2-1))\\
=&(ab)^2+(z^2-1)[a^2Q+P(b^2+Q\cdot(z^2-1))]
\end{align*}
and $a^2Q+P(b^2+Q\cdot(z^2-1))\in\R[z]_+$ because $a^2$, $Q$, $P$, and $b^2+Q\cdot(z^2-1)$ are all in $\R[z]_+$. Therefore, $f\cdot g\in V\cap\R[z]_+.$
\item Let $f\in\R[z]_+$ then $f$ can be presented as a product of quadratic polynomials. Since every quadratic factor is also in $\R[z]_+$, thus it suffices to prove the Lemma in the case where $f$ is quadratic and this was already proved in Lemma \ref{quadratic}.\qedhere
\end{enumerate}
\end{proof}

\begin{lemma}\label{reciprocalProp3.14}
The elements in $\Aut(S(\R)/\pi)$ realise all smooth real hyperelliptic curves with at most one oval. More precisely, 
\begin{enumerate}[$($a$)$]
\item for a real smooth hyperelliptic curve with one oval of the form $t^2=(1-z^2)\beta\bar\beta$ for some $\beta\in\C(z)$ with no real roots there is an orientation reversing birational diffeomorphism whose fixed curve is this curve,
\item for a real smooth hyperelliptic curve with no oval of the form $t^2=-\beta\bar\beta$ for some $\beta\in\C(z)$ with no real roots there is an orientation preserving birational diffeomorphism whose fixed curve is this one.
 \end{enumerate}
\end{lemma}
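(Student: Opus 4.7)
The plan is to use the algebraic descriptions of $\mathscr{H}$ and $\mathscr{H}_0$ from Proposition~\ref{diffG} and the normal form of involutions given by Lemma~\ref{invform}, producing the sought-for involutions as explicit matrices in $\PGL(2,\C(z))$ by means of the decomposition lemmas~\ref{positiv} and~\ref{setV}.

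For part (a), the construction is direct: given $\beta\in\C[z]$ with no real root (one may clear denominators without changing the birational class of the cover), I take $A:=\left[\begin{smallmatrix}\beta & 0\\ 0 & \bar\beta\end{smallmatrix}\right]$, which has determinant $\beta\bar\beta\in\R[z]_+$ and so lies in $\mathscr{H}_0$ by Proposition~\ref{diffG}. The product $A\tau=\left[\begin{smallmatrix}0 & \beta h\\ \bar\beta & 0\end{smallmatrix}\right]$ then has zero trace and thus defines an involution, necessarily in $\mathscr{H}\setminus\mathscr{H}_0$. The fixed-point equation $\bar\beta t^2=\beta h$ on $\A^2_\C$, rewritten as $(\bar\beta t)^2=\beta\bar\beta h$, identifies $\Fix(A\tau)\to\Pp^1$ with a double cover whose complex function field is $\C(z)[\sqrt{h\beta\bar\beta}]$; since by Proposition~\ref{diff}(b) the real locus projects onto $[-1,1]$, this determines the real structure and gives $\Fix(A\tau)\colon u^2=(1-z^2)\beta\bar\beta$, as desired.

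For part (b), given $\beta\in\C[z]$ with no real root, the first step is to apply Lemma~\ref{setV}(b) to $\beta\bar\beta\in\R[z]_+$ to obtain a decomposition $\beta\bar\beta=a^2+P(z^2-1)=a^2-Ph$ with $a\in\R[z]$ and $P\in\R[z]_+$, and then to apply Lemma~\ref{positiv} to factor $P=q\bar q$ with $q\in\C[z]$ having no real root. The candidate involution is $\iota:=\left[\begin{smallmatrix}\mathbf{i}\,a & qh\\ \bar q & -\mathbf{i}\,a\end{smallmatrix}\right]$, of the form described by Lemma~\ref{invform} (with the common-real-root condition automatic because $q$ has none), and whose determinant is $a^2-q\bar q h=a^2-Ph=\beta\bar\beta\in\R[z]_+$, placing it in $\mathscr{H}_0$. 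The fixed-point equation $\bar q t^2-2\mathbf{i}\,a\,t-qh=0$ on $\A^2_\C$ has discriminant $-4\beta\bar\beta$, so the substitution $T=\bar q t-\mathbf{i}\,a$ turns it into $T^2=-\beta\bar\beta$; Proposition~\ref{diff}(a) then pins down the real structure (no real points) and yields $\Fix(\iota)\colon u^2=-\beta\bar\beta$.

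The only substantive algebraic input is Lemma~\ref{setV}(b), which was already proved in the excerpt via the quadratic decomposition of Lemma~\ref{quadratic}. Once that is granted, both parts reduce to a matrix construction and a short discriminant computation; the remaining bookkeeping --- tracking the real structure when passing between the curve on $\A^2_\C$ and its real model in $S$ --- is handled transparently by appealing to Proposition~\ref{diff} to fix the correct sign in the defining equation.
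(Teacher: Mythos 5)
Your proposal is correct and follows essentially the same route as the paper: the involution for case (a) is the same matrix $\left[\begin{smallmatrix}0 & \beta h\\ \bar\beta & 0\end{smallmatrix}\right]$ (you just arrive at it as $A\tau$), and case (b) uses exactly the paper's combination of Lemma~\ref{setV}(b) and Lemma~\ref{positiv} to build $\left[\begin{smallmatrix}\mathbf{i}a & qh\\ \bar q & -\mathbf{i}a\end{smallmatrix}\right]$. The only difference is that you spell out the discriminant computation and the substitution identifying the fixed curve, which the paper leaves implicit.
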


\begin{proof}
Given the hyperelliptic curve $t^2=(1-z^2)\beta\bar\beta$ for some $\beta\in\C(z)$ with no real roots, the element $\alpha=\left[\begin{smallmatrix}
0 & \beta(z)h\\
\bar \beta(z) & 0 
\end{smallmatrix}
\right]$ is an involution in $\mathscr H\setminus \mathscr H_0$ whose fixed curve is $t^2=(1-z^2)\beta\bar\beta$. In the other case, when $t^2=-\beta\bar\beta$ where $\beta$ has no real roots, we have $\beta\bar\beta\in\R[z]_+\subset V$ by Lemma \ref{setV} and then there are $a\in\R[z]$ and $P\in\R[z]_+$ such that $\beta\bar\beta=a^2+P(z^2-1)$. Lemma \ref{positiv} implies that $P=b\bar b$ for some $b\in\C[z]$ then the element $\alpha=\left[\begin{smallmatrix}
{\bf i}a(z) & b(z)h\\
\bar b(z) & -{\bf i}a(z) 
\end{smallmatrix}
\right]$ is an involution in $\mathscr H_0$ whose fixed curve is $t^2=-\beta\bar\beta$.
\end{proof}

\begin{lemma}\label{1basepoint}
Let $a,b,c,d\in \C[z]$ and let $A(z)=\left[\begin{smallmatrix} a(z)& b(z) \\ c(z)& d(z)\end{smallmatrix}\right]\in \GL(2,\C(z))$. Let $z_0\in \C$ be a simple root of $ad-bc\in \C[z]$, such that $A(z_0)$ has rank $1$.

Then, the birational map of \ $\mathbb{P}^1\times \A^1$ given by 
\[([t:u],z)\dasharrow \left([a(z)t+b(z)u:c(z)t+d(z)u],z\right)\]
has exactly one base-point on the line $z=z_0$, and no infinitely near base-point to this one.
\end{lemma}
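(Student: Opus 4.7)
The plan is to first describe the situation on the fibre $z=z_0$, then normalise by constant changes of coordinates, and finally blow up and check in two charts that no infinitely near base-point appears.

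On the line $z=z_0$, the map is simply $[t:u]\mapsto [a(z_0)t+b(z_0)u:c(z_0)t+d(z_0)u]$, so its indeterminacy locus consists exactly of the points killed by $A(z_0)$. Since $A(z_0)$ has rank one, its kernel is a single point of $\mathbb{P}^1$, which gives the one base-point on the line. All other points of the fibre map to the one-dimensional image of $A(z_0)$, i.e.\ the entire line is contracted to a single point of $\mathbb{P}^1$ outside of the base-point. This already takes care of the ``exactly one'' part; it only remains to check that after blowing up this base-point the lifted map is a morphism along the exceptional curve.

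After a translation in $\A^1$ (putting $z_0=0$) and constant invertible linear changes of coordinates in the source and target $\mathbb{P}^1$ (multiplying $A$ on the left and right by constant matrices, which does not change the base-point structure), I may assume
\[A(0)=\left[\begin{smallmatrix}1 & 0\\ 0 & 0\end{smallmatrix}\right],\qquad\text{so } a(0)=1,\ b(0)=c(0)=d(0)=0,\]
and the kernel of $A(0)$ is $[0:1]$. Writing $b(z)=z\tilde b(z)$, $c(z)=z\tilde c(z)$, $d(z)=z\tilde d(z)$, the expansion
\[ (ad-bc)(z)=a(z)d(z)-b(z)c(z)=z\bigl(a(z)\tilde d(z)-zb(z)\tilde c(z)/\ldots\bigr)\]
shows that the simple-root hypothesis is equivalent to $\tilde d(0)\neq 0$ (the other terms already vanish to order $\geq 2$ at $0$). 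This is the crucial input: if the root of $ad-bc$ at $z_0$ were of higher order, then $\tilde d(0)$ would vanish too and one would produce additional infinitely near base-points.

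In the affine chart $u=1$ around the base-point $(t,z)=(0,0)$ the map reads
\[(t,z)\mapsto\Bigl(\tfrac{a(z)t+z\tilde b(z)}{z(\tilde c(z)t+\tilde d(z))},\,z\Bigr).\]
Blow up $(0,0)$ in the two standard charts. In the chart $t=zs$ the first coordinate becomes $\frac{a(z)s+\tilde b(z)}{\tilde c(z)zs+\tilde d(z)}$, whose denominator at $z=0$ equals $\tilde d(0)\neq 0$, so the map is a morphism along this chart of the exceptional curve. In the chart $z=tr$ the first coordinate is, in projective form, $[a(tr)+r\tilde b(tr):r(\tilde c(tr)t+\tilde d(tr))]$, and at $(t,r)=(0,0)$ this is $[1:0]$, again well-defined. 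The two charts cover the exceptional divisor, so there is no infinitely near base-point, finishing the proof. The only nontrivial step is the normalisation combined with the computation that the simple-root condition forces $\tilde d(0)\neq 0$; everything else is a direct check in local coordinates.
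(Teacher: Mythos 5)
Your proof is correct and follows essentially the same route as the paper's: normalise $A(z_0)$ to a constant rank-one matrix, translate the simple-root hypothesis into the non-vanishing of a single coefficient ($\tilde d(0)\neq 0$), and check in explicit charts that the lift to the blow-up is a morphism along the exceptional divisor. (The displayed expansion of $ad-bc$ contains a garbled term, but the conclusion $\mathrm{ord}_{z_0}(ad-bc)=1\Leftrightarrow \tilde d(0)\neq 0$ is right, since $bc=z^2\tilde b\tilde c$ vanishes to order $\geq 2$ while $ad=az\tilde d$ with $a(0)=1$.)
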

\begin{proof}
Making the change of variable $z\mapsto z-z_0$, we can assume that $z_0=0$. Replacing $A(z)$ with $\alpha A(z)\beta$, where $\alpha,\beta\in \GL(2,\C)$, we can moreover assume that $A(0)=\left[\begin{smallmatrix} 0&0 \\ 1& 0\end{smallmatrix}\right]$, so we can write $A(z)=\left[\begin{smallmatrix} za(z)& zb(z) \\ 1+zc(z)& zd(z)\end{smallmatrix}\right]$, for some $a,b,c,d\in \C[z]$ (which are not the same as before but we keep the same letters to simplify the notation). Since $z_0$ is a simple root of the determinant, we have $b(0)\neq0$. The corresponding birational map of $\mathbb{P}^1\times \A^1$ is then 
\[([t:u],z)\dasharrow \left([z(a(z)t+b(z)u):t+z(c(z)t+d(z)u)],z\right)\]
and has a unique proper base-point on the line $z=0$, which is the point $([0:1],0)$. 

 The blow-up of this point is locally given by 
\[\begin{array}{rccc}
\pi\colon& \A^2&\to & \mathbb{P}^1\times \A^1\\
&(t,v) & \mapsto & ([t:1],tv)\end{array}\]
And the lift of our birational is then locally given by
\[(t,v)\dasharrow \left(\frac{v(a(vt)t+b(vt))}{c(vt)tv+d(vt)v+1}, \frac{t(c(vt)tv+d(vt)v+1)}{a(vt)t+b(vt)}\right).\]
The curves $E,E'$ corresponding respectively to the exceptional divisor and the fibre $z=z_0$ are now given by $t=0$ and $v=0$ respectively, and exchanged by the lift:
\[\begin{array}{rcl}
(0,v)&\dasharrow& \left(\frac{v b(0)}{1+d(0)v},0\right)\\
(t,0)&\dasharrow& \left(0,\frac{t}{a(0)t+b(0)}\right)\end{array}\]
This implies that both, our map and its inverse, have a simple base-point at $(0,0)$.
\end{proof}

\begin{thm}\label{thm4}
Let $g$, $g'\in\Aut(S(\R)/\pi)$ of order $2$. Then $g$ and $g'$ are conjugate in $\Bir(S/\pi)$ if and only if they are conjugate in $\Aut(S(\R)/\pi)$.
\end{thm}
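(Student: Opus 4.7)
The ``if'' direction is immediate since $\Aut(S(\R)/\pi)\subset\Bir(S/\pi)$. For the converse I would pass to the algebraic models: set $A=\psi g\psi^{-1}$, $B=\psi g'\psi^{-1}\in\mathscr H\subset\mathscr G$, and assume some $\gamma_0\in\mathscr G$ conjugates $A$ to $B$. The goal is to modify $\gamma_0$ by an element of the centraliser of $A$ in $\mathscr G$ so that it lands in~$\mathscr H$.

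A preliminary step, essentially free from the hypotheses, uses Theorem~\ref{thm} together with Proposition~\ref{diff}: conjugation in $\mathscr G$ induces an $\R$-birational map $\Fix(A)\dasharrow\Fix(B)$ over $\Pp^1$, which preserves the real locus. Since by Proposition~\ref{diff} the real part of $\Fix(A)$ is empty exactly when $A\in\mathscr H_0$ and is an oval over $[-1,1]$ otherwise, $A$ and $B$ must belong to the same coset of $\mathscr H_0$ in $\mathscr H=\mathscr H_0\rtimes\langle\tau\rangle$. If both elements lie in $\mathscr H\setminus\mathscr H_0$, I would first multiply by $\tau$ and reduce to the case $A,B\in\mathscr H_0$, so it suffices to treat this case.

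Next I would revisit the cocycle construction in the proof of Theorem~\ref{thm}. After conjugating $A$, $B$ to the normal form $F=\left[\begin{smallmatrix}0&f\\1&0\end{smallmatrix}\right]$, $f\in\R(z)^{*}$, via elements $\alpha,\beta\in\PGL(2,\C(z))$, any conjugator in $\mathscr G$ takes the form $\gamma=\beta\,\xi\,\alpha^{-1}$ with $\xi\in\Cc(F)$ subject to the cocycle identity $\xi=\mu_B\bar\xi\mu_A^{-1}$ of Lemma~\ref{murho}; the set of admissible $\xi$'s is a torsor under $\Cc(F)\cap\mathscr G$, which by Proposition~\ref{C(H)} is a large abelian group containing the image of $\C(\Gamma)^{*}$. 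By Proposition~\ref{diffG}, the extra requirement $\gamma\in\mathscr H$ amounts, after clearing common real roots, to the positivity condition $\det\gamma\in\R[z]_{+}$.

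The heart of the argument, and its main obstacle, is this final positivity step. I would analyse how $\det\gamma$ transforms as $\xi$ varies in its admissible torsor, and show that the sign function of $\det\gamma$ on the real line is governed by the orientation-class invariant established in the first step, so no intrinsic sign obstruction remains. Using the identification $T_f\cong\C(\Gamma)^{*}$ from Proposition~\ref{C(H)}, together with Lemma~\ref{positiv} and Lemma~\ref{setV} applied in the same spirit as in the proof of Lemma~\ref{reciprocalProp3.14}, one can then modify $\xi$ within $\Cc(F)\cap\mathscr G$ so that $\det\gamma$ becomes strictly positive on all of $\R$, producing the required element of $\mathscr H=\psi\Aut(S(\R)/\pi)\psi^{-1}$ and hence the desired conjugation in $\Aut(S(\R)/\pi)$.
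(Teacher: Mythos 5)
Your overall strategy --- modify a $\Bir(S/\pi)$-conjugator by an element of the centraliser of $g$ so that it becomes regular on $S(\R)$ --- is the right one, and it is also the paper's. But two things go wrong. First, the reduction to $A,B\in\mathscr H_0$ by ``multiplying by $\tau$'' does not work: $A\tau$ need not have order $2$, and $\gamma A\gamma^{-1}=B$ does not give $\gamma(A\tau)\gamma^{-1}=B\tau$ unless $\gamma$ commutes with $\tau$. Worse, this reduction points the argument at the case where there is essentially nothing to prove: if $A,B\in\mathscr H_0$, then by the argument at the start of the paper's proof a real base-point $r$ of the conjugator $\alpha$ would force $\alpha(\Gamma_{z_r})$ to be a real fixed point of $g'$ distinct from $P_N,P_S$, so $\Fix(g')$ would have real points, contradicting Proposition~\ref{diff}; hence in the orientation-preserving case any conjugator in $\Bir(S/\pi)$ already lies in $\Aut(S(\R)/\pi)$. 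The entire difficulty of the theorem sits in the coset $\mathscr H\setminus\mathscr H_0$, which your reduction discards.

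Second, and more seriously, the step you yourself call ``the heart of the argument'' is asserted rather than proved. Saying that one can modify $\xi$ inside the torsor of solutions of $\xi=\mu_B\bar\xi\mu_A^{-1}$ so that $\det\gamma\in\R[z]_+$ is exactly the statement of the theorem in algebraic clothing; the cocycle identity couples the behaviour of $\xi$ across all fibres, and there is no evident reason why the sign of $\det\gamma$ can be corrected simultaneously at every $z_0\in(-1,1)$ within that constrained family. (Note also that membership in $\mathscr H$, as opposed to $\mathscr H_0$, allows the determinant to lie in $(z^2-1)\cdot\R[z]_+$ as well, so the target condition is not quite the one you state.) The paper avoids this global positivity problem entirely by an induction on the number of real base-points of $\alpha$: a real base-point forces $g$ into the anti-diagonal normal form $\left[\begin{smallmatrix}0& b h\\ \bar b& 0\end{smallmatrix}\right]$ of Lemma~\ref{reciprocalProp3.14}, whose centraliser contains elements $\beta=\left[\begin{smallmatrix}a& \lambda b h\\ \lambda\bar b& a\end{smallmatrix}\right]$ with $a,\lambda\in\R[z]$; choosing $a(z)=\sqrt{b(z_r)\bar b(z_r)(1-z_r^2)}\cdot\bigl(\tfrac{z+1}{z_r+1}\bigr)^m$ for $m$ large makes the determinant vanish simply at $z=z_r$ and nowhere else in $(-1,1)$, so by Lemma~\ref{1basepoint} the map $\beta$ has exactly one real base-point with no infinitely near one, and composing kills one base-point of $\alpha$ at a time. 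Some such quantitative construction of a centraliser element with a prescribed single real base-point is what your outline is missing.
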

\begin{proof}
Let $g$ and $g'$ be conjugate in $\Bir(S/\pi)$, then there is $\alpha\in\Bir(S/\pi)$ such that $\alpha g\alpha^{-1}=g'$. We want to show that $g$ and $g'$ are conjugate in $\Aut(S(\R)/\pi)$.
By Proposition~\ref{diff}, the curve of fixed points of an element in $\Aut(S(\R)/\pi)$ either contains no real point or only one oval.

 If $\alpha\in\Bir(S/\pi)\setminus\Aut(S(\R)/\pi)$, there is a real point $r\in S(\R)$ where $\alpha$ is not defined, and this point is not $P_S$ or $P_N$ (Lemma \ref{ExchangeOrNotBir}). The element $\alpha$ blows up this point and contracts the conic $\Gamma_{z_r}$ passing through $r$ which is a fibre of the conic bundle structure of $S$. Then $\alpha(\Gamma_{z_r})=q$ for some $q\in S(\R)$.
 
 Note that $q$ is fixed by $g$. Indeed, otherwise $g(q)=q'\neq q$ and as $g$ preserves the fibration, $g(\Gamma_{z_r})=\Gamma_{z_r}$, then $\alpha(g(\Gamma_{z_r}))\neq g'(\alpha(\Gamma_{z_r}))$. Since $q$ is a real point fixed by $g$ and distinct from $P_S$ and $P_N$, the curve $\Fix(g)$ contains real points. We may then assume that $g$ is equal to $\left[\begin{smallmatrix}0& b(z)h\\
\bar b(z)& 0 
\end{smallmatrix}\right]$ (Lemma~\ref{reciprocalProp3.14}). The centraliser of $g$ contains the following subgroup
\[\Cc(g)=\left\{\left[\begin{smallmatrix}a(z) & \lambda b(z)h\\
\lambda\bar b(z) & a(z)\end{smallmatrix}\right]\ ;\ a,\lambda\in\R[z]\text{ and } a^2-\lambda^2b\bar bh\neq0\right\}\subset\mathscr G.\]
We want to prove now that $\Cc(g)$ contains, in particular, an element $\beta=\left[\begin{smallmatrix}a(z) & b(z)h\\
\bar b(z) & a(z)\end{smallmatrix}\right]$ such that $D(z)=a(z)^2-b(z)\bar b(z)(1-z^2)$ has only one zero exactly at $z=z_r$ on the interval $(-1,1)$. The reason of the existence of such a $\beta$ is that it is possible to find a polynomial $a(z)$ with values $a(-1)=0$ and $a(z_r)=\sqrt{b(z_r)\bar b(z_r)(1-z_r^2)}$ and satisfying that $D(z)>0$ on $(-\infty,-1)\cup(z_r,\infty)$ and $D(z)<0$ on the interval $(-1,z_r)$. Notice that $b(z)\bar b(z)(1-z^2)>0$ for $z\in(-1,1)$ and the condition $D(z)>0$ for $z^2>1$ is already fulfilled (see Lemma~\ref{G}). We use the function $f(z)=z^m$ with $m$ sufficiently large and apply a suitable linear change of coordinates, namely $a(z)=\sqrt{b(z_r)\bar b(z_r)(1-z_r^2)}\cdot f\left(\frac{z+1}{z_r+1}\right)$ to get the polynomial $a(z)$ with the required conditions. See Figure~\ref{Fig:a(z)}.
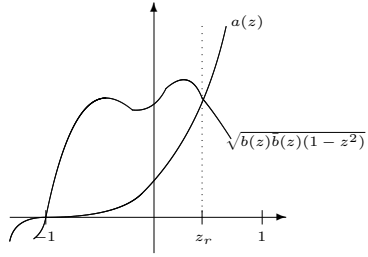
\begin{figure}[h]
\begin{center}
\begin{picture}(90,69.3)
\scalebox{0.9}{
\put(-10,0){\vector(1,0){115}} %axis
\put(50,-15){\vector(0,1){105}} % axis y
\qbezier[30](70,80)(70,35)(70,0) % recta x=z_r

\put(5,3){\line(0,-1){6}}
\put(0,-10){\unboldmath{\tiny $-1$}}
\put(95,3){\line(0,-1){6}}
\put(93,-10){\unboldmath{\tiny $1$}}
\put(70,3){\line(0,-1){6}}
\put(67,-10){\unboldmath{\tiny $z_r$}}

\qbezier(5,0)(35,0)(45,10) % a(z)
\qbezier(45,10)(70,35)(80,80) 
\qbezier(-10,-10)(-8,0)(5,0) 
\put(82,80){\unboldmath{\tiny $a(z)$}}

\qbezier(0,-9)(4,-6)(5,0) % \sqrt(b\bar b*h)
\qbezier(5,0)(15,50)(30,50) 
\qbezier(30,50)(35,50)(41,45) 
\qbezier(41,45)(50,44)(55,54) 
\qbezier(55,54)(65,63)(70,50) 
\qbezier(70,50)(75,44)(82,33) 
\put(80,30){\unboldmath{\tiny $\sqrt{b(z)\bar b(z)(1-z^2)}$}}}
\end{picture}
\end{center}
\caption{Conditions for the polynomial $a(z)$}
\label{Fig:a(z)}
\end{figure}

With $\beta\in\Cc(g)$ as before, i.e. the element with the only root of its determinant at $z=z_r$, Lemma~\ref{1basepoint} implies that the birational map that $\beta$ defines has exactly one real base-point and no infinitely near base-point to this one. Then replacing $\alpha$ by $\beta^{-1}\alpha$, one gets one base-point less. Then the claim follows by induction. 
\end{proof}

\begin{prop}\label{correspAut(S(R))}
There are bijective correspondences
 \[
     \left\{\!
         \begin{array}{cc}
              \mbox{conjugacy classes of involutions} \\
              \mbox{in $\Aut^+(S(\R)/\pi)$}
         \end{array}
     \!\right\}
     \xleftrightarrow{\ 1:1 \ } 
     \left\{\!\!
         \begin{array}{cc}
              \mbox{smooth real projective} \\
              \mbox{curves $\Gamma$ with no real point}\\
              \mbox{with $\pi\colon\Gamma\to\Pp^1$ a $2:1$-covering,}\\
              \mbox{up to $\pi$-isomorphism}
         \end{array}
     \!\!\right\}
\]

 \[
     \left\{\!
         \begin{array}{cc}
              \mbox{conjugacy classes of involutions} \\
              \mbox{in $\Aut(S(\R)/\pi)\setminus\Aut^+(S(\R)/\pi)$}
         \end{array}
     \!\right\}
     \xleftrightarrow{\ 1:1 \ } 
     \left\{\!\!
         \begin{array}{cc}
              \mbox{smooth real projective} \\
              \mbox{curves $\Gamma$ with one oval}\\
              \mbox{with $\pi\colon\Gamma\to\Pp^1$ a $2:1$-covering,}\\
              \mbox{up to $\pi$-isomorphism}
         \end{array}
     \!\!\right\}
\]
\end{prop}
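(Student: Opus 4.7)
My plan is to assemble the proposition directly from the machinery developed in the preceding subsections. The natural map in both correspondences sends an involution $g \in \Aut(S(\R)/\pi)$ to the pair $(\Fix(g), \pi|_{\Fix(g)})$, where $\pi|_{\Fix(g)} \colon \Fix(g) \to \Pp^1$ is a $2{:}1$-covering. The proof consists in showing that this assignment descends to a well-defined injective surjection on the appropriate sets.

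First I would verify that the map respects the splitting $\Aut(S(\R)/\pi) = \Aut^+(S(\R)/\pi) \sqcup (\Aut(S(\R)/\pi) \setminus \Aut^+(S(\R)/\pi))$: since orientation-preservation is invariant under conjugation by diffeomorphisms (the orientation homomorphism $o$ defined after Remark~\ref{varphi} is a group homomorphism with $\Aut^+(S(\R)/\pi)$ as kernel), the conjugacy class of $g$ stays within the same subset. Proposition~\ref{diff} then places $\Fix(g)$ in the correct class: no real point when $g \in \Aut^+(S(\R)/\pi)$, and one oval with $\pi(\Fix(g)(\R)) = [-1,1]$ when $g \in \Aut(S(\R)/\pi) \setminus \Aut^+(S(\R)/\pi)$. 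For well-definedness on conjugacy classes, if $g' = hgh^{-1}$ with $h \in \Aut(S(\R)/\pi)$, then $\Fix(g') = h(\Fix(g))$; since $h$ preserves $\pi$ (acting trivially on the basis), its restriction yields a $\pi$-isomorphism between $\Fix(g)$ and $\Fix(g')$ defined over $\R$.

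Surjectivity comes directly from Lemma~\ref{reciprocalProp3.14}: every smooth real projective hyperelliptic curve with no real point and a $2{:}1$-cover to $\Pp^1$ is realised as the fixed curve of some involution in $\Aut^+(S(\R)/\pi)$, and likewise for curves with one oval mapping onto $[-1,1]$ via elements of $\Aut(S(\R)/\pi) \setminus \Aut^+(S(\R)/\pi)$. For injectivity, suppose $g$ and $g'$ are involutions in $\Aut(S(\R)/\pi)$ whose fixed curves are isomorphic over $\R$ via a map commuting with $\pi$ (noting that for smooth projective curves a birational map is automatically an isomorphism, so the ``$\pi$-isomorphism'' hypothesis matches the birational condition in Theorem~\ref{thm}). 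Then Theorem~\ref{thm} yields that $g$ and $g'$ are conjugate in $\Bir(S/\pi)$, and Theorem~\ref{thm4} upgrades this conjugacy to $\Aut(S(\R)/\pi)$.

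I expect the main technical nuance to be the translation between the abstract $2{:}1$-coverings $\Gamma \to \Pp^1$ considered up to $\pi$-isomorphism (the right-hand side of each correspondence) and the concrete fixed curves $\Fix(g) \subset S_\C$ (the left-hand side), making sure that the isomorphism relation matches the birational equivalence appearing in Theorem~\ref{thm}; since these are smooth projective curves this passage is automatic, but it deserves an explicit mention. The rest is a direct composition of previously established results.
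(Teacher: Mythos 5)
Your proposal is correct and follows essentially the same route as the paper: the fixed-curve map, Proposition~\ref{diff} to sort the two cases, Lemma~\ref{reciprocalProp3.14} for surjectivity, and Theorems~\ref{thm} and~\ref{thm4} for injectivity. The extra remarks on well-definedness and on matching birational equivalence with $\pi$-isomorphism for smooth projective curves are sound and only make explicit what the paper leaves implicit.
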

\begin{remark}By a $\pi$-isomorphism we mean an isomorphism $\gamma\colon \Gamma\to \Gamma'$ such that $\pi\gamma=\pi$.\end{remark}

\begin{proof} 
Let $g,g'\in\Aut(S(\R)/\pi)$ be of order $2$.
If $g$ and $g'$ are conjugate in $\Aut(S(\R)/\pi)$ then by Theorem~\ref{thm}, $\Fix(g)$ and $\Fix(g')$ are birational over $\R$ by some $\pi$-isomorphism.
Proposition~\ref{diff} tell us that $\Fix(g)$ and $\Fix(g')$ are a double covering of $\Pp^1$ with no real point (when $g,g'$ are orientation-preserving birational diffeomorphisms) or with one oval (when $g,g'$ are orientation-reversing birational diffeomorphisms), and Lemma~\ref{reciprocalProp3.14} shows that all such curves are obtained.
Given a $\pi$-isomorphism between two smooth real hyperelliptic curves with no oval (respectively one), Theorem~\ref{thm} implies that $g$ and $g'$ are conjugate in $\Bir(S/\pi)$ and Theorem~\ref{thm4} that $g$ and $g'$ are indeed conjugate in $\Aut(S(\R)/\pi)$.
\end{proof}

\subsection{Elements in $\Bir(S/\pi)$ of finite order larger than two}\label{Sec:Bir(S/pi)Order>2}
\label{SecRotation}
The goal of this subsection is to show that any element in $\Bir(S/\pi)$ of finite order larger than two which preserves the fibration is conjugate to a rotation. We start by observing that any rotation $\rho_\theta\in\Bir(S/\pi)$ is given by the map 
\[\begin{array}{rcrl}
\rho_\theta\colon& S & \longrightarrow &\ \ \ \ \  \  S\\
&(x,y,z)&\longmapsto&(x\cos\theta-y\sin\theta, x\sin\theta+y\cos\theta,z)\end{array}\]
which via $\psi$ (Lemma \ref{LemAction}) corresponds in $\A^2$ to the map $(t,z)\mapsto(te^{-{\bf i}\theta},z)$ and is equivalent to the action of the element $\left[\begin{smallmatrix}
e^{-{\bf i}\theta} & 0\\
0\phantom{\bar b} & 1
\end{smallmatrix}
\right]=\left[\begin{smallmatrix}
e^{-{\bf i}(\theta/2)} & 0\\
0 & e^{{\bf i}(\theta/2)}
\end{smallmatrix}
\right]=\left[\begin{smallmatrix}
1\phantom{\bar b} & 0\\
0\phantom{\bar b} & e^{{\bf i}\theta}
\end{smallmatrix}
\right]=R_\theta\in\mathscr G$.  With this observation and the following remark, the result is presented in Lemma \ref{Rot}.

\begin{remark}\label{diagonal}\
\begin{enumerate}[(i)]
\item Let $A\in\PGL(2,\C(z))$ an element of finite order larger than $2$. Then $A$ is diagonalisable.
\item Two diagonal elements
$\left[
	\begin{smallmatrix}
		1 & 0\\
		0 & a
	\end{smallmatrix}
\right]$
and
$\left[
	\begin{smallmatrix}
		1 & 0\\
		0 & b
	\end{smallmatrix}
\right]$
are conjugate in $\PGL(2,\C(z))$ if and only if $a=b^{\pm1}$.
\end{enumerate}
\end{remark}

\begin{lemma}\label{Rot}
Let $A\in\mathscr G$ of order $n\neq2$. Then $A$ is conjugate to a rotation
\[R_\theta=\left[\begin{array}{cc}
1 & 0\\
0 & e^{{\bf i}\theta}
\end{array}
\right]\]
in $\mathscr G$ for some angle $\theta$.
\end{lemma}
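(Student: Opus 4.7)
The case $n=1$ being trivial, assume $n\ge 3$. By Remark~\ref{diagonal}(i), $A$ is diagonalisable in $\PGL(2,\C(z))$, so there exist $B\in\PGL(2,\C(z))$ and $\lambda\in\C(z)^*$ of order $n$ with $BAB^{-1}=\mathrm{diag}(1,\lambda)$. Since $\lambda^n=1$ and the only roots of unity in $\C(z)$ are constants, $\lambda=e^{{\bf i}\theta}$ for some $\theta\notin\pi\Z$; setting $P:=R_\theta=\mathrm{diag}(1,e^{{\bf i}\theta})$, a direct check yields $\tau P\tau=\bar P$ in $\PGL$, so $P\in\mathscr G$. It remains to promote the $\PGL$-conjugation between $A$ and $P$ to one inside $\mathscr G$.

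The plan is to seek $C\in\mathscr G$ of the form $C=\xi B$ with $\xi\in\Cc(P)$ diagonal, which automatically gives $CAC^{-1}=P$. Introduce $\mu_A:=B\tau\bar B^{-1}\in\PGL(2,\C(z))$. Then $\mu_A\bar{\mu_A}=1$ (from $\tau^2=1$ in $\PGL$), and conjugating the identity $\tau\bar A\tau=A$ by $B$ gives $\mu_A\bar P\mu_A^{-1}=P$. Because $n\ge 3$, $\bar P=P^{-1}\ne P$, so $\mu_A$ normalises but does not centralise $P$; it is therefore anti-diagonal, and combined with $\mu_A\bar{\mu_A}=1$ this forces
\[
\mu_A=\left[\begin{smallmatrix}0&u\\ 1&0\end{smallmatrix}\right]\quad\text{in }\PGL,\qquad u\in\R(z)^*.
\]
Using the identity $\tau\bar B\tau B^{-1}=\tau\mu_A^{-1}$, the condition $C\in\mathscr G$ (i.e.~$\tau\bar C\tau=C$) unwinds to $\xi\mu_A=\tau\bar\xi$; writing $\xi=\mathrm{diag}(1,\ell)$ and comparing anti-diagonal entries, this is equivalent to $u/h=\ell\bar\ell$. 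The lemma therefore reduces to showing that $u/h$ is a norm from the extension $\C(z)/\R(z)$.

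To verify this, I would use the canonical form $A=\left[\begin{smallmatrix}\alpha&\beta h\\ \bar\beta&\bar\alpha\end{smallmatrix}\right]$ from Lemma~\ref{G}. Since $n\ge 3$, the eigenvalues of $A$ cannot lie in $\R(z)$ (otherwise their ratio would be a real root of unity, hence $\pm1$); being complex conjugates with $\mathrm{tr}(A),\det(A)\in\R(z)$ and prescribed ratio $e^{{\bf i}\theta}$, they take the form $e^{\pm{\bf i}\theta/2}\nu$ for some $\nu\in\R(z)^*$. Setting $c=\cos(\theta/2)$, $s=\sin(\theta/2)\ne 0$, and $w:=\Im(\alpha)/s\in\R(z)$, the trace gives $\Re(\alpha)=c\nu$ and the determinant gives the key identity
\[
(w-\nu)(w+\nu)=\frac{\beta\bar\beta\,h}{s^{2}}.
\]
Computing $\mu_A$ explicitly from the eigenvectors $(\beta h:{\bf i}s(\nu-w))$ and $(\beta h:-{\bf i}s(\nu+w))$ yields, after simplification and passing to $\PGL$, the formula $u=(w+\nu)/(w-\nu)$. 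Combining with the identity above,
\[
\frac{u}{h}=\frac{(w+\nu)(w-\nu)}{h(w-\nu)^{2}}=\frac{\beta\bar\beta}{s^{2}(w-\nu)^{2}}=\Bigl|\tfrac{\beta}{s(w-\nu)}\Bigr|^{2},
\]
which is manifestly a norm. Taking $\ell=\beta/(s(w-\nu))$ produces $C=\mathrm{diag}(1,\ell)B\in\mathscr G$ with $CAC^{-1}=R_\theta$. The main obstacle is the explicit matrix computation that yields $u=(w+\nu)/(w-\nu)$; once this is in place, the identity $(w-\nu)(w+\nu)=\beta\bar\beta h/s^{2}$ automatically exhibits $u/h$ as a norm.
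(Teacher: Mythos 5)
Your argument is correct and shares its skeleton with the paper's proof: both diagonalise $A$ in $\PGL(2,\C(z))$ as $BAB^{-1}=R_\theta$, introduce the same obstruction element (your $\mu_A=B\tau\bar B^{-1}$ is the inverse of the paper's $\rho=\bar\alpha^{-1}\tau\alpha$ with $\alpha=B^{-1}$), show it is anti-diagonal of the form $\left[\begin{smallmatrix}0&u\\1&0\end{smallmatrix}\right]$ with $u\in\R(z)^*$, and reduce the lemma to solving the norm equation $u/h=\ell\bar\ell$. Where you genuinely diverge is in how that equation is solved. The paper never touches the eigenvalues of $A$: it writes the diagonalising matrix generically as $\alpha=\left[\begin{smallmatrix}a&b\\c&d\end{smallmatrix}\right]$, expands $\rho=\bar\alpha^{-1}\tau\alpha$, and reads off from the resulting two scalar identities that $u=h\,d\bar d/(a\bar a)$ (resp.\ $\frac{1}{h}\,b\bar b/(c\bar c)$ if $a=0$), so that $\ell=d/a$ works immediately. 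You instead exploit the canonical form of $A\in\mathscr G$ from Lemma~\ref{G}, determine that the eigenvalues are $\nu e^{\pm{\bf i}\theta/2}$ with $\nu\in\R(z)^*$, derive the determinant identity $(w-\nu)(w+\nu)=\beta\bar\beta h/s^2$, and compute $u=(w+\nu)/(w-\nu)$ from the explicit eigenvectors. I have checked this computation: the off-diagonal entries of $\mu_A$ are $s^2h(w+\nu)^2-\beta\bar\beta h^2$ and $-s^2h(w-\nu)^2+\beta\bar\beta h^2$, and the determinant identity collapses their ratio to $(w+\nu)/(w-\nu)$ exactly as you claim, so $\ell=\beta/(s(w-\nu))$ indeed solves the norm equation. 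Your route is more laborious but expresses the conjugating element in terms of the entries of $A$ itself rather than of an unspecified diagonalising matrix; the paper's is shorter and avoids the eigenvector computation entirely.

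One degenerate case should be mentioned: if $\beta=0$, the determinant identity forces $w=\pm\nu$, your eigenvector formulas and $\ell=\beta/(s(w-\nu))$ become $0/0$, and the argument as written breaks down. But in that case $A$ is already diagonal, $A=\left[\begin{smallmatrix}\alpha&0\\0&\bar\alpha\end{smallmatrix}\right]=\left[\begin{smallmatrix}1&0\\0&\bar\alpha/\alpha\end{smallmatrix}\right]$, and finite order forces $\bar\alpha/\alpha$ to be a root of unity in $\C(z)$, hence a constant $e^{{\bf i}\theta}$, so $A=R_\theta$ with no work. Adding that one sentence closes the only gap.
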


\begin{proof}
Since $A$ is an element of finite order $n\neq2$ then by Remark~\ref{diagonal}, A is diagonalisable in $\PGL(2,\C(z))$ so there is an element $\alpha\in\PGL(2,\C(z))$ so that $A=\alpha\left[\begin{smallmatrix}
1 & 0\\
0 & \mu
\end{smallmatrix}
\right]\alpha^{-1}$ for some $\mu\in\C(z)^*$ an element of order $n$, i.e. $\mu$ is a root of unity that we can write as $\mu=e^{i\theta}$ for some angle $\theta$. 

We define $J:=\left[\begin{smallmatrix}
1 & 0\\
0 & s
\end{smallmatrix}
\right]\alpha^{-1}$ and we want to find $s\in\C(z)$ such that $J\in\mathscr G$ and $JAJ^{-1}=R_\theta$. This latter condition is fulfilled by the form of $J$.  To ask for $J\in\mathscr G$ is the same as $J$ satisfies the relation $\tau J\tau=\bar J$ which is equivalent to $\tau \left[\begin{smallmatrix}
1 & 0\\
0 & s
\end{smallmatrix}
\right]\alpha^{-1}\tau=\left[\begin{smallmatrix}
1 & 0\\
0 & \bar s
\end{smallmatrix}
\right]\overline{\alpha}^{-1}$. Multiplying to the right by $\bar\alpha$ we get $\tau \left[\begin{smallmatrix}
1 & 0\\
0 & s
\end{smallmatrix}
\right]\alpha^{-1}\tau\overline{\alpha}^{-1}=\left[\begin{smallmatrix}
1 & 0\\
0 & \bar s
\end{smallmatrix}
\right]$. We call $\rho:=\overline{\alpha}^{-1}\tau\alpha$ and we rewrite the last equation in terms of $\rho$ obtaining:
\begin{equation}\label{s}
\tau \left[\begin{smallmatrix}
1 & 0\\
0 & s
\end{smallmatrix}
\right]\bar\rho=\left[\begin{smallmatrix}
1 & 0\\
0 & \bar s
\end{smallmatrix}
\right]
\end{equation}
where $\bar\rho=\rho^{-1}$ because $\rho\bar\rho=(\bar\alpha^{-1}\tau\alpha)(\alpha^{-1}\tau\bar\alpha)=1$. 

On the other hand, the fact that $A\in\mathscr G$ i.e. $\tau A\tau =\bar A$ which is the same as $\tau \alpha\left[\begin{smallmatrix}
1 & 0\\
0 & \mu
\end{smallmatrix}
\right]\alpha^{-1}\tau=\bar\alpha\left[\begin{smallmatrix}
1 & 0\\
0 & \bar \mu
\end{smallmatrix}
\right]\bar\alpha^{-1}$ is equivalent to $\rho\left[\begin{smallmatrix}
1 & 0\\
0 & \mu
\end{smallmatrix}
\right]=\left[\begin{smallmatrix}
\mu & 0\\
0 & 1
\end{smallmatrix}
\right]\rho$ and gives the condition on $\rho$ to be of the form $\rho=\left[\begin{smallmatrix}
0 & \lambda\\
1 & 0
\end{smallmatrix}
\right]$ for some $\lambda \in\C(z)^*$. Moreover, $\rho\bar\rho=1$ implies that $\lambda\in\R(z)^*$ because $\left[\begin{smallmatrix}
0 & \lambda\\
1 & 0
\end{smallmatrix}
\right]\left[\begin{smallmatrix}
0 & \bar\lambda\\
1 & 0
\end{smallmatrix}
\right]=\left[\begin{smallmatrix}
\lambda & 0\\
0 & \bar\lambda
\end{smallmatrix}
\right]=\left[\begin{smallmatrix}
1 & 0\\
0 & 1
\end{smallmatrix}
\right]$. With this information about $\rho$, finding $s\in\C(z)^*$ satisfying the equation (\ref{s}) is equivalent to find $s$ satisfying the equation
\begin{equation}\label{lambda}
\lambda=(1-z^2)s\bar s
\end{equation}

Note that we already know that $\frac{\lambda}{1-z^2}\in\R(z)^*$, but not every element of $\R(z)^*$ can be written as $s\bar s$.
What follows is to describe $\rho$ in terms of entries of $\alpha$ and $\tau$ in order to find candidates for the value of $s$ satisfying the previous equation.
Let us present $\alpha=\left[\begin{smallmatrix}
a & b\\
c & d
\end{smallmatrix}
\right]$, then the relation $\rho=\bar\alpha^{-1}\tau\alpha$ explicitly will be
\begin{equation*}
\left[\begin{smallmatrix}
0\phantom{\bar b} & \lambda\\
1\phantom{\bar b} & 0
\end{smallmatrix}
\right]=\left[\begin{smallmatrix}
\phantom{-} \bar d & -\bar b\\
-\bar c & \phantom{-} \bar a
\end{smallmatrix}
\right]\left[\begin{smallmatrix}
0\phantom{\bar b} & 1-z^2\\
1\phantom{\bar b} & 0
\end{smallmatrix}
\right]\left[\begin{smallmatrix}
a\phantom{\bar b} & b\\
c\phantom{\bar b} & d
\end{smallmatrix}
\right]
\end{equation*}
and this gives two equations
\begin{equation}\label{alpharho}
-a\bar b+(1-z^2)c\bar d=0\text{\ and \ } (a\bar a-(1-z^2)c\bar c)\lambda=-b\bar b+(1-z^2)d\bar d
\end{equation}
When $a\neq0$, $\bar b=(1-z^2)\frac{c\bar d}{a}$ and plugging it in the second equation in (\ref{alpharho}) we get 
\[\lambda[a\bar a-(1-z^2)c\bar c]=(1-z^2)[a\bar a-(1-z^2)c\bar c](d\bar d/a\bar a)
\] hence $\lambda=(1-z^2)\frac{d\bar d}{a\bar a}$.
In the case $a=0$, equations (\ref{alpharho}) imply that $d=0$ and that 
\begin{equation*}
\lambda=\frac{1}{1-z^2}\frac{b\bar b}{c\bar c}
\end{equation*}
Then, we may choose $s=\frac{d}{a}$ when $a\neq0$ or $s=\frac{1}{1-z^2}\frac{b}{c}$ otherwise and in this way there exist $J\in\mathscr G$ such that $JAJ^{-1}=R_\theta$.
\end{proof}

\subsection{Elements in $\Aut(S(\R)/\pi)$ of finite order larger than two}\label{Sec:Aut(S(R)/pi)Order>2}
We can check that Lemma~\ref{Rot} also holds in the subgroup $\Aut(S(\R)/\pi)$, via $\psi$:
\begin{lemma}\label{RotationsinH}
Let $A\in\mathscr H$ of order $n\neq2$. Then $A$ is conjugate to a rotation
\[
R_\theta=\left[\begin{array}{cc}
1 & 0\\
0 & e^{{\bf i}\theta}
\end{array}
\right]
\]
in $\mathscr H$ for some angle $\theta$.
\end{lemma}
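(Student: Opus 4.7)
My plan is to follow the proof of Lemma~\ref{Rot}, checking at each step that the conjugating element it produces actually lies in the subgroup $\mathscr H$ and not merely in $\mathscr G$. The first step is to reduce to the orientation-preserving case: an element of $\mathscr H\setminus\mathscr H_0$ restricts on each real fibre of $\pi$ to a reflection by Lemma~\ref{Lem:LocRot}, and since it acts trivially on the base and preserves every fibre, it must have order $2$. Hence the hypothesis $\mathrm{ord}(A)\neq 2$ forces $A\in\mathscr H_0$, and I may write $A$ in the standard form $\left[\begin{smallmatrix} a & bh \\ \bar b & \bar a\end{smallmatrix}\right]$ with $a\bar a-b\bar b h\in\R[z]_+$ (Proposition~\ref{diffG}); the case $b=0$ gives $A=R_\theta$ directly, so I may assume $b\neq 0$.

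Next I would apply Lemma~\ref{Rot} with an explicit diagonalising matrix. Since $A$ has real trace and real determinant, its eigenvalues in $\C(z)$ are complex conjugates $\mu_1=\alpha_r+iq$ and $\mu_2=\alpha_r-iq$ with $\alpha_r, q\in\R(z)$; writing $a=\alpha_r+i\alpha_i$, the characteristic polynomial yields the key identity $b\bar b h=\alpha_i^2-q^2$. Taking $\alpha=\left[\begin{smallmatrix} bh & bh \\ \mu_1-a & \mu_2-a\end{smallmatrix}\right]$ and $s=(\mu_2-a)/(bh)$ as in Lemma~\ref{Rot}, one obtains $J=\mathrm{diag}(1,s)\alpha^{-1}\in\mathscr G$ with $JAJ^{-1}=R_\theta$.

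The new content is to verify that this particular $J$ already lies in $\mathscr H$. A direct calculation, simplified using the above identity and with an overall scaling by $i$ to put $J$ in standard form, produces a representative $\left[\begin{smallmatrix} q+\alpha_i & -ibh \\ i\bar b & q+\alpha_i\end{smallmatrix}\right]$ whose determinant equals $2q(q+\alpha_i)\in\R[z]$. The positivity condition $b\bar b\geq 0$ on $\R$ translates, through $b\bar b h=\alpha_i^2-q^2$, into $|\alpha_i|\geq|q|$ on $(-1,1)$ and $|\alpha_i|\leq|q|$ outside $(-1,1)$; combined with the freedom to swap $\mu_1\leftrightarrow\mu_2$ (which replaces $R_\theta$ by its $\mathscr H$-conjugate $R_{-\theta}=\tau R_\theta\tau^{-1}$), this lets me order the eigenvalues so that $2q(q+\alpha_i)\in\R[z]_+$, placing $J$ in $\mathscr H_0\subset\mathscr H$. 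The hard part will be this last sign analysis, where it is essential that $A\in\mathscr H_0$ rather than merely $\mathscr G$, since strict positivity of the determinant on all of $\R$ is precisely the obstruction distinguishing $\mathscr H$ from $\mathscr G$.
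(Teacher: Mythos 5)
Your route is genuinely different from the paper's, and considerably heavier. The paper simply takes the conjugating element $\alpha\in\mathscr G$ produced by Lemma~\ref{Rot} and shows in a few lines that it is automatically in $\mathscr H$: if $\alpha$ had a real base-point $r$, it would contract the real fibre $\Gamma_{z_r}$ through $r$ onto a real point $q$ of that fibre (Lemma~\ref{BasePts}), and since $A$ preserves $\Gamma_{z_r}$ the relation $\alpha\mathcal A=R_\theta\alpha$ would force $R_\theta(q)=q$, which is impossible because $R_\theta$ fixes no real point outside the poles. No explicit diagonalising matrix is ever needed.

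Your explicit construction can be made to work, but the step you defer as ``the hard part'' is where the argument currently fails, and the ingredients you list for it are not sufficient. On $(-1,1)$ one has $\lvert\alpha_i\rvert\ge\lvert q\rvert$, hence $\mathrm{sign}\bigl(q(q\pm\alpha_i)\bigr)=\pm\,\mathrm{sign}(q\,\alpha_i)$ there; so a single global choice of eigenvalue ordering exists only if $q\,\alpha_i$ has constant sign on $[-1,1]$, and nothing you have stated prevents $q$ or $\alpha_i$ from changing sign inside the interval. (Note also that $b\bar b\ge0$ holds for every element of $\mathscr G$, so it cannot be the place where $A\in\mathscr H_0$ enters.) The missing observation is that, writing $e^{{\bf i}\phi}=\mu_1/\mu_2$, one has $q=\alpha_r\tan(\phi/2)$ and $\det A=\tfrac{2}{1+\cos\phi}\,\alpha_r^2$; since $\det A\in\R[z]_+$, the half-trace $\alpha_r$, hence $q$, has no real zeros, and then $\lvert\alpha_i\rvert\ge\lvert q\rvert>0$ forces $\alpha_i$ to be nonvanishing on $(-1,1)$ as well, so $\mathrm{sign}(q\,\alpha_i)$ is constant on $[-1,1]$ and selects the correct ordering. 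Even then you must still check the endpoints $z=\pm1$, where $h=0$ gives $\alpha_i=\pm q$ and the wrong ordering makes the determinant of $J$ vanish (which by Lemma~\ref{ExchangeOrNotBir} would mean $J$ swaps $M$ with $\overline M$), and appeal to Lemma~\ref{G} for positivity outside $[-1,1]$ after removing common real roots of the entries. A minor further point: Lemma~\ref{Lem:LocRot} is stated and proved only for involutions, so your reduction to $\mathscr H_0$ needs the (true but unproved there) fact that a finite-order orientation-reversing element of $\PGL(2,\R)$ has order $2$.
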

\begin{proof}
Let $A\in\mathscr H$ of finite order different from $2$, then by Lemma~\ref{Rot}, there is $\alpha\in\mathscr G$ such that $\alpha A\alpha^{-1}=R_\theta$. Let $\mathcal A=\psi^{-1}A\psi$. By abuse of notation, the element $\psi^{-1}\alpha\psi\in\Bir(S/\pi)$ will be called $\alpha$ as well. If $\alpha\in\Bir(S/\pi)\setminus\Aut(S(\R)/\pi)$, there is a real point $r\in S(\R)$ where $\alpha$ is not defined. The element $\alpha$ blows up this point and contracts the conic $\Gamma_{z_r}$ passing through $r$ which is a fibre of the conic bundle structure of $S$. Then $\alpha(\Gamma_{z_r})=q$ for some $q\in S(\R)$, which is sent by $R_\theta$ to a different real point ($R_\theta$ only fixes $P_N$ and $P_S$). As $\mathcal A$ preserves the fibration, $\mathcal A(\Gamma_{z_r})=\Gamma_{z_r}$, then $\alpha(\mathcal A(\Gamma_{z_r}))\neq R_\theta(\alpha(\Gamma_{z_r}))$
\end{proof}

\subsection{Involutions in $\Bir(S,\pi)\setminus\Bir(S/\pi)$}\label{Sec:InvNonTrivial}

Since we want now to study conjugacy classes of elements in $\Bir(S,\pi)\setminus\Bir(S/\pi)$ whose square is the identity, we observe that thanks to Lemma \ref{ImFinOrd}, we can think about elements of finite order in $\Bir(S,\pi)$ as the semi-direct product between elements of finite order in $\Bir(S/\pi)$ and $\Z/2\Z$ where $\Z/2\Z$ is generated by $\eta\colon S_\C\to S_\C$ sending $z$ to $-z$. The action of $\eta$ on $\Bir(S/\pi)$ is given by the map:
\begin{equation}\label{etaaction}
\begin{array}{rcrc}
\eta\colon&\PGL(2,\C(z)) & \longrightarrow & \PGL(2,\C(z))\\
&\left[\begin{smallmatrix}
a(z) & b(z)\\
c(z) & d(z) 
\end{smallmatrix}
\right] &\longmapsto& \left[\begin{smallmatrix}
a(-z) & b(-z)\\
c(-z) & d(-z) 
\end{smallmatrix}
\right]\end{array}
\end{equation}

Let $\alpha=(\alpha_0,\eta)\in\Bir(S,\pi)$ then $\alpha^2=(\alpha_0\eta(\alpha_0),1)\in\Bir(S/\pi)$ and $\eta(\alpha_0)=\alpha_0(-z)$ which means that all entries of $\alpha_0$ in $\C(z)$ are changed by the $\C$-field automorphism of $\C(z)$ sending $z$ to $-z$. We are then interested in the case $\alpha_0\eta(\alpha_0)$ is the identity.

Recall that in Lemma \ref{LemAction}(c), we identified $\Bir(S/\pi)$ with the group \[\mathscr G=\{A\in\PGL(2,\C(z))\ |\ \tau A\tau=\bar{A}\}\] where $\tau= \left[\begin{smallmatrix}
0 & 1-z^2\\
1 & 0 
\end{smallmatrix}
\right]$. We denote by $T$ the following group, \[T:=\{A\in\GL(2,\C(z))\ |\ A=\tau\bar{A}\tau^{-1}\}\subset\GL(2,\C(z))\] whose image under the canonical projection corresponds to $\mathscr G$. We have the following exact sequence where $\mathfrak p$ denotes the canonical projection:
\[1\rightarrow\R(z)^*\rightarrow T \xrightarrow\mfp \mathscr G\rightarrow 1\] Hence we obtain the cohomology exact sequence
\begin{equation}\label{ExSeqEta}
H^1(\langle\eta\rangle,T)\xrightarrow \mfp H^1(\langle\eta\rangle,\mathscr G)\xrightarrow\delta H^2(\langle\eta\rangle,\R(z)^*)
\end{equation} where $\langle\eta\rangle\simeq\Z/2\Z$ and the action of $\eta$ is described in ($\ref{etaaction}$).

The next lemma tells us that $H^1(\langle\eta\rangle,T)$ is trivial. Once that is done, the study of the map $\delta$ will show that conjugacy classes of $\alpha\in\Bir(S,\pi)\setminus\Bir(S/\pi)$ with $\alpha^2=id$ are parametrised by particular elements in $\R(z^2)$.

\begin{lemma}\label{groupT}
Let $T:=
\left\{A\in\GL(2,\C(z))\ ;\ A=\tau\overline{A}\tau^{-1}\right\}$ with $\tau$ as before. Then the group $T$ can be presented more precisely as \[T=\left\{\left[\begin{smallmatrix}
a & hb\\
\overline{b} & \overline{a}
\end{smallmatrix}
\right]\ ;\ a,b\in\C(z), a\bar{a}-hb\bar{b}\neq0\right\}\] and $H^1(\langle\eta\rangle,T)=\{1\}$.
\end{lemma}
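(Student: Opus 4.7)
For the explicit description of $T$, my plan is to unpack the defining equation $A=\tau\bar A\tau^{-1}$ directly. Writing $A=\left[\begin{smallmatrix}a&b\\ c&d\end{smallmatrix}\right]$ and expanding $A\tau=\tau\bar A$, the resulting four entry-wise equations will, using $h=1-z^2\in\R[z]$, collapse to just $d=\bar a$ and $b=h\bar c$ (the other two being automatic consequences). Renaming $b':=\bar c$ then yields exactly the form $\left[\begin{smallmatrix}a&hb'\\ \overline{b'}&\bar a\end{smallmatrix}\right]$, with the invertibility condition becoming $\det A=a\bar a-hb'\overline{b'}\neq 0$.

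For the cohomology vanishing, I would first exploit the algebra structure on $T\cup\{0\}$. One checks directly that this set is closed under matrix addition, multiplication, and $\R(z)$-scaling, hence is an $\R(z)$-subalgebra of\/ $M_2(\C(z))$ of dimension $4$, with basis $\{I,\,iI,\,\tau,\,i\tau\}$ satisfying $(iI)^2=-I$, $\tau^2=hI$, and $(iI)\tau=-\tau(iI)$. This identifies $D:=T\cup\{0\}$ with the quaternion algebra $(-1,h)_{\R(z)}$, whose reduced norm recovers the determinant $a\bar a-hb\bar b$.

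The key step --- and the one I expect to be the main obstacle --- is showing that $D$ is a \emph{division} algebra. My plan is to argue by contradiction: any nontrivial zero $a\bar a=hb\bar b$ with $(a,b)\neq(0,0)$ forces $b\neq 0$ and hence $h=|a|^2/|b|^2$, a quotient of two sums of two squares in $\R(z)$. By the Brahmagupta--Fibonacci identity this quotient is itself a sum of two squares $p^2+q^2$ with $p,q\in\R(z)$. Clearing denominators and specialising at $z=2$, where $h(2)=-3<0$, produces a negative real number equal to a sum of two real squares, the desired contradiction. Thus $T=D\setminus\{0\}$.

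With $D$ a division algebra, $H^1(\langle\eta\rangle,T)=\{1\}$ will follow from the averaging argument behind non-abelian Hilbert 90. A $1$-cocycle is an element $B\in T$ satisfying $B\cdot\eta(B)=I$, and I would look for $X\in T$ with $\eta(X)=XB$ (so that $B=X^{-1}\eta(X)$ is a coboundary). Setting $X:=Y+\eta(Y)B^{-1}$ for a suitable $Y\in D$, a direct calculation using $\eta(B^{-1})=B$ gives $\eta(X)=\eta(Y)+YB=XB$, regardless of $Y$. It remains to pick $Y$ so that $X\neq 0$: if $B\neq -I$, the choice $Y=I$ works since $X=I+B^{-1}\neq 0$; if $B=-I$, then $Y=zI\in D$ gives $X=2zI\neq 0$. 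Because $D$ is a division algebra, every nonzero element is automatically invertible, so $X\in T$ and $B$ is a coboundary.
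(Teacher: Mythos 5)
Your argument follows essentially the same route as the paper's, and in one respect improves on it. The paper identifies $T\cup\{0\}$ with the ``non-commutative field'' $K=\C(z)+\C(z)\xi$, $\xi^2=h$, $a\xi=\xi\bar a$ --- which is exactly your quaternion algebra $(-1,h)_{\R(z)}$ presented as a cyclic algebra over its maximal subfield $\C(z)$ --- and then runs the identical averaging argument (taking $C=A$ if $A\neq-1$, else $C=z$, and $B=C+A\eta(C)$; your $X=Y+\eta(Y)B^{-1}$ is the mirror-image of this). What the paper does \emph{not} do is justify the word ``field'': the averaging step needs every nonzero element of $K$ to be invertible, i.e.\ $K$ must be a division ring, and the paper simply asserts this. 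Your anisotropy argument for the norm form $a\bar a-hb\bar b$ supplies that missing justification, and it is sound: a nontrivial zero would express $h=1-z^2$ as a quotient of sums of two squares in $\R(z)$, hence as a sum of two squares, which is impossible since $h$ is negative on part of $\R$. Two small repairs are needed. First, after clearing denominators you should evaluate at a point $z_0>1$ avoiding the (finitely many) zeros of the denominator --- $z=2$ could accidentally be one --- or simply compare leading coefficients of $hN^2$ and $F^2+G^2$. Second, your basis element $iI$ does \emph{not} lie in $D$: since $iI$ is central, $\tau\,\overline{(iI)}\,\tau^{-1}=-iI\neq iI$, and the anticommutation relation you state for it is false. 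The correct generator squaring to $-I$ is $J=\left[\begin{smallmatrix}{\bf i}&0\\0&-{\bf i}\end{smallmatrix}\right]$ (the case $a={\bf i}$, $b=0$ of the normal form), for which $J^2=-I$ and $J\tau=-\tau J$ do hold, so the identification with $(-1,h)_{\R(z)}$ survives. With these fixes the proof is complete, and the Hilbert-90 step, including the separate treatment of $B=-I$, matches the paper's.
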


\begin{proof}
The group $T$ is isomorphic to the multiplicative group of the non-com\-mu\-ta\-tive field $K:=\C(z)+\C(z)\xi$ where $\xi^2=h$ and $a(z)\xi=\xi\overline{a}(z)$ for any $a\in\C(z)$. The isomorphism is defined by sending an element $A=\left[\begin{smallmatrix}
a(z) & hb(z)\\
\overline{b}(z) & \overline{a}(z)
\end{smallmatrix}
\right]\in T$ to the element $a(z)+b(z)\xi\in\C(z)+\C(z)\xi$. Indeed, we have that the product in $K$, 
\[(a+b\xi)(c+d\xi)=ac+b\xi d\xi + ad\xi+b\xi c=ac+b\bar{d}h+(ad+b\bar{c})\xi\] corresponds in $T$ to the product $\left[\begin{smallmatrix}
a & hb\\
\overline{b} & \overline{a} 
\end{smallmatrix}
\right]\left[\begin{smallmatrix}
c & hd\\
\overline{d} & \overline{c}
\end{smallmatrix}
\right]=\left[\begin{smallmatrix}
ac+ b\bar{d}h & h(ad+b\bar{c})\\
\bar{a}\bar{d}+\bar{b}c & \bar{a}\bar{c}+\bar{b}dh
\end{smallmatrix}
\right].$

The corresponding action of $\langle\eta\rangle\simeq\Z/2\Z$ on $\C(z)+\C(z)\xi$ is given by the extension of the field automorphism $z\mapsto -z$ of $\C(z)^*$ to $K^*$, to be more precise, $a(z)+b(z)\xi\mapsto a(-z)+b(-z)\xi.$

Let $g\colon\langle\eta\rangle\rightarrow K^*$ be a cocycle such that $g(1)=1$ and $g(\eta)=A$ for some $A\in K^*$ such that $A\eta(A)=1$. Let $C\in K$ such that  $B=C+A\eta(C)\neq0$, such a $C$ exists because we may choose $C=A$ when $A\neq -1$, otherwise there are many choices of $C$ satisfying $C-\eta(C)\neq0$, e.g. $C=z$. We have thus $\eta(B)=\eta(C)+\eta(A)C$ and hence $A\eta(B)=A\eta(C)+A\eta(A)C=A\eta(C)+C=B$ i.e. $A=B\eta(B)^{-1}$ and this means that $A$ is a coboundary.
\end{proof}

The following Lemma will be useful to compute $H^2(\langle\eta\rangle,\R(z)^*)$.
\begin{lemma}\label{H2norm}
Let $G$ be a group with two elements acting on an abelian group $M$ and let $\xi$ be the non trivial element of $G$. 
\begin{enumerate}[$(a)$]
\item Any class $[c]\in H^2(G,M)$ admits a normalised $2$-cocycle $c'$ i.e. it is the class of $c\colon G^2\rightarrow M$ such that $c(g,1)=c(1,g)=1$ for every $g\in G$. 
\item Let $c\colon G^2\rightarrow M$ is a normalised $2$-cocycle and define $\rho(c)=c(\xi,\xi)\in M$.
Then $\rho$ induces an isomorphism of groups \[H^2(G,M)\xrightarrow\cong M^G/\{m\xi(m)\ |\ m\in M\}.\]
\end{enumerate}
\end{lemma}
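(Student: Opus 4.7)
For part (a), given any $2$-cocycle $c\colon G^2\to M$, the plan is to compute $c$ at the pairs involving $1$ and then absorb those values into a coboundary. Setting $m=c(1,1)\in M$, I would apply the cocycle identity at the triples $(1,1,g)$ and $(g,1,1)$: these immediately yield $c(1,g)=m$ and $c(g,1)=g(m)$ for every $g\in G$. Then I would define a $1$-cochain $b\colon G\to M$ by $b(1)=m$ and $b(\xi)=1$ (a $1$-cochain is just a set-theoretic map, no constraints required). A direct computation gives $db(1,g)=b(1)=m$ and $db(g,1)=g(b(1))=g(m)$, so $c'=c\cdot (db)^{-1}$ lies in the same class as $c$, is still a cocycle, and is normalised.

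For part (b), I first verify that $\rho(c)=c(\xi,\xi)\in M^G$ whenever $c$ is normalised: the cocycle condition at the triple $(\xi,\xi,\xi)$ reads $\xi(c(\xi,\xi))\cdot c(1,\xi)=c(\xi,1)\cdot c(\xi,\xi)$, and normalisation kills the two middle factors to leave $\xi(c(\xi,\xi))=c(\xi,\xi)$. Next I check how $\rho$ changes under coboundaries that preserve normalisation. If $c'=c\cdot db$ with both $c,c'$ normalised, the equality $db(1,1)=b(1)=1$ forces $b(1)=1$; then with this constraint $db(1,g)=db(g,1)=1$ automatically, and $db(\xi,\xi)=\xi(b(\xi))\cdot b(1)^{-1}\cdot b(\xi)=b(\xi)\cdot\xi(b(\xi))$. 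Hence $\rho$ descends to a well-defined map
\[
\bar\rho\colon H^2(G,M)\longrightarrow M^G/N,\qquad N:=\{m\cdot\xi(m)\mid m\in M\}.
\]
That $N$ is a subgroup of $M^G$ uses only that $M$ is abelian, and $\bar\rho$ is a homomorphism because the pointwise product of two normalised cocycles is normalised and $\rho(c\cdot c')=\rho(c)\rho(c')$.

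For bijectivity, both directions are constructive. For injectivity: suppose $\rho(c)=m\cdot\xi(m)$ for some $m\in M$ and $c$ normalised. Take $b\colon G\to M$ with $b(1)=1$, $b(\xi)=m$; then $db$ is normalised and $db(\xi,\xi)=m\cdot\xi(m)=\rho(c)$, so $c\cdot (db)^{-1}$ is a normalised cocycle whose only non-prescribed value $c(\xi,\xi)$ is now $1$, making it identically $1$. Hence $c$ is a coboundary. For surjectivity: given $n\in M^G$ define $c$ to be $1$ on $(1,1), (1,\xi), (\xi,1)$ and $c(\xi,\xi)=n$; the cocycle condition is automatic except at $(\xi,\xi,\xi)$, where it reduces to $\xi(n)=n$, which holds by the choice of $n$.

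There is no real obstacle here: the whole proof is a short direct calculation exploiting how few triples there are in $G^3$ when $|G|=2$. The only care needed is bookkeeping between the multiplicative and additive conventions and ensuring that the $1$-cochains used in the coboundary steps satisfy $b(1)=1$ whenever we want to stay within normalised representatives.
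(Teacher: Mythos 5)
Your proof is correct and complete. Note that the paper states Lemma~\ref{H2norm} without any proof at all, treating it as the standard identification of $H^2(\Z/2\Z,M)$ with the Tate group $\hat H^0(\Z/2\Z,M)=M^G/N(M)$ (periodicity of the cohomology of cyclic groups), so there is no argument in the paper to compare yours against; your direct cochain computation is a perfectly valid elementary substitute for citing that general fact. All the individual steps check out: the cocycle identity at $(1,1,g)$ and $(g,1,1)$ does give $c(1,g)=c(1,1)$ and $c(g,1)=g(c(1,1))$, the coboundary of $b$ with $b(1)=c(1,1)$, $b(\xi)=1$ kills exactly these values; for well-definedness the relation $db(1,1)=b(1)$ correctly forces $b(1)=1$ when both cocycles are normalised, so the ambiguity in $c(\xi,\xi)$ is exactly the norm subgroup $\{m\,\xi(m)\}$; and in the surjectivity step the only triple at which the cocycle condition is not vacuous is indeed $(\xi,\xi,\xi)$, where it reduces to $\xi(n)=n$. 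The one point worth making explicit, which you use implicitly, is that $\{m\,\xi(m)\mid m\in M\}$ really lands inside $M^G$ (immediate from $\xi^2=\mathrm{id}$ and commutativity of $M$), so that the target of $\bar\rho$ is well formed.
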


\begin{lemma}
For the exact cohomology sequence $(\ref{ExSeqEta})$, 
\begin{align*}
H^2(\langle\eta\rangle,\R(z)^*)&\simeq\R(z^2)^*/ \{f\eta(f)\ |\ f\in\R(z)^*\}\\
&=\langle[-1],\{[z^2+b]:b>0\}\rangle\simeq\{\pm1\}\oplus\left(\bigoplus_{b\in\R_{>0}}\Z/2\Z\right).
\end{align*}
\end{lemma}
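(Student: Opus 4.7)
The plan is to apply Lemma~\ref{H2norm}(b) and then unwind the resulting quotient. The automorphism $\eta$ of $\R(z)$ sends $z$ to $-z$, and its fixed field equals $\R(z^2)$ because $[\R(z):\R(z^2)]=2$; hence $(\R(z)^*)^{\langle\eta\rangle}=\R(z^2)^*$ and Lemma~\ref{H2norm}(b) supplies the first isomorphism. It remains to identify the quotient $\R(z^2)^*/N$, where $N:=\{f(z)f(-z)\mid f\in\R(z)^*\}$, with $\{\pm1\}\oplus\bigoplus_{b\in\R_{>0}}\Z/2\Z$.

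For surjectivity, set $w=z^2$ and factorise an arbitrary element of $\R(w)^*$ into a scalar in $\R^*$, linear factors $(w-a)$ with $a\in\R$, and $\R[w]$-irreducible quadratics $w^2+dw+e$ (which satisfy $e>0$ and $d^2<4e$). The norm identities
\[N(r)=r^2,\qquad N(z-\sqrt{a})=a-w\ \ (a\geq 0),\qquad N(z^2+bz+c)=w^2+(2c-b^2)w+c^2\]
show that positive scalars, the factors $(w-a)$ with $a\geq 0$ (up to the sign $-1$), and every $\R[w]$-irreducible quadratic (take $c=\sqrt{e}$, so that $b^2=2\sqrt{e}-d>0$ is automatic from $d^2<4e$) lie in $N$. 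Thus each class in $\R(z^2)^*/N$ is a finite product of copies of $[-1]$ and $[z^2+b]=[w+b]$ with $b\in\R_{>0}$. Since $(-1)^2=N(1)$ and $(z^2+b)^2=N(z^2+b)$ by $\eta$-invariance of $z^2+b$, each generator has order dividing $2$, yielding a surjection $\{\pm1\}\oplus\bigoplus_{b\in\R_{>0}}\Z/2\Z\twoheadrightarrow\R(z^2)^*/N$.

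The main step is injectivity. For each $b_0\in\R_{>0}$, the polynomial $z^2+b_0$ is irreducible in $\R[z]$, so the prime $(w+b_0)\subset\R[w]$ lifts to the unique prime $(z^2+b_0)\subset\R[z]$, unramified of residue degree $2$. Since $\eta$ fixes this prime, one computes $v_{(w+b_0)}(f(z)f(-z))=2\,v_{(z^2+b_0)}(f)$ for every $f\in\R(z)^*$, so norms have even valuation at each $(w+b_0)$. Applied to a putative relation $(-1)^\epsilon\prod_{b\in S}(z^2+b)\in N$ with $S\subset\R_{>0}$ finite and $\epsilon\in\{0,1\}$, this forces $b_0\notin S$ for every $b_0$, hence $S=\emptyset$. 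It remains to show $-1\notin N$: if $-1=g(z)g(-z)/\bigl(h(z)h(-z)\bigr)$ with $g,h\in\R[z]$ of respective degrees $m,n$ and leading coefficients $p,q$, then $g(z)g(-z)=-h(z)h(-z)$ forces $m=n$ and $(-1)^mp^2=-(-1)^nq^2$, i.e.\ $p^2+q^2=0$, which is impossible in $\R^*$. Therefore $\epsilon=0$ as well, finishing the proof.
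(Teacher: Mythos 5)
Your proof is correct and follows the same overall route as the paper's: both identify $H^2(\langle\eta\rangle,\R(z)^*)$ with $\R(z^2)^*$ modulo the subgroup of norms $\{f(z)f(-z)\mid f\in\R(z)^*\}$ via Lemma~\ref{H2norm}, and then locate the generators $[-1]$ and $[z^2+b]$, $b>0$, by factoring into irreducibles and deciding which factors are norms (your Galois-theoretic identification of the fixed field $\R(z^2)$ is cleaner than the paper's root-by-root divisibility argument, but that is cosmetic). The genuine difference lies in the independence step. The paper merely asserts that $[z^2+b][z^2+c]\neq1$ for $b\neq c$ and that $[-1][z^2+b]\neq1$, with no argument, and in any case one needs that no nontrivial finite product of distinct generators is a norm, not just products of two. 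Your valuation argument at each prime $(w+b_0)$ of $\R[w]$ --- norms have even valuation there because $(z^2+b_0)$ is the unique prime above it, is $\eta$-invariant and has residue degree $2$ --- combined with the leading-coefficient computation showing $-1$ is not a norm, supplies exactly this missing independence. So your proposal is not only correct but more complete than the paper's own proof on the injectivity half of the statement.
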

\begin{proof}
Let $(\R(z)^*)^{\eta}$ denote the elements of $\R(z)^*$ which are invariant with respect to the action of $\eta$ described above. We call $\mathscr{N}$ the map $\mathscr N\colon \R(z)^*\to (\R(z)^*)^{\eta}$ given by $\mathscr N(p(z))=p(z)\eta(p(z))=p(z)p(-z)$. Then by Lemma \ref{H2norm}, $H^2(\Z/2\Z,\R(z)^*)$ is isomorphic to $\coker (\mathscr N)$ that we need to compute. First, we prove that $(\R(z)^*)^{\eta}=\R(z^2)^*$. The inclusion $\R(z^2)^*\subset(\R(z)^*)^{\eta}$ is clear. Reciprocally, if $g(z)\in(\R(z)^*)^{\eta}$, $g(z)=\frac{p(z)}{q(z)}$ with $p,q\in \R[z]$ that we can assume having non common factors. Thus from $\frac{p(z)}{q(z)}=\frac{p(-z)}{q(-z)}$ follows that $p(z)q(-z)=p(-z)q(z)$ and then roots of both sides need to coincide. This implies that if $a$ is a real root of $p(z)$, it has to be a root of $p(-z)$ and therefore $z^2-a^2$ divides $p(z)$. For a complex root $\alpha$ of $p(z)$, using the same argument we obtain that $(z-\alpha)(z-\bar\alpha)(z+\alpha)(z+\bar\alpha)$ divides $p(z)$. By induction on the number of roots of $p$ and $q$, we obtain $R(z)^\eta=\R(z^2)$.

In order to compute $\coker(\mathscr N)$ we look at the image by $\mathscr N$ of generators of $\R(z)^*$ and compare with generators of $\R(z^2)^*$. Generators of $\R(z)^*$ are $a\in\R^*$, $(z-b)$ with $b\in\R$, and $(z-\alpha)(z-\bar\alpha)$ with $\alpha\in\C\setminus\R$ and they are mapped by $\mathscr N$ to $a^2$, $b^2-z^2$, and $(z^2-\alpha^2)(z^2-\bar\alpha^2)$ while generators of $\R(z^2)^*$ are $c\in\R$, $(z^2-d)$ with $d\in\R$, and $(z^2-\beta)(z^2-\bar\beta)$ with $\beta\in\C\setminus\R$ (notice that $\beta$ is always a square). Hence, $\coker(\mathscr N)\simeq\R(z^2)^*/\Ima(\mathscr N)=\langle[-1],\{[z^2+b]:b>0\}\rangle\subset\R(z^2)^*/\Ima(\mathscr N)$.

To see the structure of $H^2(\langle\eta\rangle,\R(z)^*)$, we see that $[-1]\cdot[-1]=1$ and for any $b>0$, $[z^2+b][z^2+b]=1$ because $(z^2+b)(z^2+b)=(z^2+b)\eta(z^2+b)=1$ in $\R(z^2)^*/ \{f\eta(f)\ |\ f\in\R(z)^*\}$. However, $[z^2+b][z^2+c]\neq1$ for $b,c>0$ and $b\neq c$ and $[-1][z^2+b]=-(z^2+b)\neq1$ for $b >0$.
\end{proof}

\begin{prop}\label{deltaeta}
The connecting map $H^1(\langle\eta\rangle,\mathscr G)\xrightarrow\delta H^2(\langle\eta\rangle,\R(z)^*)$ for the exact cohomology sequence $(\ref{ExSeqEta})$ corresponds to the map \[\begin{array}{rcrl}
\delta\colon & H^1(\langle\eta\rangle,\mathscr G) & \longrightarrow & \langle[-1],\{[z^2+b]:b>0\}\rangle\simeq H^2(\langle\eta\rangle,\R(z)^*)\\
& \left\{\begin{array}{c}
\text{class of } \tilde A\in \mathscr G;\\
\tilde A\eta(\tilde A)=1 
\end{array}
\right\} &\longmapsto& \left\{\begin{array}{c}
\ \text{class of } \mu\in\R(z^2);\\
 A\eta(A)=\left[\begin{smallmatrix}
\mu & 0\\
0 & \mu
\end{smallmatrix}
\right]
\end{array}
\text{and }\ \mfp(A)=\tilde A
\right\}\end{array}\]
and it is bijective.
\end{prop}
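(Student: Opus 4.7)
I would begin by spelling out the connecting map $\delta$ for the central extension $1\to \R(z)^*\to T\xrightarrow{\mathfrak p}\mathscr G\to 1$, in which $\R(z)^*$ is identified with the scalar matrices and thus lies in the centre of $T$. Given a $1$-cocycle $\tilde A\in\mathscr G$ with $\tilde A\,\eta(\tilde A)=1$, pick any lift $A\in T$. Then $\mathfrak p(A\,\eta(A))=1$, so $A\,\eta(A)=\mu I$ for some $\mu\in\R(z)^*$. Centrality forces $\mu I$ to commute with $\eta(A)$, whence $A\,\eta(A)=\eta(A)\cdot A$; applying $\eta$ then gives $\eta(\mu I)=\eta(A)\cdot A=\mu I$, so $\mu\in (\R(z)^*)^\eta=\R(z^2)^*$. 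A different lift $\lambda A$ with $\lambda\in\R(z)^*$ replaces $\mu$ by $\lambda\,\eta(\lambda)\,\mu$, i.e.\ changes $\mu$ by an element of $\{f\eta(f)\mid f\in\R(z)^*\}$, and a parallel short computation shows the class is also independent of the cohomologous representative of $\tilde A$. This confirms the stated formula for $\delta$.

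Injectivity follows at once from the exact sequence (\ref{ExSeqEta}) combined with Lemma~\ref{groupT}, which gives $H^1(\langle\eta\rangle,T)=1$. Concretely, if $\delta([\tilde A])=1$ then, after rescaling the lift by a suitable $\lambda\in\R(z)^*$, we arrange $A\,\eta(A)=I$, so $A$ itself is a $1$-cocycle in $T$. Triviality of $H^1(\langle\eta\rangle,T)$ produces $B\in T$ with $A=B\,\eta(B)^{-1}$, and projecting gives $\tilde A=\mathfrak p(B)\,\eta(\mathfrak p(B))^{-1}$, a coboundary in $\mathscr G$.

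For surjectivity, it suffices to realise each generator of the image $\langle[-1],\{[z^2+b]:b>0\}\rangle$ of $H^2(\langle\eta\rangle,\R(z)^*)$ and to obtain arbitrary products of such generators. I would exhibit a family of diagonal cocycles: given $b_1,\ldots,b_k\in\R_{>0}$ and $\epsilon\in\{0,1\}$, set $u(z)=\prod_{j=1}^{k}(z+{\bf i}\sqrt{b_j})\in\C[z]$ and
\[
\tilde A_{\epsilon,b_\bullet}\;=\;\left[\begin{smallmatrix}{\bf i}^{\epsilon}\,u(z) & 0\\ 0 & (-{\bf i})^{\epsilon}\,\bar u(z)\end{smallmatrix}\right]\in \GL(2,\C(z)).
\]
Using $\tau^{-1}=\tau/h$, a direct check shows $\tau\,\overline{\tilde A_{\epsilon,b_\bullet}}\,\tau^{-1}=\tilde A_{\epsilon,b_\bullet}$, so the matrix lies in $T$ and hence projects to an element of $\mathscr G$. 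Since $u(z)u(-z)=\prod_{j}(-(z^2+b_j))\in\R[z]$, a matrix multiplication simplifies $\tilde A_{\epsilon,b_\bullet}\,\eta(\tilde A_{\epsilon,b_\bullet})$ to the scalar matrix $(-1)^{\epsilon}u(z)u(-z)\cdot I=(-1)^{\epsilon+k}\prod_{j}(z^2+b_j)\cdot I$. Hence $\delta$ sends this class to $[-1]^{\epsilon+k}\prod_{j}[z^2+b_j]$; varying $\epsilon$, $k$ and the $b_j$ therefore exhausts the generating set, proving surjectivity.

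The main technical obstacle is the surjectivity step: a general $\tilde A\in\mathscr G$ satisfying $\tilde A\,\eta(\tilde A)=1$ need not have vanishing off-diagonal entries in $\tilde A\,\eta(\tilde A)$, and imposing this scalar condition together with membership $\tilde A\in T$ yields restrictive algebraic relations that are difficult to solve in full generality. Restricting attention to diagonal matrices sidesteps this obstruction cleanly and reduces the problem to the elementary identity $u(z)u(-z)=\prod_{j}(-(z^2+b_j))$, which realises all of the required generators simultaneously.
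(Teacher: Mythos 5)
Your description of the connecting map and your surjectivity argument are both correct and essentially identical to the paper's: the paper also realises $[-1]$ and the classes $[z^2+c]$ by diagonal cocycles $\left[\begin{smallmatrix}a(z)&0\\0&\bar a(z)\end{smallmatrix}\right]$ with $a$ a product of factors $z-{\bf i}\sqrt{c_j}$ (times ${\bf i}$ depending on parity), exactly as in your $\tilde A_{\epsilon,b_\bullet}$.

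The gap is in injectivity. From exactness of $(\ref{ExSeqEta})$ and $H^1(\langle\eta\rangle,T)=\{1\}$ (Lemma~\ref{groupT}) you correctly conclude that the fibre of $\delta$ over the trivial class is trivial. But $H^1(\langle\eta\rangle,\mathscr G)$ is only a pointed set, since $\mathscr G$ is nonabelian, so a trivial kernel does not imply injectivity: nothing in your argument controls the fibre of $\delta$ over a \emph{nontrivial} class of $H^2(\langle\eta\rangle,\R(z)^*)$. The standard way to control those fibres is to twist: the fibre through $[\tilde A]$ is governed by $H^1(\langle\eta\rangle,{}_{A}T)$, where ${}_{A}T$ is $T$ equipped with the twisted action $t\mapsto A\eta(t)A^{-1}$, and one must prove vanishing for every such twist, not only the trivial one. (This is in fact true here, since ${}_{A}T$ is again the unit group of the quaternion algebra $K=\C(z)+\C(z)\xi$ with a semilinear involution, so the argument of Lemma~\ref{groupT} goes through; but you neither state nor carry out this step.) The paper instead takes a direct computational route: it shows that every cocycle $\left[\begin{smallmatrix}a&hb\\ \bar b&\bar a\end{smallmatrix}\right]$ is cohomologous to a diagonal one by solving equation $(\ref{alph})$, i.e.\ by producing a fixed point of an associated order-two automorphism of $\Pp^1_{\C(z)}$, which ultimately rests on Lemma~\ref{C(z^2)guys}. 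That reduction of an arbitrary cocycle to diagonal form is precisely what your proof is missing, and it is also why your closing remark misplaces the difficulty: the delicate step is not surjectivity, where choosing diagonal representatives is perfectly legitimate, but injectivity, where you may not restrict to diagonal matrices without first proving that every class contains one.
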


\begin{proof}
In order to study how the connecting map $\delta$ is defined, we use the Snake Lemma
(see e.\,g.\@ \cite{Neukirch2000}, Lemma 1.3.1) that in our case works as follows.
Consider the following diagram, in which $\Z_2$ stands for $\langle\eta\rangle$:
\[\xymatrix@R-0.8pc{
& {\scriptstyle C^1(\Z_2,\R(z)^*)/ B^1(\Z_2,\R(z)^*)}\  \ar[d]_{\partial_\R} \ar[r]^{i_1} & {\scriptstyle C^1(\Z_2,T)/ B^1(\Z_2,T)}  \ar[d]_{\partial_T} \ar[r]^{\mfp_1} & {\scriptstyle C^1(\Z_2,\mathscr G)/ B^1(\Z_2,\mathscr G)}\ \ar[d]_{\partial_{\mathscr G}} \ar[r] & {\scriptstyle 1}\\
{\scriptstyle 1}\ \ar[r] & {\scriptstyle Z^2(\Z_2,\R(z)^*)}\ \ar[r]^{i_2} & {\scriptstyle Z^2(\Z_2,T)}\ \ar[r]^{\mfp_2} & {\scriptstyle Z^2(\Z_2,\mathscr G)}}\]
Notice that $\delta$ is the same as the map $\ker(\partial_{\mathscr G})\xrightarrow\delta \coker(\partial_\R)$. Let $[p]\in H^1(\langle\eta\rangle,\mathscr G)$, then $p$ is a map $p\colon\langle\eta\rangle\rightarrow \mathscr G$ defined by sending $1$ to $1$ and $\eta$ to $\tilde A$ for some $\tilde A\in \mathscr G$ satisfying $\tilde A\eta(\tilde A)=1$. Since $\mfp_1$ is surjective, there is $[r]\in C^1(\langle\eta\rangle,T)/B^1(\langle\eta\rangle,T)$, this is $r\colon\langle\eta\rangle\rightarrow T$ so that $1\mapsto 1$ and $\eta\mapsto A$ where $A\in T$ is a representative of the element $\tilde A$. There is $q\in Z^2(\langle\eta\rangle,\R(z)^*)$ such that $i_2(q)=\partial_T([r])$ because $\mfp_2(\partial_T([r]))=\partial_{\mathscr G}(\mfp_1([r]))$ and $\mfp_2(\partial_T([r]))=\partial_{\mathscr G}([p])=1$ since $[p]\in\ker\partial_{\mathscr G}$ then $\partial_T([r])\in\ker\mfp_2=\Ima\ i_2$. Then $\delta$ is defined by sending $[p]$ to $[q]$ satisfying $i_2([q])=\partial_T([r])$. More explicitly, $\partial_T([r])$ is the normalised cocycle 
\[\begin{array}{rcrl}
\partial_T([r])\colon & \langle\eta\rangle\times\langle\eta\rangle & \longrightarrow & T\\
& (g_1,g_2) &\longmapsto& r(g_1)g_1(r(g_2))(r(g_1g_2))^{-1}\\
& (1,1) &\longmapsto& 1\\
& (1,\eta) &\longmapsto& 1\\
& (\eta,1) &\longmapsto& 1\\
& (\eta,\eta) &\longmapsto& A\eta(A)
\end{array}\]
Thus, $A\eta(A)=\left[\begin{smallmatrix}
\mu & 0\\
0 & \mu
\end{smallmatrix}
\right]$ with $i_2([q])(\eta,\eta)=\mu\in\R(z^2)^*$. Summing up, $\delta$ corresponds to the map
\[\begin{array}{rcrl}
\delta\colon & H^1(\langle\eta\rangle,\mathscr G) & \longrightarrow &  H^2(\langle\eta\rangle,\R(z)^*)\\
& \left\{\begin{array}{c}
\tilde A\in \mathscr G;\\
\tilde A\eta(\tilde A)=1 
\end{array}
\right\} &\longmapsto& \left\{\begin{array}{c}
\ \ \ \ \ \ \ \ \ \ \mu\in\R(z^2);\\
 A\eta(A)={\scriptsize \left[\begin{smallmatrix}
\mu & 0\\
0 & \mu
\end{smallmatrix}
\right]}
\end{array}
\text{and }\ \mfp(A)=\tilde A
\right\}.\end{array}\]

Let us see that the map $\delta$ is surjective: the element $\left[\begin{smallmatrix}
{\bf i} & 0\\
0 & -{\bf i}
\end{smallmatrix}
\right]$ is mapped by $\delta$ to the class $[-1]$. When $c\in\R_{>0}$, the element $\left[\begin{smallmatrix}
{\bf i}(z-{\bf i}\sqrt{c})&0\\
0&-{\bf i}(z+{\bf i}\sqrt{c})
\end{smallmatrix}\right]$ is sent by $\delta$ to the class $[z^2+c]$. Given any finite product of classes $\gamma=(z^2+c_1)\cdots (z^2+c_k)$ in $H^2(\langle\eta\rangle,\R(z)^*)$ with $c_i>0$ for $1\leq i\leq k$, the diagonal elements of the form $\left[\begin{smallmatrix}
a(z) & 0\\
0 & \bar a(z)
\end{smallmatrix}
\right]$ where 
\[a(z)=\begin{cases}
{\bf i}(z-{\bf i}\sqrt{c_1})(z-{\bf i}\sqrt{c_2})\cdots (z-{\bf i}\sqrt{c_k}), & \text{if } k \text{ is odd} \\
(z-{\bf i}\sqrt{c_1})(z-{\bf i}\sqrt{c_2})\cdots (z-{\bf i}\sqrt{c_k}), & \text{if } k \text{ is even}
\end{cases} 
\] is mapped to $\gamma$. This proves the surjectivity of the application $\delta$.

In order to prove injectivity, we will show that any class $\tilde A=\left[\begin{smallmatrix}
a(z) & hb(z)\\
\bar b(z) & \bar a(z)
\end{smallmatrix}
\right]$  in $H^1(\Z/2\Z,\mathscr G)$ is equivalent to a diagonal element $D$ of the form $\left[\begin{smallmatrix}
x(z) & 0\\
0 & \bar x(z)
\end{smallmatrix}
\right]$. In other words, we want to show that we can find an element $\alpha=\left[\begin{smallmatrix}
c(z) & hd(z)\\
\bar d(z) & \bar c(z)
\end{smallmatrix}
\right]$ in $\mathscr G$ such that $\eta(\alpha)A\alpha^{-1}=D$ where $A$ is the representative of $\tilde A$ in $T$. This leads to the following equation
\[\bar c(z)(\bar d(-z)a(z)+\bar c(-z)\bar b(z))-\bar d(z)(h\bar d(-z)b(z)+\bar c(-z)\bar a(z))=0\] which is equivalent to 
\begin{equation}\label{alph}
\frac{\bar c(z)}{\bar d(z)}=\frac{\bar a(z)\frac{\bar c(-z)}{\bar d(-z)}+hb(z)}{\bar b(z)\frac{\bar c(-z)}{\bar d(-z)}+a(z)}
\end{equation}
We call $\Psi$ the following automorphism of $\Pp^1_{\C(z)}$ defined by
\[\begin{array}{rcrc}
\Psi\colon & \Pp^1_{\C(z)} & \longrightarrow &  \Pp^1_{\C(z)}\\
& (r(z)\colon s(z)) &\longmapsto& (\bar a(z)r(z)+hb(z)s(z)\colon \bar b(z)r(z)+a(z)s(z))
\end{array}.\] The equation (\ref{alph}) can be seen as $f(z)=\Psi(f(-z))$ for $f(z)=\frac{\bar c(z)}{\bar d(z)}$. In this way,
finding $c(z)$ and $d(z)$ satisfying the equation (\ref{alph}) is equivalent to find fixed points of $\widetilde\Psi$ where $\widetilde\Psi(f(z)):=\Psi(f(-z))$. First we notice that the automorphism $\Psi$ is a linear automorphism given by the element $\left[\begin{smallmatrix}
\bar a(z) & hb(z)\\
\bar b(z) & a(z)
\end{smallmatrix}
\right]$ in $\PGL(2,\C(z))$ that we denote by $\Hat A$ since it comes from $A$ by interchanging the elements of the mean diagonal, this implies that $\widetilde{\Psi}$ has order two because $\widetilde{\Psi}\circ\widetilde{\Psi}=id$ is equivalent to $\Hat A\eta(\Hat A)=1$ which is satisfied because $A$ is a class in $H^1(\langle\eta\rangle,\mathscr G)$. On the other hand, the element $\Hat A$ is equivalent to $\Hat{\Hat A}=\left[
\begin{smallmatrix}
0&-\det\hat A\\
\bar b(z)^2&0
\end{smallmatrix}\right]$ since $B^{-1}\Hat A\eta(B)=\Hat{\Hat A}$ for $B=\left[\begin{smallmatrix}
1&\bar a(z)/\bar b(z)\\
0&1
\end{smallmatrix}\right]$. Hence, the existence of fixed points for the automorphism associated to $\Hat{\Hat A}$ gives the existence of fixed points for the automorphism $\widetilde{\Psi}$. Then we look explicitly for elements $u,v\in\C(z)$ such that $(u(z)\colon v(z))=\Hat{\Hat A}(u(-z)\colon v(-z))=(-\det\Hat A v(-z)\colon \bar b(z)^2u(-z))$ in $\Pp^1_{\C(z)}$ i.e. $u(z)u(-z)\bar b(z)^2=-v(z)v(-z)\det\Hat A$ and then $\frac{u(z)}{v(z)}\frac{u(-z)}{v(-z)}=-\frac{\det\Hat A}{\bar b(z)^2}$. The right side of this last equation belongs to $\C(z^2)$ because $\det \Hat A=\det A$ which belongs to $\R(z^2)$ and $\bar b(z)^2\in\C(z^2)$ condition imposed by the fact that $A$ is a class in $H^1(\langle\eta\rangle,\mathscr G)$. Existence of $u$ and $v$ comes from the next Lemma.
\end{proof}

\begin{lemma}\label{C(z^2)guys}
Any element $f\in\C(z^2)$ can be written as the product $g(z)g(-z)$ for some element $g\in\C(z)$. In other words, \[\C(z^2)=\{g(z)g(-z)\ :\ g(z)\in\C(z)\}.\]
\end{lemma}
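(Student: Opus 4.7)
The plan is to prove the non-trivial inclusion $\C(z^2)\subseteq\{g(z)g(-z)\ |\ g\in\C(z)\}$ by reducing to the case of a single linear factor in $z^2$ and then combining multiplicatively. The other inclusion is immediate: for any $g\in\C(z)$, the product $g(z)g(-z)$ is invariant under $z\mapsto -z$, hence lies in $\C(z^2)$.

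For the non-trivial direction, I would first write an arbitrary $f\in\C(z^2)^*$ as $f(z)=h(z^2)$ for some $h\in\C(u)^*$, and then factor $h$ in $\C(u)$ as
\[h(u)=c\prod_{k}(u-a_k)^{e_k},\qquad c\in\C^*,\ a_k\in\C,\ e_k\in\Z.\]
This yields $f(z)=c\prod_k(z^2-a_k)^{e_k}$. The whole problem reduces to showing that each building block $c$ and $z^2-a$ lies in the set $\{g(z)g(-z)\}$, since this set is clearly closed under products and inverses in $\C(z)^*$.

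The building blocks are then handled by inspection: for $c\in\C^*$, pick any square root $\sqrt{c}$ and take $g(z)=\sqrt{c}$, so that $g(z)g(-z)=c$; for a factor $z^2-a$ with $a\in\C$, pick any square root $\sqrt{a}$ and set $g_a(z)=\mathbf{i}(z-\sqrt{a})$, whence
\[g_a(z)g_a(-z)=-\,(z-\sqrt{a})(-z-\sqrt{a})=(z-\sqrt{a})(z+\sqrt{a})=z^2-a.\]
Putting these together, the element
\[g(z)=\sqrt{c}\,\prod_k g_{a_k}(z)^{e_k}\in\C(z)^*\]
satisfies $g(z)g(-z)=f(z)$, as required.

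There is essentially no obstacle here: the only subtlety is the sign introduced by evaluating a linear factor at $-z$, which is exactly what forces the factor of $\mathbf{i}$ in the definition of $g_a$. Once that sign is absorbed, the argument is purely multiplicative and the statement follows from the factorisation of rational functions in $\C(u)$ into linear terms.
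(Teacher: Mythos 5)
Your proof is correct and follows essentially the same route as the paper: factor $f$ into linear factors in $z^2$, realise each factor $z^2-a$ as $g(z)g(-z)$ via a square root of $a$, and absorb the sign with a factor of $\mathbf{i}$ (you do this per factor, the paper does it once globally according to the parity of the number of factors, which is only a cosmetic difference). Nothing is missing.
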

\begin{proof}
Clearly, for $g\in\C(z)$ it follows that $g(z)g(-z)\in\C(z^2)$. Reciprocally, let $f\in\C(z^2)$. Thus $f=\frac{p(z)}{q(z)}$ with $p,q\in\C[z^2]$. We can write $p$ in terms of roots as $p(z)=\alpha(z^2-\alpha_1)\cdots(z^2-\alpha_s)$ where $\alpha,\alpha_i\in\C$, $1\leq i\leq s$. Any factor of $p$ can be decomposed as a product of the form $-(z-\sqrt{\alpha_i})(-z-\sqrt{\alpha_i})$ for any root $\alpha_i$. We can then write $p$ as the product $g_1(z)g_1(-z)$ where 
\[g_1(z)=\begin{cases}
\sqrt{\alpha}(z-\sqrt{a_1})\cdots (z-\sqrt{a_r})(z-\sqrt{\alpha_1})\cdots(z-\sqrt{\alpha_s}), & \text{\!\!if } s \text{ is even} \\
{\bf i}\sqrt{\alpha}(z-\sqrt{a_1})\cdots (z-\sqrt{a_r})(z-\sqrt{\alpha_1})\cdots(z-\sqrt{\alpha_s}), & \text{\!\!if } s \text{ is odd}.
\end{cases} 
\]
In the same way, $q(z)=g_2(z)g_2(z)$ and therefore, $f$ can be presented as the product $\frac{g_1(z)}{g_2(z)}\cdot\frac{g_1(-z)}{g_2(-z)}$.
\end{proof}

\begin{corollary}[from Proposition \ref{deltaeta}]\label{invactonbasis}
The conjugacy classes of elements $\alpha=(\alpha_0,\eta)\in\Bir(S,\pi)\setminus\Bir(S/\pi)$ such that $\alpha_0\eta(\alpha_0)$ is the identity are parametrised by the classes of polynomials $\langle[-1],\{[z^2+b]:b>0\}\rangle\simeq H^2(\langle\eta\rangle,\R(z)^*)$.
\end{corollary}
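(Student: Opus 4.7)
The plan is to identify the set of conjugacy classes in question with the non-abelian cohomology set $H^1(\langle\eta\rangle,\mathscr{G})$, and then invoke the bijection of Proposition~\ref{deltaeta}.

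First I would observe that for $\alpha=(\alpha_0,\eta)$, the involution condition $\alpha^2=1$ is equivalent to $\alpha_0\cdot\eta(\alpha_0)=1$, which is exactly the $1$-cocycle condition for $\langle\eta\rangle$ acting on $\mathscr{G}$ as in \eqref{etaaction}. So the set of such $\alpha$ bijects with $Z^1(\langle\eta\rangle,\mathscr{G})$.

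Next I would show that conjugation by an element $(\gamma,1)\in\Bir(S/\pi)$ sends $(\alpha_0,\eta)$ to $(\gamma\alpha_0\eta(\gamma)^{-1},\eta)$, which is precisely the coboundary modification of the cocycle $\alpha_0$. To conclude that $\Bir(S,\pi)$-conjugacy is captured by coboundary equivalence, I would need two further checks: (i) every $g\in\Bir(S,\pi)$ that conjugates one such involution to another must have $\Phi(g)\in\langle\eta\rangle$; and (ii) conjugation by $(\delta,\eta)$ also produces a cocycle cohomologous to $\alpha_0$. For (i), applying $\Phi$ forces $\Phi(g)$ to centralise $\eta$ in $\Phi(\Bir(S,\pi))$; by Lemma~\ref{imphigrande} a direct Möbius computation shows that $z\mapsto\tfrac{z+b}{bz+1}$ commutes with $z\mapsto -z$ only when $b=0$, so this centraliser is exactly $\langle\eta\rangle$. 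For (ii), a short calculation using the cocycle relation $\alpha_0\eta(\alpha_0)=1$ shows that the new cocycle $\delta\eta(\alpha_0)\eta(\delta)^{-1}=\delta\alpha_0^{-1}\eta(\delta)^{-1}$ is obtained from $\alpha_0$ as the coboundary by $\gamma=\delta\alpha_0^{-1}$.

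Combining (i)--(ii) identifies conjugacy classes of involutions in $\Bir(S,\pi)\setminus\Bir(S/\pi)$ of the prescribed form with $H^1(\langle\eta\rangle,\mathscr{G})$. Proposition~\ref{deltaeta} then provides the bijection
\[
\delta\colon H^1(\langle\eta\rangle,\mathscr{G})\xrightarrow{\ \sim\ } H^2(\langle\eta\rangle,\R(z)^*)=\langle[-1],\{[z^2+b]:b>0\}\rangle,
\]
which is the asserted parametrisation.

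The main point to handle carefully is step (i), since $\Bir(S,\pi)$ is not itself a semidirect product in an obvious way; once one verifies that the centraliser of $\eta$ in the image of $\Phi$ is trivial beyond $\langle\eta\rangle$, the rest is standard $H^1$-bookkeeping and a direct appeal to Proposition~\ref{deltaeta}.
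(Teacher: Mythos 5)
Your proposal is correct and follows the same route as the paper: the paper's proof simply asserts that $H^1(\langle\eta\rangle,\mathscr G)$ is the set of conjugacy classes of such involutions and then cites Proposition~\ref{deltaeta}, whereas you supply the (correct) verifications that the cocycle/coboundary bookkeeping matches conjugation, that the centraliser of $\eta$ in $\Phi(\Bir(S,\pi))$ is $\langle\eta\rangle$, and that conjugation by $(\delta,\eta)$ yields a cohomologous cocycle via $\gamma=\delta\alpha_0^{-1}$. These details are left implicit in the paper, so your argument is a faithful, more explicit version of the same proof.
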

\begin{proof}
The cohomology group  $H^1(\langle\eta\rangle,\mathscr G)$ corresponds precisely to the
set of conjugacy classes of involutions in $\Bir(S,\pi) \setminus \Bir(S/\pi)$,
that is, classes of elements $(\alpha_0,\eta)$ as in the statement.
Therefore Proposition \ref{deltaeta} directly implies the corollary.
\end{proof}

\begin{corollary}\label{ColPropInv}
The set of conjugacy classes of involutions in
$\Aut(S(\R),\pi) \setminus \allowbreak \Aut(S(\R)/\pi)$
surjects naturally to the set of conjugacy classes of involutions in
$\Bir(S,\pi) \setminus \Bir(S/\pi)$.
\end{corollary}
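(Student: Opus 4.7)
The natural map is the one induced by the inclusion $\Aut(S(\R),\pi)\hookrightarrow\Bir(S,\pi)$, so it is automatically well-defined on conjugacy classes: any conjugation carried out inside $\Aut(S(\R),\pi)$ is in particular a conjugation inside $\Bir(S,\pi)$. The entire content of the statement is therefore surjectivity, namely that every $\Bir(S,\pi)$-conjugacy class of involutions acting non-trivially on the base contains at least one representative which is already a birational diffeomorphism.

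The plan is to exploit the explicit normal forms produced in the proof of surjectivity of $\delta$ in Proposition~\ref{deltaeta}. By Corollary~\ref{invactonbasis}, the conjugacy classes in question are in bijection with $\langle[-1],\{[z^2+b]:b>0\}\rangle$, and for each such class a diagonal representative $A=\left[\begin{smallmatrix}a(z) & 0\\ 0 & \bar a(z)\end{smallmatrix}\right]\in\mathscr G$ was constructed, where $a(z)$ is (up to a prefactor of ${\bf i}$) a product of linear factors of the form $(z-{\bf i}\sqrt{c_j})$ with $c_j>0$. The corresponding involution in $\Bir(S,\pi)$ is $\mathcal{A}\circ\tilde{\eta}$, where $\tilde{\eta}\colon (x,y,z)\mapsto (x,y,-z)$ is the equatorial reflection; crucially, $\tilde{\eta}$ is already a genuine real automorphism of $S$.

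It therefore suffices to check that each such $\mathcal{A}$ belongs to $\mathscr{H}_0$, for then $\mathcal{A}\circ\tilde{\eta}\in\Aut(S(\R),\pi)\setminus\Aut(S(\R)/\pi)$. Applying Proposition~\ref{diffG}, the required condition (since $b=0$) reduces to $a\bar a\in\R[z]_+$, which means that $a$ must have no real root. But by inspection every root of the polynomial $a$ described above has the form ${\bf i}\sqrt{c_j}$ with $c_j>0$, which is purely imaginary. Hence $a\bar a$ is positive on all of $\R$, and the desired representative in $\Aut(S(\R),\pi)\setminus\Aut(S(\R)/\pi)$ is produced.

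The only subtle point here is conceptual rather than computational: one must notice that the cocycle representatives used to prove surjectivity of $\delta$ in Proposition~\ref{deltaeta} were already optimally adapted, with imaginary-root polynomials, so that the positivity criterion of Proposition~\ref{diffG} is met automatically and no further conjugation inside $\Bir(S,\pi)$ is needed to land in the diffeomorphism subgroup.
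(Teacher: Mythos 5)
Your proof is correct and follows essentially the same route as the paper: both arguments reduce, via the bijectivity of $\delta$ in Proposition~\ref{deltaeta}, every class to a diagonal representative $\left[\begin{smallmatrix}a(z)&0\\0&\bar a(z)\end{smallmatrix}\right]$ with $a$ having no real roots, and then invoke the positivity criterion of Proposition~\ref{diffG} to conclude that this representative, composed with $\tilde\eta$, already lies in $\Aut(S(\R),\pi)\setminus\Aut(S(\R)/\pi)$. The only difference is presentational: you make explicit that the roots $\pm{\bf i}\sqrt{c_j}$ of the cocycle representatives are purely imaginary, which the paper leaves as an implicit consequence of its construction.
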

\begin{proof}
Let $(A,\eta)$ be an involution in $\Bir(S,\pi) \setminus \Bir(S/\pi)$.
The proof of Proposition \ref{deltaeta} shows that $(A,\eta)$ is conjugate
to an element $(\tilde A,\eta)$ where $\tilde A$ is, via $\psi$, an element
of the form 
$\left[
   \begin{smallmatrix}
      a(z) & 0\\
      0 & \bar a(z)
   \end{smallmatrix}
\right]$,
and $a \in \C[z]$ has no real roots.
Since in that case
$a\bar{a} \in \R[z]_+$,
Proposition~\ref{diffG} tells us that such an element corresponds to one of $\Aut(S(\R)/\pi)$.
Hence the birational diffeomorphism
$(\tilde A, \eta) \in \Aut(S(\R),\pi) \setminus \allowbreak \Aut(S(\R)/\pi)$
is conjugate in $\Bir(S,\pi)$ to $(A,\eta)$, and therefore every conjugation class of
$\Bir(S,\pi) \setminus \allowbreak \Bir(S/\pi)$
contains a conjugation class of
$\Aut(S(\R),\pi) \setminus \allowbreak \Aut(S(\R)/\pi)$.
\end{proof}

\section{Connection between families}\label{Ch:connection}
In this section, we collect all our results, and use the fixed points and the classification of the possible Sarkisov links given by Iskovskikh in \cite{Isk96} to give the proofs of Theorem~\ref{MainThm} and Theorem~\ref{MainThm2} (Section~\ref{Ch:results}).

We start with some definitions, which come from the equivariant Sarkisov program.
\begin{definition}\label{MoriFib}
Let $X$ be a smooth projective real rational surface with $X(\R)\simeq S(\R)$, let $g\in \Aut(X)$ be an automorphism of finite order and let $\mu\colon X\to Y$ be a morphism. 

The triple $(X,g,\mu)$ is said to be a \emph{Mori fibration} when one of the following holds 
\begin{enumerate}[$(i)$]
\item $\rk(\Pic(X)^{g})=1$, $Y$ is a point and $X$ is a Del Pezzo surface;
\item $\rk(\Pic(X)^{g})=2$, $Y=\Pp^1$ and the map $\mu$ is a conic bundle.
\end{enumerate}
\end{definition}
\begin{remark}\label{ConicBAutSpi}
In the second case, we can do as in Proposition~\ref{MinMod} and find a birational morphism $\varepsilon\colon X\to S$ that restricts to a diffeomorphism $X(\R)\to S(\R)$, such that $\pi\varepsilon=\alpha\mu$, for some $\alpha\in \Aut(\Pp^1_\R)$. This conjugates $g$ to an element $\varepsilon g \varepsilon^{-1}\in \Aut(S(\R),\pi)$. The possible choices for $\varepsilon$ just replace $\varepsilon g \varepsilon^{-1}$ with a conjugate in the group $\Aut(S(\R),\pi)$. 
\end{remark}
\begin{definition}
Let $\mu\colon X\to Y$ and $\mu'\colon X'\to Y'$, $g\in \Aut(X), g'\in \Aut(X')$ be two Mori-fibrations. An \emph{isomorphism of Mori fibrations} is an isomorphism $\rho \colon X\to X'$, such that $g'\rho=\rho g$ and $\mu'\rho=\alpha\mu$ for some isomorphism $\alpha\colon Y\to Y'$.
\end{definition}

\begin{definition}
A \emph{Sarkisov link} between two Mori fibrations $\mu\colon X\to Y$ and $\mu'\colon X'\to Y'$, $g\in \Aut(X), g'\in \Aut(X')$ is a birational map $\zeta\colon X\dasharrow X'$ such that $g'\zeta=\zeta g$ and is of one of the following four types,
\begin{enumerate}[$(i)$]
\item \emph{Links of type $\mathrm{I}$}. These are commutative diagrams of the form
\[\xymatrix@R-0.8pc{
X \ar[d]_\mu \ar@{-->}[r]^\zeta & X' \ar[d]^{\mu'}  \\
Y=\{p\} & Y'=\Pp^1 \ar[l]_{\rho}}\]
where $\zeta^{-1}\colon X'\to X$ is a birational morphism, which is the blow-up of either a $g$-orbit of real points or imaginary conjugate points of $X$, and where $\rho$ is the contraction of $Y'=\Pp^1$ to the point $p$.
\item \emph{Links of type $\mathrm{II}$}. These are commutative diagrams of the form
\[\xymatrix@R-0.8pc{
X \ar@/^1pc/ @{-->}[rr]^{\zeta} \ar[d]_\mu & Z \ar[r]_{\beta'} \ar[l]^{\beta} & X' \ar[d]^{\mu'} \\
Y \ar[rr]^{\simeq}_{\rho}  &  & Y'}\]
where $\beta\colon Z\to X$ (respectively $\beta'\colon Z\to X'$) is a birational morphism, which is the blow-up of either a $g$-orbit (respectively $g'$-orbit) of real points or imaginary conjugate points of $X$  (respectively of $X'$), and where $\rho$ is an isomorphism between $Y$ and $Y'$.
\item \emph{Links of type $\mathrm{III}$}. (These are the inverse of the links of type I). These are commutative diagrams of the form
\[\xymatrix@R-0.8pc{
X \ar[d]_\mu \ar[r]^\zeta & X' \ar[d]^{\mu'}  \\
Y=\Pp^1 \ar[r]^{\rho}& Y'=\{p\}}\]
where $\zeta\colon X\to X'$ is a birational morphism, which is the blow-up of either a $g'$-orbit  of real points or imaginary conjugate points of $X'$, and where $\rho$ is the contraction of $Y=\Pp^1$ to the point $p$.
\item \emph{Links of type $\mathrm{IV}$}. These are commutative diagrams of the form
\[\xymatrix@R-0.8pc{
X \ar[d]_\mu \ar[r]^\zeta_{\simeq} & X' \ar[d]^{\mu'}  \\
Y=\Pp^1 & Y'=\Pp^1}\]
where $\zeta\colon X\to X'$ is an isomorphism and $\mu$, $\mu'\circ\zeta$ are conic bundles on $X'$ with distinct fibres.
\end{enumerate}
\end{definition}
The following result is given in \cite[Theorem 2.5]{Isk96}
\begin{thm}\label{thmIsk}
Let $(X,g,\mu)$ and $(X',g',\mu')$ be two Mori-fibrations. Every birational map $\rho\colon X\dasharrow X'$ such that $g'\rho=\rho g$ decomposes into elementary links and isomorphisms of conic bundles. 
\end{thm}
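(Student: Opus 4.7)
The plan is to run the equivariant Sarkisov program in dimension two, which is especially tractable since there are no flips on a surface: every step is a divisorial contraction (blow-down) or an elementary change of conic bundle. First I would construct an equivariant common resolution of $\rho$, namely a smooth projective real surface $Z$ equipped with an automorphism $\tilde g$ of the same order as $g$ and birational morphisms $\beta\colon Z\to X$, $\beta'\colon Z\to X'$ with $\rho\beta=\beta'$, by repeatedly blowing up $\tilde g$-orbits of base points of $\rho$ and $\rho^{-1}$. Each such orbit is either a real point or a pair of conjugate imaginary points (possibly infinitely near), so both $\beta$ and $\beta'$ are equivariant compositions of the two elementary blow-ups allowed by Proposition~\ref{MinMod}.

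Next I would define a \emph{Sarkisov degree} on the set of Mori fibrations admitting an equivariant birational map to $(X',g',\mu')$. A natural choice is the triple $(\mu,\lambda,e)$, where $\mu$ is the degree (against $-K_X$) of the strict transform under $\rho$ of a general member of a $g'$-invariant very ample linear system $\mathcal{H}'$ on $X'$, $\lambda$ is the canonical threshold of this mobile system with respect to $-K_X$, and $e$ is the number of $\tilde g$-orbits of exceptional divisors realising the threshold. Using an equivariant Noether--Fano--Iskovskikh inequality, one shows that whenever $\rho$ is neither an isomorphism of Mori fibrations nor itself a link, there exists an extremal ray $R\subset \overline{\mathrm{NE}}(X)^{g}$ whose contraction, completed by the ``two-ray game'' on the blow-up of a suitable $g$-orbit, yields a Sarkisov link $X\dasharrow X_1$ along which $(\mu,\lambda,e)$ strictly decreases in the lexicographic order.

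The main obstacle is to check that every such extremal contraction falls into one of the four link types $\mathrm{I}$--$\mathrm{IV}$ and stays inside the class of Mori fibrations from Definition~\ref{MoriFib}. Concretely: when $\rk \Pic(X)^{g}=1$, the only allowed move is to blow up a $g$-orbit whose strict transform under $\rho$ is contained in the base locus, producing a conic bundle and a link of type $\mathrm{I}$; when $\rk \Pic(X)^{g}=2$, the $g$-invariant part of $\overline{\mathrm{NE}}(X)$ has exactly two extremal rays, and contracting the one not coming from $\mu$ produces either a link of type $\mathrm{II}$ (same base), a link of type $\mathrm{III}$ (collapse to a Del Pezzo), or a link of type $\mathrm{IV}$ (another conic-bundle structure on the same surface). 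Here one uses Theorem~\ref{t} together with the classification of invariant curves on Del Pezzo surfaces carried out in Sections~\ref{DPcase} and \ref{Ch:ConicCase} to rule out spurious contractions.

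Finally, termination is automatic: since $(\mu,\lambda,e)$ is a triple of non-negative rational numbers with bounded denominators and strict decrease at each link, after finitely many steps one reaches a pair where $\rho$ becomes either the identity or an isomorphism of conic bundles (type $\mathrm{IV}$). Composing the sequence of links with this terminal isomorphism yields the required factorisation of $\rho$.
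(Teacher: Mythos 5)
The paper offers no proof of this statement at all: it is quoted directly from Iskovskikh, \cite[Theorem 2.5]{Isk96}, so there is nothing internal to compare your argument against. What you have written is a sketch of the standard two-dimensional equivariant Sarkisov program, which is essentially the architecture of the proof in the cited reference, so your route is the expected one rather than a genuinely different one.

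That said, as a proof your text is only an outline, and the two points where all the work lies are exactly the two you assert without argument. First, the equivariant Noether--Fano--Iskovskikh inequality --- the existence, when $\rho$ is not already an isomorphism of Mori fibrations, either of a maximal $g$-orbit of base points to blow up or of a suitable extremal ray of $\overline{\mathrm{NE}}(X)^{g}$ to contract --- is the engine that produces each link, and establishing it (together with the fact that the two-ray game on the resulting rank-two surface closes up into one of the four link types) is most of the content of \cite{Isk96}. Second, termination is not ``automatic'': one must check that $\mu$ ranges in a set with the descending chain condition and is bounded below, and that for fixed $\mu$ the remaining invariants can only improve finitely often; in dimension $2$ this does work because $1\le K^2\le 9$ bounds the denominators, but it has to be said. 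Finally, your appeal to Theorem~\ref{t} and to Sections~\ref{DPcase} and~\ref{Ch:ConicCase} to rule out spurious contractions points in the wrong direction: those sections classify automorphisms of the particular surfaces arising from the sphere and sit logically downstream of the factorisation theorem, whereas the list of links that can actually occur is itself a separate result, \cite[Theorem 2.6]{Isk96}, which the paper only invokes afterwards in Lemma~\ref{links}. None of this makes your strategy wrong, but as written the proposal records the plan of the proof rather than the proof.
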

Looking at the classification of links of \cite{Isk96}, we obtain the following lemma with the links that could be possible to have in our classification problem.

\begin{lemma}\label{links}
Let $(X,g,\mu)$ and $(X',g',\mu')$ be two Mori-fibrations, and let $\rho\colon X\dasharrow Y$ be a birational map which restricts to a diffeomorphism $X(\R)\to Y(\R)$. Then, $\rho$ decomposes into elementary links that blow up only imaginary points and contract only imaginary curves, and are of the following type:
\begin{enumerate}[$(a)$]
\item Links of type $\mathrm{II}$ between conic bundles, which correspond therefore to a conjugation in $\Aut(S(\R),\pi)$.
\item Links of type $\mathrm{II}$ of the form $ X\dasharrow X$, where $X$ is either the sphere $S$ or a Del Pezzo surface of degree $4$. Moreover, the two elements of $\Aut(X)$ corresponding to this link are conjugate in $\Aut(X)$.
\item Link of type $\mathrm{I}$ and $\mathrm{III}$ between the sphere $S$ and the Del Pezzo surface of degree $6$ obtained by blowing up two conjugate points on $S$. These are possible for only a few of elements, given in Lemma~$\ref{Aut(dP6)}$.
\item Links of type $\mathrm{IV}$ on Del Pezzo surfaces of degree $2$ or $4$, obtained by blowing up pairs of conjugate points in $S$. 

If the two elements of $\Aut(S(\R),\pi)$ corresponding to the link are not conjugate, then $X$ is a Del Pezzo surface of degree $4$ and the two automorphisms are $g_1,g_2\in \Aut(X)$ described in Lemma~$\ref{Lem:g1g2DP4}$.
\end{enumerate}
\end{lemma}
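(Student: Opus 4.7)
The plan is to combine Iskovskikh's decomposition theorem with the rigid constraints imposed by the hypothesis that $\rho$ is a diffeomorphism on real points. By Theorem~\ref{thmIsk}, we may write $\rho$ as a composition of elementary Sarkisov links and isomorphisms of conic bundles. The first observation is that, because $\rho$ restricts to a diffeomorphism $X(\R)\to Y(\R)$, no intermediate surface may acquire a real $(-1)$-curve: by Proposition~\ref{p}, contracting such a curve would destroy the sphere topology. Consequently each intermediate surface still satisfies $X_i(\R)\simeq S(\R)$, each blow-up inside a link is a blow-up of imaginary conjugate points, and each contracted curve is imaginary. In particular, each intermediate Mori fibration falls into the list given by Proposition~\ref{MinMod}: either a del Pezzo surface of degree $8$, $6$, $4$, or $2$ obtained from $S$ by blowing up imaginary conjugate pairs, or a conic bundle whose morphism factors (after contracting $g$-invariant imaginary $(-1)$-curves) through $\pi\colon S\to\Pp^1$.

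With these constraints, I will go through Iskovskikh's table in \cite{Isk96} and keep only the links compatible with surfaces of this list and with imaginary blow-ups/contractions. For links of type II between conic bundles (case (a)), Remark~\ref{ConicBAutSpi} ensures that both ends of the link factor through $\pi$; contracting the blown-up pairs of imaginary points on each side produces a birational self-map of $S$ which commutes with $\pi$ and restricts to a diffeomorphism of $S(\R)$, hence an element of $\Aut(S(\R),\pi)$ which conjugates $g$ to $g'$. For links of type I and III (case (c)), the non-Del-Pezzo end is a conic bundle, which here must come from blowing up a single imaginary pair on $S$; by Subsection~\ref{Pezzodeg6} the resulting surface is the del Pezzo of degree $6$, and only automorphisms of $S$ that preserve the blown-up pair lift, which are exactly those classified in Lemma~\ref{Aut(dP6)}.

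For type II links between del Pezzo surfaces (case (b)), the constraint that source and target must appear in Proposition~\ref{MinMod} forces $(K_X)^2=(K_{X'})^2$ and $X(\R)\simeq X'(\R)\simeq S(\R)$; consulting Iskovskikh's list under these constraints leaves only self-links $X\dasharrow X$ where $X$ is $S$ itself (degree $8$) or a del Pezzo surface of degree $4$ of the type studied in Subsection~\ref{dP4}. In each case the link is a composition of an imaginary blow-up followed by an imaginary blow-down that inverts it, hence extends to a genuine automorphism of $X$; the two automorphisms $g,g'$ at the ends are then conjugate inside $\Aut(X)$ by construction. For type IV links (case (d)), the surface $X$ admits two distinct conic bundle structures both surviving the $g$-action, which from Iskovskikh's list forces $(K_X)^2\in\{2,4\}$. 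In degree $2$ this corresponds to the Geiser situation of Subsection~\ref{dP2}; in degree $4$, the pair structure $P_1,\dots,P_5$ of Subsection~\ref{dP4} applies, and Lemma~\ref{Lem:DP4Links} tells us that either there is an automorphism of $X$ in the centraliser of $g$ exchanging the two conic bundles of $P_1$ (making the two induced elements of $\Aut(S(\R),\pi)$ conjugate), or $g$ is one of the two exceptional automorphisms $g_1$, $g_2$ from Lemma~\ref{Lem:g1g2DP4}.

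The delicate point, and the main obstacle, is case (d): one has to rule out non-trivial type IV links on a degree $4$ del Pezzo producing non-conjugate pairs apart from the $g_1,g_2$ family. This requires combining the $(\F_2)^3$-analysis of the kernel in Proposition~\ref{kerEq}, the action of the anti-holomorphic involution $\sigma$ on the five conic-bundle pairs (Figure~\ref{picturepairs}), and Lemma~\ref{Lem:DP4Links} to track which automorphisms of $X$ can play the role of the conjugating isomorphism between the two conic bundle structures. All other cases reduce to verifying that the constraints from Proposition~\ref{MinMod} plus the imaginary-only blow-up requirement eliminate all remaining entries in Iskovskikh's classification.
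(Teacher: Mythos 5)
Your overall strategy coincides with the paper's: decompose $\rho$ via Theorem~\ref{thmIsk}, argue that all base-points are imaginary, and then prune Iskovskikh's table. Cases (a), (c) and (d) are treated essentially as in the paper (for (d) you correctly reduce to Lemma~\ref{Lem:DP4Links} and the pair $g_1,g_2$). There is, however, a genuine gap in your treatment of case (b). You write that a type $\mathrm{II}$ self-link $X\dasharrow X$ between Del Pezzo surfaces ``is a composition of an imaginary blow-up followed by an imaginary blow-down that inverts it, hence extends to a genuine automorphism of $X$,'' and that the two automorphisms at its ends are conjugate in $\Aut(X)$ ``by construction.'' Both claims fail. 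In a type $\mathrm{II}$ link $X\leftarrow Z\rightarrow X$ the second morphism contracts a \emph{different} set of $(-1)$-curves than the one blown up by the first, so the induced map $X\dasharrow X$ is genuinely birational and not regular: the prototypical example is precisely the Geiser involution of the sphere, which fixes a genus-$3$ curve and is certainly not in $\Aut(S)$. Consequently the link only conjugates $g$ to $g'$ inside $\Bir(X)$, and showing that they are in fact conjugate in $\Aut(X)$ is the substantive content of (b), not a formality.

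The paper closes this gap in two different ways depending on the link. When the link lifts to a Geiser-type involution $\nu$ on the intermediate surface $Z$ (the cases ``blow up $3$ pairs on $S$'' and ``blow up $2$ pairs on a degree-$4$ Del Pezzo''), one uses Proposition~\ref{Prop:Geiser}: $\nu$ commutes with every automorphism of $Z$, so from $\hat g'\nu=\nu\hat g$ one gets $\hat g'=\hat g$. For the remaining link $S\dasharrow S$ blowing up two pairs of conjugate points, the two ends are order-$2$ elements of $\Aut(S)$ that are conjugate by a birational diffeomorphism; one then invokes Proposition~\ref{ConjAut(S)} (rotation, reflection, antipodal) and compares fixed-point sets to conclude that they are already conjugate in $\Aut(S)$. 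Your proof needs one of these arguments (or an equivalent) to establish the second sentence of (b). A secondary, more minor point: your justification that no real point is ever blown up is slightly circular, since you apply Proposition~\ref{p} to intermediate surfaces whose real locus you have not yet shown to be a sphere; the clean argument, as in the paper, is that the base-points of each successive link are chosen among the base-points of the remaining birational map, which are imaginary by hypothesis, and one then proceeds by induction on the number of links.
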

\begin{proof}
It follows from Proposition~\ref{p} that $X$, $X'$ do not contain any real $(-1)$-curve. Moreover, the map $\rho$ has no real base-points implying that the first Sarkisov link obtained in the decomposition does not have real base-points (the base-points of the link are taken among the base-points of the map, see the proof of \cite[Theorem 2.5]{Isk96}). Proceeding by induction on the number of links provided by Theorem~\ref{thmIsk}, we obtain that $\rho$ decomposes into Sarkisov links that do not blow up any real point or contract any real curve. In particular, the surfaces obtained are all diffeomorphic to the sphere and with $K^2_X\in 2\mathbb{Z}$.

It remains to study links $X\dasharrow X'$, between two Mori-fibrations $\mu\colon X\to Y$ and $\mu'\colon X'\to Y'$, $g\in \Aut(X)$, $g'\in \Aut(X')$, such that $X(\R)\simeq X'(\R) \simeq S(\R)$, with $(K_X)^2$, $(K_{X'})^2\in 2\mathbb{Z}$, and which do not blow up any point. In the case where $Y$ is a point, we can moreover assume that $(K_X)^2\not=6$, by Proposition~\ref{dP6nominimal} (and similarly $(K_{X'})^2\not=6$ if $Y'$ is a point). Looking at the list of \cite[Theorem $2.6$]{Isk96}, we get the following possibilities.
\begin{enumerate}
\item Links of type $\mathrm{I}$ and $\mathrm{III}$ ($Y$ is a point and $Y'=\Pp^1$ or vice versa). Looking at \cite[Theorem $2.6$, case $(i)$]{Isk96}, one gets only one possibility, which is the blow-up of two imaginary conjugate points on the sphere $S$. Up to automorphism, these points can be taken to be the two base-points of $\pi\colon S\dasharrow \Pp^1$, and the automorphisms that preserve the union of these two points are described in  Lemma~$\ref{Aut(dP6)}$.
\item Links of type $\mathrm{II}$ ($Y=Y'=\Pp^1$ or $Y=Y'$ is a point).

In the first case, when $Y=Y'=\Pp^1$, the link corresponds to conjugation in the group $\Aut(S(\R),\pi)$ (see Remark~\ref{ConicBAutSpi}).

 In the second case, the list of \cite[Theorem $2.6$, case $(ii)$]{Isk96} yields the following three possibilities:
 
 \begin{enumerate}[$(i)$]
 \item
(Case $(K_X)^2=8,(b)$)
 A birational map $S(\R)\dasharrow S(\R)$ that blows up $3$ pairs of conjugate points and contract $3$ pairs of conjugate curves. It corresponds to the Geiser involution on the blow-up of the $6$ points. 
 \item
(Case $(K_X)^2=8,(d)$) A birational map $S(\R)\dasharrow S(\R)$ that blows up $2$ pairs of conjugate points and contract $2$ pairs of conjugate curves. 
 \item
(Case $(K_X)^2=4,(b)$)
 A birational map $X(\R)\dasharrow X(\R)$ that blows up $2$ pairs of conjugate points on a Del Pezzo surface $X$ of degree $4$ and contract $2$ pairs of conjugate curves. It corresponds to the Geiser involution on the blow-up of the $4$ points. 
 \end{enumerate} In each case we get a link $X\dasharrow X$, where $X$ is either the sphere $S$ or a Del Pezzo surface of degree $4$. It remains to see that the two automorphisms of prime order of $\Aut(X)$ produced by this link are conjugate by an element of $\Aut(X)$. If the link corresponds to a Geiser involution, this is because the Geiser involution commutes with all automorphism of the surface (see Proposition~\ref{Prop:Geiser}). In the other case, the orbit blown up consists of two pairs of conjugate points on $S(\C)$, so the automorphism is an element of order $2$ in $\Aut(S)$, so conjugate to a rotation, a reflection or the antipodal involution (Proposition~\ref{ConjAut(S)}). By looking at the fixed points, we observe that two elements of order $2$ in $\Aut(S)$ are conjugate in $\Aut(S)$ if and only if they are conjugate in $\Aut(S(\R))$.
\item Links of type $\mathrm{IV}$.  ($X\simeq X'$ is a surface which admits two different conic bundle structures, and the link consists of changing the structure). It follows from \cite[Theorem $2.6$, case $(iv)$]{Isk96} that $(K_X)^2\in \{2,4,8\}$. The case $8$ is not possible since $\Pic(S)\cong \mathbb{Z}$. If $(K_X)^2=2$, the link is given by the Geiser involution (by \cite[Theorem $2.6$]{Isk96}), which commutes with all automorphisms. Hence, the two automorphisms of $\Aut(S(\R),\pi)$ provided by the links are conjugate. This is the same if $(K_X)^2=4$ and if there is an element of $\Aut(S)$ which commutes with the automorphism. By Lemma~\ref{Lem:DP4Links}, the only remaining case is when the two automorphisms are $g_1,g_2$ given in Lemma~$\ref{Lem:g1g2DP4}$.\qedhere
\end{enumerate}
\end{proof}
Lemma~\ref{links} shows that the automorphisms $g_1,g_2$ given in Lemma~\ref{Lem:g1g2DP4} are quite special. The following result describes the situation.
\begin{lemma}\label{Lem:SpecialMapsg1g2}
\begin{enumerate}[$(1)$]
\item
Let $X$ be a Del Pezzo surface of degree $4$ with $\mu\in \C\setminus\{\pm1\}$, $\lvert\mu\rvert =1$ $($see Lemma~$\ref{points})$, and $g_1,g_2\in \Aut(X)$ be the automorphisms given in Lemma~\ref{Lem:g1g2DP4}. The action on the two conic bundles invariant yields two involutions \[g_1'(\mu)\in \Aut(S(\R)/\pi),\ g_2'(\mu)\in \Aut(S(\R),\pi)\setminus \Aut(S(\R)/\pi)\] given by 
\[\begin{array}{rccccc}
g_1'(\mu)&\colon& (t,z)&\dasharrow& \left(\frac{-2{\bf i}\mu t+(1+\mu)(1-z^2)}{\mu(2{\bf i}+(1+\mu)t)},z\right)\\
g_2'(\mu)&\colon &(t,z)&\dasharrow &\left(\frac{(1-z^2)({\bf i}t(1+\mu)-2)}{-2\mu t-{\bf i}(1+\mu)(1-z^2)},-z\right)\end{array}\]
$($using the map $\psi\colon S_\C\dasharrow \A^2_\C$ of Lemma~$\ref{LemAction})$
\item
Taking another surface given by $\mu'\in \C\setminus\{\pm1\}, \lvert \mu'\rvert=1$, the following are equivalent:
\begin{enumerate}
\item
$g_1'(\mu)$ and $g_1'(\mu')$ are conjugate in $\Aut(S(\R),\pi)$;
\item
$g_2'(\mu)$ and $g_2'(\mu')$ are conjugate in $\Aut(S(\R),\pi)$;
\item
$\mu'=\mu^{\pm 1}$.
\end{enumerate}
\item
Let $g\in \Aut(S(\R)/\pi)$ be an element of order $2$, such that $\Fix(g)$ is a rational curve with no real point. Then, $g$ is conjugate in $\Aut(S(\R),\pi)$ to $g_1'(\mu)$ for some $\mu\in \C\setminus\{\pm1\}$, $\lvert \mu\rvert=1$.
\end{enumerate}
\end{lemma}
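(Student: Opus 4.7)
For part (1), I would transport the formulas of Lemma~\ref{Lem:g1g2DP4}(d) through $\psi$. From Remark~\ref{varphi} and Lemma~\ref{LemAction}, passing to the affine chart $(t,z)$ on $S_\C$ via the chart $((1:s),(1:v))$ on $\Pp^1_\C\times\Pp^1_\C$ yields the substitutions $s={\bf i}t/(1+z)$, $v=-{\bf i}(1-z)/t$, so that $sv=(1-z)/(1+z)$. Direct substitution shows that $g_1$ preserves $sv$ (giving $z'=z$), while $g_2$ sends $sv\mapsto 1/(sv)$ (giving $z'=-z$); simplifying the first coordinate then produces the announced expressions for $g_1'(\mu)$ and $g_2'(\mu)$.

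For part (3), by Proposition~\ref{diff}, $g\in\mathscr H_0$ has fixed curve $\Fix(g)$ a double cover of $\Pp^1$ with no real point. Rationality forces exactly two branch points, necessarily a complex-conjugate pair $\{\alpha,\bar\alpha\}$ with $\alpha\in\C\setminus\R$, and such a cover is determined up to $\pi$-isomorphism by this pair. Writing $\alpha=x+{\bf i}y$ with $y\neq 0$, the real quadratic $xb^2+(|\alpha|^2+1)b+x=0$ has reciprocal roots, so exactly one lies in $(-1,1)$; conjugating $g$ by a lift to $\Aut(S(\R),\pi)$ of the corresponding element $z\mapsto(z+b)/(bz+1)\in\Phi(\Aut(S(\R),\pi))$ (provided by Lemma~\ref{imphigrande}) normalises the branch points to $\{\pm{\bf i}a\}$ for some $a\in\R_{\neq 0}$. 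Taking $\theta\in(0,\pi)$ with $\tan(\theta/2)=a$ and $\mu=e^{i\theta}\in\C\setminus\{0,\pm 1\}$, the discriminant of the quadratic $\mu(1+\mu)t^2+4{\bf i}\mu t-(1+\mu)(1-z^2)$ (whose roots cut out $\Fix(g_1'(\mu))$) vanishes precisely where $z^2=-\tan^2(\theta/2)$. Hence $\Fix(g)\cong\Fix(g_1'(\mu))$ as $\pi$-covers, and Proposition~\ref{correspAut(S(R))} gives the claimed conjugacy in $\Aut(S(\R)/\pi)\subset\Aut(S(\R),\pi)$.

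For part (2), the implication $(c)\Rightarrow(a),(b)$ would rest on the automorphism $\tilde\upsilon\in\Aut(S)$ of Lemma~\ref{Aut(dP6)}, which on $\Pp^1\times\Pp^1$ is $(x,y)\mapsto(1/x,1/y)$ and a direct computation shows it sends $p\leftrightarrow\bar p$ and $q_\mu\leftrightarrow q_{\mu^{-1}}$; it therefore lifts to an isomorphism $X(\mu)\to X(\mu^{-1})$. Since the Picard-level description of $g_i$ given in Lemma~\ref{Lem:g1g2DP4} is invariant under simultaneously swapping $E_p\leftrightarrow E_{\bar p}$ and $E_q\leftrightarrow E_{\bar q}$, this isomorphism conjugates $g_i(\mu)$ to $g_i(\mu^{-1})$ and, descending to $S$, produces the desired conjugation. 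Conversely, for $(a)\Rightarrow(c)$ the discriminant calculation of part (3) exhibits the branch points of $\Fix(g_1'(\mu))$ as $\pm{\bf i}\tan(\theta/2)$; the only elements of $\Phi(\Aut(S(\R),\pi))$ preserving such a pair are $z\mapsto\pm z$ (for $z\mapsto(z+b)/(bz+1)$ the real part of $({\bf i}a+b)(1+{\bf i}ba)$ equals $b(a^2+1)$, forcing $b=0$), so Proposition~\ref{correspAut(S(R))} gives $\mu'=\mu^{\pm 1}$. For $(b)\Rightarrow(c)$ I would apply Corollary~\ref{invactonbasis}: squaring the matrix $M(z)$ of $g_2'(\mu)$ (ignoring $\eta$) yields $M^2=(1-z^2)[4\mu-(1+\mu)^2(1-z^2)]I$, and after rescaling by a constant $c\in\C$ with $c/\bar c=\bar\mu$ (which exists because $|\mu|=1$) to land in $T$, one computes $\delta([g_2'(\mu)])$ and reduces it modulo $\mathscr N(\R(z)^*)$, using $[1-z^2]=[1]$ and that the positive real constant $(1+\mu)(1+\bar\mu)$ is a square, to arrive at the class $[z^2+\tan^2(\theta/2)]$, which by the description of $H^2(\langle\eta\rangle,\R(z)^*)\simeq\bigoplus_{b>0}\Z/2\Z$ determines $\mu$ up to $\mu^{\pm 1}$.

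The main obstacle should be the bookkeeping in $(b)\Rightarrow(c)$: correctly lifting the $\PGL$-class to $T$ and carrying out the reduction in $\R(z^2)^*/\mathscr N(\R(z)^*)$ to recognise the clean representative $[z^2+\tan^2(\theta/2)]$; everything else reduces to a discriminant computation and the explicit Del Pezzo symmetry $\tilde\upsilon$.
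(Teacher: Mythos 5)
Your proposal is correct and, for parts (1), (3) and most of (2), follows essentially the same route as the paper: explicit transport of the Lemma~\ref{Lem:g1g2DP4}(d) formulas through $\psi\varphi^{-1}$; identification of the conjugacy class of $g$ in part (3) with the pair of imaginary branch points of the rational double cover, normalised to $\{\pm{\bf i}a\}$ by the interval-preserving M\"obius group of Lemma~\ref{imphigrande} and matched with the discriminant of $\Fix(g_1'(\mu))$ (the paper's branch points $\pm\frac{\mu-1}{\mu+1}$ are exactly your $\pm{\bf i}\tan(\theta/2)$); and the $H^2(\langle\eta\rangle,\R(z)^*)$ invariant $[z^2+\tan^2(\theta/2)]$ for the $g_2'$ family via Proposition~\ref{deltaeta}. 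The one genuinely different ingredient is your use of $\tilde\upsilon$ for $(c)\Rightarrow(a),(b)$: the paper instead deduces $g_1'(\mu)\sim g_1'(\mu^{-1})$ from the coincidence of branch points (via Theorems~\ref{thm} and~\ref{thm4}) and leaves the $g_2'$ direction to the $H^2$ computation, whereas your lift of $\tilde\upsilon$ to an isomorphism $X(\mu)\to X(\mu^{-1})$ gives an explicit conjugating element in $\Aut(S)\cap\Aut(S(\R),\pi)$ treating both families at once; this is a nice, more self-contained argument (it relies on the faithfulness of the $\Aut(X)$-action on $\Pic(X)$ for degree~$4$, which holds). Two small slips to fix: in the purely-imaginary computation the relevant product is $({\bf i}a+b)(1-{\bf i}ba)$, not $({\bf i}a+b)(1+{\bf i}ba)$ (the real part $b(a^2+1)$ and the conclusion $b=0$ are nonetheless correct); and the normalising constant must satisfy $c/\bar c=\mu$ (equivalently $c^2=\mu$ when $\lvert c\rvert=1$, the paper's $\nu^2=\mu$), not $c/\bar c=\bar\mu$, since otherwise $M^2/c^2$ is not a real multiple of the identity. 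Neither affects the validity of the argument.
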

\begin{proof}Let $g\in \Aut(S(\R)/\pi)$ be an element of order $2$, such that $\Fix(g)$ is a rational curve with no real point.
The element $g$ belongs to $\Aut^{+}(S(\R)/\pi)$, and the map $\pi$ restricts to a double covering $\pi_g\colon \Fix(g)\to \Pp^1$ (Proposition~\ref{diff}). Since the curve is rational, by the Riemann-Hurwitz formula the double covering is ramified over two points $q,\bar{q}\in \mathbb{P}^1(\C)$. These two points determine the curve $\Fix(g)$, up to isomorphisms above $\Pp^1(\C)$, i.e. isomorphisms $\rho\colon \Fix(g)\to \Fix(g')$ with $\pi_{g'}\rho=\pi_g$. Hence, by Theorems~\ref{thm} and~\ref{thm4}, the conjugacy class of $g$ in $\Aut(S(\R)/\pi)$ is given by the set $\{q,\bar{q}\}$.

We will use this observation to show that $g$ is conjugate to one of the automorphisms $g_1$, $g_2\in \Aut(X)$, where $X$ is a Del Pezzo surface of degree $4$, given in Lemma~\ref{Lem:g1g2DP4}.

We use the map $\psi\colon S_\C \dasharrow \A^2_\C$, $(x,y,z)\dasharrow (x-{\bf i} y,z)$ given in Lemma~\ref{LemAction} to compute the action of $g_1$, $g_2$ on $\A^2_\C$. 
Note that $\psi\varphi^{-1}\colon \Pp^1_\C\times\Pp^1_\C\dasharrow \A^2_\C$ is locally given by 
\[((1:s),(1:v))\dasharrow \left(\frac{-2{\bf i}s}{sv+1}, \frac{1-sv}{1+sv}\right),\]
and its inverse is $(t,z)\dasharrow ((z+1:{\bf i}t),(t:{\bf i}(z-1)))$. Using the explicit description of  Lemma~\ref{Lem:g1g2DP4}, the actions of $g_1$, $g_2$ are then respectively given by 
\[g_1'(\mu)\colon (t,z)\dasharrow \left(\frac{-2{\bf i}\mu t+(1+\mu)(1-z^2)}{\mu(2{\bf i}+(1+\mu)t)},z\right)\]
\[g_2'(\mu)\colon (t,z)\dasharrow \left(\frac{(1-z^2)({\bf i}t(1+\mu)-2)}{-2\mu t-{\bf i}(1+\mu)(1-z^2)},-z\right)\]
These correspond to involutions $g_1'(\mu)\in \Aut(S(\R)/\pi)$ and $g_2'(\mu)\in \Aut(S(\R),\pi)\setminus\allowbreak \Aut(S(\R)/\pi)$, which are conjugate by an element which is in the group $\Aut(S(\R))\setminus\allowbreak \Aut(S(\R),\pi)$ (see Lemma~\ref{Lem:g1g2DP4}).

In order to show that there exists $\mu$ such that $g$ is conjugate to $g_1'(\mu)$ in $\Aut(S(\R),\pi)$, we need to compute the ramification points of $\Fix(g_1'(\mu))$. The curve of fixed points of $g_1'(\mu)$ is given by
\[\mu(1+\mu)t^2+4{\bf i}\mu t-(1-z^2)(1+\mu)=0\]
so its discriminant with respect to $t$ is equal to
\[-4\mu (\mu+1)^2 \cdot \left(z^2-\left(\frac{\mu-1}{\mu+1}\right)^2\right),\]
and the two points correspond then to $z=\pm \frac{\mu-1}{\mu+1}$. We conjugate $g$ with an automorphism of the form 
\[g_b\!:\!(x,y,z)\mapsto\left(x\frac{\sqrt{1-b^2}}{bz+1},y\frac{\sqrt{1-b^2}}{bz+1},\frac{z+b}{bz+1}\right)\]
for some $b\in (-1,1)$ (see Lemma~\ref{imphigrande}), and claim that we can send the points $q,\bar{q}$ onto $\pm \frac{\mu-1}{\mu+1}$ for some $\mu\in \C\setminus \{\pm 1\}$ with $\lvert \mu\rvert=1$. To see this, we make the change of coordinates $z=\frac{1-z'}{1+z'}$, $z'=\frac{1-z}{1+z}$, so that the map $g_b$ acts as $z'\mapsto z'\frac{1-b}{1+b}$ and the points $z=\pm \frac{\mu-1}{\mu+1}$ correspond to $z'=\mu^{\pm 1}$. The claim follows then from the fact that the map $b\mapsto \frac{1-b}{1+b}$ yields a bijection $(-1,1)\to \R_{>0}$. Hence $g$ is conjugate to $g_1'(\mu)$ for some $\mu$.

Let us show that $g_1'(\mu)$ is conjugate to $g_1'(\mu')$ in $\Aut(S(\R),\pi)$ if and only if $\mu'=\mu^{\pm 1}$. First, observe that $\frac{1/\mu-1}{1/\mu+1}=\frac{1-\mu}{1+\mu}$, so the pair of points are the same for $\mu$ and $\mu^{-1}$. Hence, $g_1'(\mu)$ is conjugate to $g_1'(\mu')$ in $\Aut(S(\R),\pi)$. Second, if $g_1'(\mu')$ is conjugate to $g_1'(\mu)$, there exists an element of $\Aut(S(\R),\pi)$ whose action on $\Pp^1$ sends  $\left\{\pm \frac{\mu-1}{\mu+1}\right\}$ onto $\left\{\pm \frac{\mu'-1}{\mu'+1}\right\}$. But the action is generated by the maps $z\mapsto \frac{z+b}{bz+1}$, $b\in (-1,1)$ and by $z\to -z$ (Lemma~\ref{imphigrande}). Making the same change of coordinates as before, we obtain that $\mu'=\mu^{\pm 1}$.

To finish the proof, it remains to see that two elements $g_2'(\mu)$ and $g_2'(\mu')$ are conjugate in $\Aut(S(\R),\pi)$ if and only if $\mu'={\mu}^{\pm 1}$. The element $g_2'(\mu)$ corresponds to an element of $H^2(\langle\eta\rangle,\R(z)^*)$ that we can compute using Proposition~\ref{deltaeta}. To do this, we need to write the corresponding element of $H^1(\langle \eta\rangle,\mathscr G)$. Composing $g_2'(\mu)$ with $(t,z)\to (t,-z)$, we obtain the element of $\tilde{A}=\mathscr G$ given by
\[\left[\begin{smallmatrix}
-{\bf i}(1+\mu)(1-z^2) & 2(1-z^2)\\
2\mu & {\bf i}(1+\mu)(1-z^2)
\end{smallmatrix}
\right].\]
In order to get an element of $T\subset \GL(2,\C(z))$ (see Lemma~\ref{groupT}), we divide each element of the matrix with $\nu$, with $\nu\in \C$, $\lvert \nu\rvert=1$, $\nu^2=\mu$, and get
\[\left[\begin{smallmatrix}
a & hb\\
\overline{b} & \overline{a}
\end{smallmatrix}
\right]\in T\subset\GL(2,\C(z)),\]
with $a=-\frac{{\bf i}{(1+\mu)(1-z^2)}}{\nu}$, $b=\frac{2}{\nu}$ (indeed, $\overline{a}=\frac{{\bf i}{(1+1/\mu)(1-z^2)}}{1/\nu}=\frac{{\bf i}(1+\mu)(1-z^2)}{\nu}$). Obser-ving that $\overline{a}=-a$ and that $a,b$ are invariant by $z\mapsto -z$, the corresponding element of $H^2(\langle \eta\rangle,\R(z)^*)$ can be computed (using Proposition~\ref{deltaeta}) by
\[\left[\begin{smallmatrix}
a & hb\\
\overline{b} & \overline{a}
\end{smallmatrix}\right]^2=\left[\begin{smallmatrix}
a^2+b\overline{b}h & 0\\
0 & a^2+b\overline{b}h
\end{smallmatrix}\right]\]
and corresponds therefore to 
{\small \[a^2+b\overline{b}h=(1-z^2)\left(z^2-\left(\frac{1-\mu}{1+\mu}\right)^2\right)\frac{(1+\mu)^2}{\mu}.\]}
Writing $\mu=\cos(\theta)+{\bf i} \sin(\theta)$ we obtain $\frac{(1+\mu)^2}{\mu}=2(\cos(\theta)+1),\left(\frac{1-\mu}{1+\mu}\right)^2=\frac{\cos(\theta)-1}{\cos(\theta)+1}=\frac{\cos^2(\theta)-1}{(\cos(\theta)+1)^2}\in \R_{<0}$, so the corresponding element of $H^2(\langle \eta\rangle,\R(z)^*)$ is the class of $z^2+\frac{1-\cos(\theta)}{\cos(\theta)+1}$. Denoting by $s\colon (0,\pi)\cup (\pi,2\pi)\to \R_{>0}$ the map $s(\theta)= \frac{1-\cos(\theta)}{\cos(\theta)+1}$, we observe that $s(\theta)=s(\theta')$ if and only if $\theta'\in \{\theta, 2\pi-\theta\}$. This gives the result.
\end{proof}
\subsection{Proof of theorems \ref{MainThm} and \ref{MainThm2}}\label{Sec:Proofs}
We can now finish by giving the proof of the main theorems.
\begin{proof}[Proof of Theorem $\ref{MainThm}$]
Let $g\in\Aut(S(\R))$ be of prime order. By Proposition~\ref{MinMod}, one of the two following possibilities holds
\begin{enumerate}[$(a)$]
\item There exists a birational morphism $\varepsilon\colon X\to S$ which is the blow-up of $0$, $1$, $2$, or $3$ pairs of conjugate imaginary points in $S$, such that $\hat g=\varepsilon^{-1}g\varepsilon\in\Aut(X)$, $\Pic(X)^{\hat g}\cong \Z$, and $X$ is a Del Pezzo surface.
\item There exists $\alpha\in \Aut(\Pp^1)$ such that $\alpha\pi=\pi g$. Moreover, there exists a birational morphism $\varepsilon\colon X\to S$ that restricts to a diffeomorphism $X(\R)\to S(\R)$ such that $\hat g=\varepsilon^{-1}g\varepsilon\in\Aut(X)$, $\pi\varepsilon\colon X\to\Pp^1$ is a conic bundle on $X$, and $\Pic(X)^{\hat g}\cong\Z^2$.
\end{enumerate}
In particular, we have a Mori fibration in the sense of Definition~\ref{MoriFib}.

In the case $(a)$, $X$ is a Del Pezzo surface with possible degree $8,6,4$, or $2$. If $(K_X)^2=8$, $X\simeq S$ and $g\in\Aut(S)$. By Proposition~\ref{ConjAut(S)}, $g$ is conjugate to one of the cases (3), (4), or (5) of the statement.
If $X$ is a Del Pezzo surface of degree $6$, $X$ comes from $S$ by blowing up a pair of conjugate imaginary points and Proposition~\ref{dP6nominimal} tell us that $\hat g$ comes from an automorphism of $S$, having the same cases as before.
If $X$ is a Del Pezzo surface of degree $4$, $X$ comes from $S$ by blowing up two pairs of conjugate imaginary points and by Proposition~\ref{dP4rank1} $g$ is conjugate to $\alpha_1$ or $\alpha_2$ giving in case (2).
If $X$ is a Del Pezzo surface of degree $2$, $X$ comes from $S$ by blowing up three pairs of conjugate imaginary points and Lemma~\ref{numinimal} asserts that the Geiser involution $\nu$ is such that $\Pic(X)^\nu$ has rank $1$ and Lemma~\ref{dP2nonrank1} that there is no other such automorphism of $X$. We get then case (1).

We look now at case $(b)$, where $\rk(\Pic(X)^{\hat g})=2$. In this case, $g$ is conjugate to an element of $\Aut(S(\R),\pi)$ by some birational morphism $\varepsilon\colon X\to S$ that restricts to a diffeomorphism $X(\R)\to S(\R)$ (see Remark~\ref{ConicBAutSpi}) that we call $g$ again for simplicity.
Since the order of $g$ is finite, by Lemma~\ref{ImFinOrd} the image of $g$ under the map $\Phi\colon\Bir(S,\pi)\to\Aut(\Pp^1)$ is the identity or $\eta\colon z\mapsto -z$, after conjugation by an element of $\Aut(S(\R),\pi)$. 
\begin{itemize}
\item If $\Phi(g)$ is the identity, then $g\in\Aut(S(\R)/\pi)$. When $g$ has order larger than $2$, by Lemma~\ref{RotationsinH} $g$ is conjugate to a rotation, case (3).

\noindent If $g$ has order $2$, then $g$ is an element in $\Aut^+(S(\R)/\pi)$ when $g$ is an orientation-preserving birational diffeomorphism or an element that belongs to $\Aut(S(\R)/\pi)\setminus \Aut^+(S(\R)/\pi)$ otherwise.  Proposition~\ref{diff} implies in the first case, that $\Fix(g)$ is a double covering of $\Pp^1$ with no real points and in the second case, that $\Fix(g)$ is a double covering of $\Pp^1$ with real points one oval and ramification points $P_N$ and $P_S$. Lemma~\ref{ExchangeOrNotBir} implies that $P_N$ and $P_S$ are fixed in both cases. By Lemma~\ref{Lem:LocRot}, the action of $g$ on the fibres of $\pi$ is either by rotations of order $2$ when $g$ is in $\Aut^+(S(\R)/\pi)$ or by reflections when $g$ is in $\Aut(S(\R)/\pi)\setminus\Aut^+(S(\R)/\pi)$. We get thus cases (6) and (7) in the statement, except if the curve $\Fix(g)$ is rational. It remains to see that if $\Fix(g)$ is rational, $g$ is conjugate to another case. If $g\in\Aut(S(\R)/\pi)\setminus\allowbreak\Aut^+(S(\R)/\pi)$, then the curve $\Fix(g)$ is isomorphic to $\Pp^1_\R$ and $g$ is conjugate to the reflection $\upsilon\colon(w:x:y:z)\mapsto(w:-x:y:z)$ by Theorems~\ref{thm} and~\ref{thm4}. If $g\in\Aut(S(\R)/\pi)$, then $g$ is conjugate to an automorphism of the last family by Lemma~\ref{Lem:SpecialMapsg1g2}.
\item If $\Phi(g)=\eta$, then $g=g'\tilde\eta$ with $\Phi(\tilde\eta)=\eta$ (Lemma~\ref{ImFinOrd}) and $g'\in\Aut(S(\R)/\pi)$. Since the order of $g$ is prime, $g$ is of order $2$ in $\Aut(S(\R),\pi)\setminus\Aut(S(\R)/\pi)$ giving the case (8) in the statement, or one of the automorphisms $(w:x:y:z)\mapsto(w:\pm x:\pm y:-z)$.\qedhere
\end{itemize}
\end{proof}

\begin{proof}[Proof of Theorem $\ref{MainThm2}$]

All the cases are disjoint because of the fixed curves and order, except maybe in case (2) where the curve of fixed points of $\alpha_i$ has genus $1$ because elements in cases (6) and (7) may a have curve of fixed points of the same genus. However, $\alpha_i$ is not conjugate to an automorphism of a conic bundle since there is no sequence of links coming from it to a Mori fibration preserving a conic bundle (Lemma~\ref{links}). On the other hand, $\alpha_i$ is conjugate to another element if and only if the conjugation is by an isomorphism of the surface $X$; this is again a consequence of Lemma~\ref{links}. We proved that conjugacy classes in (2) are disjoint and parametrised by isomorphism classes of pairs $(X,g)$, where $X$ is a Del Pezzo surface of degree $4$ with $X(\R)\simeq S(\R)$ and $g$ is an automorphism of order $2$ that does not preserve any real conic bundle (Proposition~\ref{dP4rank1}).

It remains to show the parametrisation of the families $(1)$ and $(3)-(8)$. 

For (1), the curves of fixed points in $S(\C)$ are not rational and invariant under conjugation in $\Bir(S)$ and then in $\Aut(S(\R))$. We obtain a map from the set of conjugacy classes associated to each family to the set of isomorphism classes of the set of fixed curves.
The surjectivity is given by the correspondence 
\[\left\{
   \begin{array}{c}
      \mbox{Smooth real quartics}\\
      \mbox{with one oval}
   \end{array}
\right\}
\leftrightarrow
\left\{
   \begin{array}{c}
      \mbox{Del Pezzo surfaces of degree 2}\\
      \mbox{diffeomorphic to the sphere}
   \end{array}
\right\}\] 
Concerning injectivity, if two quartics are isomorphic, then the surfaces are isomorphic. This is because the canonical divisor of the quartic is the class of a line (see proof Proposition~\ref{Prop:Geiser}). Then every isomorphism extends to $\Pp^2$ and then, it yields an isomorphism of Del Pezzo surfaces of degree 2.

For (6) and (7), the elements are conjugate in $\Aut(S(\R))$ if and only if they are conjugate in $\Aut(S(\R))$, because it is not possible to use other links that links of type $\mathrm{II}$ (see the description of links given in Lemma~\ref{links}). We can thus consider the fixed locus, which is not only a non-rational curve, but also a curve endow with a $2\colon 1$-covering. Moreover, the elements of $\Aut(S(\R),\pi)$ preserve the interval. Conversely, let $\Gamma\to\Pp^1$, $\Gamma'\to\Pp^1$ be $2:1$-coverings of $\Pp^1$ and assume that there exists an isomorphism $\alpha\colon\Pp^1\to\Pp^1$ such that the following diagram commutes:
\[\xymatrix@R-0.8pc{
\Gamma\  \ar[d]_\pi \ar[r]^{\rho}_{\sim} & \Gamma' \ar[d]_\pi  \\
\Pp^1\ \ar[r]^\alpha_\simeq & \ \Pp^1
}\]
and that $\alpha$ preserves $[-1,1]$ then $\alpha$ is in the group given in Lemma~\ref{imphigrande}, then there exist $\xi\in\Aut(S(\R),\pi)$ such that we replace $\rho$ with $\xi \rho\xi^{-1}$ and may assume that $\alpha=id$. Then the corresponding elements are conjugate by Proposition~\ref{correspAut(S(R))}.

For (4) and (5), the parametrisation is trivial since there is only one element in each family.

For (3), if two rotations are equal up to sign, they are conjugate by $\upsilon$ or the identity. It remains to see that if $r_\theta$ is conjugate to $r_{\theta'}$ by $\rho\in \Aut(S(\R))$ then $\theta=\pm\theta'$ $\pmod{2\pi}$. We may assume that the order is $\geq 5$, (since otherwise we always have $\theta=\pm \theta$ $\pmod{2\pi}$). We decompose $\rho$ into elementary links and use Lemma~\ref{links} to see that $\rho$ is a product of maps of the following type:
\[\xymatrix@R-0.8pc{
dP_6\  \ar[d] \ar@{-->}[r]^{\mathrm{II}} & \ dP_6 \ar[d]  \\
S\ar[r]^{\simeq}  & S
}\]
where the vertical arrows are blow-ups of two imaginary fixed points, fixed by $g$ and the image. Hence, we may assume that the points are  $(0:\pm{\bf i}: 1:0)$ and then we stay in $\Aut(S(\R),\pi)$ (Lemma~\ref{Aut(dP6)}). In $\Aut(S(\R)/\pi)\rtimes\langle\tau\rangle\subset\PGL(2,\C)\rtimes\langle\tau\rangle$ the elements are $\left(\left[\begin{smallmatrix}1& 0\\ 0& e^{{\bf i}\theta}\end{smallmatrix}\right],1\right)$ (see Subsection~\ref{SecRotation}), and two are conjugate only if $\theta=\pm\theta'.$

For (8), by Corollary~\ref{ColPropInv}, conjugacy classes of elements in $\Aut(S(\R),\pi)\setminus\allowbreak\Aut(S(\R)/\pi)$ surjects naturally to the set of conjugacy classes of elements in $\Bir(S,\pi)\setminus\allowbreak\Bir(s/\pi)$ which is uncountable. These correspond to the conjugacy classes of $\Bir(S,\pi)$, we may then have a priori more conjugacy classes in $\Aut(S(\R),\pi)$.
It remains to prove that two such elements are conjugate in $\Aut(S(\R),\pi)$ if and only if they are conjugate in $\Aut(S(\R))$. For this, we write $\rho\in \Aut(S(\R))$ an element that conjugates one involution to another, and decompose it into elementary links. If all links are of type $\mathrm{II}$, then $\rho\in \Aut(S(\R),\pi)$. If some links of type $\mathrm{I}$ or $\mathrm{III}$ are used, then by Lemma~\ref{links} these pass through the sphere and the Del Pezzo of degree $6$, which is impossible here, since elements of the last family are not conjugate to $(w:x:y:z)\mapsto(w:\pm x:\pm y:-z)$ by hypothesis. The last part is when $\rho$ decomposes into links of type $\mathrm{II}$ and $\mathrm{IV}$. The links of type $\mathrm{IV}$ provide two fibrations of the same surface, which lead to two different elements of $\Aut(S(\R),\pi)$. If the two elements are conjugate in this latter group, the result is clear. The only case where this is not true is by Lemma~\ref{links} the case given by the automorphisms $g_1$, $g_2$ on special Del Pezzo surfaces of degree $4$ given by $\lvert \mu\vert=1$ (Lemma~$\ref{Lem:g1g2DP4}$). But in this case, we conjugate an element of $\Aut(S(\R),\pi)\setminus \Aut(S(\R)/\pi)$ to an element of $\Aut(S(\R)/\pi)$, and when we come back we did not change the conjugacy class in $\Aut(S(\R),\pi)$ (Lemma~\ref{Lem:SpecialMapsg1g2}). This ends the proof of the Theorem~\ref{MainThm2}.
\end{proof}
\printbibliography
\end{document}